\documentclass[11pt]{amsart}

\usepackage{package}
\usepackage{command}
\usepackage{bibstyle}

\usepackage{setting}

\begin{document}

%\date{\today}

% !TEX root = ../main.tex

\title[Counting Prime Orbits in Shrinking Intervals]
{
    Counting Prime Orbits in Shrinking Intervals \\ for Expanding Thurston Maps    
}
% Distribution of Periodic Orbits in Shrinking Targets \\ for Expanding Thurston Maps

% Statistics of prime orbits for expanding Thurston maps

\author{Zhiqiang~Li \and Xianghui~Shi}

\thanks{Li and Shi were partially supported by Beijing Natural Science Foundation (JQ25001 and 1214021) and National Natural Science Foundation of China (12471083, 12101017, 12090010, and 12090015).}

% \thanks{Z.~Li and X.~Shi were partially supported by NSFC Nos.~12101017, 12090010, 12090015, and BJNSF No.~1214021.}
    
\address{Zhiqiang~Li, School of Mathematical Sciences \& Beijing International Center for Mathematical Research, Peking University, Beijing 100871, China}
\email{zli@math.pku.edu.cn}

\address{Xianghui~Shi, Beijing International Center for Mathematical Research, Peking University, Beijing 100871, China}
\email{xhshi@pku.edu.cn}

% \date{\today}

\subjclass[2020]{Primary: 37D20; Secondary: 37C25, 37C35, 37D35, 57M12}

\keywords{expanding Thurston map, postcritically-finite rational map, prime orbits, thermodynamic formalism, Ruelle operator, shrinking targets.}

 % Title, authors, thanks, address & email, subclass, keywords

% !TEX root = ../main.tex

\begin{abstract}
We establish a local central limit theorem for primitive periodic orbits of expanding Thurston maps, providing a fine-scale refinement of the Prime Orbit Theorem in the context of non-uniformly expanding dynamics.
Specifically, we count the number of primitive periodic orbits whose Birkhoff sums for a given potential lie within a family of shrinking intervals. 
For eventually positive, real-valued \holder continuous potentials that satisfy the strong non-integrability condition, we derive precise asymptotic estimates. 
In particular, our results apply to postcritically-finite rational maps whose Julia set is the whole Riemann sphere.
\end{abstract}

% In this paper, we study a counting problem for primitive periodic orbits of expanding Thurston maps, with the constraint that for a given potential function, the Birkhoff sums along the periodic orbits lie in a prescribed family of shrinking intervals.
% For eventually positive, real-valued \holder continuous potentials on $S^2$ satisfying the strong non-integrability condition, we obtain asymptotic estimates for the number of these constrained orbits, which resemble a local central limit theorem.

% In particular, our results apply to postcritically-finite rational maps for which the Julia set is the whole Riemann sphere. Moreover, a stronger result is obtained for Latt\`{e}s maps.

% We consider a counting problem for primitive periodic orbits of a class of branched covering maps on the $2$-sphere $S^{2}$ called expanding Thurston maps, which are topological models of some non-uniformly expanding rational maps without any smoothness or holomorphicity assumption.

% \bigskip

% This paper investigates the asymptotic distribution of primitive periodic orbits for expanding Thurston maps, specifically restricted to shrinking target intervals. We count orbits whose Birkhoff sums for a given potential fall into a prescribed shrinking window. Assuming the potential is eventually positive, real-valued, Hölder continuous, and satisfies the strong non-integrability condition, we derive a precise asymptotic formula. This formula constitutes a local central limit theorem, refining the classical counting of periodic orbits. % Abstract

\maketitle
\tableofcontents

% !TEX root = ../main.tex
\section{Introduction}
\label{sec:Introduction}

% The study of the distribution of periodic orbits is a central theme in dynamical systems, paralleling the theory of prime numbers in number theory.
% While the Prime Orbit Theorem \cite{li2024prime:dirichlet, li2024prime:split} provides a macroscopic law for the growth of these orbits, akin to the Prime Number Theorem, and the establishment of an exponential error term reveals the rate of convergence similar to the Riemann Hypothesis, a more subtle question remains: how are these orbits distributed on a fine scale?
% Specifically, can we quantify the number of orbits whose statistical properties (encoded by Birkhoff sums) fall into vanishingly small windows?

Periodic orbits serve as the skeleton of chaotic dynamics, encoding essential information about the system's long-term behavior.
A fundamental objective in this field is counting these orbits, a problem analogous to the Prime Number Theorem in number theory.
Recently, the dynamical counterpart---the Prime Orbit Theorem---was successfully established for expanding Thurston maps \cite{li2024prime:dirichlet, li2024prime:split}, which serve as topological models for postcritically-finite rational maps.
While this answers the question of ``how many'' orbits exist, it leaves open the more subtle question of ``how they are distributed'' with respect to statistical observables.
Understanding this fine-scale distribution is crucial for characterizing the fluctuations inherent in the system.

In this paper, we investigate the asymptotic distribution of primitive periodic orbits restricted to shrinking intervals for expanding Thurston maps.
We count orbits whose Birkhoff sums for a given potential lie within a prescribed family of shrinking intervals.
we obtain a precise asymptotic formula for the number of these constrained orbits, which resemble a local central limit theorem.
Unlike the central limit theorem, which describes the distribution of Birkhoff sums on the scale of $\sqrt{n}$, the local central limit theorem can probe the density at a given point.
Our main result refines the coarse counting of the Prime Orbit Theorem \cite{li2024prime:dirichlet} for expanding Thurston maps.

% The problem of counting orbits in shrinking intervals has been well-studied for uniformly expanding cases.
% For instance, Petkov and Stoyanov \cite{petkovDistributionPeriodsClosed2012} investigated the distribution of periods of closed trajectories for hyperbolic flows, and Sharp and Stylianou \cite{sharpStatisticsMultipliersHyperbolic2022} studied the multipliers and holonomies for hyperbolic rational maps.
% However, the non-uniformly expanding case remains largely unexplored.
% To the best of our knowledge, our work is the first to address this problem in the context of non-uniformly expanding dynamics.

The problem of counting orbits in shrinking intervals has been successfully addressed for uniformly expanding systems.
For instance, Petkov and Stoyanov \cite{petkovDistributionPeriodsClosed2012} investigated the distribution of closed orbits for hyperbolic flows, and Sharp and Stylianou \cite{sharpStatisticsMultipliersHyperbolic2022} studied the multipliers and holonomies for hyperbolic rational maps.
However, these results rely heavily on the hyperbolicity and smoothness of the underlying systems.
The context of non-uniformly expanding dynamics, particularly for branched covering maps, remains largely unexplored.
To the best of our knowledge, our work is the first to address this problem in such a setting.

\subsection{Main results}%
\label{sub:Main results}

Let $f \colon S^2 \to S^2$ be an expanding Thurston map and $\phi \colon S^2 \to \mathbb{R}$ be a real-valued \holder continuous function. 
The topological $2$-sphere $S^2$ is equipped with a \emph{visual metric} $d$ (see Section~\ref{sub:Thurston_maps} for details). 
A periodic orbit $\tau = \set[\big]{x, f(x), \dots, f^{n-1}(x)}$ (where $f^n(x) = x$) is called \emph{primitive} if $f^m(x) \ne x$ for each integer $m$ with $1 \leqslant m < n$. 
We denote the set of primitive periodic orbits of $f$ by $\mathfrak{P}(f)$. 
For each $\tau \in \mathfrak{P}(f)$, we write $\phi(\tau) \coloneqq \sum_{x \in \tau} \phi(x)$.

In this article, we investigate the asymptotic distribution of primitive periodic orbits subject to constraints on their Birkhoff sums. 
Specifically, for a given number $\alpha \in \real$ and a sequence $\sequen{I_{n}}$ of intervals contained in a compact set $K \subseteq \real$, we study the asymptotic behavior of 
\[
    \pi_{f, \phi}(n; \alpha, I_n) \coloneqq \operatorname{card} \set[\big]{ \tau \in \mathfrak{P}_n(f) \describe \phi(\tau) - n \alpha \in I_{n}}
\]
as $n \to +\infty$, where $\mathfrak{P}_n(f) \coloneqq \set[\big]{\tau \in \mathfrak{P}(f) \describe |\tau| = n}$. 

To obtain precise estimates, we impose specific conditions on the potential $\phi$ and the sequence $\sequen{I_{n}}$.
We assume that $\phi$ is \emph{eventually positive} and satisfies the \emph{strong non-integrability condition} (see Definitions~\ref{def:eventually positive functions} and~\ref{def:strong non-integrability condition}). 
Furthermore, denoting the length of $I_{n}$ by $|I_n|$, we assume that the sequence $\sequen[\big]{|I_{n}|^{-1}}$ exhibits sub-exponential growth, i.e., $\limsup_{n \to +\infty} \frac{1}{n} \log \parentheses[\big]{ |I_{n}|^{-1} } = 0$.

We write $A(n) \sim B(n)$ as $n \to +\infty$ if $\lim_{n \to +\infty} A(n) / B(n) = 1$.

\begin{theorem} \label{thm:main theorem}
    Let $f \colon S^2 \to S^2$ be an expanding Thurston map and $d$ be a visual metric on $S^2$ for $f$.
    Let $\beta \in (0, 1]$ and $\phi \in C^{0, \beta}(S^2, d)$ be an eventually positive real-valued \holder continuous function satisfying the $\beta$-strong non-integrability condition (with respect to $f$ and $d$). 
    Then there exists a unique positive number $\rootpressure > 0$ with topological pressure $P(f, -\rootpressure \phi) = 0$ and there exists $N_{f} \in \n$ depending only on $f$ such that for each $N \in \n$ with $N \geqslant N_{f}$, the following statement holds for the iterate $F \coloneqq f^{N}$ and the potential $\Phi \coloneqq \sum_{i = 0}^{N - 1} \phi \circ f^{i}$:

    Denote $\alpha \coloneqq \frac{\mathrm{d}}{\mathrm{d} t} P(F, t \Phi) |_{t = -\rootpressure}$ and $\sigma \coloneqq \sqrt{ \frac{\mathrm{d}^2}{\mathrm{d} t^2} P(F, t \Phi) |_{t = -\rootpressure } }$.
    Let $\sequen{I_{n}}$ be a sequence of intervals contained in a compact set $K \subseteq \real$ with $\sequen[\big]{|I_{n}|^{-1}}$ having sub-exponential growth.
    Then
    \[
        \pi_{F, \Phi}(n; \alpha, I_n) \sim \frac{ \int_{I_{n}} e^{\rootpressure t} \, \mathrm{d}t }{ \sqrt{2\pi} \, \sigma } \, \frac{e^{\rootpressure \alpha n}}{n^{3/2}} \qquad \text{as } n \to +\infty.
    \]
\end{theorem}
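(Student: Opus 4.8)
Here is how I would go about proving the asymptotic; the argument is a Fourier--Tauberian one resting on the spectral theory of Ruelle operators, the essential difficulty being the quantitative control forced by the \emph{shrinking} targets.

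The plan is to run a Fourier-analytic (Tauberian) argument in the spirit of the shrinking-interval theorems of Petkov--Stoyanov and Sharp--Stylianou, but powered by the thermodynamic formalism for expanding Thurston maps of \cite{li2024prime:dirichlet}: the Ruelle--Perron--Frobenius theorem with spectral gap, analytic dependence of the leading eigenvalue on the potential, and --- crucially --- the Dolgopyat-type operator-norm estimates that the strong non-integrability condition supplies. The guiding heuristic is that after weighting a primitive orbit $\tau$ by $e^{-\rootpressure\Phi(\tau)}$ the orbits equidistribute to the equilibrium state $\mu_{-\rootpressure\Phi}$, whose $\Phi$-mean is $\alpha$ and whose Birkhoff sums satisfy a central limit theorem with variance $\sigma^{2}$; the asserted formula is the associated \emph{local} limit theorem, in which the exponent $n^{-3/2}$ factors as $n^{-1}$ (converting periodic points into primitive orbits) times $n^{-1/2}$ (Gaussian concentration on scale $\sqrt n$).

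First I would normalize and reduce. Take $\rootpressure$ as in the statement, so $P(F,-\rootpressure\Phi)=NP(f,-\rootpressure\phi)=0$; eventual positivity of $\phi$, hence of $\Phi$, makes $t\mapsto P(F,t\Phi)$ real-analytic and strictly increasing near $-\rootpressure$ with $\alpha=\int\Phi\,\mathrm d\mu_{-\rootpressure\Phi}>0$ and $\sigma^{2}>0$ (the latter since strong non-integrability forbids $\Phi$ from being cohomologous to a constant). As an expanding Thurston map has no periodic critical point, $\deg_{F^{n}}(x)=1$ whenever $F^{n}(x)=x$, so the branching is invisible on the diagonal. Writing $S_{n}\Phi(x)\coloneqq\sum_{i=0}^{n-1}\Phi(F^{i}(x))$, so $\Phi(\tau)=S_{n}\Phi(x)$ for $x\in\tau\in\mathfrak{P}_{n}(F)$, I would discard the non-primitive orbits of period $m\mid n$, $m\le n/2$ using the crude bound $\operatorname{card}\set{\tau\in\mathfrak{P}_{m}(F)\describe\Phi(\tau)\le T}\le Ce^{\rootpressure T}$ (a consequence of $P(F,-\rootpressure\Phi)=0$): their total contribution is $O(ne^{\rootpressure\alpha n/2})$, negligible against the claimed main term because $|I_{n}|^{-1}$ grows sub-exponentially. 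Then, with $g_{n}(t)\coloneqq e^{\rootpressure t}\mathbf{1}_{I_{n}}(t)$, the theorem reduces to
\[
    \sum_{F^{n}(x)=x}g_{n}\bigl(S_{n}\Phi(x)-n\alpha\bigr)=e^{\rootpressure\alpha n}\!\!\sum_{F^{n}(x)=x}\!\! e^{-\rootpressure S_{n}\Phi(x)}g_{n}\bigl(S_{n}\Phi(x)-n\alpha\bigr)\sim\frac{\int_{I_{n}}e^{\rootpressure t}\,\mathrm d t}{\sqrt{2\pi}\,\sigma}\cdot\frac{e^{\rootpressure\alpha n}}{\sqrt n}
\]
(the first identity being the tilt to the potential $-\rootpressure\Phi$). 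To accommodate the non-smoothness of $g_{n}$ I would sandwich $g_{n}^{-}\le g_{n}\le g_{n}^{+}$ by smooth functions supported in $I_{n}$ enlarged, resp. shrunk, by a fixed fraction $\eta$, with $\widehat{g_{n}^{\pm}}(0)=(1+O(\eta))\int_{I_{n}}e^{\rootpressure t}\,\mathrm d t$, $|\widehat{g_{n}^{\pm}}(\xi)|\le\widehat{g_{n}^{\pm}}(0)$, and $|\widehat{g_{n}^{\pm}}(\xi)|\le C_{M}\,\widehat{g_{n}^{\pm}}(0)\,(\eta|I_{n}||\xi|)^{-M}$ for every $M$. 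Fourier inversion then gives, with $Z_{n}(\xi)\coloneqq\sum_{F^{n}(x)=x}e^{(-\rootpressure+\mathrm{i}\xi)S_{n}\Phi(x)}$,
\[
    \sum_{F^{n}(x)=x}e^{-\rootpressure S_{n}\Phi(x)}g_{n}^{\pm}\bigl(S_{n}\Phi(x)-n\alpha\bigr)=\frac{1}{2\pi}\int_{\real}\widehat{g_{n}^{\pm}}(\xi)\,e^{-\mathrm{i}\xi n\alpha}\,Z_{n}(\xi)\,\mathrm d\xi,
\]
where by the comparison between periodic-point sums and iterates of the Ruelle operator $\mathcal{L}_{(-\rootpressure+\mathrm{i}\xi)\Phi}$ from \cite{li2024prime:dirichlet} one has $|Z_{n}(\xi)|\le Cn^{c}\bigl\lVert\mathcal{L}_{(-\rootpressure+\mathrm{i}\xi)\Phi}^{n}\bigr\rVert_{C^{0,\beta}(S^{2},d)}$, while trivially $|Z_{n}(\xi)|\le Z_{n}(0)\le C$.

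The heart of the analysis is a three-way split $\real=\set{|\xi|<\delta}\cup\set{\delta\le|\xi|\le T_{n}}\cup\set{|\xi|>T_{n}}$ with $\delta$ a small fixed constant and $T_{n}\coloneqq(\eta|I_{n}|)^{-1}\log n$, which is sub-exponential in $n$ by hypothesis. On $\set{|\xi|>T_{n}}$ I would use the rapid Fourier decay of $g_{n}^{\pm}$ against $|Z_{n}|\le C$; on $\set{\delta\le|\xi|\le T_{n}}$ I would invoke the Dolgopyat-type bound $\bigl\lVert\mathcal{L}_{(-\rootpressure+\mathrm{i}\xi)\Phi}^{n}\bigr\rVert\le C(1+|\xi|)^{a}e^{-\epsilon n}$ from \cite{li2024prime:dirichlet} (the one place the $\beta$-strong non-integrability of $\Phi$ enters, after checking it is inherited from that of $\phi$ for $N\ge N_{f}$), so that both tails are $o\bigl(\widehat{g_{n}^{\pm}}(0)\,n^{-1/2}\bigr)$ --- here the sub-exponentiality of $T_{n}$ is exactly what lets $e^{-\epsilon n}$ win. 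On $\set{|\xi|<\delta}$, standard analytic perturbation theory gives a simple leading eigenvalue $\lambda(\xi)$ of $\mathcal{L}_{(-\rootpressure+\mathrm{i}\xi)\Phi}$ with a spectral gap and $\log\lambda(\xi)=\mathrm{i}\alpha\xi-\tfrac12\sigma^{2}\xi^{2}+O(\xi^{3})$, whence $Z_{n}(\xi)=\lambda(\xi)^{n}\bigl(1+O(\theta^{n})\bigr)$ with $\theta<1$; then $e^{-\mathrm{i}\xi n\alpha}Z_{n}(\xi)=e^{-n\sigma^{2}\xi^{2}/2+nO(\xi^{3})}+O(\theta^{n})$, and rescaling $\xi=u/\sqrt n$, using $\widehat{g_{n}^{\pm}}(u/\sqrt n)=\widehat{g_{n}^{\pm}}(0)\bigl(1+O(|u|n^{-1/2})\bigr)$ and dominated convergence, I obtain
\[
    \frac{1}{2\pi}\int_{|\xi|<\delta}\widehat{g_{n}^{\pm}}(\xi)\,e^{-\mathrm{i}\xi n\alpha}\,Z_{n}(\xi)\,\mathrm d\xi\sim\frac{\widehat{g_{n}^{\pm}}(0)}{2\pi\sqrt n}\int_{\real}e^{-\sigma^{2}u^{2}/2}\,\mathrm d u=\frac{\widehat{g_{n}^{\pm}}(0)}{\sqrt{2\pi}\,\sigma\sqrt n}.
\]
Combining the three ranges, multiplying by $e^{\rootpressure\alpha n}$, inserting $\widehat{g_{n}^{\pm}}(0)=(1+O(\eta))\int_{I_{n}}e^{\rootpressure t}\,\mathrm d t$, dividing by $n$ (the primitive/periodic-point bookkeeping above), and letting first $n\to+\infty$ and then $\eta\to0^{+}$ in the sandwich closes the argument.

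I expect the main obstacle to be the quantitative bookkeeping demanded by the \emph{shrinking} targets rather than the spectral input itself: the conceptually deep estimate --- the uniform Dolgopyat contraction powered by strong non-integrability --- is already available from \cite{li2024prime:dirichlet}, but here every error term must be pushed below $o(|I_{n}|n^{-1/2})$, a quantity that itself tends to $0$, and the intermediate frequency band now stretches all the way out to scale $|I_{n}|^{-1}$; reconciling these is precisely why sub-exponential growth of $|I_{n}|^{-1}$ is the right hypothesis. The subsidiary technical points are the passage to the iterate $F=f^{N}$ for $N\ge N_{f}$ --- where one needs an $F$-invariant Jordan curve through the postcritical set and the inheritance of strong non-integrability by $\Phi$ --- the construction of the mollifiers $g_{n}^{\pm}$ with the stated Fourier decay, and the precise comparison of the periodic-point sums $Z_{n}(\xi)$ with iterates of the Ruelle operator on $C^{0,\beta}(S^{2},d)$.
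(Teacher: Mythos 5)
Your overall architecture is the same as the paper's (reduce primitive orbits to fixed points of $F^n$, tilt by $e^{-s_0\Phi}$, sandwich the indicator by smooth test functions, Fourier-invert, and split the frequency integral into a local Gaussian regime, an intermediate regime, and a high-frequency regime), but the way you handle the two non-local frequency regimes has a genuine quantitative gap. On $\{|\xi|>T_n\}$ with $T_n=(\eta|I_n|)^{-1}\log n$ you propose to use only the trivial bound $|Z_n(\xi)|\le C$ against the Fourier decay of $g_n^{\pm}$. Since $g_n^{\pm}$ lives at scale $|I_n|$ with mollification scale $\eta|I_n|$, the best available decay is $|\widehat{g_n^{\pm}}(\xi)|\le C_M\,\widehat{g_n^{\pm}}(0)\,(\eta|I_n||\xi|)^{-M}$, so the tail of the integral is of size about $C_M\,\widehat{g_n^{\pm}}(0)\,(\eta|I_n|)^{-1}(\log n)^{-(M-1)}$, whereas it must be $o\bigl(\widehat{g_n^{\pm}}(0)\,n^{-1/2}\bigr)$; this fails for every fixed $M$ even when $|I_n|$ is of constant length, and no choice of mollifier can repair it once $|I_n|^{-1}$ is allowed to grow sub-exponentially, because on that range you have no decay in $n$ at all. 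The decay in $n$ must come from the partition function itself: the paper applies the Dolgopyat-type bound $|Z_n(\xi)|\lesssim|\xi|^{2+\varepsilon}\rho^{n}$ (via the split Ruelle operators and the strong non-integrability condition) on the \emph{entire} unbounded range $|\xi|\ge T$ with $T$ a fixed constant, pairs it with the $C^4$-derived polynomial Fourier decay of the test function to make the $\xi$-integral converge, and lets $\rho^{n}$ absorb both $\sqrt{n}$ and the sub-exponential factor $|I_n|^{-4}$. Your cutoff $T_n$ is thus both unnecessary and, as used, fatal; the sub-exponential hypothesis enters exactly to let $\rho^n$ beat $|I_n|^{-1}$ in this Dolgopyat regime, not to make a trivial bound work beyond $T_n$.

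Second, the Dolgopyat-type estimate you invoke on $\{\delta\le|\xi|\le T_n\}$ is only available for $|\Im s|$ above a threshold $\widetilde{b}_\varepsilon\ge 2s_0+1$ (this is how it is proved in the split-operator framework you cite), so it does not cover the compact band $\delta\le|\xi|\le\widetilde{b}_\varepsilon$, while your perturbative eigenvalue expansion only covers $|\xi|<\delta$ small. This band needs a separate ingredient, which your proposal omits: one must know that the twisted operator at each fixed nonzero frequency has spectral radius strictly less than $1$, uniformly on compact sets. In the paper this is supplied by the complex Ruelle--Perron--Frobenius theorem of Parry--Pollicott applied to the tile coding, the non-lattice property of the lifted potential (deduced from the fact that $\phi$ is not cohomologous to a constant, via the equivalence in \cite{li2024prime:dirichlet}), and upper semicontinuity of the spectral radius to get a uniform bound $\vartheta^{n}$ on $K\smallsetminus(-\varepsilon,\varepsilon)$. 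Without this step (or an extension of the Dolgopyat bound down to $|\xi|=\delta$, which the cited results do not provide), the intermediate range is uncontrolled. The remaining parts of your outline --- the removal of non-primitive orbits, the tilting, the local Gaussian computation with the pressure expansion, and the final sandwich in $\eta$ --- match the paper's argument and are fine, modulo the routine but necessary comparison between the sphere partition function and its symbolic models (degree/curve corrections), which you cite as a black box.
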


% \begin{rmk}
%     Here $\rootpressure \alpha$ equals the measure-theoretic entropy of the equilibrium state for the map $F$ and the potential $-\rootpressure \Phi$.
% \end{rmk}

Recall that a postcritically-finite rational map is expanding if and only if it has no periodic critical points (see \cite[Proposition~2.3]{bonk2017expanding}). 
Therefore, when we restrict our attention to rational maps, we obtain the following corollary of Theorem~\ref{thm:main theorem} and Remark~\ref{rem:chordal metric visual metric qs equiv}.

\begin{corollary}\label{coro:main theorem for postcritically-finite rational maps}
    Let $f \colon \ccx \mapping \ccx$ be a postcritically-finite rational map without periodic critical points.
    Let $\sigma$ be the chordal metric or the spherical metric on the Riemann sphere $\ccx$, and $\phi \in C^{0, \holderexp} \parentheses[\big]{ \ccx, \sigma }$ be an eventually positive real-valued \holder continuous function with exponent $\holderexp \in (0, 1]$ satisfying the $\holderexp$-strong non-integrability condition (with respect to $f$ and a visual metric).
    Then there exists a unique positive number $\rootpressure > 0$ with topological pressure $P(f, -\rootpressure \phi) = 0$ and there exists $N_f \in \n$ depending only on $f$ such that for each $N \in \n$ with $N \geqslant N_f$, the following statements hold for $F \define f^N$ and $\Phi \define \sum_{i=0}^{N-1} \phi \circ f^i$:
    
    Denote $\alpha \coloneqq \frac{\mathrm{d}}{\mathrm{d} t} P(F, t \Phi) |_{t = -\rootpressure}$ and $\sigma \coloneqq \sqrt{ \frac{\mathrm{d}^2}{\mathrm{d} t^2} P(F, t \Phi) |_{t = -\rootpressure } }$.
    Let $\sequen{I_{n}}$ be a sequence of intervals contained in a compact set $K \subseteq \real$ with $\sequen[\big]{|I_{n}|^{-1}}$ having sub-exponential growth.
    Then
    \[
        \pi_{F, \Phi}(n; \alpha, I_n) \sim \frac{ \int_{I_{n}} e^{\rootpressure t} \, \mathrm{d}t }{ \sqrt{2\pi} \, \sigma } \, \frac{e^{\rootpressure \alpha n}}{n^{3/2}} \qquad \text{as } n \to +\infty.
    \]
\end{corollary}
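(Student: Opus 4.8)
The plan is to prove the asymptotic by following the standard dynamical zeta function / Tauberian strategy, but with the fine-scale count obtained via a Fourier–Laplace analysis of a suitably twisted Ruelle operator. Since $\Phi$ is \holder continuous on $S^2$ with $F = f^N$ expanding, the thermodynamic formalism of \cite{bonk2017expanding} gives the Ruelle operator $\mathcal{L}_{F, -s\Phi}$ acting on $C^{0,\beta}(S^2, d)$ with a simple leading eigenvalue $e^{P(F, -s\Phi)}$; the implicit function theorem together with the strict convexity coming from the strong non-integrability condition yields the unique $\rootpressure > 0$ with $P(f, -\rootpressure\phi) = 0$ (hence $P(F, -\rootpressure\Phi) = 0$), as well as the smooth dependence needed to define $\alpha$ and $\sigma$ with $\sigma > 0$. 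The choice of $N_f$ is forced so that $F$ and $\Phi$ satisfy the hypotheses (e.g.\ so that $\Phi$ is strictly positive and the strong non-integrability condition for $f$ propagates to $F$, giving a Dolgopyat-type spectral gap for the two-parameter family $\mathcal{L}_{F, -(\rootpressure + a + \mathbf{i}b)\Phi}$; this is the role played by the earlier sections of the paper).

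First I would encode the counting function via periodic-orbit sums: for $\tau \in \mathfrak{P}_n(F)$ one has the dynamical Birkhoff sum $\Phi(\tau) = S_n\Phi(x)$ for $x \in \tau$, and by the standard inclusion–exclusion (Möbius) relation between periodic points of $F^n$ and primitive orbits, control of $\pi_{F,\Phi}(n; \alpha, I_n)$ reduces to control of the weighted sums $\sum_{F^n x = x} g(S_n\Phi(x) - n\alpha) e^{-\rootpressure S_n\Phi(x)}$ for test functions $g$ approximating $\mathbf{1}_{I_n}$. Writing $g$ through its Fourier transform, such a sum becomes $\int_{\real} \widehat{g}(\xi) e^{-\mathbf{i}\xi\alpha n} \operatorname{tr}^{\flat}\!\bigl(\mathcal{L}_{F, -(\rootpressure + \mathbf{i}\xi)\Phi}^n\bigr)\,\mathrm{d}\xi$, where $\operatorname{tr}^{\flat}$ is the flat/Atiyah–Bott trace $\sum_{F^n x = x}\frac{e^{-(\rootpressure + \mathbf{i}\xi)S_n\Phi(x)}}{\lvert \det(\mathrm{D}F^n(x) - \mathrm{Id})\rvert}$ — or, in the Thurston-map setting where there is no smooth structure, the combinatorial analogue built from the tile structure used in \cite{li2024prime:dirichlet}. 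The main term comes from the neighborhood $\lvert\xi\rvert \leqslant \delta$: there the leading eigenvalue $\lambda(\xi) \coloneqq e^{P(F, -(\rootpressure+\mathbf{i}\xi)\Phi)}$ is analytic with $\lambda(0) = 1$, $\lambda'(0) = \mathbf{i}\alpha$, $\lambda''(0) = -(\sigma^2 + \alpha^2)$ (after the pressure normalization), so $\lambda(\xi)^n e^{-\mathbf{i}\xi\alpha n} \approx e^{-n\sigma^2\xi^2/2}$, and a Laplace-type expansion of $\int \widehat{g}(\xi) e^{-n\sigma^2\xi^2/2}\,\mathrm{d}\xi$ produces the Gaussian factor $\frac{1}{\sqrt{2\pi}\,\sigma\sqrt{n}}\,\widehat{g}(0) = \frac{1}{\sqrt{2\pi}\,\sigma\sqrt{n}}\int g$. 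Combined with the extra $n^{-1}$ and the $e^{\rootpressure\alpha n}$ coming from undoing the exponential weight (the orbit count near Birkhoff value $n\alpha$ being of size $e^{\rootpressure\alpha n}$, analogous to the Prime Orbit Theorem's main term $\frac{e^{sn}}{sn}$), one gets precisely $\frac{\int_{I_n} e^{\rootpressure t}\,\mathrm{d}t}{\sqrt{2\pi}\,\sigma}\cdot\frac{e^{\rootpressure\alpha n}}{n^{3/2}}$; here replacing $g$ by $\mathbf{1}_{I_n}$ and controlling the error uniformly is exactly where the sub-exponential growth of $\lvert I_n\rvert^{-1}$ enters, since one needs $\widehat{\mathbf{1}_{I_n}}$-tails to be negligible against the $e^{-cn}$ gains from the rest of the contour.

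The remaining ranges of $\xi$ must be shown to contribute a lower-order amount, and this is the main obstacle. For $\delta \leqslant \lvert\xi\rvert \leqslant T$ with $T = T(n)$ growing polynomially (needed because $g$ approximates an indicator, so $\widehat g$ is not compactly supported), one needs the Dolgopyat-type bound $\lVert\mathcal{L}_{F, -(\rootpressure + \mathbf{i}\xi)\Phi}^n\rVert \leqslant C\,\lvert\xi\rvert^{\kappa} e^{-\epsilon n}$ on the appropriate \holder norm, uniformly in $\xi$ in this range; this is where the $\beta$-strong non-integrability condition is essential, and it is precisely the hard spectral estimate established (for this class of non-smooth branched covers) earlier in the paper — I would invoke it as a black box. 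For $\lvert\xi\rvert$ large one additionally uses the rapid decay of $\widehat g$ together with a crude uniform bound on the twisted operators. Finally I would run the Möbius inversion to pass from periodic points back to primitive orbits, checking that the contributions of non-primitive orbits (those of period a proper divisor of $n$) and of the ``short'' orbits are exponentially smaller than the main term — routine given the exponential growth rate of $\operatorname{card}\mathfrak{P}_n(F)$. For the Corollary, Remark~\ref{rem:chordal metric visual metric qs equiv} identifies the chordal/spherical metric with a visual metric up to quasisymmetry, so \holder continuity and the strong non-integrability condition transfer, and Theorem~\ref{thm:main theorem} applies verbatim.
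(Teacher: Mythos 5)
Your proposal is correct and takes essentially the paper's route: the corollary is deduced by transferring the H\"older hypotheses from the chordal/spherical metric to a visual metric via the quasisymmetric equivalence of Remark~\ref{rem:chordal metric visual metric qs equiv} and then applying Theorem~\ref{thm:main theorem}, exactly as the paper does (your extended sketch of the proof of Theorem~\ref{thm:main theorem} itself also matches the paper's Fourier/Dolgopyat strategy and can simply be replaced by a citation of that theorem). The one ingredient you leave implicit but the paper states explicitly is \cite[Proposition~2.3]{bonk2017expanding}: a postcritically-finite rational map is an expanding Thurston map precisely when it has no periodic critical points, which is what entitles you to apply Theorem~\ref{thm:main theorem} to $f$ in the first place.
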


\subsection{Strategy and organization}
Our approach relies on a combination of thermodynamic formalism and operator theory, specifically adapted to the branched covering setting.

% \subsubsection*{Split Ruelle Operators}
The main technical obstacle in studying Thurston maps is the presence of critical points, which disrupts the functional analytic properties of the standard Ruelle transfer operator.
To overcome this, we employ the \emph{split Ruelle operators} introduced in \cite{li2024prime:split}.
The idea is to decompose the sphere into ``black'' and ``white'' tiles (based on a checkerboard coloring induced by an invariant Jordan curve) and define a pair of operators acting on functions supported on these tiles.
This construction effectively ``unfolds'' the singularities, allowing us to recover good spectral properties.

% \subsubsection*{Dolgopyat Estimates}
To obtain the precise asymptotics required for the local central limit theorem, we need to control the decay of the characteristic function of the Birkhoff sums.
In terms of operator theory, this translates to bounding the spectral radius of the twisted transfer operator $\mathbb{L}_{s\phi}$ as the complex parameter $s$ moves along the imaginary axis.
The detailed estimates are separated into three parts: the unbounded part, the bounded part, and the local part.
For the unbounded part, we employ Dolgopyat-type estimates for the split Ruelle operators established in \cite{li2024prime:split}.
This requires checking a strong non-integrability condition (Definition~\ref{def:strong non-integrability condition}), which in particular implies that the potential is not cohomologous to a constant.
For the bounded part, we employ Ruelle's estimate (see Appendix~\ref{sec:Appendix:Ruelle lemma}).
For the local part, we employ the complex Ruelle--Perron--Frobenius theorem \cite[Theorem~2]{pollicott1984complex} and arguments in \cite{pollicottZetaFunctionsPeriodic1990}.

% \subsubsection*{Smoothing and Fourier Analysis}
Finally, to count orbits in intervals $I_n$, we approximate the indicator function $\indicator{I_n}$ by smooth test functions.
We then apply Fourier transforms to relate the smoothed count to partition functions, which allows us to apply the established decay estimates.

\smallskip

The paper is organized as follows.
In Section~\ref{sec:Preliminaries}, we fix our notation, review fundamental concepts from thermodynamic formalism, and recall key results from the theory of expanding Thurston maps.
In Section~\ref{sec:The Assumptions}, we collect the main assumptions used throughout the paper.
The technical core of the paper is Section~\ref{sec:Pressure function and partition function estimates}, where we employ previous results and derive crucial decay estimates for associated partition functions.
These estimates are then used in Section~\ref{sec:Proof of the main theorem} to prove Theorem~\ref{thm:main theorem}.
 % Introduction
% \input{section/Introduction_v2} % Introduction
% !TEX root = ../main.tex

\section{Preliminaries}
\label{sec:Preliminaries}

\subsection{Notation}
\label{sub:Notation}

Let $\cx$ be the complex plane and $\ccx$ be the Riemann sphere. 
For each complex number $z \in \cx$, we denote by $\Re{z}$ the real part of $z$, and by $\Im{z}$ the imaginary part of $z$.
The symbol $\mathbf{i}$ stands for the imaginary unit in the complex plane $\cx$.
We follow the convention that $\n = \{1,2,3,\dots\}$ and $\n_0 = \{0\} \cup \n$. 
We denote by $\nonnegreal$ the set of non-negative real numbers.
The cardinality of a set $A$ is denoted by $\card{A}$.

Consider complex-valued functions $u$, $v$, and $w$ defined on $\real$ and $a \in \real \cup \{\pm\infty\}$. 
We write $u(x) \sim v(x)$ as $x \to a$ if $\lim_{x \to a} \frac{u(x)}{v(x)} = 1$, and write $u(x) = v(x) + \mathcal{O}(w(x))$ as $x \to a$ if $\limsup_{x \to a} \abs[\big]{\frac{u(x) - v(x)}{w(x)}} < +\infty$. 
We use the same notation for discrete variables.

% Let $g \colon X \mapping Y$ be a map between two sets $X$ and $Y$. We denote the restriction of $g$ to a subset $Z$ of $X$ by $g|_{Z}$.

Let $f \colon X \mapping X$ be a map on a set $X$. 
The inverse map of $f$ is denoted by $f^{-1}$. 
We write $f^n$ for the $n$-th iterate of $f$, and $f^{-n}\define (f^n)^{-1}$, for $n\in \n$. 
For $n \in \n$, we denote by
\[
    \myperiodpoint{f}{n} \define \set{ x \in X \describe f^{n}(x) = x }
\]
the set of fixed points of $f^n$.
For a real-valued function $\varphi \colon X \mapping \real$, we write 
\[
    S_n \varphi(x) = S^f_n \varphi(x) \define \sum_{j=0}^{n-1} \varphi(f^j(x))
\] 
for $x \in X$ and $n\in \n_0$. We omit the superscript $f$ when the map $f$ is clear from the context. We adopt the convention that $S_0 \varphi = 0$.

Let $(X,d)$ be a compact metric space. 
For each subset $Y \subseteq X$, we denote the diameter of $Y$ by $\diameter{d}{Y} \define \sup\{d(x, y) \describe \juxtapose{x}{y} \in Y\}$ and the characteristic function of $Y$ by $\indicator{Y}$. 
% For each $r > 0$ and each $x \in X$, we denote the open (\resp closed) ball of radius $r$ centered at $x$ by $B_{d}(x,r)$ (\resp $\overline{B}_{d}(x,r)$).
We denote by $C(X)$ (\resp $C(X, \cx)$) the space of real-valued continuous functions from $X$ to $\real$ (\resp complex-valued continuous functions from $X$ to $\cx$). 
For $\varphi \in C(X)$, we denote by $\supp{\varphi}$ the support of $\varphi$.
If we do not specify otherwise, we equip $C(X)$ and $C(X, \cx)$ with the uniform norm $\uniformnorm{\cdot}$.
% For each $\psi \in C^{0, \holderexp}((X,d), \cx)$, we denote
For $\psi \in C(X, \cx)$, we denote
\[
    |\psi|_{\holderexp} \define \sup\set[\big]{ \abs{ \psi(x) - \psi(y) } \big/ d(x,y)^{\holderexp} \describe x, y \in X, \, x \ne y },
\]
and the \holder norm of $\psi$ is defined as $\| \psi \|_{C^{0,\holderexp}} \define |\psi|_{\holderexp} + \|\psi\|_{\infty}$.
% We often omit the notation $(X, d)$ in the subscript when it is clear from the context.
The space of real-valued (\resp complex-valued) \holder continuous functions with an exponent $\holderexp \in (0, 1]$ on $(X, d)$ is denoted by $C^{0,\holderexp}(X,d)$ (\resp $C^{0,\holderexp}((X,d), \cx)$), which consists of continuous functions with finite \holder norm.

\subsection{Thermodynamic formalism}  
\label{sub:thermodynamic formalism}

We first review some basic concepts from ergodic theory and dynamical systems. 
For more detailed studies of these concepts, we refer the reader to \cite[Chapter~20]{katok1995introduction} and \cite[Chapter~9]{walters1982introduction}.

\smallskip

Let $(X,d)$ be a compact metric space and $g \colon X \mapping X$ a continuous map. Given $n \in \n$, 
\[
    d^n_g(x, y) \define \operatorname{max} \bigl\{  d \bigl(g^k(x), g^k(y) \bigr) \describe k \in \{0, \, 1, \, \dots, \, n-1 \} \!\bigr\}, \quad \text{ for } \juxtapose{x}{y} \in X,
\]
defines a metric on $X$. A set $F \subseteq X$ is \emph{$(n, \epsilon)$-separated} (with respect to $g$), for some $n \in \n$ and $\epsilon > 0$, if for each pair of distinct points $\juxtapose{x}{y} \in F$, we have $d^n_g(x, y) \geqslant \epsilon$.

For each real-valued continuous function $\psi \in C(X)$, the following limits exist and are equal, and we denote these limits by $P(g, \psi)$ (see~e.g.~\cite[Subsection~20.2]{katok1995introduction}):
\begin{equation}  \label{eq:def:topological pressure}
    \begin{split}
        P(g, \psi) \define &  \lim \limits_{\epsilon \to 0^{+}} \limsup\limits_{n \to +\infty} \frac{1}{n} \log  N_{d}(g, \psi, \varepsilon, n) 
        = \lim \limits_{\epsilon \to 0^{+}} \liminf\limits_{n \to +\infty} \frac{1}{n} \log  N_{d}(g, \psi, \varepsilon, n),
    \end{split}
\end{equation}
where \[
    N_{d}(g, \psi, \varepsilon, n) \define \sup \set[\Big]{\sum_{x \in E} \myexp[\big]{S_n\psi(x)} \describe E \subseteq X \text{ is } (n, \varepsilon)\text{-separated with respect to } g}.
\]
We call $P(g, \psi)$ the \emph{topological pressure} of $g$ with respect to the \emph{potential} $\psi$. 
The quantity $h_{\operatorname{top}}(g) \define P(g, 0)$ is called the \emph{topological entropy} of $g$. 
The topological pressure $P(g, \psi)$ depends only on the topology of $X$ (see~e.g.~\cite[Subsection~20.2]{katok1995introduction}).

% definition of ergodic and mixing

Let $\mathcal{M}(X,g)$ be the set of $g$-invariant Borel probability measures on $X$.
Let $\mu \in \mathcal{M}(X, g)$. 
We say $g$ is \emph{ergodic} for $\mu$ (or $\mu$ is \emph{ergodic} for $g$) if for each set $A \in \mathcal{B}$ with $g^{-1}(A) = A$ we have $\mu(A) = 0$ or $\mu(A) = 1$.

For each real-valued continuous function $\psi \in C(X)$, the \emph{measure-theoretic pressure} $P_\mu(g, \psi)$ of $g$ for the measure $\mu \in \mathcal{M}(X, g)$ and the potential $\psi$ is
\begin{equation}  \label{eq:def:measure-theoretic pressure}
    P_\mu(g, \psi) \define  h_\mu (g) + \int \! \psi \,\mathrm{d}\mu,
\end{equation}
where $h_{\mu}(g)$ is the measure-theoretic entropy of $g$ for $\mu$.

The topological pressure is related to the measure-theoretic pressure by the so-called \emph{Variational Principle}.
It states that (see~e.g.~\cite[Theorem~20.2.4]{katok1995introduction})
\begin{equation}  \label{eq:Variational Principle for pressure}
    P(g, \psi) = \sup \{P_\mu(g, \psi) \describe \mu \in \mathcal{M}(X, g)\}
\end{equation}
for each $\psi \in C(X)$.
In particular, when $\psi$ is the constant function $0$,
\begin{equation}  \label{eq:Variational Principle for entropy}
    h_{\operatorname{top}}(g) = \sup\{h_{\mu}(g) \describe\mu \in \mathcal{M}(X, g)\}.
\end{equation}
A measure $\mu$ that attains the supremum in \eqref{eq:Variational Principle for pressure} is called an \emph{equilibrium state} for the map $g$ and the potential $\psi$. A measure $\mu$ that attains the supremum in \eqref{eq:Variational Principle for entropy} is called a \emph{measure of maximal entropy} of $g$.

Let $\widetilde{X}$ be another compact metric space. If $\mu$ is a measure on $X$ and the map $\pi \colon X \mapping \widetilde{X}$ is continuous, then the \emph{push-forward} $\pi_{*} \mu$ of $\mu$ by $\pi$ is the measure given by $\pi_{*}\mu(A) \define \mu \parentheses[\big]{ \pi^{-1}(A) }$ for all Borel sets $A \subseteq \widetilde{X}$.

\subsection{Thurston maps}%
\label{sub:Thurston_maps}

In this subsection, we go over some key concepts and results on Thurston maps, and expanding Thurston maps in particular. 
For a more thorough treatment of the subject, we refer to \cite{bonk2017expanding,li2017ergodic}.

Let $S^2$ denote an oriented topological $2$-sphere and $f \colon S^2 \mapping S^2$ be a branched covering map. 
We denote by $\deg_f(x)$ the local degree of $f$ at $x \in S^2$.
The \emph{degree} of $f$ is $\deg{f} = \sum_{x\in f^{-1}(y)} \deg_{f}(x)$ for $y\in S^2$ and is independent of $y$. 

A point  $x\in S^2$ is a \emph{critical point} of $f$ if $\deg_f(x) \geqslant 2$. 
The set of critical points of $f$ is denoted by $\crit{f}$. A point $y\in S^2$ is a \emph{postcritical point} of $f$ if $y = f^n(x)$ for some $x \in \crit{f}$ and $n\in \n$. 
The set of postcritical points of $f$ is denoted by $\post{f}$. 
We observe that $\post{f} = \post{f^{n}}$ for all $n \in \n$.

\begin{definition}[Thurston maps]
    A Thurston map is a branched covering map $f \colon S^2 \mapping S^2$ on $S^2$ with $\deg f \geqslant 2$ and $\card{ \post{f} }  < +\infty$.
\end{definition}

We now recall the notation for cell decompositions of $S^2$ as used in \cite{bonk2017expanding} and \cite{li2017ergodic}. A \emph{cell of dimension $n$} in $S^2$, $n \in \{1, \, 2\}$, is a subset $c \subseteq S^2$ that is homeomorphic to the closed unit ball $\overline{\mathbb{B}^n}$ in $\real^n$, where $\mathbb{B}^{n}$ is the open unit ball in $\real^{n}$. We define the \emph{boundary of $c$}, denoted by $\partial c$, to be the set of points corresponding to $\partial \mathbb{B}^n$ under such a homeomorphism between $c$ and $\overline{\mathbb{B}^n}$. The \emph{interior of $c$} is defined to be $\inte{c} = c \mysetminus \partial c$. For each point $x\in S^2$, the set $\{x\}$ is considered as a \emph{cell of dimension $0$} in $S^2$. For a cell $c$ of dimension $0$, we adopt the convention that $\partial c = \emptyset$ and $\inte{c} = c$. 

Let $f \colon S^2 \mapping S^2$ be a Thurston map, and $\mathcal{C}\subseteq S^2$ be a Jordan curve containing $\post{f}$. 
Then the pair $f$ and $\mathcal{C}$ induces natural cell decompositions $\mathbf{D}^n(f,\mathcal{C})$ of $S^2$, for each $n \in \n_0$, in the following way:

By the Jordan curve theorem, the set $S^2 \mysetminus \mathcal{C}$ has two connected components. We call the closure of one of them the \emph{white $0$-tile} for $(f,\mathcal{C})$, denoted by $X^0_{\white}$, and the closure of the other one the \emph{black $0$-tile} for $(f,\mathcal{C})$, denoted be $X^0_{\black}$. 
The set of $0$-\emph{tiles} is $\mathbf{X}^0(f, \mathcal{C}) \define \set[\big]{ X^0_{\black}, \, X^0_{\white} }$. 
The set of $0$-\emph{vertices} is $\mathbf{V}^0(f, \mathcal{C}) \define \post{f}$. 
We set $\overline{\mathbf{V}}^0(f, \mathcal{C}) \define \set[\big]{ \{x\} \describe x\in \mathbf{V}^0(f,\mathcal{C}) }$. 
The set of $0$-\emph{edges} $\mathbf{E}^0(f,\mathcal{C})$ consists of the closures of the connected components of $\mathcal{C} \mysetminus \post{f}$. 
Then we get a cell decomposition\[
    \mathbf{D}^0(f,\mathcal{C}) \define \mathbf{X}^0(f, \mathcal{C}) \cup \mathbf{E}^0(f,\mathcal{C}) \cup \overline{\mathbf{V}}^0(f,\mathcal{C})
\]
of $S^2$ consisting of \emph{cells of level }$0$, or $0$-\emph{cells}.

We can recursively define the unique cell decomposition $\mathbf{D}^n(f,\mathcal{C})$ for $n\in \n$, consisting of $n$-\emph{cells}, such that $f$ is cellular for $\parentheses[\big]{ \mathbf{D}^{n + 1}(f,\mathcal{C}), \mathbf{D}^n(f,\mathcal{C}) }$. 
See \cite[Lemma~5.12]{bonk2017expanding} for details. 
We denote by $\mathbf{X}^n(f,\mathcal{C})$ the set of $n$-cells of dimension 2, called $n$-\emph{tiles}; by $\mathbf{E}^n(f,\mathcal{C})$ the set of $n$-cells of dimension $1$, called $n$-\emph{edges}; by $\overline{\mathbf{V}}^n(f,\mathcal{C})$ the set of $n$-cells of dimension $0$; and by $\mathbf{V}^n(f,\mathcal{C})$ the set $\set[\big]{x \describe \{x\} \in \overline{\mathbf{V}}^n(f,\mathcal{C}) }$, called the set of $n$-\emph{vertices}. 
% The $k$-\emph{skeleton}, for $k\in \{0, \, 1\}$, of $\mathbf{D}^n(f,\mathcal{C})$ is the union of all $n$-cells of dimension $k$ in this cell decomposition.

\smallskip

For $n\in \n_0$, we define the \emph{set of black $n$-tiles} as\[
    \mathbf{X}^n_{\black}(f,\mathcal{C}) \define \left\{ X \in \mathbf{X}^n (f,\mathcal{C}) \describe f^n(X) = X^0_{\black} \right\},
\]
and the \emph{set of white $n$-tiles} as\[
    \mathbf{X}^n_{\white}(f,\mathcal{C}) \define \left\{X\in \mathbf{X}^n(f,\mathcal{C}) \describe f^n(X) = X^0_{\white}\right\}.
\]

From now on, if the map $f$ and the Jordan curve $\mathcal{C}$ are clear from the context, we will sometimes omit $(f,\mathcal{C})$ in the notation above.

We can now give a definition of expanding Thurston maps.

\begin{definition}[Expansion]     \label{def:expanding_Thurston_maps}
    A Thurston map $f \colon S^2 \mapping S^2$ is called \emph{expanding} if there exists a metric $d$ on $S^2$ that induces the standard topology on $S^2$ and a Jordan curve $\mathcal{C} \subseteq S^2$ containing $\post{f}$ such that
    \begin{equation}    \label{eq:definition of expansion}
        \lim_{n \to +\infty} \max\{ \diameter{d}{X} \describe X \in \mathbf{X}^n(f,\mathcal{C}) \} = 0.
    \end{equation}
\end{definition}

For an expanding Thurston map $f$, we can fix a particular metric $d$ on $S^2$ called a \emph{visual metric for $f$}. 
For the existence and properties of such metrics, see \cite[Chapter~8]{bonk2017expanding}. 
For a visual metric $d$ for $f$, there exists a unique constant $\Lambda > 1$ called the \emph{expansion factor} of $d$ (see \cite[Chapter~8]{bonk2017expanding} for more details). 

\begin{remark}\label{rem:chordal metric visual metric qs equiv}
    If $f \colon \ccx \mapping \ccx$ is a rational expanding Thurston map, then a visual metric is quasisymmetrically equivalent to the chordal metric on the Riemann sphere $\ccx$ (see \cite[Theorem~18.1~(ii)]{bonk2017expanding}). 
    Here the chordal metric $\sigma$ on $\ccx$ is given by $\sigma (z, w) \define \frac{2\abs{z - w}}{\sqrt{1 + \abs{z}^2} \sqrt{1 + \abs{w}^2}}$ for all $\juxtapose{z}{w} \in \cx$, and $\sigma(\infty, z) = \sigma(z, \infty) \define \frac{2}{\sqrt{1 + \abs{z}^2}}$ for all $z \in \cx$. 
    Quasisymmetric embeddings of bounded connected metric spaces are \holder continuous (see \cite[Section~11.1 and Corollary~11.5]{heinonen2001lectures}). 
    Accordingly, the classes of \holder continuous functions on $\ccx$ equipped with the chordal metric and on $S^2 = \ccx$ equipped with any visual metric for $f$ are the same (up to a change of the \holder exponent).
\end{remark}

% f-invariant Jordan curve
A Jordan curve $\mathcal{C} \subseteq S^2$ is \emph{$f$-invariant} if $f(\mathcal{C}) \subseteq \mathcal{C}$. 
We are interested in $f$-invariant Jordan curves that contain $\post{f}$, since for such a Jordan curve $\mathcal{C}$, we get a cellular Markov partition $\parentheses[\big]{ \mathbf{D}^{1}(f,\mathcal{C}), \mathbf{D}^{0}(f,\mathcal{C}) }$ for $f$.
According to Example~15.11 in \cite{bonk2017expanding}, such $f$-invariant Jordan curves containing $\post{f}$ need not exist. 
However, Bonk and Meyer \cite[Theorem~15.1]{bonk2017expanding} proved that there exists an $f^n$-invariant Jordan curve $\mathcal{C}$ containing $\post{f}$ for each sufficiently large $n$ depending on $f$. 
A slightly stronger version of this result was proved in \cite[Lemma~3.11]{li2016periodic}, and we record it below. 

\begin{lemma}[Bonk \& Meyer \cite{bonk2017expanding}; Li \cite{li2016periodic}]    \label{lem:invariant_Jordan_curve not join opposite sides}
    Let $f \colon S^2 \mapping S^2$ be an expanding Thurston map, and $\widetilde{\mathcal{C}} \subseteq S^2$ be a Jordan curve with $\post{f} \subseteq \widetilde{\mathcal{C}}$. Then there exists an integer $N(f, \widetilde{\mathcal{C}}) \in \n$ such that for each $n \geqslant N(f,\widetilde{\mathcal{C}})$ there exists an $f^n$-invariant Jordan curve $\mathcal{C}$ isotopic to $\widetilde{\mathcal{C}}$ rel.\ $\post{f}$ such that no $n$-tile in $\Tile{n}$ joins opposite sides of $\mathcal{C}$.
\end{lemma}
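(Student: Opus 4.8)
The plan is to obtain Lemma~\ref{lem:invariant_Jordan_curve not join opposite sides} as a refinement of Bonk and Meyer's construction of invariant Jordan curves \cite[Theorem~15.1]{bonk2017expanding}, using the expansion hypothesis to upgrade its conclusion. The first step is a reduction to an iterate. Since $\post{f} = \post{f^{n}}$, for any Jordan curve $\mathcal{C} \subseteq S^{2}$ with $\post{f} \subseteq \mathcal{C}$ the cell decompositions $\mathbf{D}^{k}(f^{n}, \mathcal{C})$ are defined, and one checks (cf.\ \cite[Lemma~5.12]{bonk2017expanding}) that $\mathbf{D}^{kn}(f, \mathcal{C}) = \mathbf{D}^{k}(f^{n}, \mathcal{C})$ for every $k \in \n_{0}$; in particular the $n$-tiles of $(f, \mathcal{C})$ are exactly the $1$-tiles of $(f^{n}, \mathcal{C})$, and an $f^{n}$-invariant curve is the same thing as a $g$-invariant curve for $g \define f^{n}$. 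Hence it suffices to produce a threshold $N = N(f, \widetilde{\mathcal{C}})$ such that for each $n \geqslant N$ the (again expanding) Thurston map $g \define f^{n}$ admits a $g$-invariant Jordan curve $\mathcal{C}$, isotopic to $\widetilde{\mathcal{C}}$ rel.\ $\post{f}$, for which no $1$-tile in $\mathbf{X}^{1}(g, \mathcal{C})$ joins opposite sides of $\mathcal{C}$.

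Existence of a $g$-invariant Jordan curve isotopic to $\widetilde{\mathcal{C}}$ rel.\ $\post{f}$ is exactly \cite[Theorem~15.1]{bonk2017expanding}: it provides a threshold $N_{0}(f, \widetilde{\mathcal{C}})$ and, for $n \geqslant N_{0}$, such a curve $\mathcal{C} = \mathcal{C}_{n}$. To secure the additional ``no $1$-tile joins opposite sides'' property I would use the standard combinatorial reformulation of expansion: for an expanding Thurston map $g$ and any Jordan curve $\mathcal{C}' \supseteq \post{g}$, the least number $D_{m}(g, \mathcal{C}')$ of $m$-tiles in a connected chain joining opposite sides of $\mathcal{C}'$ tends to $+\infty$ as $m \to +\infty$ (this is the combinatorial face of \eqref{eq:definition of expansion}; see \cite{bonk2017expanding}). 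Applying this to $g = f^{n}$ and $\mathcal{C}' = \mathcal{C}_{n}$, and using the identity $D_{mn}(f, \mathcal{C}_{n}) = D_{m}(f^{n}, \mathcal{C}_{n})$ from the reduction, shows that no chain of $m$-tiles of $(f^{n}, \mathcal{C}_{n})$ joins opposite sides once $m$ is large; the remaining task is to push the level down to $m = 1$, i.e.\ to show $D_{n}(f, \mathcal{C}_{n}) \geqslant 2$ for all $n$ beyond a threshold depending only on $f$ and $\widetilde{\mathcal{C}}$.

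This last point is where the genuine work lies, and it is why Lemma~\ref{lem:invariant_Jordan_curve not join opposite sides} is a strengthening of \cite[Theorem~15.1]{bonk2017expanding} rather than a formal corollary: the curve $\mathcal{C}_{n}$ itself varies with $n$, so a priori the level at which chains joining opposite sides become long varies as well. One cannot fix this cheaply by passing to a further iterate: replacing $f^{n}$ by $f^{jn}$ and invoking $D_{jn}(f, \mathcal{C}_{n}) \to +\infty$ only controls $n$ in a sparse set, and the argument is circular when $n$ is a large prime, since then $n$ is the only divisor of $n$ that is $\geqslant N_{0}$. I would instead argue directly, following Bonk and Meyer's construction of $\mathcal{C}_{n}$ closely enough to control how the combinatorics of $\mathcal{C}_{n}$ — in particular the disposition of its ``opposite sides'' — sits relative to the \emph{fixed} decompositions $\mathbf{D}^{\bullet}(f, \widetilde{\mathcal{C}})$, using that the isotopy class rel.\ $\post{f}$ is the one fixed by $\widetilde{\mathcal{C}}$ and that $D_{k}(f, \widetilde{\mathcal{C}}) \to +\infty$; this should produce the required threshold in terms of $f$ and $\widetilde{\mathcal{C}}$ only. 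Taking $N(f, \widetilde{\mathcal{C}})$ to be the maximum of that threshold and $N_{0}(f, \widetilde{\mathcal{C}})$ then gives the lemma. I expect this uniformity in $n$ to be the main obstacle; the reduction and the appeal to \cite[Theorem~15.1]{bonk2017expanding} are routine.
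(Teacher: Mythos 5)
The paper does not prove this lemma; it is recorded as a cited result, attributed to Bonk--Meyer \cite[Theorem~15.1]{bonk2017expanding} together with the strengthening in Li \cite[Lemma~3.11]{li2016periodic}. So there is no ``paper's own proof'' to compare against; what you are reconstructing is the content of Li's Lemma~3.11.

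Your outline points in the right direction. The reduction $\mathbf{D}^{kn}(f,\mathcal{C}) = \mathbf{D}^{k}(f^{n},\mathcal{C})$ is correct, the appeal to \cite[Theorem~15.1]{bonk2017expanding} for existence of the invariant curve in the correct isotopy class is the right move, and using the divergence of $D_{m}(\cdot,\cdot)$ (the least number of tiles in a chain joining opposite sides) as the mechanism enforcing ``no $n$-tile joins opposite sides'' is exactly how the combinatorial expansion property is exploited. You have also correctly located and named the genuine issue: the curve $\mathcal{C}_{n}$ depends on $n$, so a bound of the form $D_{m}(f,\mathcal{C}_{n}) \geqslant 2$ for $m$ large does not immediately give $D_{n}(f,\mathcal{C}_{n}) \geqslant 2$ unless the threshold for $m$ is uniform in $n$, and you correctly observe that passing to further iterates only controls a sparse set of exponents. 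That diagnosis is accurate and is where the actual work is.

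However, as a proof this is incomplete: you explicitly defer the resolution of the uniformity issue (``following Bonk and Meyer's construction \ldots closely enough'') rather than supplying it, and that deferred step \emph{is} the entire content of \cite[Lemma~3.11]{li2016periodic}. The way it is actually resolved is that in Bonk--Meyer's proof of Theorem~15.1 the threshold $N_{0}(f,\widetilde{\mathcal{C}})$ is chosen precisely so that, for $n \geqslant N_{0}$, no $n$-tile of $\mathbf{D}^{n}(f,\widetilde{\mathcal{C}})$ joins opposite sides of the \emph{fixed} curve $\widetilde{\mathcal{C}}$; the invariant curve $\mathcal{C}$ is then built by an iterative isotopy that keeps each stage inside the combinatorics of $\mathbf{D}^{\bullet}(f,\widetilde{\mathcal{C}})$, so the ``no tile joins opposite sides'' property is transported along the construction and the threshold does not need to be re-derived for each $\mathcal{C}_{n}$ separately. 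Your proposal should either carry out this tracking (in which case the uniformity worry resolves once and for all at level $N_{0}$) or simply cite \cite[Lemma~3.11]{li2016periodic}, as the paper does.
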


The phrase ``joining opposite sides'' has a specific meaning in our context.

\begin{definition}[Joining opposite sides]
    Fix a Thurston map $f$ with $\card{ \post{f} } \geqslant 3$ and an $f$-invariant Jordan curve $\mathcal{C}$ containing $\post{f}$. A set $K \subseteq S^2$ \emph{joins opposite sides} of $\mathcal{C}$ if $K$ meets two disjoint $0$-edges when $\card{ \post{f} } \geqslant 4$, or $K$ meets all three $0$-edges when $\card{ \post{f} } = 3$.
\end{definition}

Recall that $\card{ \post{f} } \geqslant 3$ for each expanding Thurston map $f$ \cite[Lemma~6.1]{bonk2017expanding}.

\subsection{Symbolic dynamics for expanding Thurston maps}%
\label{sub:Symbolic dynamics for expanding Thurston maps}

In this subsection, we briefly review the dynamics of one-sided subshifts of finite type.
We refer the reader to \cite{kitchens1997symbolic} for a beautiful introduction to symbolic dynamics.
For a discussion of results on subshifts of finite type in our context, see \cite{pollicottZetaFunctionsPeriodic1990,baladi2000positive}.
    
\def\sft{\Sigma^{+}_{A}}    \def\sopt{\sigma_{A}}  
\def\sequ#1{\{ #1_{i} \}_{i \in \n_0}}
Let $S$ be a finite non-empty set and $A \colon S \times S \mapping \{0, \, 1\}$ be a matrix whose entries are either $0$ or $1$.
We denote the \emph{set of admissible sequences defined by $A$} by\[
    \sft \define \set{ \{x_{i}\}_{i \in \n_0} \describe x_{i} \in S, \, A(x_{i}, x_{i + 1}) = 1, \, \text{for each } i \in \n_0 }.
\]
Given $\metricexpshiftspace \in (0, 1)$, we equip the set $\sft$ with a metric $\metriconshiftspace$ given by
\begin{equation}    \label{eq:def:metric on shift space}
    \metriconshiftspace( \{x_{i}\}_{i \in \n_{0}}, \{y_{i}\}_{i \in \n_{0}} ) = \metricexpshiftspace^{m}  \qquad \text{for } \{x_{i}\}_{i \in \n_{0}} \ne \{y_{i}\}_{i \in \n_{0}},
\end{equation}
where $m$ is the smallest non-negative integer such that $x_{m} \ne y_{m}$.
The topology on the metric space $\parentheses[\big]{ \sft, \metriconshiftspace }$ coincides with that induced from the product topology, and is therefore compact.

The \emph{left-shift operator} $\sopt \colon \sft \mapping \sft$ (defined by $A$) is given by\[
    \sopt(\sequ{x}) = \{ x_{i + 1} \}_{i \in \n_0} \qquad \text{for } \sequ{x} \in \sft.
\]
The pair $\parentheses[\big]{ \sft, \sopt }$ is called the \emph{one-sided subshift of finite type} defined by $A$.
The set $S$ is called the \emph{set of states} and the matrix $A \colon S \times S \mapping \{0, \, 1\}$ is called the \emph{transition matrix}.
In particular, if all entries of the transition matrix $A$ are $1$, we call $\sigma \colon \Sigma^{+} \mapping \Sigma^{+}$ a \emph{one-sided shift map} on $\card{S}$ symbols, where $\sigma = \sigma_{A}$ and $\Sigma^{+} = \sft = S^{\n_0}$.

For a complex-valued continuous function \( \psi \in C \parentheses[\big]{ \sft, \cx } \), the \emph{Ruelle operator} \( \mathcal{L}_{\psi} \colon C \parentheses[\big]{ \sft, \cx } \mapping C \parentheses[\big]{ \sft, \cx } \) is defined by 
\begin{equation} \label{eq:def:ruelle operator symbolic shift}
    (\mathcal{L}_{\psi} u)(x) \define \sum_{y \in \sopt^{-1}(x)} e^{\psi(y)} u(y)
\end{equation}
for \( u \in C \parentheses[\big]{ \sft, \cx } \) and \( x \in \sft \).
% It is a standard result that if $\psi$ is \holder continuous, then $\mathcal{L}_{\psi}$ is a bounded linear operator on the space of \holder continuous functions.

We say that a one-sided subshift of finite type $\parentheses[\big]{ \sft, \sopt }$ is \textit{topologically mixing} if there exists $N \in \n$ such that $A^n(x, y) > 0$ for all $n \geqslant N$ and $\juxtapose{x}{y} \in S$.

Let $X$ and $Y$ be topological spaces, and $G \colon X \mapping X$ and $g \colon Y \mapping Y$ be continuous maps.
We say that the topological dynamical system $(Y, g)$ is a \emph{factor} of the topological dynamical system $(X, G)$ if there is a surjective continuous map $\pi \colon X \mapping Y$ such that $\pi \circ G = g \circ \pi$.
Such a map $\pi \colon X \mapping Y$ is called a \emph{semi-conjugacy} or \emph{factor map}.

The following proposition, combining results from \cite[Propositions~3.31 and 5.5]{li2024prime:dirichlet}, constructs a \holder continuous semi-conjugacy from a one-sided subshift of finite type to an expanding Thurston map, using a coding based on tiles.
% For an expanding Thurston map with an invariant Jordan curve on $S^2$ containing $\post{f}$, the following result established in \cite[Propositions~3.31]{li2024prime:dirichlet} and \cite[Proposition~3.24~(i)]{li2024prime:split} construct a semi-conjugacy with a one-sided subshift of finite type by using tiles.

\begin{proposition}[Li \& Zheng \cite{li2024prime:dirichlet}]    \label{prop:one-sided subshift of finite type associated with expanding Thurston map}
    Let $f \colon S^2 \mapping S^2$ be an expanding Thurston map with a Jordan curve $\mathcal{C} \subseteq S^2$ satisfying $f(\mathcal{C}) \subseteq \mathcal{C}$ and $\post{f} \subseteq \mathcal{C}$. 
    Let $d$ be a visual metric on $S^2$ for $f$.
    % We equip $S^2$ with a visual metric $d$ for $f$.
    We set $S_{\vartriangle} \define \Tile{1}$, and define a transition matrix $A_{\vartriangle} \colon S_{\vartriangle} \times S_{\vartriangle} \mapping \{0, \, 1\}$ by
    \[
        A_{\vartriangle}(X, X') \define
        \begin{cases}
            1 & \text{if } f(X) \supseteq X', \\
            0 & \text{otherwise}
        \end{cases} 
    \]
    for $\juxtapose{X}{X'} \in S_{\vartriangle}$.
    Then the dynamical system $\parentheses[\big]{ S^2, f }$ is a factor of the one-sided subshift of finite type $\parentheses[\big]{ \Sigma^{+}_{A_{\vartriangle}}, \sigma_{A_{\vartriangle}} }$ defined by the transition matrix $A_{\vartriangle}$ with a surjective and \holder continuous factor map $\pi_{\vartriangle} \colon \Sigma^{+}_{A_{\vartriangle}} \mapping S^2$ given by
    \begin{equation}    \label{eq:prop:one-sided subshift of finite type associated with expanding Thurston map:factor map}
        \pi_{\vartriangle}(\sequ{X}) = x, \quad \text{where } \{x\} = \bigcap_{i \in \n_0} f^{-i}(X_{i}).
    \end{equation}
    Here $\Sigma^{+}_{A_{\vartriangle}}$ is equipped with the metric $\metriconshiftspace$ defined in \eqref{eq:def:metric on shift space} for some constant $\metricexpshiftspace \in (0, 1)$, and $S^2$ is equipped with the visual metric $d$.
    Moreover, $\parentheses[\big]{ \Sigma^{+}_{A_{\vartriangle}}, \sigma_{A_{\vartriangle}} }$ is topologically mixing, $\pi_{\vartriangle}$ is injective on $\pi_{\vartriangle}^{-1}\parentheses[\big]{ S^2 \mysetminus \bigcup_{i \in \n_0} f^{-i}(\mathcal{C})}$, and $P(\sigma_{A_{\vartriangle}}, \psi \circ \pi_{\vartriangle}) = P(f, \psi)$ for each $\psi \in \holderspacesphere$.
\end{proposition}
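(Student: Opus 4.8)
This proposition compiles \cite[Propositions~3.31 and~5.5]{li2024prime:dirichlet}, and the plan is to reconstruct both using the cellular Markov partition attached to the $f$-invariant Jordan curve $\mathcal{C} \supseteq \post{f}$; I use freely that the decompositions $\mathbf{D}^{n} = \mathbf{D}^{n}(f,\mathcal{C})$ exist for all $n \in \n_0$ with $\mathbf{D}^{n+k}$ refining $\mathbf{D}^n$, that $f^k$ is cellular for $\parentheses[\big]{\mathbf{D}^{n+k}, \mathbf{D}^n}$, and that $f^k$ restricts to a homeomorphism of each $(n+k)$-tile onto an $n$-tile (see \cite[Chapter~5]{bonk2017expanding}). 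The core step is a description of the fibers of the coding map: for an admissible sequence $\sequ{X}$, the sets $Y_m \define \bigcap_{i=0}^{m} f^{-i}(X_i)$ form a decreasing sequence of nonempty $(m+1)$-tiles with $f^m(Y_m) = X_m$. I would prove this by induction on $m$, the step using that $f^m|_{Y_m} \colon Y_m \to X_m$ and $f|_{X_m} \colon X_m \to f(X_m)$ are homeomorphisms onto a $1$-tile and a $0$-tile respectively, that the relation $A_{\vartriangle}(X_m, X_{m+1}) = 1$ says exactly $X_{m+1} \subseteq f(X_m)$ (so that $Y_{m+1} = \parentheses[\big]{f^{m+1}|_{Y_m}}^{-1}(X_{m+1})$ is nonempty), and that $f^{m+1}$ is cellular for $\parentheses[\big]{\mathbf{D}^{m+2}, \mathbf{D}^1}$ (so that $Y_{m+1}$, being homeomorphic to the $1$-tile $X_{m+1}$, is an $(m+2)$-tile). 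By the expansion property \eqref{eq:definition of expansion}, $\diameter{d}{Y_m} \to 0$ as $m \to +\infty$, so $\bigcap_{i \in \n_0} f^{-i}(X_i) = \bigcap_{m \in \n_0} Y_m$ is a single point; declaring it to be $\pi_{\vartriangle}(\sequ{X})$ gives meaning to \eqref{eq:prop:one-sided subshift of finite type associated with expanding Thurston map:factor map}.

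The remaining basic properties of $\pi_{\vartriangle}$ follow quickly from this lemma. For \holder continuity: two sequences first differing at index $m$ have images in the common $m$-tile $Y_{m-1}$, whose diameter is at most $C\Lambda^{-m}$ for a constant $C$ depending only on the visual metric \cite[Chapter~8]{bonk2017expanding}, while their distance in $\parentheses[\big]{\Sigma^{+}_{A_{\vartriangle}}, \metriconshiftspace}$ equals $\metricexpshiftspace^m$, so choosing $\metricexpshiftspace \in (0, \Lambda^{-1}]$ gives \holder continuity with exponent $\log\Lambda / \log(1/\metricexpshiftspace) \in (0,1]$. For surjectivity: given $x \in S^2$, choose recursively $1$-tiles $X_i$ with $f^i(x) \in X_i$ and $X_{i+1} \subseteq f(X_i)$ --- possible because $f^{i+1}(x) \in f(X_i)$ and the $0$-tile $f(X_i)$ is covered by the $1$-tiles contained in it --- and then $\sequ{X}$ is admissible with $x \in Y_m$ for all $m$, so $\pi_{\vartriangle}(\sequ{X}) = x$. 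For the semi-conjugacy $f \circ \pi_{\vartriangle} = \pi_{\vartriangle} \circ \sigma_{A_{\vartriangle}}$: if $\{x\} = \bigcap_{i} f^{-i}(X_i)$ then $f(x) \in f^{-j}(X_{j+1})$ for every $j \in \n_0$, so $f(x) = \pi_{\vartriangle}\parentheses[\big]{\sigma_{A_{\vartriangle}}(\sequ{X})}$. Hence $\parentheses[\big]{S^2, f}$ is a factor of $\parentheses[\big]{\Sigma^{+}_{A_{\vartriangle}}, \sigma_{A_{\vartriangle}}}$ via $\pi_{\vartriangle}$.

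Next I would treat injectivity and topological mixing. If $x \notin \bigcup_{i \in \n_0} f^{-i}(\mathcal{C})$, then every $f^i(x)$ avoids the $1$-skeleton $f^{-1}(\mathcal{C})$ and hence lies in the interior of a unique $1$-tile; since $\pi_{\vartriangle}(\sequ{X}) = x$ forces $f^i(x) \in X_i$, this pins $X_i$ down, so $\sequ{X}$ is determined by $x$ and $\pi_{\vartriangle}$ is injective on $\pi_{\vartriangle}^{-1}\parentheses[\big]{S^2 \mysetminus \bigcup_{i \in \n_0} f^{-i}(\mathcal{C})}$. For mixing, I would record the bijection between length-$n$ admissible paths $X = X_0, X_1, \dots, X_n$ starting at a fixed $1$-tile $X$ and $(n+1)$-tiles $Y \subseteq X$ (a path going to $Y = \bigcap_{i=0}^{n} f^{-i}(X_i)$, a tile going to the sequence of $1$-tiles containing $f^i(Y)$), under which $f^n(Y)$ is the terminal tile $X_n$. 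Using that expanding Thurston maps are topologically exact \cite{bonk2017expanding, li2024prime:dirichlet}, fix $N \in \n$ with $f^n(X) = S^2$ for all $n \geq N$ and all $1$-tiles $X$; then given a $1$-tile $X'$, pick $p \in \inte{X'}$, which avoids the $1$-skeleton and hence the boundary of every $1$-tile; since $p \in S^2 = f^n(X) = \bigcup_{Y} f^n(Y)$, the union over $(n+1)$-tiles $Y \subseteq X$ with each $f^n(Y)$ a $1$-tile, we get $p \in \inte{f^n(Y)}$ for some such $Y$, and then $f^n(Y) = X'$ because interiors of distinct $1$-tiles are disjoint. So $A_{\vartriangle}^n(X, X') > 0$ for all $n \geq N$ and all $X, X' \in S_{\vartriangle}$, i.e., $\parentheses[\big]{\Sigma^{+}_{A_{\vartriangle}}, \sigma_{A_{\vartriangle}}}$ is topologically mixing.

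Finally, the pressure identity $P(\sigma_{A_{\vartriangle}}, \psi \circ \pi_{\vartriangle}) = P(f, \psi)$. The inequality ``$\geq$'' is the general fact for factor maps: via the Variational Principle \eqref{eq:Variational Principle for pressure}, the surjectivity of the push-forward $\mu \mapsto (\pi_{\vartriangle})_*\mu$ on invariant measures, the bound $h_\mu(\sigma_{A_{\vartriangle}}) \geq h_{(\pi_{\vartriangle})_*\mu}(f)$, and $\int (\psi \circ \pi_{\vartriangle}) \,\mathrm{d}\mu = \int \psi \,\mathrm{d}(\pi_{\vartriangle})_*\mu$. For ``$\leq$'' I would show $\pi_{\vartriangle}$ is uniformly bounded-to-one: a prefix $(X_0, \dots, X_m)$ of a point of $\pi_{\vartriangle}^{-1}(x)$ is interchangeable with the $(m+1)$-tile $Y_m \ni x$, and the number of $n$-tiles through a point is bounded uniformly in $n$ \cite[Chapter~5]{bonk2017expanding}, so $\card{\pi_{\vartriangle}^{-1}(x)}$ is uniformly bounded; a uniformly bounded-to-one factor map preserves measure-theoretic entropy, so $h_\mu(\sigma_{A_{\vartriangle}}) = h_{(\pi_{\vartriangle})_*\mu}(f)$ for all invariant $\mu$, and the Variational Principle closes the gap. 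The main obstacle is the mixing step: producing, for every ordered pair of $1$-tiles and all large $n$, an $(n+1)$-tile inside the source that $f^n$ carries exactly onto the target is where one genuinely needs topological exactness together with careful bookkeeping of the cell structure under iteration; the uniform bounded-to-one estimate behind the pressure identity is a milder, secondary point.
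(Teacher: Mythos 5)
This proposition is imported verbatim from Li--Zheng (the paper cites \cite[Propositions~3.31 and 5.5]{li2024prime:dirichlet} and gives no proof of its own), and your reconstruction follows the same standard tile-coding route used there: nested tiles $Y_m=\bigcap_{i=0}^m f^{-i}(X_i)$ shrinking by expansion to define $\pi_{\vartriangle}$, the path--tile bijection plus eventual-onto-ness for mixing, injectivity off $\bigcup_i f^{-i}(\mathcal{C})$, and the pressure identity via surjectivity of the push-forward on invariant measures together with entropy preservation for uniformly finite-to-one factor maps; I find no gap in these steps. The one point deserving a sharper reference is the uniform bound on the number of $(m+1)$-tiles containing a given point, which is not a pure cell-decomposition generality from \cite[Chapter~5]{bonk2017expanding} but rests on the fact that expanding Thurston maps have no periodic critical points, so the local degrees $\deg_{f^n}$ are uniformly bounded and the valence of the decompositions $\mathbf{D}^n(f,\mathcal{C})$ is bounded independently of $n$.
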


\begin{remark}\label{rem:primitivity of transition matrix for expanding Thurston map}
    By \cite[Lemma~5.10]{li2018equilibrium}, the transition matrix $A_{\vartriangle}$ in Proposition~\ref{prop:one-sided subshift of finite type associated with expanding Thurston map} is primitive.
    Recall that a matrix $A$ is called primitive if there exists $n \in \n$ such that all entries of $A^{n}$ are positive.
\end{remark}

The symbolic model in Proposition~\ref{prop:one-sided subshift of finite type associated with expanding Thurston map} is based on coding by tiles.
We next recall from~\cite{li2024prime:dirichlet} the construction of two subshifts of finite type based on the dynamics on an invariant Jordan curve.

Let $f \colon S^2 \to S^2$ be an expanding Thurston map with a Jordan curve $\mathcal{C} \subseteq S^2$ satisfying $f(\mathcal{C}) \subseteq \mathcal{C}$ and $\mathrm{post}\,f \subseteq \mathcal{C}$.
% Let $(\Sigma_{A_{\vartriangle}}^+, \sigma_{A_{\vartriangle}})$ be the one-sided subshift of finite type associated to $f$ and $\mathcal{C}$ defined in Proposition~\ref{prop:one-sided subshift of finite type associated with expanding Thurston map}, and let $\pi_{\vartriangle} \colon \Sigma_{A_{\vartriangle}}^+ \to S^2$ be the factor map defined in~\eqref{eq:prop:one-sided subshift of finite type associated with expanding Thurston map:factor map}.
Define the set of states $\mathcal{S}_{\singleedge} \define \{e \in \mathbf{E}^1(f, \mathcal{C}) \describe e \subseteq \mathcal{C}\}$ and the transition matrix $\ematrix \colon \mathcal{S}_{\singleedge} \times \mathcal{S}_{\singleedge} \to \{0, 1\}$ by
\begin{equation} \label{eq:def:A_I_transition}
    \ematrix(e_1, e_2) = 
    \begin{cases}
        1 & \text{if } f(e_1) \supseteq e_2; \\
        0 & \text{otherwise}
    \end{cases}
\end{equation}
for $e_1, e_2 \in \mathcal{S}_{\singleedge}$.

Define the set of states $\mathcal{S}_{\doubleedge} \define \set[\big]{ (e, c) \in \mathbf{E}^1(f, \mathcal{C}) \times \colours \describe e \subseteq \mathcal{C} }$. 
For each $(e, c) \in \mathcal{S}_{\doubleedge}$, we denote by $X^1(e, c) \in \mathbf{X}^1(f, \mathcal{C})$ the unique $1$-tile satisfying\footnote{The existence and uniqueness of such a tile $X^1(e, c)$ defined by \eqref{eq:def:1-tile_definition_for_edge_symbolic_factors} follow immediately from the properties of the cell decomposition (cf.~\cite[Proposition~5.16~(iii),~(v),~and~(vi)]{bonk2017expanding}) and the assumptions that $f(\mathcal{C}) \subseteq \mathcal{C}$ and $e \subseteq \mathcal{C}$.}
\begin{equation} \label{eq:def:1-tile_definition_for_edge_symbolic_factors}
    e \subseteq X^1(e, c) \subseteq X_c^0.
\end{equation}
We define the transition matrix $\eematrix \colon \mathcal{S}_{\doubleedge} \times \mathcal{S}_{\doubleedge} \to \{0, 1\}$ by
\begin{equation*} \label{eq:def:A_II_transition}
    \eematrix((e_1, c_1), (e_2, c_2)) = 
    \begin{cases}
        1 & \text{if } f(e_1) \supseteq e_2 \text{ and } f \parentheses[\big]{ X^1(e_1, c_1) }  \supseteq X^1(e_2, c_2); \\
        0 & \text{otherwise}
    \end{cases}
\end{equation*}
for $(e_1, c_1), (e_2, c_2) \in \mathcal{S}_{\doubleedge}$.

We will consider the one-sided subshift of finite type $\parentheses[\big]{ \Sigma_{\ematrix}^+, \sigma_{\ematrix} }$ defined by the transition matrix $\ematrix$, and $\parentheses[\big]{ \Sigma_{\eematrix}^+, \sigma_{\eematrix} }$ defined by the transition matrix $\eematrix$, where
\begin{align*}
    \Sigma_{\ematrix}^+ &= \{ \{e_i\}_{i \in \n_0} \describe e_i \in \mathcal{S}_{\singleedge} \text{ and } \ematrix(e_i, e_{i+1}) = 1 \text{ for each } i \in \n_0 \}, \\
    \Sigma_{\eematrix}^+ &= \{ \{(e_i, c_i)\}_{i \in \n_0} \describe (e_i, c_i) \in \mathcal{S}_{\doubleedge} \text{ and } \eematrix((e_i, c_i), (e_{i+1}, c_{i+1})) = 1 \text{ for each } i \in \n_0 \},
\end{align*}
and the maps $\sigma_{\ematrix} \colon \Sigma_{\ematrix}^+ \to \Sigma_{\ematrix}^+$ and $\sigma_{\eematrix} \colon \Sigma_{\eematrix}^+ \to \Sigma_{\eematrix}^+$ are the corresponding left-shift operators.

The following proposition from \cite[Proposition~6.1]{li2024prime:dirichlet} shows that the dynamics of $f$ on the invariant Jordan curve $\mathcal{C}$ is a factor of the edge-based symbolic dynamics defined above.

\begin{proposition}[Li \& Zheng \cite{li2024prime:dirichlet}] \label{prop:edge_symbolic_factors_properties}
    Let $f \colon S^2 \mapping S^2$ be an expanding Thurston map with a Jordan curve $\mathcal{C} \subseteq S^2$ satisfying $f(\mathcal{C}) \subseteq \mathcal{C}$ and $\mathrm{post}\,f \subseteq \mathcal{C}$.
    Let $d$ be a visual metric on $S^2$ for $f$.
    Let $\parentheses[\big]{ \Sigma_{A_{\vartriangle}}^+, \sigma_{A_{\vartriangle}} }$ be the one-sided subshift of finite type associated to $f$ and $\mathcal{C}$ defined in Proposition~\ref{prop:one-sided subshift of finite type associated with expanding Thurston map}, and let $\pi_{\vartriangle} \colon \Sigma_{A_{\vartriangle}}^+ \to S^2$ be the factor map defined in~\eqref{eq:prop:one-sided subshift of finite type associated with expanding Thurston map:factor map}.
    Fix $\metricexpshiftspace \in (0, 1)$ and equip the spaces $\Sigma_{\ematrix}^+$ and $\Sigma_{\eematrix}^+$ with the metric $\metriconshiftspace$ defined in \eqref{eq:def:metric on shift space}.
    We write $\mathbf{V}(f, \mathcal{C}) \define \bigcup_{i \in \n_0} \mathbf{V}^i(f, \mathcal{C})$.
    Then the following statements hold:
    \begin{enumerate}
        \smallskip

        \item 
            $\parentheses[\big]{ \Sigma_{\ematrix}^+, \sigma_{\ematrix} }$ is a factor of $\parentheses[\big]{ \Sigma_{\eematrix}^+, \sigma_{\eematrix} }$ with a Lipschitz continuous factor map $\eecoding \colon \Sigma_{\eematrix}^+ \to \Sigma_{\ematrix}^+$ defined by
            \begin{equation} \label{eq:pi_II_def}
                \eecoding(\{ (e_i, c_i) \}_{i \in \n_0}) = \{ e_i \}_{i \in \n_0}
            \end{equation}
            for $\{ (e_i, c_i) \}_{i \in \n_0} \in \Sigma_{\eematrix}^+$.
            Moreover, for each $\{ e_i \}_{i \in \n_0} \in \Sigma_{\ematrix}^+$, we have
            \[
                \card[\big]{\eecoding^{-1}(\{ e_i \}_{i \in \n_0})} = 2.
            \]
        
        \smallskip

        \item 
            $\parentheses{ \mathcal{C}, f|_{\mathcal{C}} }$ is a factor of $\parentheses[\big]{ \Sigma_{\ematrix}^+, \sigma_{\ematrix} }$ with a \holder continuous factor map $\ecoding \colon \Sigma_{\ematrix}^+ \to \mathcal{C}$ defined by
            \begin{equation} \label{eq:pi_I_def}
                \ecoding(\{ e_i \}_{i \in \n_0}) = x, \quad \text{where } \{x\} = \bigcap_{i \in \n_0} f^{-i}(e_i)
            \end{equation}
            for $\{ e_i \}_{i \in \n_0} \in \Sigma_{\ematrix}^+$.
            Moreover, for each $x \in \mathcal{C}$, we have
            \begin{equation} \label{eq:card_pi_I_inv}
                \card[\big]{ \ecoding^{-1}(x) } = 
                \begin{cases}
                    1 & \text{if } x \in \mathcal{C} \mysetminus \mathbf{V}(f, \mathcal{C}), \\
                    2 & \text{if } x \in \mathcal{C} \cap \mathbf{V}(f, \mathcal{C}).
                \end{cases}
            \end{equation}
            Thus, we have the following commutative diagram:
            \[
            \begin{tikzcd}
                \Sigma_{\eematrix}^+ \arrow[r, "\eecoding"] \arrow[d, "\sigma_{\eematrix}"] & \Sigma_{\ematrix}^+ \arrow[r, "\ecoding"] \arrow[d, "\sigma_{\ematrix}"] & \mathcal{C} \arrow[d, "f|_{\mathcal{C}}"] \\
                \Sigma_{\eematrix}^+ \arrow[r, "\eecoding"] & \Sigma_{\ematrix}^+ \arrow[r, "\ecoding"] & \mathcal{C}.
            \end{tikzcd}
            \]
    \end{enumerate}
\end{proposition}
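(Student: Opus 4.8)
The plan is to treat the two statements separately; both rest on the cellular Markov partition $\parentheses[\big]{ \mathbf{D}^{1}(f, \mathcal{C}), \mathbf{D}^{0}(f, \mathcal{C}) }$ attached to the $f$-invariant Jordan curve $\mathcal{C}$ and on the expansion of $f$. The guiding picture is that $f|_{\mathcal{C}} \colon \mathcal{C} \mapping \mathcal{C}$ (well defined since $f(\mathcal{C}) \subseteq \mathcal{C}$) is a ``Markov'' self-map of the topological circle $\mathcal{C}$ for which the family $\mathcal{S}_{\singleedge}$ of $1$-edges contained in $\mathcal{C}$ is a Markov partition---the admissibility condition ``$\ematrix(e, e') = 1$ exactly when $f(e) \supseteq e'$'' being precisely the Markov property. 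In this picture $\ecoding$ is the associated address map, while a point of $\Sigma_{\eematrix}^{+}$ augments an address from $\Sigma_{\ematrix}^{+}$ by a ``colour bit'' that is propagated forward along the orbit. Part~(1) is essentially bookkeeping once the relevant geometry of $1$-edges in $\mathcal{C}$ has been recorded; Part~(2) carries the substance.

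For Part~(2), I would first check that $\ecoding$ is well defined, i.e.\ that $E_{n} \define \bigcap_{i = 0}^{n} f^{-i}(e_{i})$ shrinks to a single point for each admissible sequence $\{ e_{i} \}_{i \in \n_{0}}$. By induction on $n$, using $f(e_{i}) \supseteq e_{i + 1}$ together with the fact that the cellular map $f$ restricts to a homeomorphism from each cell of $\mathbf{D}^{k + 1}(f, \mathcal{C})$ onto a cell of $\mathbf{D}^{k}(f, \mathcal{C})$ (see \cite[Section~5]{bonk2017expanding}), one shows that $E_{n}$ is a single $(n + 1)$-edge contained in $\mathcal{C}$. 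In particular the $E_{n}$ are nonempty and nested, so $\bigcap_{n \in \n_{0}} E_{n} \neq \emptyset$ by compactness, while $\diameter{d}{E_{n}} \to 0$ since an $(n + 1)$-edge lies in an $(n + 1)$-tile and $\diameter{d}{X} \leqslant C \Lambda^{-n}$ for $X \in \mathbf{X}^{n}(f, \mathcal{C})$ (a standard property of the visual metric, see \cite[Chapter~8]{bonk2017expanding}); thus the intersection is a single point. The same estimate shows that sequences $\xi, \xi' \in \Sigma_{\ematrix}^{+}$ with $\metriconshiftspace(\xi, \xi') \leqslant \metricexpshiftspace^{m}$ agree on coordinates $0, \dots, m - 1$, hence have $\ecoding$-images in the common $m$-edge $\bigcap_{i = 0}^{m - 1} f^{-i}(e_{i})$ and so within distance $C \Lambda^{-m}$; comparing exponents yields \holder continuity of $\ecoding$. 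The intertwining $\ecoding \circ \sigma_{\ematrix} = (f|_{\mathcal{C}}) \circ \ecoding$ is immediate by shifting the index inside \eqref{eq:pi_I_def}. For surjectivity, on the dense set $\mathcal{C} \mysetminus \bigcup_{i \in \n_{0}} f^{-i}(\mathbf{V}^{1})$ of points whose forward orbit misses the $1$-vertices, taking $e_{i}$ to be the unique $1$-edge whose interior contains $f^{i}(x)$ produces a coding of $x$; the image $\ecoding\parentheses[\big]{ \Sigma_{\ematrix}^{+} }$ contains this dense set and is compact, hence equals $\mathcal{C}$. Finally, since $(f^{i})^{-1}(\mathbf{V}^{1}) = \mathbf{V}^{i + 1}(f, \mathcal{C})$ by cellularity, the exceptional set $\bigcup_{i \in \n_{0}} f^{-i}(\mathbf{V}^{1})$ equals $\mathbf{V}(f, \mathcal{C})$: over a point of $\mathcal{C} \mysetminus \mathbf{V}(f, \mathcal{C})$ the coding is unique (interiors of distinct $1$-edges are disjoint), whereas a point $x \in \mathcal{C} \cap \mathbf{V}(f, \mathcal{C})$ is the common endpoint of exactly two $m$-edges of $\mathcal{C}$ for all large $m$; a level-by-level count of the cylinders $\bigcap_{i = 0}^{m - 1} f^{-i}(e_{i})$ containing $x$---using that an endpoint of an $m$-edge lies in exactly one of its constituent $(m + 1)$-edges---then gives exactly two infinite codings (the ``left'' and ``right'' addresses), so $\card[\big]{ \ecoding^{-1}(x) } = 2$.

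For Part~(1), that $\eecoding$ is well defined (the colour-forgotten sequence is $\ematrix$-admissible by the very definition of $\eematrix$) and is a semi-conjugacy ($\eecoding \circ \sigma_{\eematrix} = \sigma_{\ematrix} \circ \eecoding$) is immediate, and since deleting a coordinate cannot increase $\metriconshiftspace$, the map $\eecoding$ is $1$-Lipschitz. For the fibre count, the geometric input I would record is that a $1$-edge $e \subseteq \mathcal{C}$ lies on the boundary of exactly two $1$-tiles, and---because $\mathcal{C}$ separates $X^{0}_{\black}$ from $X^{0}_{\white}$---exactly one of them lies in $X^{0}_{\black}$ and one in $X^{0}_{\white}$, so $c \mapsto X^{1}(e, c)$ is a bijection onto this pair. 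Given $\{ e_{i} \}_{i \in \n_{0}} \in \Sigma_{\ematrix}^{+}$, a choice of $c_{0} \in \colours$ then forces the entire colour sequence: $f\parentheses[\big]{ X^{1}(e_{0}, c_{0}) }$, being the image of a $1$-tile under the cellular map $f$, is a $0$-tile, say $X^{0}_{c}$, and the requirement $f\parentheses[\big]{ X^{1}(e_{0}, c_{0}) } \supseteq X^{1}(e_{1}, c_{1})$ together with $X^{1}(e_{1}, c_{1}) \subseteq X^{0}_{c_{1}}$ and the fact that a $1$-tile lies in a unique $0$-tile force $c_{1} = c$; iterating produces $\{ c_{i} \}_{i \in \n_{0}}$, which one checks is genuinely $\eematrix$-admissible (the edge transitions hold by hypothesis, the tile transitions by construction). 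Both values of $c_{0}$ yield valid and distinct lifts, whence $\card[\big]{ \eecoding^{-1}(\{ e_{i} \}_{i \in \n_{0}}) } = 2$; in particular $\eecoding$ is surjective, so $\parentheses[\big]{ \Sigma_{\ematrix}^{+}, \sigma_{\ematrix} }$ is a factor of $\parentheses[\big]{ \Sigma_{\eematrix}^{+}, \sigma_{\eematrix} }$. The commutative diagram then merely records the two intertwining identities obtained above.

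The main obstacle lies in Part~(2): establishing that the cylinders $E_{n}$ are single $(n + 1)$-edges that shrink to a point (which leans on the cell-decomposition machinery of \cite{bonk2017expanding} and on the definition of expansion) and carrying out the branch-point bookkeeping over the countable set $\mathcal{C} \cap \mathbf{V}(f, \mathcal{C})$ to pin down the exact fibre cardinalities; Part~(1) is comparatively soft once the ``one black, one white $1$-tile'' fact is available. All of this is carried out in detail in \cite[Section~6]{li2024prime:dirichlet}.
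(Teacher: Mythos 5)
The paper itself gives no proof of this proposition—it is imported verbatim from Li--Zheng \cite[Proposition~6.1]{li2024prime:dirichlet}—so there is nothing internal to compare against, but your reconstruction is correct and follows the natural (and, in essence, the cited source's) route: the $1$-edges in $\mathcal{C}$ form a Markov partition for $f|_{\mathcal{C}}$, the cell-decomposition machinery and the visual-metric diameter bounds give well-definedness, \holder continuity, semi-conjugacy and the one-versus-two-fold fibres of $\ecoding$ at points of $\mathcal{C} \mysetminus \mathbf{V}(f,\mathcal{C})$ versus $\mathcal{C} \cap \mathbf{V}(f,\mathcal{C})$, and the forced propagation of the colour bit (a $1$-tile lies in a unique $0$-tile, and $f$ maps $1$-tiles onto $0$-tiles) yields exactly two lifts through $\eecoding$. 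The only place your sketch is thin is the vertex bookkeeping in Part~(2), where one should make explicit the bijection between admissible words of length $m$ and $m$-edges contained in $\mathcal{C}$ before counting nested chains, but that is a routine refinement of what you wrote.
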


\subsection{Ergodic theory of expanding Thurston maps}%
\label{sub:Ergodic theory of expanding Thurston maps}

We summarize the existence, uniqueness, and basic properties of equilibrium states for expanding Thurston maps in the following theorem.

\begin{theorem}[Li \cite{li2018equilibrium}]     \label{thm:properties of equilibrium state}
    Let $f \colon S^2 \mapping S^2$ be an expanding Thurston map and $d$ a visual metric on $S^2$ for $f$. 
    Let $\juxtapose{\phi}{\gamma} \in C^{0,\holderexp} \parentheses[\big]{ S^2,d }$ be real-valued \holder continuous functions with an exponent $\holderexp \in (0,1]$. 
    Then the following statements hold:
    \begin{enumerate}[label=\rm{(\roman*)}]
        \smallskip
        
        \item     \label{item:thm:properties of equilibrium state:existence and uniqueness}
        There exists a unique equilibrium state $\mu_{\phi}$ for the map $f$ and the potential $\phi$.

        \smallskip
        
        \item     \label{item:thm:properties of equilibrium state:derivative}
        For each $t \in \real$, we have $\frac{\mathrm{d}}{\mathrm{d}t}P(f,\phi + t \gamma) = \int \! \gamma \,\mathrm{d}\mu_{\phi + t\gamma}$.

        \smallskip

        \item     \label{item:thm:properties of equilibrium state:cohomologous}
        Let $\mu_{\gamma}$ be the unique equilibrium state for $f$ and $\gamma$.
        Then $\mu_{\phi} = \mu_{\gamma}$ if and only if there exist a constant $K \in \real$ and a continuous function $u \in C \parentheses[\big]{ S^2 }$ such that $\phi - \gamma = K + u \circ f - u$. 
    \end{enumerate}
\end{theorem}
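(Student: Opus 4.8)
\emph{Overall plan and item~\ref{item:thm:properties of equilibrium state:existence and uniqueness}.}
I would construct $\mu_\phi$ through the Ruelle transfer operator of $f$, running the classical transfer-operator/variational program for uniformly expanding maps while absorbing the branch points into the tile combinatorics. Fix a Jordan curve $\mathcal{C} \supseteq \post{f}$ realizing \eqref{eq:definition of expansion} for the given visual metric $d$, write $\mathbf{X}^n \coloneqq \mathbf{X}^n(f, \mathcal{C})$, and set $\mathcal{L}_\phi \colon C(S^2) \mapping C(S^2)$ by $\mathcal{L}_\phi u(y) \coloneqq \sum_{x \in f^{-1}(y)} \deg_f(x) e^{\phi(x)} u(x)$, so $\mathcal{L}_\phi^n u(y) = \sum_{x \in f^{-n}(y)} \deg_{f^n}(x) e^{S_n\phi(x)} u(x)$. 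The single geometric input is \emph{bounded distortion}: since $\diameter{d}{X} \leqslant C\Lambda^{-n}$ for every $X \in \mathbf{X}^n$ ($\Lambda > 1$ being the expansion factor of $d$; see \cite[Chapter~8]{bonk2017expanding}), \holder continuity of $\phi$ gives $C_0 \geqslant 1$, independent of $n$, with $\abs{S_n\phi(x) - S_n\phi(x')} \leqslant C_0$ whenever $x, x'$ lie in a common $n$-tile. From this I would deduce, in order: (a) $\mathcal{L}_\phi$ preserves $C^{0,\beta}(S^2, d)$ and satisfies a Lasota--Yorke inequality there; (b) an eigenmeasure $m_\phi$, $\mathcal{L}_\phi^* m_\phi = \lambda m_\phi$ with $\lambda > 0$, via Schauder--Tychonoff applied to $\nu \mapsto \mathcal{L}_\phi^* \nu / (\mathcal{L}_\phi^*\nu)(S^2)$ on the space of Borel probability measures; (c) $\lambda = e^{P(f,\phi)}$, by comparing $\frac{1}{n}\log\mathcal{L}_\phi^n 1$ with the $(n,\varepsilon)$-separated-set formula \eqref{eq:def:topological pressure}, again using bounded distortion and $\max_{X \in \mathbf{X}^n}\diameter{d}{X} \to 0$; (d) a strictly positive eigenfunction $u_\phi \in C^{0,\beta}(S^2,d)$, $\mathcal{L}_\phi u_\phi = e^{P(f,\phi)} u_\phi$, as a limit point of the averages $\frac{1}{N}\sum_{n=0}^{N-1}\lambda^{-n}\mathcal{L}_\phi^n 1$, which are equi-\holder by (a) and uniformly bounded away from $0$ and $\infty$ by a Harnack-type consequence of bounded distortion. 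After normalizing $\int u_\phi \, \mathrm{d}m_\phi = 1$, the measure $\mu_\phi \coloneqq u_\phi m_\phi$ is $f$-invariant, and bounded distortion upgrades to the Gibbs property $C_1^{-1} \leqslant \mu_\phi(X)\exp(nP(f,\phi) - S_n\phi(x)) \leqslant C_1$ for some $C_1 \geqslant 1$ and all $n$, all $X \in \mathbf{X}^n$, all $x \in X$; in particular $\mu_\phi$ has full support and is ergodic (indeed exact).

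\emph{Finishing item~\ref{item:thm:properties of equilibrium state:existence and uniqueness} and proving item~\ref{item:thm:properties of equilibrium state:derivative}.}
That $\mu_\phi$ is an equilibrium state would follow from the Gibbs property and the Variational Principle \eqref{eq:Variational Principle for pressure}: the partitions into $n$-tiles are refining and, modulo a $\mu_\phi$-null set, generating, so Shannon--McMillan--Breiman gives $h_{\mu_\phi}(f) = -\lim_n\frac{1}{n}\int\log\mu_\phi(X^n(\,\cdot\,))\,\mathrm{d}\mu_\phi = P(f,\phi) - \int\phi\,\mathrm{d}\mu_\phi$, i.e.\ $P_{\mu_\phi}(f,\phi) = P(f,\phi)$ (equivalently, push forward the classical equilibrium state of $\phi\circ\pi_{\vartriangle}$ on the topologically mixing subshift of Proposition~\ref{prop:one-sided subshift of finite type associated with expanding Thurston map}, using that $\pi_{\vartriangle}$ is boundedly finite-to-one, so that entropy and pressure are preserved). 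For uniqueness I would (after passing to ergodic components) treat an ergodic equilibrium state $\nu$ and run a Bowen-type argument---using $P(f,\phi) = \lim_n\frac{1}{n}\log\sum_{X \in \mathbf{X}^n}\sup_X e^{S_n\phi}$ (valid by bounded distortion) and Shannon--McMillan--Breiman for $\nu$---to show $\nu$ obeys the same two-sided Gibbs bounds, hence $\nu \ll \mu_\phi$ with bounded density, so $\nu = \mu_\phi$ by ergodicity of $\mu_\phi$. For item~\ref{item:thm:properties of equilibrium state:derivative}, the function $g(t) \coloneqq P(f,\phi + t\gamma)$ is convex (a supremum of affine functions of $t$ by \eqref{eq:Variational Principle for pressure}), and \eqref{eq:Variational Principle for pressure} applied with $\mu_{\phi + t_0\gamma}$ gives $g(t) \geqslant g(t_0) + (t - t_0)\int\gamma\,\mathrm{d}\mu_{\phi+t_0\gamma}$ for all $t$, a supporting line of slope $\int\gamma\,\mathrm{d}\mu_{\phi+t_0\gamma}$ at $t_0$; by item~\ref{item:thm:properties of equilibrium state:existence and uniqueness} and the standard correspondence between tangent functionals of the pressure and equilibrium states (see, e.g., \cite[Chapter~9]{walters1982introduction}) this is the only supporting line, so $g$ is differentiable at $t_0$ with $g'(t_0) = \int\gamma\,\mathrm{d}\mu_{\phi+t_0\gamma}$.

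\emph{Proving item~\ref{item:thm:properties of equilibrium state:cohomologous}.}
If $\phi - \gamma = K + u\circ f - u$ with $u \in C(S^2)$, then $\int(u\circ f - u)\,\mathrm{d}\mu = 0$ for every $\mu \in \mathcal{M}(S^2, f)$, so $P_\mu(f,\phi) = P_\mu(f,\gamma) + K$ for all such $\mu$; taking suprema gives $P(f,\phi) = P(f,\gamma) + K$ and shows $\mu$ maximizes one functional exactly when it maximizes the other, so $\mu_\phi = \mu_\gamma$. Conversely, suppose $\mu_\phi = \mu_\gamma$; put $P_1 \coloneqq P(f,\phi)$, $P_2 \coloneqq P(f,\gamma)$, and let $u_\phi, u_\gamma > 0$ be the \holder eigenfunctions from the construction above. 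I would pass to the normalized operator $\widehat{\mathcal{L}}_\phi g(y) \coloneqq e^{-P_1}u_\phi(y)^{-1}\mathcal{L}_\phi(u_\phi g)(y) = \sum_{x \in f^{-1}(y)}p_\phi(x)g(x)$, where $p_\phi(x) \coloneqq \deg_f(x)e^{\phi(x)-P_1}u_\phi(x)/u_\phi(f(x))$ is strictly positive and continuous with $\sum_{x \in f^{-1}(y)}p_\phi(x) = 1$; then $\widehat{\mathcal{L}}_\phi^*\mu_\phi = \mu_\phi$ (from $\mathcal{L}_\phi^* m_\phi = e^{P_1}m_\phi$), and likewise for $\gamma$. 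Testing $\int\widehat{\mathcal{L}}_\phi g\,\mathrm{d}\mu_\phi = \int g\,\mathrm{d}\mu_\phi$ against functions $g$ supported in $\inte{X}$ for a single $1$-tile $X$---on which $f$ is a homeomorphism onto the $0$-tile $f(X)$ with no critical point---identifies $p_\phi|_{\inte{X}}$ with the Radon--Nikodym derivative $\mathrm{d}(\mu_\phi|_X)/\mathrm{d}m_X$, where $m_X \coloneqq ((f|_X)^{-1})_*(\mu_\phi|_{\inte{f(X)}})$ depends only on $f|_X$ and $\mu_\phi$. Running this with $\gamma$ and the \emph{common} measure $\mu_\phi = \mu_\gamma$ gives $p_\gamma|_{\inte{X}} = \mathrm{d}(\mu_\phi|_X)/\mathrm{d}m_X$ as well, so $p_\phi = p_\gamma$ $\mu_\phi$-a.e.\ on $\inte{X}$, hence everywhere on $X$ by continuity and full support of $\mu_\phi$, hence on $S^2$ since the $1$-tiles cover it. Taking logarithms in $p_\phi \equiv p_\gamma$ (the $\deg_f$ cancels) and rearranging gives $\phi - \gamma = (P_1 - P_2) + u\circ f - u$ with $u \coloneqq \log(u_\phi/u_\gamma) \in C(S^2)$, i.e.\ the claimed identity with $K = P_1 - P_2 = P(f,\phi) - P(f,\gamma)$. (Alternatively, classically: equilibrium states are Gibbs, so $\mu_\phi = \mu_\gamma$ forces $\abs{S_n\phi(x) - S_n\gamma(x) - n(P_1-P_2)}$ bounded uniformly over $n$-tiles; evaluating on a point of period $n$ and its period-$kn$ iterates and letting $k \to \infty$ yields $S_n\phi = S_n\gamma + n(P_1-P_2)$ on $\myperiodpoint{f}{n}$ for every $n$, whence a Liv\v{s}ic-type theorem for $f$ produces the transfer function.)

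\emph{Expected main obstacle.}
The crux---pervading item~\ref{item:thm:properties of equilibrium state:existence and uniqueness}---is that $f$ is a \emph{branched} covering, so $\mathcal{L}_\phi$ lacks the clean spectral theory of the unramified case; the branch points must be controlled combinatorially through the tiles, and the distortion and positivity estimates underlying (a)--(d), the Gibbs property, and the $1$-tile localization in item~\ref{item:thm:properties of equilibrium state:cohomologous} are exactly where the geometry of expanding Thurston maps and the visual metric must be exploited. (This is also why the sharper results of the present paper require the split Ruelle operators rather than $\mathcal{L}_\phi$ itself.)
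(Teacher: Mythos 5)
This theorem is not proved in the paper at all: it is imported verbatim from Li \cite{li2018equilibrium}, so there is no internal proof to compare against. Your proposal is, in outline, a faithful reconstruction of the route taken in that cited source: the Ruelle operator weighted by local degrees, bounded distortion from the visual metric and the tile decompositions, eigenmeasure via Schauder--Tychonoff, a positive H\"older eigenfunction, the Gibbs property, and then the tangent-line argument for (ii) and the normalized-operator/Jacobian argument for (iii). The forward direction of (iii) and your Radon--Nikodym identification of $p_\phi$ with $p_\gamma$ on interiors of $1$-tiles are correct as written (the $\deg_f$ weights do cancel, and full support of $\mu_\phi$ plus continuity upgrades the a.e.\ identity), so the cohomology characterization goes through once the objects from (i) exist.

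Two places, however, conceal the real content and as stated are gaps. First, in (ii) you pass from uniqueness of the equilibrium state to uniqueness of the supporting line via ``the standard correspondence between tangent functionals and equilibrium states''; that correspondence (Walters, Chapter 9) requires upper semi-continuity of the entropy map $\mu \mapsto h_\mu(f)$, which for a branched cover is not automatic -- for expanding Thurston maps it is a separate nontrivial theorem of Li (asymptotic $h$-expansiveness). Without it, unique equilibrium states do not formally imply differentiability of $t \mapsto P(f,\phi+t\gamma)$, so this hypothesis must be invoked explicitly. Second, the uniqueness step in (i) is asserted rather than argued: the claim that an arbitrary ergodic equilibrium state ``obeys the same two-sided Gibbs bounds'' is essentially equivalent to what is to be proved; the genuine work (as in the cited source) is to show $\nu \ll \mu_\phi$ via a Shannon--McMillan--Breiman/covering argument adapted to the tiles, and this in turn needs the facts you gloss over -- that the grid $\bigcup_{n} f^{-n}(\mathcal{C})$ is $\mu$-null and that the tile partitions are generating -- which require proof (the curve you fix need not be invariant, so the coding of Proposition~\ref{prop:one-sided subshift of finite type associated with expanding Thurston map} is only available for an iterate, and uniqueness must then be transported back to $f$). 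The parenthetical ``Liv\v{s}ic-type theorem for $f$'' alternative in (iii) is likewise not off-the-shelf in this branched, non-uniformly smooth setting, whereas your primary argument avoids it.
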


Let $T \colon X \mapping X$ be a map on a topological space $X$ and $\psi \colon X \mapping \real$ be a real-valued function on $X$.
We say that $\psi$ is \emph{cohomologous to a constant in $C(X)$} if there exist $C \in \real$ and $u \in C(X)$ such that $\psi = C + u \circ T - u$.

\begin{lemma} \label{lem:tile symbolic coding preserves entropy of equilibrium state}
    Under the assumptions and notation of Proposition~\ref{prop:one-sided subshift of finite type associated with expanding Thurston map}, let $\potential \in \holderspacesphere$ and $\widetilde{\mu} \in \mathcal{M}\parentheses[\big]{ \Sigma^{+}_{A_{\vartriangle}}, \sigma_{A_{\vartriangle}} }$ be the equilibrium state for the map $\sigma_{A_{\vartriangle}}$ and the potential $\potential \circ \pi_{\vartriangle}$. 
    Denote $\mu \define (\pi_{\vartriangle})_{*}\widetilde{\mu}$.
    Then $h_{\mu}(f) = h_{\widetilde{\mu}}(\sigma_{A_{\vartriangle}})$ and $\mu$ is an equilibrium state for $f$ and $\potential$.
    Moreover, $\potential$ is cohomologous to a constant in $C \parentheses[\big]{ S^2 }$ with respect to $f$ if and only if $\potential \circ \pi_{\vartriangle}$ is cohomologous to a constant in $C \parentheses[\big]{ \Sigma^{+}_{A_{\vartriangle}} }$ with respect to $\sigma_{A_{\vartriangle}}$.
\end{lemma}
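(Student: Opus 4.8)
The plan is to deduce all three assertions from two facts already in hand: the pressure identity $P \parentheses[\big]{ \sigma_{A_{\vartriangle}}, \psi \circ \pi_{\vartriangle} } = P(f, \psi)$ for every $\psi \in \holderspacesphere$ (Proposition~\ref{prop:one-sided subshift of finite type associated with expanding Thurston map}), and the differentiability of the topological pressure in Theorem~\ref{thm:properties of equilibrium state}~\ref{item:thm:properties of equilibrium state:derivative}; in particular, I would avoid any combinatorial analysis of the fibers of $\pi_{\vartriangle}$. First, $\mu \define (\pi_{\vartriangle})_{*} \widetilde{\mu}$ is $f$-invariant, which is immediate from the intertwining relation $\pi_{\vartriangle} \circ \sigma_{A_{\vartriangle}} = f \circ \pi_{\vartriangle}$ and the $\sigma_{A_{\vartriangle}}$-invariance of $\widetilde{\mu}$.

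The main step is to identify $\mu$ with $\mu_{\potential}$, the unique equilibrium state of $f$ and $\potential$. Fix an arbitrary $\gamma \in \holderspacesphere$. For each $t \in \real$ the function $\potential + t\gamma$ lies in $\holderspacesphere$, so the pressure identity applied with $\psi = \potential + t\gamma$ yields the identity of functions of $t$
\[
    P(f, \potential + t\gamma) = Q(t) \define P \parentheses[\big]{ \sigma_{A_{\vartriangle}}, \, \potential \circ \pi_{\vartriangle} + t\,(\gamma \circ \pi_{\vartriangle}) }.
\]
By Theorem~\ref{thm:properties of equilibrium state}~\ref{item:thm:properties of equilibrium state:derivative} the left-hand side is differentiable at $t = 0$ with derivative $\int \! \gamma \, \mathrm{d}\mu_{\potential}$; hence $Q$ is differentiable at $0$ with $Q'(0) = \int \! \gamma \, \mathrm{d}\mu_{\potential}$. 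On the other hand, the Variational Principle \eqref{eq:Variational Principle for pressure} applied to the $\sigma_{A_{\vartriangle}}$-invariant measure $\widetilde{\mu}$, combined with the fact that $\widetilde{\mu}$ is the equilibrium state for $\sigma_{A_{\vartriangle}}$ and $\potential \circ \pi_{\vartriangle}$, gives $Q(t) \geqslant Q(0) + t \! \int \! (\gamma \circ \pi_{\vartriangle}) \, \mathrm{d}\widetilde{\mu}$ for every $t \in \real$; since $Q$ is differentiable at $0$, this forces $Q'(0) = \int \! (\gamma \circ \pi_{\vartriangle}) \, \mathrm{d}\widetilde{\mu} = \int \! \gamma \, \mathrm{d}\mu$. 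Comparing the two expressions for $Q'(0)$ gives $\int \! \gamma \, \mathrm{d}\mu = \int \! \gamma \, \mathrm{d}\mu_{\potential}$; as $\gamma \in \holderspacesphere$ was arbitrary and $\holderspacesphere$ is dense in $C(S^2)$ in the uniform norm, $\mu = \mu_{\potential}$. In particular $\mu$ is an equilibrium state for $f$ and $\potential$. The entropy equality then follows by bookkeeping with the variational formula: $h_{\mu}(f) = P(f, \potential) - \int \! \potential \, \mathrm{d}\mu$ and $h_{\widetilde{\mu}}(\sigma_{A_{\vartriangle}}) = P \parentheses[\big]{ \sigma_{A_{\vartriangle}}, \potential \circ \pi_{\vartriangle} } - \int \! \potential \circ \pi_{\vartriangle} \, \mathrm{d}\widetilde{\mu}$, and the two right-hand sides coincide because $P \parentheses[\big]{ \sigma_{A_{\vartriangle}}, \potential \circ \pi_{\vartriangle} } = P(f, \potential)$ and $\int \! \potential \circ \pi_{\vartriangle} \, \mathrm{d}\widetilde{\mu} = \int \! \potential \, \mathrm{d}\mu$.

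For the cohomology statement, the forward implication is direct: from $\potential = K + u \circ f - u$ with $u \in C(S^2)$ we get $\potential \circ \pi_{\vartriangle} = K + (u \circ \pi_{\vartriangle}) \circ \sigma_{A_{\vartriangle}} - (u \circ \pi_{\vartriangle})$ with $u \circ \pi_{\vartriangle} \in C \parentheses[\big]{ \Sigma^{+}_{A_{\vartriangle}} }$. Conversely, assume $\potential \circ \pi_{\vartriangle} = K + v \circ \sigma_{A_{\vartriangle}} - v$ with $K \in \real$ and $v \in C \parentheses[\big]{ \Sigma^{+}_{A_{\vartriangle}} }$. Since a continuous coboundary integrates to zero against every invariant measure, $P_{\nu} \parentheses[\big]{ \sigma_{A_{\vartriangle}}, \potential \circ \pi_{\vartriangle} } = P_{\nu}(\sigma_{A_{\vartriangle}}, 0) + K$ for all $\nu \in \mathcal{M} \parentheses[\big]{ \Sigma^{+}_{A_{\vartriangle}}, \sigma_{A_{\vartriangle}} }$; as $\parentheses[\big]{ \Sigma^{+}_{A_{\vartriangle}}, \sigma_{A_{\vartriangle}} }$ is topologically mixing and $\potential \circ \pi_{\vartriangle}$ is \holder continuous, the potentials $\potential \circ \pi_{\vartriangle}$ and $0$ each admit a unique equilibrium state, hence the same one; that is, $\widetilde{\mu}$ equals the measure of maximal entropy $\widetilde{m}$ of $\sigma_{A_{\vartriangle}}$. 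Applying the argument of the second paragraph with $\potential$ replaced by the zero potential gives $(\pi_{\vartriangle})_{*} \widetilde{m} = \mu_{0}$, the (unique) measure of maximal entropy of $f$. Therefore $\mu_{\potential} = \mu = (\pi_{\vartriangle})_{*} \widetilde{\mu} = (\pi_{\vartriangle})_{*} \widetilde{m} = \mu_{0}$, and Theorem~\ref{thm:properties of equilibrium state}~\ref{item:thm:properties of equilibrium state:cohomologous} (with $\gamma = 0$) yields that $\potential$ is cohomologous to a constant in $C \parentheses[\big]{ S^2 }$ with respect to $f$.

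The step I expect to require the most care is the transfer of differentiability in the second paragraph. Theorem~\ref{thm:properties of equilibrium state}~\ref{item:thm:properties of equilibrium state:derivative} is stated for \holder potentials on $S^2$, so one must verify that the perturbation direction $\gamma \circ \pi_{\vartriangle}$ is itself \holder continuous on $\Sigma^{+}_{A_{\vartriangle}}$ (it is, being a composition of the \holder maps $\gamma$ and $\pi_{\vartriangle}$, at the cost of a smaller exponent), and that the elementary fact that a function differentiable at a point whose graph lies above a given line through that point has that line as tangent there may legitimately be invoked on the symbolic side using only that $\widetilde{\mu}$ is an equilibrium state at the base parameter. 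An alternative route, proving the entropy equality via the assertion that $\pi_{\vartriangle}$ is finite-to-one with a bound depending only on $f$ (so that it preserves measure-theoretic entropy for every invariant measure), would also work, but it requires a closer analysis of the cell-decomposition combinatorics that the pressure-differentiation argument bypasses.
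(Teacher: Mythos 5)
Your proposal is correct, but it proves the lemma by a genuinely different route than the paper. You identify $(\pi_{\vartriangle})_{*}\widetilde{\mu}$ with the unique equilibrium state $\mu_{\potential}$ abstractly: the pressure identity of Proposition~\ref{prop:one-sided subshift of finite type associated with expanding Thurston map} turns $t \mapsto P\parentheses[\big]{\sigma_{A_{\vartriangle}}, (\potential + t\gamma)\circ\pi_{\vartriangle}}$ into $t \mapsto P(f, \potential + t\gamma)$, whose differentiability at $t=0$ (Theorem~\ref{thm:properties of equilibrium state}~\ref{item:thm:properties of equilibrium state:derivative}) together with the supporting-line inequality coming from the variational principle at $\widetilde{\mu}$ forces $\int \gamma \, \mathrm{d}\mu = \int \gamma \, \mathrm{d}\mu_{\potential}$ for every H\"older $\gamma$, and density of H\"older functions in $C(S^2)$ gives $\mu = \mu_{\potential}$; the entropy equality then drops out because both variational suprema are attained and the pressures and integrals match, and the converse cohomology direction is handled by rerunning the same identification for the zero potential and invoking uniqueness of the measure of maximal entropy of the mixing subshift together with Theorem~\ref{thm:properties of equilibrium state}~\ref{item:thm:properties of equilibrium state:cohomologous}. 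The paper instead works measure-theoretically on the fibers: it shows $\mu(E) = \widetilde{\mu}\parentheses[\big]{\pi_{\vartriangle}^{-1}(E)} = 0$ for $E = \bigcup_{i} f^{-i}(\mathcal{C})$ using ergodicity of the shift equilibrium state, $f$-invariance of $E$, and the Gibbs property (full support), concluding that $\pi_{\vartriangle}$ is a measurable isomorphism mod $0$, which yields the entropy equality directly and then the equilibrium-state property; its converse cohomology step uses Bowen's theorem that a potential cohomologous to a constant has the measure of maximal entropy as equilibrium state, plus the entropy preservation just proved. What your tangent-functional argument buys is that you bypass the ergodicity/Gibbs/fiber analysis entirely (so no supports or grand orbits of $\mathcal{C}$ appear), at the cost of leaning on the differentiability and uniqueness results for pressure on the sphere side and uniqueness of the MME on the shift; what the paper's argument buys is the stronger intermediate fact that the tile coding is injective off a set of measure zero for the equilibrium state, with entropy preservation obtained for structural rather than variational reasons. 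Your closing worry about H\"older regularity of $\gamma\circ\pi_{\vartriangle}$ is actually moot for the differentiation step, since differentiability is only used on the $S^2$ side; H\"older regularity of $\potential\circ\pi_{\vartriangle}$ is only needed (and holds, by composition) where you invoke uniqueness of equilibrium states on the shift.
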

\begin{proof}
    Let $E \define \bigcup_{i \in \n_0} f^{-i}(\mathcal{C})$ and $\widetilde{E} \define \pi_{\vartriangle}^{-1}\parentheses{ E }$.
    To deduce that $h_{\mu}(f) = h_{\widetilde{\mu}}(\sigma_{A_{\vartriangle}})$, it suffices to show $\mu(E) = \widetilde{\mu}(\widetilde{E}) = 0$.
    Indeed, by Proposition~\ref{prop:one-sided subshift of finite type associated with expanding Thurston map}, the restriction of $\pi_{\vartriangle}$ to the subset $\pi_{\vartriangle}^{-1}\parentheses[\big]{ S^2 \mysetminus E} = \Sigma^{+}_{A_{\vartriangle}} \mysetminus \widetilde{E}$ is a bijection from $\Sigma^{+}_{A_{\vartriangle}} \mysetminus \widetilde{E}$ onto $S^2 \mysetminus E$.
    If $\mu(E) = \widetilde{\mu}(\widetilde{E}) = 0$, then $\pi_{\vartriangle}$ is a measurable isomorphism between the systems $\parentheses[\big]{ \Sigma_{A_{\vartriangle}}^+, \sigma_{A_{\vartriangle}},\widetilde{\mu} }$ and $\parentheses[\big]{ S^2, f, \mu }$ (modulo sets of measure zero). 
    Since measure-theoretic entropy is invariant under measurable isomorphism, it follows that $h_{\mu}(f) = h_{\widetilde{\mu}}(\sigma_{A_{\vartriangle}})$.

    We argue by contradiction and suppose that $\mu(E) > 0$.
    The equilibrium state $\widetilde{\mu}$ for the topologically mixing subshift $\parentheses[\big]{ \Sigma^{+}_{A_{\vartriangle}}, \sigma_{A_{\vartriangle}} }$ is ergodic (cf.~\cite[Proposition~1.14]{bowen1975equilibrium}).
    Since $\pi_{\vartriangle}$ is a factor map, the push-forward measure $\mu$ is also ergodic.
    Since $f(\mathcal{C}) \subseteq \mathcal{C}$, we have $f^{-1}(E) = E$.
    Then ergodicity implies $\mu(E) = 1$. 
    Furthermore, we can show that $\mu(\mathcal{C}) = 1$.
    Indeed, since $\mu$ is $f$-invariant and $f(\mathcal{C}) \subseteq \mathcal{C}$, we have $\mu(f^{-n}(\mathcal{C})) = \mu(\mathcal{C})$ and $\mathcal{C} \subseteq f^{-n}(\mathcal{C})$ for all $n \in \n_{0}$, which imply that $\mu(f^{-n}(\mathcal{C}) \smallsetminus \mathcal{C}) = 0$. 
    Thus $\mu(E \mysetminus \mathcal{C}) = 0$, i.e., $\mu(\mathcal{C}) = 1$.
    Consequently, $\widetilde{\mu}\parentheses[\big]{ \pi_{\vartriangle}^{-1}(\mathcal{C}) } = 1$. 
    This leads to a contradiction, as $\pi_{\vartriangle}^{-1}(\mathcal{C})$ is a closed proper subset of $\Sigma^{+}_{A_{\vartriangle}}$, while the measure $\widetilde{\mu}$ must be positive on every non-empty open set (as a consequence of the Gibbs property \cite[Theorem 1.2]{bowen1975equilibrium}).
    Therefore, we conclude that $0 = \mu(E) = \widetilde{\mu} \parentheses[\big]{ \widetilde{E} }$.
    
    We now verify that $\mu$ is an equilibrium state for $f$ and $\potential$. 
    Since $\mu = (\pi_{\vartriangle})_{*}\widetilde{\mu}$, we have $\int \! \potential \,\mathrm{d}\mu = \int \! \potential \circ \pi_{\vartriangle} \,\mathrm{d} \widetilde{\mu}$.
    Using the entropy preservation $h_{\mu}(f) = h_{\widetilde{\mu}}(\sigma_{A_{\vartriangle}})$ and the pressure equality $P(\sigma_{A_{\vartriangle}}, \potential \circ \pi_{\vartriangle}) = P(f, \potential)$ from Proposition~\ref{prop:one-sided subshift of finite type associated with expanding Thurston map}, we obtain that
    \[
        P(f, \potential) = P(\sigma_{A_{\vartriangle}}, \potential \circ \pi_{\vartriangle}) 
        = h_{\widetilde{\mu}}(\sigma_{A_{\vartriangle}}) + \int \! \potential \circ \pi_{\vartriangle} \,\mathrm{d} \widetilde{\mu} 
        = h_{\mu}(f) + \int \! \potential \,\mathrm{d}\mu.
    \]
    This implies that $\mu$ is an equilibrium state for $f$ and $\potential$.

    Finally, we establish the equivalence of the cohomological statements. 
    The forward direction is straightforward. 
    For the converse, assume that $\potential \circ \pi_{\vartriangle}$ is cohomologous to a constant in $C \parentheses[\big]{ \Sigma^{+}_{A_{\vartriangle}} }$. 
    By Proposition~\ref{prop:one-sided subshift of finite type associated with expanding Thurston map}, the subshift $\parentheses[\big]{ \Sigma^{+}_{A_{\vartriangle}}, \sigma_{A_{\vartriangle}} }$ is topologically mixing. 
    It is a classical result (see \cite[Theorem~1.28]{bowen1975equilibrium}) that for such a system, the equilibrium state of a potential cohomologous to a constant is the measure of maximal entropy. Thus, $\widetilde{\mu}$ is the measure of maximal entropy for $\sigma_{A_{\vartriangle}}$.
    The preservation of entropy implies that its pushforward $\mu = (\pi_{\vartriangle})_{*} \tilde{\mu}$ is a measure of maximal entropy for $f$.
    Since $\mu$ is an equilibrium state for $f$ and $\potential$, it follows from Theorem~\ref{thm:properties of equilibrium state}~\ref{item:thm:properties of equilibrium state:cohomologous} that $\potential$ is cohomologous to a constant in $C \parentheses[\big]{ S^2 }$.
    This completes the proof.
\end{proof}

The potentials that satisfy the following property are of particular interest in the considerations of Prime Orbit Theorems.

\begin{definition}[Eventually positive function]    \label{def:eventually positive functions}
    Let $g \colon X \mapping X$ be a map on a set $X$, and $\varphi \colon X \mapping \real$ be a real-valued function on $X$.
    Then $\varphi$ is \emph{eventually positive} if there exists $N \in \n$ such that $S_n \varphi(x) > 0$ for each $x \in X$ and each $n \in \n$ with $n \geqslant N$. 
\end{definition}

The following result is a consequence of the properties of the pressure function. For a proof, see \cite[Corollary~3.29]{li2024prime:dirichlet}.

\begin{lemma}[Li \& Zheng \cite{li2024prime:dirichlet}] \label{lem:pressure function:monotonicity and uniqueness of zero}
    Let $f \colon S^2 \mapping S^2$ be an expanding Thurston map, and $d$ be a visual metric on $S^2$ for $f$. 
    Let $\potential \in \holderspacesphere$ be an eventually positive real-valued \holder continuous function with an exponent $\holderexp \in (0, 1]$. 
    Then the function $t \mapping P(f, -t \potential)$, $t \in \real$, is strictly decreasing and there exists a unique number $\rootpressure \in \real$ such that $P(f, -\rootpressure \potential) = 0$. 
    Moreover, $\rootpressure > 0$.
\end{lemma}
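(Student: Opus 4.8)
The plan is to deduce everything from the convexity and differentiability of the pressure function, together with one quantitative consequence of eventual positivity: there exists $\delta > 0$ such that $\int \potential \,\mathrm{d}\mu \geqslant \delta$ for \emph{every} $\mu \in \mathcal{M}(S^2, f)$. To see this, fix $N \in \n$ with $S_N \potential > 0$ on $S^2$ as in Definition~\ref{def:eventually positive functions}; since $S_N \potential$ is continuous and $S^2$ is compact, $\delta \define \frac{1}{N} \min_{x \in S^2} S_N \potential(x) > 0$, and then $\int \potential \,\mathrm{d}\mu = \frac{1}{N} \int S_N \potential \,\mathrm{d}\mu \geqslant \delta$ for every $f$-invariant Borel probability measure $\mu$, using $\int \potential \circ f^{\,j} \,\mathrm{d}\mu = \int \potential \,\mathrm{d}\mu$ for each $j$.

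\textbf{Strict monotonicity.} I would apply Theorem~\ref{thm:properties of equilibrium state}~\ref{item:thm:properties of equilibrium state:derivative} with the function called $\phi$ there taken to be $0$ and the function called $\gamma$ taken to be $-\potential$, which gives, for every $t \in \real$,
\[
    \frac{\mathrm{d}}{\mathrm{d}t} P(f, -t \potential) = \int (-\potential) \,\mathrm{d}\mu_{-t\potential} = - \int \potential \,\mathrm{d}\mu_{-t\potential} \leqslant -\delta < 0,
\]
where $\mu_{-t\potential}$ is the equilibrium state of Theorem~\ref{thm:properties of equilibrium state}~\ref{item:thm:properties of equilibrium state:existence and uniqueness}. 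Hence $t \mapsto P(f, -t\potential)$ is strictly decreasing on $\real$. (If one prefers to avoid differentiability entirely: for $s < t$, picking $\mu$ nearly attaining the supremum in the Variational Principle defining $P(f, -t\potential)$ and evaluating at $s$ yields $P(f, -s\potential) \geqslant P(f, -t\potential) + (t - s)\delta$, which already gives strict decrease.)

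\textbf{Existence, uniqueness, and positivity of the zero.} At $t = 0$ we have $P(f, 0) = h_{\operatorname{top}}(f) > 0$, since an expanding Thurston map has $\deg f \geqslant 2$ and hence positive topological entropy (indeed $h_{\operatorname{top}}(f) = \log \deg f$). As $t \to +\infty$, the Variational Principle~\eqref{eq:Variational Principle for pressure} and the bound above give
\[
    P(f, -t \potential) = \sup_{\mu \in \mathcal{M}(S^2, f)} \Bigl( h_\mu(f) - t \textstyle\int \potential \,\mathrm{d}\mu \Bigr) \leqslant h_{\operatorname{top}}(f) - t \delta \longrightarrow -\infty.
\]
The function $t \mapsto P(f, -t\potential)$ is convex on $\real$ (a supremum of functions affine in $t$), hence continuous, so by the intermediate value theorem it has a zero $\rootpressure \in \real$; strict monotonicity makes $\rootpressure$ unique. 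Finally $\rootpressure > 0$, because the function is strictly decreasing and already positive at $t = 0$.

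\textbf{Main obstacle.} The argument is essentially soft. The only non-elementary ingredient is the differentiability formula in Theorem~\ref{thm:properties of equilibrium state}~\ref{item:thm:properties of equilibrium state:derivative}, which rests on the uniqueness and Gibbs-type regularity of equilibrium states for expanding Thurston maps and which we are free to invoke; modulo that, the one point needing care is the passage from eventual positivity to the uniform lower bound $\delta$, which is precisely where compactness of $S^2$ (rather than mere pointwise positivity of some $S_N\potential$) enters.
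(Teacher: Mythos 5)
Your proof is correct. The paper gives no argument of its own for this lemma—it simply refers to \cite[Corollary~3.29]{li2024prime:dirichlet}—and your derivation follows the standard route such a proof takes: eventual positivity plus invariance and compactness give the uniform bound $\int \phi \, \mathrm{d}\mu \geqslant \delta > 0$ over all invariant measures, strict decrease then follows either from the derivative formula in Theorem~\ref{thm:properties of equilibrium state} or directly from the Variational Principle, and existence, uniqueness, and positivity of the zero follow from $P(f,0) = h_{\operatorname{top}}(f) = \log \deg f > 0$, the bound $P(f,-t\phi) \leqslant h_{\operatorname{top}}(f) - t\delta \to -\infty$, continuity (convexity) of the pressure in $t$, and the strict monotonicity already established.
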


We recall the strong non-integrability condition from \cite[Subsection~7.1]{li2024prime:split}. 

\begin{definition}[Strong non-integrability condition]    \label{def:strong non-integrability condition}
    Let $f \colon S^2 \mapping S^2$ be an expanding Thurston map and $d$ be a visual metric on $S^2$ for $f$.
    Fix $\holderexp \in (0, 1]$.
    Let $\potential \in \holderspacesphere$ be a real-valued \holder continuous function with an exponent $\holderexp$.
    \begin{enumerate}[label=\rm{(\arabic*)}]
        \smallskip
        \item We say that $\potential$ satisfies the \emph{$(\mathcal{C}, \holderexp)$-strong non-integrability condition} (with respect to $f$ and $d$), for a Jordan curve $\mathcal{C} \subseteq S^2$ with $\post{f} \subseteq \mathcal{C}$, if there exist
        \begin{enumerate}[label=\rm{(\alph*)}]
            \smallskip
            \item numbers $\juxtapose{N}{M} \in \n$, $\varepsilon \in (0, 1)$, 
            \smallskip
            \item $M$-tiles $Y^{M}_{\black} \in \cTile{M}{\black}$, $Y^{M}_{\white} \in \cTile{M}{\white}$
        \end{enumerate}
        such that for each $\colour \in \colours$, each integer $m \geqslant M$, and each $m$-tile $X \in \Tile{m}$ with $X \subseteq Y^{M}_{\colour}$, there exist two points $\juxtapose{x_{1}}{x_{2}} \in X$ with the following properties:
        \begin{enumerate}
            \smallskip
            \item $\min \set[\big]{ d \parentheses[\big]{ x_{1}, S^2 \mysetminus X }, d \parentheses[\big]{ x_{2}, S^2 \mysetminus X }, d(x_{1}, x_{2}) } \geqslant \diameter{d}{X}$, and
            \smallskip
            \item for each integer $n \geqslant N$, there exist two $(n + M)$-tiles $\juxtapose{X^{n + M}_{\colour, 1}}{X^{n + M}_{\colour, 2}} \in \Tile{n + M}$ such that $Y^{M}_{\colour} = f^{n} \parentheses[\big]{ X^{n + M}_{\colour, 1} } = f^{n} \parentheses[\big]{ X^{n + M}_{\colour, 2} }$ and 
            \[
                \abs{ S_{n}\potential(\varsigma_{1}(x_{1})) - S_{n}\potential(\varsigma_{2}(x_{1})) - S_{n}\potential(\varsigma_{1}(x_{2})) + S_{n}\potential(\varsigma_{2}(x_{2})) } \geqslant \varepsilon d( x_{1}, x_{2})^{\holderexp},
            \]
            where we write $\varsigma_{i} \define \parentheses[\big]{ f^{n}|_{X^{n + M}_{\colour, i}} }^{-1}$ for each $i \in \set{1, 2}$.
        \end{enumerate}
        \smallskip

        \item We say that $\potential$ satisfies the \emph{$\holderexp$-strong non-integrability condition} (with respect to $f$ and $d$) if $\potential$ satisfies the $(\mathcal{C}, \holderexp)$-strong non-integrability condition with respect to $f$ and $d$ for some Jordan curve $\mathcal{C} \subseteq S^2$ with $\post{f} \subseteq \mathcal{C}$.
        
        \smallskip

        \item We say that $\potential$ satisfies the \emph{strong non-integrability condition} (with respect to $f$ and $d$) if $\potential$ satisfies the $\holderexp'$-strong non-integrability condition with respect to $f$ and $d$ for some $\holderexp' \in (0, \holderexp]$.
    \end{enumerate}
\end{definition}

The strong non-integrability condition is independent of the choice of the Jordan curve $\mathcal{C}$ (see \cite[Lemma~7.2]{li2024prime:split}).

\begin{definition}[Partition function]    \label{def:partition function}
    Let $g \colon X \mapping X$ be a map on a topological space $X$.
    Let $\varphi \colon X \mapping \cx$ be a complex-valued function on $X$.
    Consider $n \in \n$.
    We define a \emph{partition function} $\partifun{\cdot}[n][g][\varphi] \colon \cx \mapping \cx$ (for $(g, \varphi)$) as
    \begin{equation}    \label{eq:def:partition function}
        \partifun{s}[n][g][\varphi] \define \sum_{x \in \myperiodpoint{g}{n}} e^{s S_{n} \varphi(x)}, \quad s \in \cx.
    \end{equation}
\end{definition}

The primary tool employed in \cite{li2018equilibrium} for developing the thermodynamic formalism for expanding Thurston maps is the Ruelle operator.
For our purposes in this paper, we require certain variants of the Ruelle operator, called \emph{split Ruelle operators}, which were introduced in \cite{li2024prime:split}.
The relevant notions are recorded below.

\begin{definition}[Partial split Ruelle operator]    \label{def:partial split Ruelle operator}
    Let $f \colon S^2 \mapping S^2$ be an expanding Thurston map, $\mathcal{C} \subseteq S^2$ a Jordan curve containing $\post{f}$, and $\psi \in C(S^2, \cx)$ a complex-valued continuous function.
    Let $n \in \n_0$, and $E \subseteq S^2$ a union of $n$-tiles in $\Tile{n}$.
    We define an operator $\mathcal{L}^{(n)}_{\psi, \colour, E} \colon C(E, \cx) \mapping C \parentheses[\big]{ \colourtile, \cx } $, for each $\colour \in \colours$, by
    \begin{equation}    \label{eq:def:partial split Ruelle operator}
        \mathcal{L}^{(n)}_{\psi, \colour, E}(u)(y) \define \sum_{\substack{X^n \in \tile{n}_{\colour} \\ X^n \subseteq E}} u\parentheses[\big]{ (f^n|_{X^n})^{-1}(y) } \myexp[\big]{ S_n\psi \parentheses[\big]{ (f^n|_{X^n})^{-1}(y) } }, 
    \end{equation}
    for $u \in C(E, \cx)$ and $y \in \colourtile$.
    When $E = \ccolourtile$ for some $\ccolour \in \colours$, we often write \[
        \mathcal{L}^{(n)}_{\psi, \colour, \ccolour} \define \mathcal{L}^{(n)}_{\psi, \colour, \ccolourtile}.
    \]
\end{definition}

For each $\colour \in \colours$, we define the projection $\pi_{\colour} \colon C \parentheses[\big]{ \blacktile, \cx } \times C \parentheses[\big]{ \whitetile, \cx } \mapping C \parentheses[\big]{ \colourtile, \cx }$ by
\[
    \pi_{\colour}\splfun \define u_{\colour}, \qquad \text{for } \splfun \in C \parentheses[\big]{ \blacktile, \cx } \times C \parentheses[\big]{ \whitetile, \cx }.
\]

\begin{definition}[Split Ruelle operators]    \label{def:split ruelle operator}
    Let $f \colon S^2 \mapping S^2$ be an expanding Thurston map with a Jordan curve $\mathcal{C} \subseteq S^2$ satisfying $f(\mathcal{C}) \subseteq \mathcal{C}$ and $\post{f} \subseteq \mathcal{C}$. 
    Let $d$ be a visual metric for $f$ on $S^2$, and $\psi \in C^{0, \holderexp}((S^2, d), \cx)$ a complex-valued \holder continuous function with an exponent $\holderexp \in (0, 1]$.
    The \emph{split Ruelle operator} $\mathbb{L}_{\psi} \colon C \parentheses[\big]{ X^0_{\black} } \times C \parentheses[\big]{ X^0_{\white} } \mapping C \parentheses[\big]{ X^0_{\black} } \times C \parentheses[\big]{ X^0_{\white} }$ is defined by
    \begin{equation}    \label{eq:def:split ruelle operator}
        \mathbb{L}_{\psi}\splfun \define \parentheses[\big]{ 
            \paroperator[\varphi]{1}{\black}{\black}(u_{\black}) + \paroperator[\varphi]{1}{\black}{\white}(u_{\white}), 
            \paroperator[\varphi]{1}{\white}{\black}(u_{\black}) + \paroperator[\varphi]{1}{\white}{\white}(u_{\white})
        }
    \end{equation}
    for $u_{\black} \in C \parentheses[\big]{ X^0_{\black}, \cx}$ and $u_{\white} \in C \parentheses[\big]{ X^0_{\white}, \cx}$.

    For all $n \in \n_0$, we write the operator norm
    \[
        \sploptnorm{\mathbb{L}^{n}_{\psi}}
        \define \sup \set[\Big]{ \norm[\big]{ \pi_{\colour}\parentheses[\big]{ \mathbb{L}^{n}_{\psi}\splfun } }_{C^{0, \holderexp}} \describe
        \colour \in \colours, \, u_{\ccolour} \in C^{0, \holderexp}\parentheses[\big]{ (\ccolourtile, d), \cx }, \, \norm{u_{\ccolour}}_{C^{0, \holderexp}} \leqslant 1, \, \ccolour \in \colours }.
    \]

\end{definition}
 % Preliminaries
% !TEX root = ../main.tex
\section{The Assumptions}
\label{sec:The Assumptions}

We state below the hypotheses under which we will develop our theory in most parts of this paper. We will repeatedly refer to such assumptions in the later sections. We emphasize again that not all assumptions are assumed in all the statements in this paper.

\begin{assumptions}
\quad
\begin{enumerate}[label=\textrm{(\arabic*)}]
    \smallskip

    \item \label{assumption:expanding Thurston map}
        $f \colon S^2 \mapping S^2$ is an expanding Thurston map.

    \smallskip

    \item \label{assumption:Jordan curve}
        $\mathcal{C} \subseteq S^2$ is a Jordan curve containing $\post{f}$ with the property that there exists an integer $n_{\mathcal{C}} \in \n$ such that $f^{n_{\mathcal{C}}}(\mathcal{C}) \subseteq \mathcal{C}$ and $f^m(\mathcal{C}) \not\subseteq \mathcal{C}$ for each $m \in \{ 1,2,\dots, n_{\mathcal{C}}-1 \}$.

    \smallskip

    \item \label{assumption:visual metric and expansion factor}
        $d$ is a visual metric on $S^2$ for $f$ with expansion factor $\Lambda > 1$. 
        % $d$ is a visual metric on $S^2$ for $f$ with expansion factor $\Lambda > 1$ and a linear local connectivity constant $L \geqslant 1$. 

    \smallskip

    \item \label{assumption:holder exponent}
        $\beta \in (0, 1]$.

    \smallskip

    \item \label{assumption:holder potential}
        $\potential \in C^{0,\beta}(S^2,d)$ is an eventually positive real-valued \holder continuous function with exponent $\holderexp$.

    \item \label{assumption:unique zero}
        $\rootpressure \in \real$ is the unique positive real number satisfying $P(f, - \rootpressure \potential) = 0$.
    
    \smallskip
    
    \item \label{assumption:average}
        $\alpha \define \frac{\mathrm{d}}{\mathrm{d} t} P(f, t \potential) |_{t = -\rootpressure}$.  

    \smallskip

    \item \label{assumption:equilibrium state} % removable?
        $\mu_{\phi}$ is the unique equilibrium state for the map $f$ and the potential $\phi$. 
\end{enumerate}
\end{assumptions}
    
Note that the uniqueness of $\rootpressure$ in \ref{assumption:unique zero} is guaranteed by Lemma~\ref{lem:pressure function:monotonicity and uniqueness of zero}. Furthermore, it follows from Theorem~\ref{thm:properties of equilibrium state}~\ref{item:thm:properties of equilibrium state:derivative} and Definition~\ref{def:eventually positive functions} that 
\[
    \alpha = \int \! \potential \,\mathrm{d}\mu_{-\rootpressure \potential} > 0.
\]
For a pair of $f$ in \ref{assumption:expanding Thurston map} and $\potential$ in \ref{assumption:holder potential}, we will say that a quantity depends on $f$ and $\potential$ if it depends on $\rootpressure$.

Observe that by Lemma~\ref{lem:invariant_Jordan_curve not join opposite sides}, for each $f$ satisfying (1), there exists at least one Jordan curve $\mathcal{C}$ satisfying (2). 
Since for a fixed $f$, the number $n_{\mathcal{C}}$ is uniquely determined by $\mathcal{C}$ in $(2)$, in the remaining part of the paper we will say that a quantity depends on $\mathcal{C}$ even if it also depends on $n_{\mathcal{C}}$.

Recall that the expansion factor $\Lambda$ of a visual metric $d$ on $S^2$ for $f$ is uniquely determined by $d$ and $f$. We will say that a quantity depends on $f$ and $d$ if it depends on $\Lambda$.

In the discussion below, depending on the conditions we will need, we will sometimes say ``Let $f$, $\mathcal{C}$, $d$, $\phi$ satisfy the Assumptions.'', and sometimes say ``Let $f$ and $\mathcal{C}$ satisfy the Assumptions.'', etc.

% \begin{rmk}
%     While Assumption~\ref{assumption:Jordan curve} only requires the Jordan curve $\mathcal{C}$ to be invariant under an iterate $f^{n_{\mathcal{C}}}$, the technical results in Section~\ref{sec:Pressure function and partition function estimates} are primarily developed for the case where $f(\mathcal{C}) \subseteq \mathcal{C}$ (i.e., $n_{\mathcal{C}}=1$).
%     This simplification is justified because, as we will see in the proof of the main theorem (Section~\ref{sec:Proof of the main theorem}), we can always work with a suitable iterate $F = f^N$ (where $N$ is a multiple of $n_{\mathcal{C}}$) for which the curve is invariant.
% \end{rmk} 

% debug % Assumptions
% !TEX root = ../main.tex

\section{Pressure function and partition function estimates}
\label{sec:Pressure function and partition function estimates}

In this section, we employ thermodynamic formalism and Ruelle operators to study the dynamics of expanding Thurston maps and derive decay estimates needed for the main theorem.
We first investigate some differential and analytic properties of the topological pressure function in Subsection~\ref{sub:The topological pressure function}, then in Subsection~\ref{sub:Decay estimates of the partition function} we establish several decay estimates associated with the Ruelle operators and partition functions.

\subsection{The topological pressure function}%
\label{sub:The topological pressure function}

We first introduce some terminology and then discuss properties of the topological pressure function.

% Let $f \colon S^2 \mapping S^2$ be an expanding Thurston map.
% By \cite[Proposition~7.3]{li2024prime:split} and \cite[Theorem~F]{li2024prime:dirichlet}, if $\phi$ satisfies the strong non-integrability condition (Definition~\ref{def:strong non-integrability condition}), then $\phi$ is not cohomologous to a constant in $C(S^2)$.

The following proposition establishes the analyticity and second derivatives of the topological pressure function in suitable function directions by using the symbolic coding constructed in Proposition~\ref{prop:one-sided subshift of finite type associated with expanding Thurston map} and classical results from symbolic dynamics.

\begin{proposition}    \label{prop:property of pressure function and derivative}
    Let $f$ and $d$ satisfy the Assumptions in Section~\ref{sec:The Assumptions}. 
    Let $\potential$, $u$, and $v$ be \holder continuous functions on $S^2$ with respect to $d$.
    Let $\mu_{\potential}$ be the unique equilibrium state for $f$ and $\potential$.
    Then the following statements hold:
    \begin{enumerate}
        \smallskip

        \item     \label{item:prop:property of pressure function and derivative:analytic}
            The function $P(f, \potential + \cdot \, u) \colon \real \mapping \real$ is real-analytic.
            Moreover, there exists an open set $W \subseteq \cx$ with $\real \subseteq W$ such that $P(f, \potential + \cdot \, u)$ extends to a complex-analytic function on $W$.

        \smallskip

        \item     \label{item:prop:property of pressure function and derivative:second derivative}
            \begin{equation} \label{eq:item:prop:property of pressure function and derivative:second derivative:second derivative of pressure function}
                \frac{\partial^2}{\partial s \partial t} P(f, \potential + t u + s v) \Big|_{t = s = 0} = \lim_{n \to +\infty} \frac{1}{n} \int \! \parentheses[\bigg]{ S_{n}u - n \int \! u \,\mathrm{d}\mu_{\potential} } \parentheses[\bigg]{ S_{n}v - n \int \! v \,\mathrm{d}\mu_{\potential} }  \,\mathrm{d} \mu_{\potential}.
            \end{equation}
            In particular, \[
                \sigma^{2}_{\mu_{\potential}}(u) \define
                \frac{\mathrm{d}^2}{\mathrm{d} t^2} P(f, \potential + t u) \Big|_{t = 0} = \lim_{n \to +\infty} \frac{1}{n} \int \! \parentheses[\bigg]{ S_{n}u - n\int \! u \,\mathrm{d}\mu_{\potential} }^{2} \,\mathrm{d} \mu_{\potential}.
            \]
            Moreover, $\sigma^{2}_{\mu_{\potential}}(u) = 0$ if and only if there exists $w \in C(S^2)$ such that $u = \int \! u \,\mathrm{d}\mu_{\potential} + w \circ f - w$.
    \end{enumerate}
\end{proposition}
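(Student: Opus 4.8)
The plan is to reduce everything to the topologically mixing one-sided subshift of finite type $\parentheses[\big]{ \Sigma^{+}_{A_{\vartriangle}}, \sigma_{A_{\vartriangle}} }$ furnished by Proposition~\ref{prop:one-sided subshift of finite type associated with expanding Thurston map}, where the analytic theory of the Ruelle operator is classical, and then to transport the conclusions back to $S^2$ via the pressure identity $P(\sigma_{A_{\vartriangle}}, \psi \circ \pi_{\vartriangle}) = P(f, \psi)$ and Lemma~\ref{lem:tile symbolic coding preserves entropy of equilibrium state}. Concretely, set $\Psi \define \potential \circ \pi_{\vartriangle}$, $U \define u \circ \pi_{\vartriangle}$, and $V \define v \circ \pi_{\vartriangle}$; since $\pi_{\vartriangle}$ is \holder continuous and compositions of \holder continuous maps are \holder continuous, $\Psi$, $U$, $V$ are \holder continuous on $\parentheses[\big]{ \Sigma^{+}_{A_{\vartriangle}}, \metriconshiftspace }$ for a suitable exponent. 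By Proposition~\ref{prop:one-sided subshift of finite type associated with expanding Thurston map} and Remark~\ref{rem:primitivity of transition matrix for expanding Thurston map}, this subshift is topologically mixing with primitive transition matrix, so the Ruelle--Perron--Frobenius theorem applies on the Banach space of \holder continuous functions.

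For statement~\ref{item:prop:property of pressure function and derivative:analytic} I would use analytic perturbation theory of the leading eigenvalue. For real $t$, the Ruelle operator $\mathcal{L}_{\Psi + t U}$ has a simple leading eigenvalue $e^{P(\sigma_{A_{\vartriangle}}, \Psi + t U)}$, separated from the rest of its spectrum by a gap; Kato's perturbation theory then extends this eigenvalue to a nowhere-vanishing complex-analytic function $\lambda(\cdot)$ on an open set $W \subseteq \cx$ with $\real \subseteq W$, and $\log \lambda(\cdot)$ is the required complex-analytic extension of $t \mapsto P(\sigma_{A_{\vartriangle}}, \Psi + t U) = P(f, \potential + t u)$ (this is the mechanism behind the complex Ruelle--Perron--Frobenius theorem \cite{pollicott1984complex}). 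Real-analyticity on $\real$ is just the restriction of this extension.

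For statement~\ref{item:prop:property of pressure function and derivative:second derivative} I would start from the classical version of \eqref{eq:item:prop:property of pressure function and derivative:second derivative:second derivative of pressure function} on the subshift, obtained by differentiating $\log \lambda$ twice and invoking the exponential decay of correlations for the Gibbs/equilibrium measure $\widetilde{\mu} \in \mathcal{M}\parentheses[\big]{ \Sigma^{+}_{A_{\vartriangle}}, \sigma_{A_{\vartriangle}} }$ of $\Psi$: the identity holds with $f$, $\potential$, $u$, $v$, $\mu_{\potential}$ replaced by $\sigma_{A_{\vartriangle}}$, $\Psi$, $U$, $V$, $\widetilde{\mu}$. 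To move this to $S^2$: the semi-conjugacy relation $\pi_{\vartriangle} \circ \sigma_{A_{\vartriangle}} = f \circ \pi_{\vartriangle}$ gives $S_{n} U = \parentheses[\big]{ S^{f}_{n} u } \circ \pi_{\vartriangle}$, and likewise $S_{n} V = \parentheses[\big]{ S^{f}_{n} v } \circ \pi_{\vartriangle}$; by Lemma~\ref{lem:tile symbolic coding preserves entropy of equilibrium state}, $\mu_{\potential} = (\pi_{\vartriangle})_{*}\widetilde{\mu}$ is the equilibrium state for $f$ and $\potential$, and $\pi_{\vartriangle}$ is a measurable isomorphism between $\parentheses[\big]{ \Sigma^{+}_{A_{\vartriangle}}, \sigma_{A_{\vartriangle}}, \widetilde{\mu} }$ and $\parentheses[\big]{ S^{2}, f, \mu_{\potential} }$ modulo null sets. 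Together with $P(f, \potential + tu + sv) = P(\sigma_{A_{\vartriangle}}, \Psi + tU + sV)$, a change of variables under $\pi_{\vartriangle}$ yields \eqref{eq:item:prop:property of pressure function and derivative:second derivative:second derivative of pressure function}, and the displayed formula for $\sigma^{2}_{\mu_{\potential}}(u)$ is the case $v = u$.

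Finally, for the vanishing criterion I would use the classical fact (see e.g.\ \cite{baladi2000positive,pollicottZetaFunctionsPeriodic1990}) that on this topologically mixing subshift the asymptotic variance of $U$ under $\widetilde{\mu}$ vanishes if and only if $U$ is cohomologous to a constant in $C \parentheses[\big]{ \Sigma^{+}_{A_{\vartriangle}} }$ with respect to $\sigma_{A_{\vartriangle}}$; by the measurable isomorphism this is equivalent to $\sigma^{2}_{\mu_{\potential}}(u) = 0$. Then Lemma~\ref{lem:tile symbolic coding preserves entropy of equilibrium state} gives that $u \circ \pi_{\vartriangle}$ is cohomologous to a constant over $\sigma_{A_{\vartriangle}}$ if and only if $u$ is cohomologous to a constant in $C(S^{2})$ over $f$, say $u = C + w \circ f - w$ with $C \in \real$ and $w \in C(S^{2})$; integrating against the $f$-invariant measure $\mu_{\potential}$ forces $C = \int \! u \,\mathrm{d}\mu_{\potential}$, which is the asserted form. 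I expect the one genuinely non-routine point to be this last equivalence in the direction from a symbolic coboundary to a coboundary on $S^2$, but that is exactly what Lemma~\ref{lem:tile symbolic coding preserves entropy of equilibrium state} provides (its proof handling the delicate facts that $\widetilde{\mu}$ charges every nonempty open set and that an equilibrium state cohomologous to a constant is a measure of maximal entropy). The rest is bookkeeping: tracking the change of \holder exponent under $\pi_{\vartriangle}$ and checking that the cited symbolic-dynamics statements apply verbatim.
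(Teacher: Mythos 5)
Your overall strategy (code the dynamics by the tile subshift, use classical Ruelle operator theory there, and transfer back via the factor map and Lemma~\ref{lem:tile symbolic coding preserves entropy of equilibrium state}) is the same mechanism the paper uses, but your very first step is unjustified: Proposition~\ref{prop:one-sided subshift of finite type associated with expanding Thurston map} (and likewise Lemma~\ref{lem:tile symbolic coding preserves entropy of equilibrium state}) requires a Jordan curve $\mathcal{C}$ with $f(\mathcal{C}) \subseteq \mathcal{C}$ and $\operatorname{post} f \subseteq \mathcal{C}$, while the proposition being proved assumes no curve at all, and such $f$-invariant curves need not exist (the paper recalls Example~15.11 of Bonk--Meyer). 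The paper's proof therefore first passes to a high iterate $F = f^{K}$ admitting an $F$-invariant curve (Lemma~\ref{lem:invariant_Jordan_curve not join opposite sides}), replaces $\phi, u, v$ by $\Phi = S_{K}\phi$, $U = S_{K}u$, $V = S_{K}v$, and uses $P(F, \Phi + tU) = K\, P(f, \phi + tu)$ together with the identification $\mu_{F,\Phi} = \mu_{\phi}$ before invoking the symbolic results.

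This omission matters downstream. After the unavoidable passage to $F$, the symbolic second-derivative formula only gives the limit in \eqref{eq:item:prop:property of pressure function and derivative:second derivative:second derivative of pressure function} along the times $nK$ (Birkhoff sums $S_{nK}$), whereas the statement asserts the limit over all $n$; the paper closes this with an explicit interpolation argument (write $m = nK + r$, control the error by Cauchy--Schwarz using $\norm{S_{nK}\widetilde{u}}_{L^2(\mu_{\phi})} = O(\sqrt{n})$), and your plan contains no analogue of that step. Likewise, your transfer of the variance-vanishing criterion through Lemma~\ref{lem:tile symbolic coding preserves entropy of equilibrium state} would, once carried out for $F$, relate coboundaries of $S_{K}u$ over $F$ rather than of $u$ over $f$, which needs a further argument; the paper avoids this entirely by citing \cite[Theorem~1.2~(6)]{das2021thermodynamic} for the final equivalence. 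Aside from these points, your use of eigenvalue perturbation theory for part (1) and of the measurable isomorphism plus change of variables for the transfer of the variance formula is consistent with the paper's argument.
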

\begin{proof}
    We reduce the general case to one where the map possesses an invariant Jordan curve, thereby permitting the use of symbolic dynamics.
    By Lemma~\ref{lem:invariant_Jordan_curve not join opposite sides}, we may take a sufficiently high iterate $F \define f^K$ that admits an $F$-invariant Jordan curve $\mathcal{C} \subseteq S^2$ containing the postcritical set $\post{F} = \post{f}$. 
    The map $F$ is then an expanding Thurston map for which $d$ is a visual metric (cf.~\cite[Proposition~8.3~(v)]{bonk2017expanding}). 
    We fix this integer $K$, map $F$, and curve $\mathcal{C}$.

    Applying Proposition~\ref{prop:one-sided subshift of finite type associated with expanding Thurston map} to $F$ and $\mathcal{C}$, we denote the corresponding one-sided subshift of finite type by $(\Sigma^{+}, \sigma)$ and the factor map by $\pi \colon \Sigma^{+} \mapping S^2$. 
    Fix $\metricexpshiftspace \in (0, 1)$ and equip the space $\Sigma^{+}$ with the metric $\metriconshiftspace$ defined in \eqref{eq:def:metric on shift space}.
    Then $\pi \circ \sigma = F \circ \pi$, and $P(F, \psi) = P(\sigma, \psi \circ \pi)$ for each \holder continuous function $\psi \colon S^2 \mapping \real$ (with respect to $d$).
    Here the lifted potential $\psi \circ \pi \colon \Sigma^{+} \mapping \real$ is \holder continuous with respect to the metric $\metriconshiftspace$ on $\Sigma^{+}$.

    Denote $\Phi \define S_K^f \phi$, $U \define S_K^f u$, and $V \define S_K^f v$. 
    Then $\Phi$, $U$, and $V$ are \holder continuous with respect to $d$ since $f$ is Lipschitz continuous with respect to $d$ (cf.~\cite[Lemma~3.12]{li2017ergodic}).

    Let $\mu_{F,\Phi}$ be the unique equilibrium state for $F$ and $\Phi$.
    Since $P(F, \Phi) = K P(f, \phi)$ (cf.~ \cite[Theorem~9.8~(i)]{walters1982introduction}), it follows from $P_{\mu_{\phi}}(F, \Phi) = K P_{\mu_{\phi}}(f, \phi)$ and the uniqueness of the equilibrium state that $\mu_{F,\Phi} = \mu_{\phi}$. 

    \smallskip
    \ref{item:prop:property of pressure function and derivative:analytic}
    By \cite[Proposition~4.7]{pollicottZetaFunctionsPeriodic1990}, the function $P(\sigma, \Phi \circ \pi + \cdot\, U \circ \pi) \colon \real \mapping \real$ is real-analytic.
    Since $P(F, \Phi + t U) = P(\sigma, \Phi \circ \pi + t U \circ \pi)$ and $P(F, \Phi + t U) = P\parentheses[\big]{ f^K, S_K^f(\phi + tu) } = K P(f, \phi + tu)$ for all $t \in \real$, we conclude that $t \mapsto P(f, \phi + t u)$ is real-analytic, and thus extends to a complex-analytic function on a neighborhood of $\real$.

    \smallskip
    \ref{item:prop:property of pressure function and derivative:second derivative}
    We first establish the formula \eqref{eq:item:prop:property of pressure function and derivative:second derivative:second derivative of pressure function}.
    By \cite[Theorem~1.22]{bowen1975equilibrium}, there exists a unique equilibrium state $\nu_{\Phi \circ \pi} \in \invmea[\Sigma^{+}][\sigma]$ for $\sigma$ and $\Phi \circ \pi$.
    Then it follows from \cite[Theorem~5.7.4]{przytycki2010conformal} that
    \begin{align*}
        &\frac{\partial^2}{\partial s \partial t} P(\sigma, \Phi \circ \pi + t U \circ \pi + s V \circ \pi) \Big|_{t = s = 0} \\
        &\qquad = \lim_{n \to +\infty} \frac{1}{n} \int \! \parentheses[\bigg]{ S_{n}^{\sigma}(U \circ \pi) - n \int \! U \circ \pi \,\mathrm{d}\nu_{\Phi \circ \pi} } \parentheses[\bigg]{ S_{n}^{\sigma}(V \circ \pi) - n \int \! V \circ \pi \,\mathrm{d}\nu_{\Phi \circ \pi} }  \,\mathrm{d} \nu_{\Phi \circ \pi}.
    \end{align*}
    Denote $\mu \define \pi_{*}\nu_{\Phi \circ \pi} \in \invmea[S^2][F]$.
    Then by Lemma~\ref{lem:tile symbolic coding preserves entropy of equilibrium state}, we have $\mu = \mu_{F,\Phi} = \mu_{\potential}$.
    Since $P(f, \phi + tu + sv) = \frac{1}{K} P(F, \Phi + tU + sV)$ and $\int \! \varphi \,\mathrm{d}\mu = \int \! \varphi \circ \pi \,\mathrm{d} \nu_{\potential \circ \pi}$ for all $\varphi \in C(S^{2})$, we get
    \[
        \frac{\partial^2}{\partial s \partial t} P(f, \phi + tu + sv) \Big|_{t = s = 0} 
        = \frac{1}{K} \lim_{n \to +\infty} \frac{1}{n} \int \! \parentheses[\big]{ S_{n}^F \widetilde{U} } \parentheses[\big]{ S_{n}^F \widetilde{V} }  \,\mathrm{d} \mu_{\phi}
        = \lim_{n \to +\infty} \frac{1}{n K} \int \! \parentheses[\big]{ S_{nK}^f \widetilde{u} } \parentheses[\big]{ S_{nK}^f \widetilde{v} }  \,\mathrm{d} \mu_{\potential},
    \]
    where $\widetilde{U} \define U - \int \! U \,\mathrm{d}\mu_{\phi}$ and $\widetilde{u} \define u - \int \! u \,\mathrm{d}\mu_{\phi}$ (and similarly for $\widetilde{V}$ and $\widetilde{v}$).
    To establish~\eqref{eq:item:prop:property of pressure function and derivative:second derivative:second derivative of pressure function}, it suffices to verify that $\lim_{m \to +\infty} \frac{1}{m} \int \parentheses[\big]{ S_m^f \widetilde{u} } \parentheses[\big]{ S_m^f \widetilde{v} } \,\mathrm{d}\mu_{\phi}$ exists.
    
    For each $m \in \n$, we write $m = nK + r$ with $r \in \atob{0}{K - 1}$.
    We write $S_m^f \widetilde{u} = S_{nK}^f \widetilde{u} + E_{m}(\widetilde{u})$, where the error term $E_{m}(\widetilde{u}) \coloneqq S_r^f (\widetilde{u} \circ f^{nK})$ satisfies $\uniformnorm{E_{m}(\widetilde{u})} \leqslant K \uniformnorm{\widetilde{u}}$.
    Then by the Cauchy--Schwarz inequality, we obtain
    \begin{align*}
        &\abs[\bigg]{ \frac{1}{m} \int \parentheses[\big]{ S_m^f \widetilde{u} } \parentheses[\big]{ S_m^f \widetilde{v} } \,\mathrm{d}\mu_{\phi} - \frac{1}{m} \int \parentheses[\big]{ S_{nK}^f \widetilde{u} } \parentheses[\big]{ S_{nK}^f \widetilde{v} } \,\mathrm{d}\mu_{\phi} }  \\
        &\peq= \frac{1}{m} \abs[\bigg]{ \int E_m(\widetilde{v}) S_{nK}^f \widetilde{u} \,\mathrm{d}\mu_{\phi} + \int E_m(\widetilde{u}) S_{nK}^f \widetilde{v} \,\mathrm{d}\mu_{\phi} + \int E_m(\widetilde{u}) E_m(\widetilde{v}) \,\mathrm{d}\mu_{\phi} } \\
        &\peq\leqslant \frac{1}{m} \parentheses[\Big]{ K \uniformnorm{\widetilde{v}} \norm[\big]{S_{nK}^f \widetilde{u}}_{L^2(\mu_{\phi})} + K \uniformnorm{\widetilde{u}} \norm[\big]{S_{nK}^f \widetilde{v}}_{L^2(\mu_{\phi})} + K^2 \uniformnorm{\widetilde{u}} \uniformnorm{\widetilde{v}} }.
    \end{align*}    
    Since $\lim_{n \to +\infty} \frac{1}{nK} \norm[\big]{S_{nK}^f \widetilde{u}}^2_{L^2(\mu_{\phi})} = \frac{\mathrm{d}^2}{\mathrm{d} t^2} P(f, \potential + t u) \Big|_{t = 0}$, we have $\norm[\big]{S_{nK}^f \widetilde{u}}_{L^2(\mu_{\phi})} = O(\sqrt{n})$ as $n \to \infty$ (and similarly for $\widetilde{v}$).
    Since $\lim_{m \to \infty} m/(nK) = 1$, we conclude that
    \[
        \lim_{m \to +\infty} \frac{1}{m} \int \parentheses[\big]{ S_m^f \widetilde{u} } \parentheses[\big]{ S_m^f \widetilde{v} } \,\mathrm{d}\mu_{\phi} = \lim_{n \to +\infty} \frac{1}{nK} \int \parentheses[\big]{ S_{nK}^f \widetilde{u} } \parentheses[\big]{ S_{nK}^f \widetilde{v} } \,\mathrm{d}\mu_{\phi}.
    \]

    Finally, the last statement follows from \cite[Theorem~1.2~(6)]{das2021thermodynamic}.
\end{proof}

Theorem~\ref{prop:property of pressure function and derivative} implies the analyticity and expansion of the pressure function with respect to the imaginary part of the potential. 

\begin{lemma} \label{lem:Taylor extension of pressure function in imaginary part}
    Let $f$, $d$, $\potential$, $\rootpressure$, $\alpha$ satisfy the Assumptions in Section~\ref{sec:The Assumptions}.
    Let $\mu_{-\rootpressure \potential}$ be the unique equilibrium state for the map $f$ and the potential $- \rootpressure \potential$. 
    We assume that $\phi$ is not cohomologous to a constant in $C(S^2)$.
    Denote $\normpotential \define \potential - \alpha$.
    Then there exist $\delta > 0$ and $C_{\delta} \geqslant 0$ such that the function $t \mapsto P(f, (-\rootpressure + \imaginary t) \normpotential) \colon (-\delta, \delta) \mapping \cx$ is complex-analytic, and for each $t \in (-\delta, \delta)$,
    \begin{equation}    \label{eq:lem:Taylor extension of pressure function in imaginary part:Taylor expansion of pressure function}
        \abs[\Big]{ P\parentheses[\big]{ f, (-\rootpressure + \imaginary t)\normpotential }  - P(f, -\rootpressure \normpotential) + \frac{1}{2}\sigma^{2} t^{2} } \leqslant C_{\delta}\abs{ t }^{3},
    \end{equation}
    where $\sigma > 0$ is given by
    \begin{equation}    \label{eq:def:variance of potential}
    \sigma^2 \define \frac{\mathrm{d}^2}{\mathrm{d} t^2} P(f, t \potential) \big|_{t = -\rootpressure } = \lim_{n \to +\infty} \frac{1}{n} \int \! ( S_{n}\potential - n \alpha )^{2} \,\mathrm{d} \mu_{-\rootpressure \potential}.
\end{equation}
\end{lemma}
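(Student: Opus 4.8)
The plan is to read this lemma off Proposition~\ref{prop:property of pressure function and derivative}, regarding $t \mapsto P\parentheses[\big]{ f, (-\rootpressure + \imaginary t)\normpotential }$ as the restriction to a translate of the imaginary axis of the complex-analytic extension of the pressure function, and then Taylor-expanding that analytic function near $t = 0$ after computing its first two coefficients by hand.

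\emph{Setting up analyticity.} First I would apply Proposition~\ref{prop:property of pressure function and derivative}~\ref{item:prop:property of pressure function and derivative:analytic} with base potential $0$ and direction $u \define \normpotential$ (which is \holder continuous since $\potential$ is), obtaining an open set $W \subseteq \cx$ with $\real \subseteq W$ and a complex-analytic function $G$ on $W$ that agrees with $s \mapsto P(f, s\normpotential)$ on $\real$. Since $-\rootpressure \in \real \subseteq W$ and $W$ is open, the set of $t \in \cx$ with $-\rootpressure + \imaginary t \in W$ is an open neighbourhood of $0$, hence contains a disk $\abs{t} < \rho$ for some $\rho > 0$; composing $G$ with the (entire) map $t \mapsto -\rootpressure + \imaginary t$, the function $h(t) \define G(-\rootpressure + \imaginary t) = P\parentheses[\big]{ f, (-\rootpressure + \imaginary t)\normpotential }$ is complex-analytic on $\abs{t} < \rho$. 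This gives the analyticity claim (with $\delta \define \rho/2$, say).

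\emph{Computing the Taylor coefficients.} Write $h(t) = \sum_{k \geqslant 0} a_k t^k$ on $\abs{t} < \rho$. Clearly $a_0 = G(-\rootpressure) = P(f, -\rootpressure\normpotential)$, and by the chain rule $a_1 = h'(0) = \imaginary G'(-\rootpressure)$ and $a_2 = \frac{1}{2} h''(0) = -\frac{1}{2} G''(-\rootpressure)$. For $G'(-\rootpressure)$ I would invoke Theorem~\ref{thm:properties of equilibrium state}~\ref{item:thm:properties of equilibrium state:derivative}: $G'(-\rootpressure) = \int \normpotential \,\mathrm{d}\mu_{-\rootpressure\normpotential}$; since $-\rootpressure\normpotential$ and $-\rootpressure\potential$ differ by the constant $\rootpressure\alpha$ they have the same equilibrium state, and $\int \potential \,\mathrm{d}\mu_{-\rootpressure\potential} = \alpha$ (as recorded after the Assumptions), so $G'(-\rootpressure) = \int (\potential - \alpha) \,\mathrm{d}\mu_{-\rootpressure\potential} = 0$ and $a_1 = 0$. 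For $G''(-\rootpressure)$ I would apply Proposition~\ref{prop:property of pressure function and derivative}~\ref{item:prop:property of pressure function and derivative:second derivative} with base potential $-\rootpressure\normpotential$ and direction $\normpotential$, which yields $G''(-\rootpressure) = \sigma^2_{\mu_{-\rootpressure\normpotential}}(\normpotential) = \lim_{n \to +\infty} \frac{1}{n} \int (S_n\normpotential)^2 \,\mathrm{d}\mu_{-\rootpressure\potential}$, using $\int\normpotential\,\mathrm{d}\mu_{-\rootpressure\potential} = 0$ once more; since $S_n\normpotential = S_n\potential - n\alpha$, this is precisely $\sigma^2$ of~\eqref{eq:def:variance of potential}, so $a_2 = -\sigma^2/2$. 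The identity $\sigma^2 = \frac{\mathrm{d}^2}{\mathrm{d}t^2}P(f,t\potential)\big|_{t=-\rootpressure} = \lim_{n\to+\infty}\frac{1}{n}\int(S_n\potential - n\alpha)^2\,\mathrm{d}\mu_{-\rootpressure\potential}$ asserted in the statement I would verify the same way, now applying Proposition~\ref{prop:property of pressure function and derivative}~\ref{item:prop:property of pressure function and derivative:second derivative} with base potential $-\rootpressure\potential$ and direction $\potential$ and using $\int\potential\,\mathrm{d}\mu_{-\rootpressure\potential} = \alpha$.

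\emph{Positivity and the remainder bound.} That $\sigma^2 > 0$ follows from the cohomology dichotomy in Proposition~\ref{prop:property of pressure function and derivative}~\ref{item:prop:property of pressure function and derivative:second derivative}: $\sigma^2 = \sigma^2_{\mu_{-\rootpressure\potential}}(\normpotential) = 0$ would force $\normpotential = \int\normpotential\,\mathrm{d}\mu_{-\rootpressure\potential} + w\circ f - w = w\circ f - w$ for some $w \in C(S^2)$, i.e.\ $\potential = \alpha + w\circ f - w$, contradicting the hypothesis that $\potential$ is not cohomologous to a constant in $C(S^2)$. For the remainder estimate I would set $C_\delta \define (2/\rho)^3 \sum_{k \geqslant 0} \abs{a_k} (\rho/2)^k$, finite because $\sum_k \abs{a_k} r^k < +\infty$ for every $r < \rho$; then for $\abs{t} \leqslant \delta = \rho/2$,
\[
    \abs[\Big]{ h(t) - a_0 - a_2 t^2 } = \abs[\Big]{ \sum_{k \geqslant 3} a_k t^k } \leqslant \abs{t}^3 \sum_{k \geqslant 3} \abs{a_k} (\rho/2)^{k-3} \leqslant C_\delta \abs{t}^3,
\]
which is exactly \eqref{eq:lem:Taylor extension of pressure function in imaginary part:Taylor expansion of pressure function} once the values of $a_0$ and $a_2$ are substituted.

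I do not expect a genuine obstacle: the analytic heavy lifting is already contained in Proposition~\ref{prop:property of pressure function and derivative}, and what remains is bookkeeping. The one point that needs care is the normalization --- subtracting $\alpha$ from $\potential$ is precisely what annihilates the linear Taylor coefficient (so the expansion truly begins at order two) --- together with checking that the coefficient of $t^2$ agrees with each of the two expressions for $\sigma^2$ in the statement; both reductions rest on the facts that adding a constant to a potential does not change its equilibrium state and that $\alpha = \int\potential\,\mathrm{d}\mu_{-\rootpressure\potential}$.
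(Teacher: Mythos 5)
Your proposal is correct and follows essentially the same route as the paper: analyticity of the pressure in the complex parameter via Proposition~\ref{prop:property of pressure function and derivative}~\ref{item:prop:property of pressure function and derivative:analytic}, Taylor expansion at $-\rootpressure$ with the first derivative $\alpha$ (respectively $\int\normpotential\,\mathrm{d}\mu_{-\rootpressure\potential}=0$) killing the linear term, and the second-derivative formula identifying $\sigma^2$. The only cosmetic difference is that you expand $P(f,\cdot\,\normpotential)$ directly, whereas the paper expands $P(f,\cdot\,\potential)$ and then translates via $P(f,s\potential)=P(f,s\normpotential)+s\alpha$; your explicit verification of $\sigma^2>0$ via the cohomology dichotomy is a welcome addition that the paper leaves implicit.
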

\begin{proof}
    By Proposition~\ref{prop:property of pressure function and derivative}~\ref{item:prop:property of pressure function and derivative:analytic}, there exists $\delta_{1} > 0$ with $-\rootpressure \in B(-\rootpressure, \delta_{1}) \subseteq \cx$ such that $P(f, \cdot \, \potential) \colon B(-\rootpressure, \delta_{1}) \mapping \cx$ is complex-analytic.
    Recall that $\alpha = \frac{\mathrm{d}}{\mathrm{d} t} P(f, t \potential) |_{t = -\rootpressure}$.  
    Then it follows from Taylor's formula and \eqref{eq:def:variance of potential} that there exist constants $\delta \in (0, \delta_{1}]$ and $C_{\delta} \geqslant 0$ such that for each $t \in (-\delta, \delta)$,
    \[
         \abs[\Big]{ P(f, (-\rootpressure + \imaginary t)\potential) - \parentheses[\Big]{ P(f, -\rootpressure \potential) + \imaginary \alpha t - \frac{1}{2}\sigma^{2} t^{2} } } 
        \leqslant C_{\delta}\abs{ t }^{3}.
    \]
    This implies \eqref{eq:lem:Taylor extension of pressure function in imaginary part:Taylor expansion of pressure function} since $P(f, (-\rootpressure + \imaginary t)\potential) = P\parentheses[\big]{ f, (-\rootpressure + \imaginary t)\normpotential }  + (-\rootpressure + \imaginary t) \alpha$ and $P(f, -\rootpressure \potential) = P(f, -\rootpressure \normpotential) - \rootpressure \alpha$.
\end{proof}

\subsection{Decay estimates of the partition function}%
\label{sub:Decay estimates of the partition function}

In this subsection, we investigate partition functions and obtain several decay estimates to prove the main theorem.
Our approach is inspired by the recent work \cite{li2024prime:dirichlet,li2024prime:split,li2024prime:non-integrability} on prime orbit theorems for expanding Thurston maps.

% Our approach is inspired by Dolgopyat's landmark work \cite{dolgopyat1998decay} on exponential mixing of Anosov flows, as well as Li's and Zheng's recent work \cite{li2024prime:dirichlet,li2024prime:split,li2024prime:non-integrability} on prime orbit theorems for expanding Thurston maps.
% {\color{red}(... To be continued ...)}

\def\sft{\Sigma^{+}_{A}}    \def\sopt{\sigma_{A}}  
\def\sequ#1{\{ #1_{i} \}_{i \in \n_0}}

The following result proved in \cite[Proposition~6.1]{li2024prime:split} is a version of Ruelle's estimate adapted to our setting.
The idea of the proof originated from Ruelle \cite{ruelle1990extension}.

\begin{proposition}[Z.~Li \& T.~Zheng \cite{li2024prime:split}]    \label{prop:Ruelle's estimate for the shift associated with expanding Thurston map}
    Let $f$, $\mathcal{C}$, $d$, $\Lambda$, $\holderexp$, $\potential$, $\rootpressure$ satisfy the Assumptions in Section~\ref{sec:The Assumptions}.
    We assume that $f(\mathcal{C}) \subseteq \mathcal{C}$ and no $1$-tile in $\Tile{1}$ joins opposite sides of $\mathcal{C}$.
    Let $\parentheses[\big]{ \Sigma^{+}_{A_{\vartriangle}}, \sigma_{A_{\vartriangle}} }$ be the one-sided subshift of finite type associated to $f$ and $\mathcal{C}$ defined in Proposition~\ref{prop:one-sided subshift of finite type associated with expanding Thurston map}, and let $\pi_{\vartriangle} \colon \Sigma^{+}_{A_{\vartriangle}} \mapping S^2$ be defined in \eqref{eq:prop:one-sided subshift of finite type associated with expanding Thurston map:factor map}.
    
    Then for each $\delta > 0$ there  exists a constant $D_{\delta} > 0$ such that for all integers $n \geqslant 2$ and $k \in \n$, we have
    \begin{equation}    \label{eq:prop:Ruelle's estimate for the shift associated with expanding Thurston map:holder norm}
        \sum_{X^k \in \Tile{k}} \max_{\colour \in \colours} \norm[\big]{\mathcal{L}^{(k)}_{- s \potential, \colour, X^k}(\indicator{X^k})}_{C^{0, \holderexp}}
        \leqslant D_{\delta} \abs{\Im{s}} \Lambda^{-\holderexp} \myexp{k(\delta + P(f, -\Re{s}\potential))}.
    \end{equation}
    and
    \begin{equation}    \label{eq:prop:Ruelle's estimate for the shift associated with expanding Thurston map:uniform bound for difference}
        \begin{split}
            &\abs[\bigg]{ \shiftpartifun{s}[n][\sigma_{A_{\vartriangle}}][-\potential\circ\pi_{\vartriangle}] - \sum_{\colour \in \colours} \sum_{\substack{X \in \Tile{1} \\ X \subseteq \colourtile}} \mathcal{L}^{(n)}_{-s \potential, \colour, X}(\indicator{X})(x_{X}) }  \\
            &\qquad \leqslant D_{\delta} \abs{ \Im{s} } \sum_{m = 2}^{n} \sploptnorm{ \mathbb{L}^{n - m}_{-s \potential} } \parentheses[\big]{ \Lambda^{-\holderexp} \myexp{\delta + P(f, -\Re{s}\potential)} }^{m}
        \end{split}
    \end{equation}
    for any choice of a point $x_{X} \in \inte{X}$ for each $X \in \Tile{1}$, and for all $s \in \cx$ with $\abs{ \Im{s} } \geqslant 2\rootpressure + 1$ and $\abs{ \Re{s} - \rootpressure } \leqslant \rootpressure$, where $\shiftpartifun{s}[n][\sigma_{A_{\vartriangle}}][-\potential\circ\pi_{\vartriangle}]$ is defined in Definition~\ref{def:partition function}.
\end{proposition}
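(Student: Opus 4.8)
The plan is to reduce the estimate to a statement purely about the tile-based subshift of finite type $\parentheses[\big]{ \Sigma^{+}_{A_{\vartriangle}}, \sigma_{A_{\vartriangle}} }$, and then run the classical Ruelle argument (as in \cite{ruelle1990extension}) with the split Ruelle operators $\mathbb{L}_{-s\potential}$ playing the role of the transfer operator. The first step is bookkeeping: since $f(\mathcal{C}) \subseteq \mathcal{C}$ and no $1$-tile joins opposite sides, the coding $\pi_{\vartriangle}$ of Proposition~\ref{prop:one-sided subshift of finite type associated with expanding Thurston map} is available, and periodic points of $\sigma_{A_{\vartriangle}}$ of period $n$ are in essentially bijective correspondence with admissible cycles $X_{0}, X_{1}, \dots, X_{n-1}$ of $1$-tiles with $f(X_{i}) \supseteq X_{i+1}$ (indices mod $n$). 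For such a cycle, the inverse branch $\parentheses[\big]{ f^{n}|_{X^{n}} }^{-1}$ along the corresponding $n$-tile $X^{n} \subseteq X_{0}$ has a unique fixed point, and the Birkhoff sum $S_{n}\parentheses[\big]{ -\potential \circ \pi_{\vartriangle} }$ at the periodic point equals $S_{n}(-\potential)$ evaluated there. Thus the partition function $\shiftpartifun{s}[n][\sigma_{A_{\vartriangle}}][-\potential\circ\pi_{\vartriangle}]$ can be rewritten, up to controlled error coming from points on $\bigcup_{i} f^{-i}(\mathcal{C})$, as a sum over $n$-tiles of $\myexp[\big]{ S_{n}(-s\potential) }$ evaluated at a point of each $n$-tile. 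The key algebraic identity is that $\sum_{X^{n} \in \Tile{n}} \mathcal{L}^{(n)}_{-s\potential, \colour, X^{n}}(\indicator{X^{n}})(y)$ is exactly a sum of such exponential weights over the inverse branches landing in $X^{0}_{\colour}$, and iterating the split Ruelle operator $n - m$ more times from level $m$ reproduces this via the semigroup/composition property of the $\mathcal{L}^{(k)}_{\psi, \colour, \ccolour}$.

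With this translation in hand, the second step is the telescoping decomposition underlying both displayed inequalities. For \eqref{eq:prop:Ruelle's estimate for the shift associated with expanding Thurston map:holder norm}, I would estimate the $C^{0,\holderexp}$-norm of $\mathcal{L}^{(k)}_{-s\potential, \colour, X^{k}}(\indicator{X^{k}})$ directly: its sup norm is bounded by $\myexp[\big]{ \sup S_{k}(-\Re{s}\potential) }$ over the single inverse branch, and summing over all $k$-tiles yields a factor comparable to $N_{d}\parentheses[\big]{ f, -\Re{s}\potential, \varepsilon, k }$, hence the $\myexp{k(\delta + P(f, -\Re{s}\potential))}$ factor once $\delta > 0$ absorbs the subexponential discrepancies (using the bounded distortion / Gibbs-type property of Birkhoff sums along tiles for expanding Thurston maps, cf.\ the visual-metric estimates in \cite{bonk2017expanding} and \cite{li2017ergodic}). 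The $\holderexp$-seminorm term gains the extra factor $\abs{\Im{s}}\Lambda^{-\holderexp}$: differentiating the exponential weight along an inverse branch produces a factor $\abs{s}\abs[\big]{ \nabla S_{k}\potential }$ which, because the $k$-th inverse branch contracts by roughly $\Lambda^{-k}$ and $\potential$ is $\holderexp$-\holder, is controlled by $\abs{\Im{s}}\Lambda^{-\holderexp k}$ times the sup-norm bound; absorbing $\Lambda^{-\holderexp(k-1)}$ into the $\myexp{k\delta}$ slack (for $k$ large) and keeping one clean $\Lambda^{-\holderexp}$ gives the stated form, and small $k$ are handled by enlarging $D_{\delta}$. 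For \eqref{eq:prop:Ruelle's estimate for the shift associated with expanding Thurston map:uniform bound for difference}, I would write $\shiftpartifun{s}[n][\sigma_{A_{\vartriangle}}][-\potential\circ\pi_{\vartriangle}]$ as a telescoping sum: the leading term is $\sum_{\colour}\sum_{X \subseteq X^{0}_{\colour}} \mathcal{L}^{(n)}_{-s\potential,\colour,X}(\indicator{X})(x_{X})$, and the difference between consecutive approximations at level $m$ is governed by applying $\mathbb{L}^{n-m}_{-s\potential}$ to the level-$m$ building blocks $\mathcal{L}^{(m)}_{-s\potential,\colour,X^{m}}(\indicator{X^{m}})$; the operator-norm bound $\sploptnorm{\mathbb{L}^{n-m}_{-s\potential}}$ controls the propagation and \eqref{eq:prop:Ruelle's estimate for the shift associated with expanding Thurston map:holder norm} controls the input, yielding the geometric factor $\parentheses[\big]{ \Lambda^{-\holderexp}\myexp{\delta + P(f,-\Re{s}\potential)} }^{m}$.

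The third step is to make the error from points lying on the exceptional set $\bigcup_{i\in\n_{0}} f^{-i}(\mathcal{C})$ negligible. Here I would use that $\pi_{\vartriangle}$ is injective off $\pi_{\vartriangle}^{-1}\parentheses[\big]{ \bigcup_{i} f^{-i}(\mathcal{C}) }$ (Proposition~\ref{prop:one-sided subshift of finite type associated with expanding Thurston map}) and that the number of $n$-periodic points of $\sigma_{A_{\vartriangle}}$ mapping under $\pi_{\vartriangle}$ into this set grows at a strictly smaller exponential rate than $\myexp{nP(f,-\Re{s}\potential)}$, using the Markov structure on the invariant curve — essentially the edge subshifts $\parentheses[\big]{ \Sigma^{+}_{\ematrix}, \sigma_{\ematrix} }$ of Proposition~\ref{prop:edge_symbolic_factors_properties} have smaller topological entropy/pressure than the full tile subshift, since no $1$-tile joins opposite sides of $\mathcal{C}$. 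This discrepancy is then swallowed by the $\myexp{m\delta}$ or $\myexp{n\delta}$ slack. The uniformity over $s$ in the strip $\abs{\Im{s}} \geqslant 2\rootpressure+1$, $\abs{\Re{s}-\rootpressure}\leqslant\rootpressure$ is automatic because all the constants above depend only on $\delta$, on the visual-metric data $\parentheses[\big]{ \Lambda, \holderexp }$, on $\uniformnorm{\potential}$ and $\abs{\potential}_{\holderexp}$, and on $\mathcal{C}$; the factor $\abs{\Im{s}}$ appears linearly precisely from the single differentiation of the exponential weight, and $\abs{\Re{s}}$ is confined to a compact interval so $\myexp{\pm k\rootpressure\uniformnorm{\potential}}$-type factors are bounded.

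I expect the main obstacle to be the second step — correctly organizing the telescoping identity so that the split structure (the $2\times 2$ matrix of partial operators $\mathcal{L}^{(1)}_{-s\potential,\colour,\ccolour}$ coupling black and white $0$-tiles) is respected at every level, and verifying that the composition $\mathbb{L}^{n-m}_{-s\potential} \circ \parentheses[\big]{\text{level-}m\text{ blocks}}$ genuinely reconstructs the level-$n$ sum. The subtlety is that inverse branches of $f^{n}$ do not simply factor as "first $f^{m}$, then $f^{n-m}$" on the nose because of the coloring constraint \eqref{eq:def:1-tile_definition_for_edge_symbolic_factors}; one must track which $0$-tile each intermediate inverse image lands in. Once that bookkeeping is set up cleanly — which is exactly the content built into the split Ruelle operator formalism of \cite{li2024prime:split} — the analytic estimates are the routine Ruelle-type bounds described above.
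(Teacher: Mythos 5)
Your outline follows essentially the same route as the paper's own treatment of this estimate: the proposition itself is quoted from Li--Zheng \cite{li2024prime:split} without proof, but the appendix proves the exact symbolic counterpart (Lemma~\ref{lem:Ruelle lemma for subshift of finite type} together with Lemma~\ref{lem:periodic point representation}) by precisely the scheme you describe --- rewrite the partition function as a transfer-operator sum over top-level words, telescope across levels, bound the H\"older norm of the operator applied to indicators of level-$m$ cylinders to extract the factor $\abs{\Im{s}}$ and the pressure term with $\delta$-slack, and control the remaining $n-m$ iterates by the operator norm. The split-operator bookkeeping you single out as the main obstacle is exactly what the composition identities for the partial operators $\mathcal{L}^{(n)}_{\psi,\colour,\ccolour}$ in \cite{li2024prime:split} provide, as used in the proof of Proposition~\ref{prop:partition function estimate:symbolic coding:unbounded imaginary}.

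Three imprecisions are worth flagging, none fatal. First, no error from $\bigcup_{i}f^{-i}(\mathcal{C})$ arises here: $n$-periodic points of $\sigma_{A_{\vartriangle}}$ correspond bijectively to cyclically admissible words of $1$-tiles, and $S_n(\phi\circ\pi_{\vartriangle})$ at such a point equals $S_n\phi$ at its image under $\pi_{\vartriangle}$, so the analogue of Lemma~\ref{lem:periodic point representation} is an exact identity; the smaller-pressure-of-the-curve argument you invoke is what is needed later, in Lemma~\ref{lem:relation between partition function of expanding Thurston map and tile coding}, not in this proposition. Second, the H\"older seminorm of $\mathcal{L}^{(k)}_{-s\phi,\colour,X^k}(\indicator{X^k})$ does not gain a decay $\Lambda^{-\beta k}$: the correct estimate sums the geometric series $\sum_{j=1}^{k}\Lambda^{-\beta j}\leqslant \Lambda^{-\beta}/(1-\Lambda^{-\beta})$ over the contracted inverse orbit, producing a $k$-independent constant containing exactly one factor $\Lambda^{-\beta}$ (matching the statement), times $\abs{s}\lesssim\abs{\Im{s}}$, times the sup-norm bound; your step of ``absorbing $\Lambda^{-\beta(k-1)}$ into the $e^{k\delta}$ slack'' is therefore both incorrect as stated and unnecessary. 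Third, the claim that constants are uniform over $\Re{s}$ in the compact interval is not automatic for the pressure-type bound $\sum_{X^k}\sup e^{-\Re{s}\,S_k\phi}\leqslant K_{\delta}e^{k(P(f,-\Re{s}\phi)+\delta)}$: one needs the uniform-in-$\Re{s}$ convergence of $\frac1k\log$ of these partition sums to the pressure, which the appendix obtains via an equicontinuity/Arzel\`a--Ascoli argument and which your sketch assumes implicitly. With these repairs your argument is the intended one.
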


We record \cite[Theorem~6.3]{li2024prime:split} here, which establishes an exponential bound on the operator norm of the split Ruelle operator.
We remark that by \cite[Proposition~7.3]{li2024prime:split} and \cite[Theorem~F]{li2024prime:dirichlet}, if $\phi$ satisfies the strong non-integrability condition (Definition~\ref{def:strong non-integrability condition}), then $\phi$ is not cohomologous to a constant in $C(S^2)$.

\begin{theorem}[Z.~Li \& T.~Zheng \cite{li2024prime:split}]    \label{thm:exponential bounds for the holder norm of split Ruelle operator}
    Let $f$, $\mathcal{C}$, $d$, $\Lambda$, $\holderexp$, $\potential$, $\rootpressure$ satisfy the Assumptions in Section~\ref{sec:The Assumptions}.
    We assume that $f(\mathcal{C}) \subseteq \mathcal{C}$ and $\potential$ satisfies the $\holderexp$-strong non-integrability condition.
    Then there exists a constant $D' = D'(f, \mathcal{C}, d, \holderexp, \potential) > 0$ such that for each $\varepsilon > 0$ there exist constants $\delta_{\varepsilon} \in (0, \rootpressure)$, $\widetilde{b}_{\varepsilon} \geqslant 2 \rootpressure + 1$, and $\rho_{\varepsilon} \in (0, 1)$ with the following property:

    For each $n \in \n$ and each $s \in \cx$ satisfying $\abs{ \Re{s} - \rootpressure } < \delta_{\varepsilon}$ and $\abs{ \Im{s} } \geqslant \widetilde{b}_{\varepsilon}$, we have
    \begin{equation}    \label{eq:thm:exponential bounds for the holder norm of split Ruelle operator}
        \sploptnorm{ \mathbb{L}^{n}_{-s \potential} } \leqslant D' \abs{ \Im{s} }^{1 + \varepsilon} \rho_{\varepsilon}^{n}.
    \end{equation}
\end{theorem}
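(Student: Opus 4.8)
The estimate is a Dolgopyat-type bound for the twisted operator, and to prove it --- following the strategy of \cite{li2024prime:split} --- the plan is to run Dolgopyat's iteration scheme for the split Ruelle operators, using the $\holderexp$-strong non-integrability condition (Definition~\ref{def:strong non-integrability condition}) as the oscillation input. Write $s = \rootpressure + a + \mathbf{i} b$ with $\abs{a}$ small and $\abs{b} = \abs{\Im{s}}$ large. Since $P(f, -\rootpressure\potential) = 0$, for $a$ in a small real neighborhood of $0$ the real operator $\mathbb{L}_{-(\rootpressure + a)\potential}$ has a simple leading eigenvalue $\lambda_{a} = \myexp{P(f, -(\rootpressure + a)\potential)}$ close to $1$, with strictly positive Hölder eigenfunctions on $X^{0}_{\black}$ and $X^{0}_{\white}$ depending analytically and uniformly on $a$; conjugating $\mathbb{L}_{-s\potential}$ by these and dividing by $\lambda_{a}$ reduces matters to a family whose \emph{modulus operator} --- obtained by replacing each summand in \eqref{eq:def:partial split Ruelle operator} by its absolute value --- is, up to uniform constants, a normalized transfer operator preserving a fixed reference probability measure $\nu$ on $X^{0}_{\black} \sqcup X^{0}_{\white}$. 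Here the split structure is essential: decomposing $S^{2}$ into black and white tiles lets one work with inverse branches of $f^{n}$ restricted to single $n$-tiles, which are uniformly separated and enjoy uniform bounded-distortion estimates, so that the usual loss of Hölder regularity at postcritical points does not occur.

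The Lasota--Yorke (Doeblin--Fortet) step uses the visual-metric bound $\diameter{d}{X^{n}} \leqslant C \Lambda^{-n}$ for $n$-tiles, bounded distortion of $S_{n}\potential$ along inverse branches, and the Hölder continuity of $\potential$, to obtain an inequality of the shape
\[
    \lvert \pi_{\colour}( \mathbb{L}^{n}_{-s\potential}\splfun ) \rvert_{\holderexp} \leqslant C \Lambda^{-\holderexp n} \max_{\ccolour \in \colours} \lvert u_{\ccolour} \rvert_{\holderexp} + C \abs{\Im{s}} \max_{\ccolour \in \colours} \uniformnorm{u_{\ccolour}}
\]
for all $n \in \n$, uniformly in $a, b$ in the relevant region (without the normalization of the previous paragraph an extra factor $\myexp{n P(f, -\Re{s}\potential)}$ appears on both terms). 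This is the mechanism converting control of a sup- or $L^{2}$-norm into control of the $C^{0,\holderexp}$-norm, at a cost linear in $\abs{\Im{s}}$; it is convenient to phrase everything via the $\abs{b}$-adapted norm $\uniformnorm{u} + \abs{b}^{-1}\lvert u \rvert_{\holderexp}$, with respect to which the normalized operator is uniformly bounded.

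The core of the argument, and the main obstacle, is to produce a fixed $\gamma = \gamma(\holderexp, \varepsilon) \in (0,1)$ and an integer $n_{0} = n_{0}(\abs{b}) \asymp \log\abs{b}$, together with a Dolgopyat cone of pairs $\splfun$ --- functions with $\abs{u_{\colour}}$ bounded above by a fixed reference function comparable to the modulus eigenfunction and varying slowly at the scale $\Lambda^{-n_{0}} \asymp \abs{b}^{-1}$ --- that is invariant under the modulus operator and for which
\[
    \norm{ \lvert \pi_{\colour}( \mathbb{L}^{n_{0}}_{-s\potential}\splfun ) \rvert }_{L^{2}(\nu)} \leqslant \gamma \norm{ \max_{\ccolour \in \colours} \abs{u_{\ccolour}} }_{L^{2}(\nu)}
\]
for every $\splfun$ in the (rescaled) cone. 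The contraction is obtained by a majority argument over the $n_{0}$-tiles covering a given point: on each such tile either the cone condition already forces $\abs{u_{\colour}}$ a definite fraction below the reference function --- so the triangle inequality in the sum defining $\mathbb{L}^{n_{0}}_{-s\potential}$ loses a definite factor for free --- or else the phase differences $b( S_{n_{0}}\potential\circ\varsigma_{i} - S_{n_{0}}\potential\circ\varsigma_{j} )$, for a definite proportion of pairs of inverse branches $\varsigma_{i}, \varsigma_{j}$, stay at distance $\geqslant \varepsilon'$ from $2\pi\mathbb{Z}$ on a subset of definite $\nu$-measure, so that the complex numbers being summed are genuinely non-aligned and their sum is strictly shorter than the sum of their moduli. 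Turning the strong non-integrability condition into such a statement is itself delicate: the condition supplies the lower bound $\abs{ S_{n}\potential(\varsigma_{1}(x_{1})) - S_{n}\potential(\varsigma_{2}(x_{1})) - S_{n}\potential(\varsigma_{1}(x_{2})) + S_{n}\potential(\varsigma_{2}(x_{2})) } \geqslant \varepsilon\, d(x_{1}, x_{2})^{\holderexp}$ only at two chosen points per tile, and one must propagate it, via Hölder continuity and a pigeonhole count over branches, to a statement valid on sets of definite measure and for a definite fraction of branch pairs --- exactly the kind of analysis carried out in \cite{li2024prime:non-integrability}. The scale $\Lambda^{-\holderexp n_{0}} \asymp \abs{b}^{-1}$ is forced, being precisely the regime in which the phase oscillates over an $n_{0}$-tile by an amount that is $O(1)$ but not $o(1)$.

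Finally, for general $n$ write $n = k n_{0} + r$ with $0 \leqslant r < n_{0}$. One first applies the Lasota--Yorke inequality over the initial $r + n_{0}$ iterates to place an arbitrary $\splfun$ of unit adapted norm into the Dolgopyat cone, at a cost that is a bounded power of $\abs{b}$; then applies the $L^{2}$-contraction $k - 1$ further times to gain the factor $\gamma^{k-1}$; and finally converts the resulting $L^{2}$ bound back to a $C^{0,\holderexp}$ bound via one more Lasota--Yorke step, using that elements of the cone vary slowly at scale $\abs{b}^{-1}$ (a Sobolev-type inequality then controls their sup-norm by a bounded power of $\abs{b}$ times their $L^{2}$-norm). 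Collecting the powers of $\abs{b}$, working throughout with the $\abs{b}$-adapted norm, and using $n_{0} \asymp \log\abs{b}$, one bounds $\sploptnorm{ \mathbb{L}^{n}_{-s\potential} }$ by a bounded power of $\abs{b}$ times $\gamma^{k} = \gamma^{(n - r)/n_{0}}$; a careful accounting --- the $\abs{b}$ in the Lasota--Yorke inequality multiplies a sup-norm that is summed geometrically, so the cone-entry and norm-conversion steps together cost only $\abs{b}^{1 + \varepsilon}$ --- yields, for each $\varepsilon > 0$, a bound $\sploptnorm{ \mathbb{L}^{n}_{-s\potential} } \leqslant D' \abs{\Im{s}}^{1 + \varepsilon}\rho_{\varepsilon}^{n}$ valid once $\abs{\Im{s}} \geqslant \widetilde{b}_{\varepsilon}$ is large enough, with $\rho_{\varepsilon} \in (0,1)$ and $D' = D'(f, \mathcal{C}, d, \holderexp, \potential)$; shrinking $\delta_{\varepsilon}$ (and enlarging $\widetilde{b}_{\varepsilon}$) ensures all the distortion, normalization, and cone estimates hold uniformly for $\abs{\Re{s} - \rootpressure} < \delta_{\varepsilon}$.
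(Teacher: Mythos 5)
This theorem is not proved in the paper at all: it is recorded verbatim from \cite[Theorem~6.3]{li2024prime:split}, and the paper's ``proof'' is simply that citation. So there is no in-paper argument to compare yours against; the relevant comparison is with the Dolgopyat-type analysis carried out in \cite{li2024prime:split} (and, for the non-integrability input, \cite{li2024prime:non-integrability}). At the level of strategy your outline matches that source: the split black/white tile structure to restore uniform branch separation and distortion, a Lasota--Yorke inequality with a factor linear in $\abs{\Im{s}}$, an invariant Dolgopyat cone with an $L^2$ contraction at the scale $n_0 \asymp \log\abs{\Im{s}}$, and the final bookkeeping producing $D'\abs{\Im{s}}^{1+\varepsilon}\rho_\varepsilon^n$.

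As a proof, however, the proposal has a genuine gap, and you essentially flag it yourself: the decisive step --- converting the strong non-integrability condition, which gives the fourth-difference lower bound only at two selected points in each tile and for two selected inverse branches, into a non-alignment statement valid on a set of definite measure and for a definite proportion of branch pairs, and then verifying that the resulting Dolgopyat cone is actually invariant under the normalized split operator --- is asserted rather than carried out. That propagation (together with the construction of the Dolgopyat operators and the verification of the $L^2$ contraction with a uniform $\gamma$) is precisely the technical core of \cite{li2024prime:split}, occupying the bulk of that paper; without it the iteration scheme in your last paragraph has nothing to iterate. So the architecture is right and consistent with how the result is actually proved, but what you have written is a roadmap of the known argument, not a self-contained proof; within the present paper the honest route is the one the authors take, namely quoting the theorem from \cite{li2024prime:split}.
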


We first study the partition function for the symbolic coding of an expanding Thurston map, separating the estimates into unbounded and bounded cases based on the imaginary part of $s$.

In the unbounded case, the following estimate is a direct consequence of Proposition~\ref{prop:Ruelle's estimate for the shift associated with expanding Thurston map} and Theorem~\ref{thm:exponential bounds for the holder norm of split Ruelle operator}.
We adapt the strategy of \cite[Proposition~6.4]{li2024prime:split}; we include the proof for the sake of completeness.

\begin{proposition}    \label{prop:partition function estimate:symbolic coding:unbounded imaginary}
    Let $f$, $\mathcal{C}$, $d$, $\Lambda$, $\holderexp$, $\potential$, $\rootpressure$ satisfy the Assumptions in Section~\ref{sec:The Assumptions}.
    We assume that $\potential$ satisfies the $\holderexp$-strong non-integrability condition, and that $f(\mathcal{C}) \subseteq \mathcal{C}$ and no $1$-tile in $\Tile{1}$ joins opposite sides of $\mathcal{C}$.
    Let $\parentheses[\big]{ \Sigma^{+}_{A_{\vartriangle}}, \sigma_{A_{\vartriangle}} }$ be the one-sided subshift of finite type associated to $f$ and $\mathcal{C}$ defined in Proposition~\ref{prop:one-sided subshift of finite type associated with expanding Thurston map}, and let $\pi_{\vartriangle} \colon \Sigma^{+}_{A_{\vartriangle}} \mapping S^2$ be defined in \eqref{eq:prop:one-sided subshift of finite type associated with expanding Thurston map:factor map}.
    % We define $\normpotential \define \phi - \alpha$.

    Then for each $\varepsilon > 0$, there exist constants $T > 1$, $C_{\vartriangle} > 0$, and $\rho_{\vartriangle} \in (0, 1)$ such that for each $t \in \real \mysetminus (-T, T)$ and each integer $n \geqslant 2$, we have
    \begin{equation}    \label{eq:prop:partition function estimate:symbolic coding:unbounded imaginary}
        \abs[\Big]{ \shiftpartifun{ -\rootpressure + \imaginary t }[n][\sigma_{A_{\vartriangle}}][\potential\circ\pi_{\vartriangle}]} 
        \leqslant C_{\vartriangle} \abs{t}^{2 + \varepsilon} \rho_{\vartriangle}^{n}.
      \end{equation}  
\end{proposition}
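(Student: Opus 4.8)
The plan is to combine the two estimates in Proposition~\ref{prop:Ruelle's estimate for the shift associated with expanding Thurston map} with the exponential decay of the split Ruelle operator norm from Theorem~\ref{thm:exponential bounds for the holder norm of split Ruelle operator}, applied along the vertical line $\Re s = \rootpressure$. First I would fix $\varepsilon > 0$ and apply Theorem~\ref{thm:exponential bounds for the holder norm of split Ruelle operator} to obtain $\delta_{\varepsilon} \in (0, \rootpressure)$, $\widetilde{b}_{\varepsilon} \geqslant 2\rootpressure + 1$, $\rho_{\varepsilon} \in (0, 1)$, and $D' > 0$ such that $\sploptnorm{\mathbb{L}^{m}_{-s\potential}} \leqslant D' \abs{\Im s}^{1 + \varepsilon} \rho_{\varepsilon}^{m}$ whenever $\abs{\Re s - \rootpressure} < \delta_{\varepsilon}$ and $\abs{\Im s} \geqslant \widetilde{b}_{\varepsilon}$. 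Setting $s = -\rootpressure + \imaginary t$, the condition $\abs{\Re s - \rootpressure}$ reads $\abs{-\rootpressure - \rootpressure} = 2\rootpressure$, so I need to be careful: the line $\Re s = -\rootpressure$ is at distance $2\rootpressure$ from $\rootpressure$, not inside the strip $\abs{\Re s - \rootpressure} < \delta_{\varepsilon}$. Here I should instead read the theorem with $\rootpressure$ playing the role of $\Re s$ being near $\rootpressure$; that is, the estimates are stated for $\abs{\Re s - \rootpressure}$ small, but Proposition~\ref{prop:Ruelle's estimate for the shift associated with expanding Thurston map} uses $-s\potential$, i.e.\ one evaluates the operator $\mathbb{L}_{-s\potential}$ at $s$ with $\Re s$ close to $\rootpressure$, and the partition function appearing is $\shiftpartifun{s}[n][\sigma_{A_{\vartriangle}}][-\potential\circ\pi_{\vartriangle}]$. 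So to match conventions I take $s$ with $\Re s = \rootpressure$ and $\Im s = t$, and the quantity to estimate becomes $\shiftpartifun{\rootpressure - \imaginary t}[n][\sigma_{A_{\vartriangle}}][-\potential\circ\pi_{\vartriangle}]$, which by the symmetry $\overline{\shiftpartifun{\bar s}{\cdots}} = \shiftpartifun{s}{\cdots}$ (the periodic points are real and $\potential$ is real-valued) has the same modulus as the desired expression. I would set $T \coloneqq \max\{\widetilde{b}_{\varepsilon}, 2\}$ and restrict to $\abs{t} \geqslant T$.

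The second step is to invoke \eqref{eq:prop:Ruelle's estimate for the shift associated with expanding Thurston map:uniform bound for difference}, which bounds $\shiftpartifun{s}[n]{\cdots}$ in terms of the ``boundary term'' $\sum_{\colour} \sum_{X \subseteq \colourtile} \mathcal{L}^{(n)}_{-s\potential, \colour, X}(\indicator{X})(x_X)$ plus the sum $D_{\delta}\abs{\Im s} \sum_{m=2}^{n} \sploptnorm{\mathbb{L}^{n-m}_{-s\potential}} \bigl(\Lambda^{-\holderexp} e^{\delta + P(f, -\Re s\, \potential)}\bigr)^{m}$. Since $\Re s = \rootpressure$ we have $P(f, -\rootpressure\potential) = 0$ by assumption \ref{assumption:unique zero}, so the geometric factor is $\bigl(\Lambda^{-\holderexp} e^{\delta}\bigr)^{m}$; choosing $\delta$ small enough (depending only on $\Lambda, \holderexp$) makes $\theta \coloneqq \Lambda^{-\holderexp} e^{\delta} < 1$. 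For the boundary term I would use \eqref{eq:prop:Ruelle's estimate for the shift associated with expanding Thurston map:holder norm} with $k = n$: it gives $\sum_{X^n} \max_{\colour} \norm{\mathcal{L}^{(n)}_{-s\potential, \colour, X^n}(\indicator{X^n})}_{C^{0,\holderexp}} \leqslant D_{\delta}\abs{\Im s}\Lambda^{-\holderexp} e^{n(\delta + P(f, -\rootpressure\potential))} = D_{\delta}\abs{\Im s}\Lambda^{-\holderexp} e^{n\delta}$, and since evaluation at a point is bounded by the uniform norm, hence by the $C^{0,\holderexp}$ norm, the boundary term is $\leqslant D_{\delta}\abs{t}\Lambda^{-\holderexp} e^{n\delta}$. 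Taking $\delta$ small also forces $e^{\delta} < \rho_{\varepsilon}^{-1}$... actually this boundary bound grows like $e^{n\delta}$, which is subexponential but not decaying; I will need to be slightly more careful and instead sum over $1$-tiles only, not all $n$-tiles. For the boundary term there are only finitely many (at most $\card{\Tile{1}}$) summands and each is $\norm{\mathcal{L}^{(n)}_{-s\potential,\colour,X}(\indicator{X})}_{\infty} \leqslant \norm{\mathbb{L}^{n}_{-s\potential}}_{\mathrm{op}}$-type quantity; more precisely $\mathcal{L}^{(n)}_{-s\potential,\colour,X}(\indicator{X})$ is a single component of $\mathbb{L}^{n}_{-s\potential}$ applied to an indicator of a $1$-tile, whose $C^{0,\holderexp}$ norm is controlled, after peeling off one level, by $\sploptnorm{\mathbb{L}^{n-1}_{-s\potential}}$ times a constant depending on the Hölder norm of $e^{-s\potential}$ restricted to a $1$-tile, which is $\lesssim \abs{s}$. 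So the boundary term is $\lesssim \abs{t}^{2+\varepsilon}\rho_{\varepsilon}^{n}$ by Theorem~\ref{thm:exponential bounds for the holder norm of split Ruelle operator}.

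For the geometric sum, I plug in $\sploptnorm{\mathbb{L}^{n-m}_{-s\potential}} \leqslant D'\abs{t}^{1+\varepsilon}\rho_{\varepsilon}^{n-m}$, valid since $\abs{\Im s} = \abs{t} \geqslant T \geqslant \widetilde{b}_{\varepsilon}$ and $\abs{\Re s - \rootpressure} = 0 < \delta_{\varepsilon}$. Thus
\[
    D_{\delta}\abs{t}\sum_{m=2}^{n}\sploptnorm{\mathbb{L}^{n-m}_{-s\potential}}\theta^{m} \leqslant D_{\delta}D'\abs{t}^{2+\varepsilon}\sum_{m=2}^{n}\rho_{\varepsilon}^{n-m}\theta^{m} = D_{\delta}D'\abs{t}^{2+\varepsilon}\rho_{\varepsilon}^{n}\sum_{m=2}^{n}(\theta/\rho_{\varepsilon})^{m}.
\]
If $\theta < \rho_{\varepsilon}$ the inner sum is bounded by a constant and we are done with $\rho_{\vartriangle} = \rho_{\varepsilon}$; if not, I split the sum at $m = \lfloor n/2 \rfloor$, bounding $\rho_{\varepsilon}^{n-m}$ by $\rho_{\varepsilon}^{n/2}$ on the first half and $\theta^{m}$ by $\theta^{n/2}$ on the second half, yielding a bound $C\abs{t}^{2+\varepsilon}\rho_{\vartriangle}^{n}$ with $\rho_{\vartriangle} \coloneqq \max\{\sqrt{\rho_{\varepsilon}}, \sqrt{\theta}\} < 1$. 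Combining the boundary term and the geometric sum, and relabeling $t \mapsto -t$ via the conjugation symmetry, gives \eqref{eq:prop:partition function estimate:symbolic coding:unbounded imaginary} with $C_{\vartriangle}$ and $\rho_{\vartriangle}$ as required. \textbf{The main obstacle} I anticipate is the bookkeeping in the boundary term: extracting the correct power of $\abs{t}$ (it should be $\abs{t}^{2+\varepsilon}$, matching the stated exponent, arising as one factor $\abs{t}$ from the Hölder norm of the twisting $e^{-s\potential}$ on a $1$-tile together with the $\abs{t}^{1+\varepsilon}$ from Theorem~\ref{thm:exponential bounds for the holder norm of split Ruelle operator}) and making sure the finitely many $1$-tile contributions are uniformly controlled; the geometric-sum manipulation is routine once the norm bounds are in place.
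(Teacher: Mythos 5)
Your proposal is correct and follows essentially the same route as the paper: you estimate the boundary term by peeling off one level (equivalently, the $k=1$ case of \eqref{eq:prop:Ruelle's estimate for the shift associated with expanding Thurston map:holder norm}), bound the remaining sum via \eqref{eq:prop:Ruelle's estimate for the shift associated with expanding Thurston map:uniform bound for difference} together with Theorem~\ref{thm:exponential bounds for the holder norm of split Ruelle operator} on the line $\Re{s} = \rootpressure$ (where $P(f,-\rootpressure\potential)=0$), and sum the resulting geometric series. Your self-corrections (working at $\Re{s}=\rootpressure$ rather than $\Re{s}=-\rootpressure$, and summing over $1$-tiles rather than $n$-tiles) land exactly on the paper's argument, and your splitting of the geometric sum at $m=\lfloor n/2\rfloor$ is a harmless cosmetic variant of the paper's $n\,(\max\{\rho_{\varepsilon},\Lambda^{-\holderexp/2}\})^{n}$ bound.
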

% Recall that $\shiftpartifun{ -\rootpressure + \imaginary t }[n][\sigma_{A_{\vartriangle}}][\potential\circ\pi_{\vartriangle}]$ is defined in Definition~\ref{def:partition function}.
\begin{proof}
    Let $\delta \define \frac{1}{2} \log(\Lambda^{\holderexp}) > 0$.
    Fix an arbitrary $\varepsilon > 0$.
    We choose $T \define \widetilde{b}_{\varepsilon} > 1$, where $\widetilde{b}_{\varepsilon}$ is the constant from Theorem~\ref{thm:exponential bounds for the holder norm of split Ruelle operator} depending only on $f$, $\mathcal{C}$, $d$, $\holderexp$, $\potential$, and $\varepsilon$.

    Fix an arbitrary point $x_{X^{1}} \in \inte{X^{1}}$ for each $X^{1} \in \tile{1}$.
    Recall the definition of partial split Ruelle operators in Definition~\ref{def:partial split Ruelle operator} (see also \cite[Lemmas~5.3~and~5.7]{li2024prime:split}). 
    By applying the estimate \eqref{eq:prop:Ruelle's estimate for the shift associated with expanding Thurston map:holder norm} from Proposition~\ref{prop:Ruelle's estimate for the shift associated with expanding Thurston map}, for each $n \geqslant 2$ and each $t \in \real \mysetminus (-T, T)$, we obtain the following bound:
    % By Definition~\ref{def:partial split Ruelle operator} (see also \cite[Lemmas~5.3~and~5.7]{li2024prime:split}) and \eqref{eq:prop:Ruelle's estimate for the shift associated with expanding Thurston map:holder norm} in Proposition~\ref{prop:Ruelle's estimate for the shift associated with expanding Thurston map}, for each $n \geqslant 2$ and each $t \in \real \mysetminus (-T, T)$, we have
    \begin{equation}    \label{eq:temp:prop:partition function estimate:symbolic coding:unbounded imaginary:split Ruelle operator}
        \begin{aligned}
        & \uppercase\expandafter{\romannumeral1} \define \abs[\bigg]{ \sum_{\colour \in \colours} \sum_{\substack{X^1 \in \mathbf{X}^1 \\ X^1 \subseteq X_{\colour}^0}} \mathcal{L}_{(-\rootpressure + \imaginary t) \potential, \colour, X^1}^{(n)} \parentheses{ \indicator{X^1} } (x_{X^1}) } \\
        & \qquad \leqslant \sum_{\colour \in\colours} \sum_{\substack{X^1 \in \mathbf{X}^1 \\ X^1 \subseteq X_{\colour}^0}} \abs[\bigg]{ \sum_{\colour^{\prime} \in \colours} \mathcal{L}_{(-\rootpressure + \imaginary t) \potential, \colour, \colour^{\prime}}^{(n-1)} \parentheses[\Big]{ \mathcal{L}_{(-\rootpressure + \imaginary t) \potential, \colour^{\prime}, X^1}^{(1)} \parentheses[\big]{ \indicator{X^1} } } (x_{X^1}) } \\
        & \qquad \leqslant \sploptnorm[\big]{ \mathbb{L}_{(-\rootpressure + \imaginary t) \potential}^{n-1} } \sum_{\colour \in \colours} \sum_{\substack{X^1 \in \mathbf{X}^1 \\ X^1 \subseteq X_{\colour}^0}} \max_{\colour^{\prime} \in\colours} \norm[\Big]{ \mathcal{L}_{(-\rootpressure + \imaginary t) \potential, \colour^{\prime}, X^1}^{(1)} \parentheses{ \indicator{X^1} } }_{C^{0, \holderexp}} \\
        & \qquad \leqslant \sploptnorm[\big]{\mathbb{L}_{(-\rootpressure + \imaginary t) \potential}^{n-1} } \, D_\delta \abs{t} \Lambda^{-\holderexp} \myexp{ \delta + P(f,-\rootpressure \potential) }  \\
        & \qquad = \sploptnorm[\big]{\mathbb{L}_{(-\rootpressure + \imaginary t) \potential}^{n-1} } \, D_\delta \abs{t} \Lambda^{- \holderexp / 2},
        \end{aligned}
    \end{equation}
    where $D_{\delta} > 0$ is the constant given in Proposition~\ref{prop:Ruelle's estimate for the shift associated with expanding Thurston map}, which depends only on $f$, $\mathcal{C}$, $d$, $\holderexp$, $\potential$, and $\delta$.
    
    Hence, by \eqref{eq:def:partition function}, the triangle inequality, \eqref{eq:prop:Ruelle's estimate for the shift associated with expanding Thurston map:uniform bound for difference} in Proposition~\ref{prop:Ruelle's estimate for the shift associated with expanding Thurston map}, \eqref{eq:temp:prop:partition function estimate:symbolic coding:unbounded imaginary:split Ruelle operator}, and Theorem~\ref{thm:exponential bounds for the holder norm of split Ruelle operator}, we deduce that for each $n \geqslant 2$ and each $t \in \real \mysetminus (-T, T)$,
    \begin{align*}
        \abs[\Big]{ \shiftpartifun{ -\rootpressure + \imaginary t }[n][\sigma_{A_{\vartriangle}}][\potential\circ\pi_{\vartriangle}] } 
        & \leqslant \uppercase\expandafter{\romannumeral1}  +  \abs[\Bigg]{ \shiftpartifun{ -\rootpressure + \imaginary t }[n][\sigma_{A_{\vartriangle}}][\potential\circ\pi_{\vartriangle}]  -  \sum_{\colour \in \colours} \sum_{\substack{X^1 \in \mathbf{X}^1 \\ X^1 \subseteq X_{\colour}^0}} \mathcal{L}_{(-\rootpressure + \imaginary t) \potential, \colour, X^1}^{(n)} \parentheses{ \indicator{X^1} } (x_{X^1}) }  \\
        & \leqslant  D_\delta \abs{t} \, \sploptnorm[\big]{\mathbb{L}_{(-\rootpressure + \imaginary t) \potential}^{n-1} } \Lambda^{- \holderexp / 2}  +  D_{\delta} \abs{t} \sum_{m = 2}^{n} \sploptnorm[\big]{ \mathbb{L}^{n - m}_{(-\rootpressure + \imaginary t) \potential} }  \Lambda^{- \holderexp m / 2}  \\
        & \leqslant  D_{\delta} D' \abs{t}^{2 + \varepsilon}  \sum_{m = 1}^{n} \rho_{\varepsilon}^{n - m}  \Lambda^{- \holderexp m / 2}  \\
        & \leqslant  D_{\delta} D' \abs{t}^{2 + \varepsilon} \, n \parentheses[\big]{ \max\set[\big]{\rho_{\varepsilon}, \Lambda^{- \holderexp / 2}} }^{n},
    \end{align*}
    where constants $D' > 0$ and $\rho_{\varepsilon} \in (0, 1)$ are from Theorem~\ref{thm:exponential bounds for the holder norm of split Ruelle operator} depending only on $f$, $\mathcal{C}$, $d$, $\holderexp$, $\potential$, and $\varepsilon$.
    
    Therefore, we may choose constants 
    \[
         \rho_{\vartriangle} \define \frac{1}{2} \parentheses[\big]{ 1 +  \max\set[\big]{\rho_{\varepsilon}, \Lambda^{- \holderexp / 2}} } \in (0, 1)
    \] 
    and 
    \[
        C_{\vartriangle} \define D_{\delta} D' \sup_{n \in \n} \set{n \rho_{\vartriangle}^{n}} < +\infty
    \] 
    to ensure that the estimate \eqref{eq:prop:partition function estimate:symbolic coding:unbounded imaginary} holds.
\end{proof}

\def\errdegpartifun{\mathrm{I}_n(s)}
\def\codingpartifun{\Pi_n(s)}

We record the following result from \cite[Theorem~6.8]{li2024prime:dirichlet}.

\begin{theorem}[Li \& Zheng \cite{li2024prime:dirichlet}] \label{thm:expanding Thurston map symbolic pressure relation}
	Let $f$, $\mathcal{C}$, $d$ satisfy the Assumptions in Section~\ref{sec:The Assumptions}.
	We assume that $f(\mathcal{C}) \subseteq \mathcal{C}$.
	Let $\varphi \in C^{0,\holderexp}(S^2, d)$ be a real-valued \holder continuous function with exponent $\holderexp$.
	Recall the one-sided subshifts of finite type $\parentheses[\big]{ \Sigma_{\ematrix}^+, \sigma_{\ematrix} }$ and $\parentheses[\big]{ \Sigma_{\eematrix}^+, \sigma_{\eematrix} }$ constructed in Subsection~\ref{sub:Symbolic dynamics for expanding Thurston maps}.
	We denote by $(\mathbf{V}^0, f|_{\mathbf{V}^0})$ the dynamical system on $\mathbf{V}^0 \define \Vertex{0} = \post{f}$ induced by $f|_{\mathbf{V}^0} \colon \mathbf{V}^0 \to \mathbf{V}^0$.
	Then the following relations between the topological pressure of these systems hold:
	\begin{enumerate}
        \smallskip

		\item \label{item:thm:expanding Thurston map symbolic pressure relation:vertex}
            $P(f, \varphi) > P(f|_{\mathbf{V}^0}, \varphi|_{\mathbf{V}^0})$,
		
        \smallskip
		
        \item \label{item:thm:expanding Thurston map symbolic pressure relation:Jordan curve and edge}
            $P(f, \varphi) > P(f|_{\mathcal{C}}, \varphi|_{\mathcal{C}}) = P(\sigma_{\ematrix}, \varphi \circ \ecoding) = P(\sigma_{\eematrix}, \varphi \circ \ecoding \circ \eecoding)$.
	\end{enumerate}
\end{theorem}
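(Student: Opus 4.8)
The plan is to split the statement into its equalities and its strict inequalities. First I would reduce both strict inequalities to the single claim $P(f, \varphi) > P(f|_{\mathcal{C}}, \varphi|_{\mathcal{C}})$: since $\mathbf{V}^0 = \post{f} \subseteq \mathcal{C}$ and both sets are forward invariant under $f$ (we have $f(\post{f}) \subseteq \post{f}$, and $f(\mathcal{C}) \subseteq \mathcal{C}$ by the Assumptions), every $f|_{\mathbf{V}^0}$-invariant measure is also $f|_{\mathcal{C}}$-invariant, and every $f|_{\mathcal{C}}$-invariant measure extends to an $f$-invariant Borel probability measure on $S^2$ with the same entropy and integral; so the Variational Principle \eqref{eq:Variational Principle for pressure} gives $P(f|_{\mathbf{V}^0}, \varphi|_{\mathbf{V}^0}) \leqslant P(f|_{\mathcal{C}}, \varphi|_{\mathcal{C}}) \leqslant P(f, \varphi)$, and it remains only to upgrade the second inequality to a strict one.

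For the equalities in \ref{item:thm:expanding Thurston map symbolic pressure relation:Jordan curve and edge}, I would use that $\ecoding \colon \Sigma_{\ematrix}^+ \to \mathcal{C}$ is a factor map from $\sigma_{\ematrix}$ to $f|_{\mathcal{C}}$ with $\card[\big]{\ecoding^{-1}(x)} \leqslant 2$ for all $x \in \mathcal{C}$ (Proposition~\ref{prop:edge_symbolic_factors_properties}~(2) and \eqref{eq:card_pi_I_inv}), and that $\eecoding \colon \Sigma_{\eematrix}^+ \to \Sigma_{\ematrix}^+$ is a factor map from $\sigma_{\eematrix}$ to $\sigma_{\ematrix}$ with all fibers of cardinality $2$ (Proposition~\ref{prop:edge_symbolic_factors_properties}~(1)). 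For any factor map $\pi \colon (X, T) \to (Y, S)$ with uniformly bounded fibers, Bowen's inequality $h_{\mu}(T) \leqslant h_{\pi_{*}\mu}(S) + \sup_{y \in Y} h_{\operatorname{top}}(T; \pi^{-1}(y))$ together with the vanishing of the fiber entropy (finite fibers) gives $h_{\mu}(T) = h_{\pi_{*}\mu}(S)$ for every invariant $\mu$; combined with the pushforward--lift correspondence of invariant measures and \eqref{eq:Variational Principle for pressure}, this yields $P(T, \psi \circ \pi) = P(S, \psi)$ for all continuous $\psi$. Applying this to $\ecoding$ (with $\psi = \varphi|_{\mathcal{C}}$) and then to $\eecoding$ (with $\psi = \varphi \circ \ecoding$) produces $P(f|_{\mathcal{C}}, \varphi|_{\mathcal{C}}) = P(\sigma_{\ematrix}, \varphi \circ \ecoding) = P(\sigma_{\eematrix}, \varphi \circ \ecoding \circ \eecoding)$.

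For the strict inequality I would argue by contradiction, assuming $P(f, \varphi) = P(f|_{\mathcal{C}}, \varphi|_{\mathcal{C}})$. Since subshifts of finite type are expansive, $(\sigma_{\ematrix}, \varphi \circ \ecoding)$ admits an equilibrium state $\widetilde{\nu}$; by the entropy identity just established, its pushforward $\nu \define \ecoding_{*}\widetilde{\nu}$ satisfies $h_{\nu}(f|_{\mathcal{C}}) + \int \! \varphi|_{\mathcal{C}} \,\mathrm{d}\nu = h_{\widetilde{\nu}}(\sigma_{\ematrix}) + \int \! \varphi \circ \ecoding \,\mathrm{d}\widetilde{\nu} = P(\sigma_{\ematrix}, \varphi \circ \ecoding) = P(f|_{\mathcal{C}}, \varphi|_{\mathcal{C}})$, so $\nu$ is an equilibrium state for $(f|_{\mathcal{C}}, \varphi|_{\mathcal{C}})$. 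Viewing $\nu$ as an $f$-invariant Borel probability measure on $S^2$ with $\nu(\mathcal{C}) = 1$ (legitimate since $f(\mathcal{C}) \subseteq \mathcal{C}$, which also leaves the measure-theoretic entropy unchanged), we get $h_{\nu}(f) + \int \! \varphi \,\mathrm{d}\nu = P(f|_{\mathcal{C}}, \varphi|_{\mathcal{C}}) = P(f, \varphi)$, so $\nu$ is an equilibrium state for $f$ and $\varphi$. By the uniqueness part of Theorem~\ref{thm:properties of equilibrium state} this forces $\nu = \mu_{\varphi}$, contradicting $\mu_{\varphi}(\mathcal{C}) = 0$ — the latter being exactly the content of the argument in the proof of Lemma~\ref{lem:tile symbolic coding preserves entropy of equilibrium state} (the pushforward under $\pi_{\vartriangle}$ of the Gibbs equilibrium state of the topologically mixing tile subshift gives zero mass to $\bigcup_{i \in \n_0} f^{-i}(\mathcal{C}) \supseteq \mathcal{C}$).

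The main obstacle is precisely this last fact, that the unique equilibrium state of an expanding Thurston map puts no mass on the invariant Jordan curve. Everything else is soft thermodynamic formalism (monotonicity of pressure under restriction to invariant subsets, invariance of pressure under bounded-to-one factor maps, and the variational characterization of equilibrium states), whereas $\mu_{\varphi}(\mathcal{C}) = 0$ genuinely uses the Gibbs property of equilibrium states for expanding Thurston maps \cite{li2018equilibrium} together with the ergodicity-plus-full-support argument already carried out in the proof of Lemma~\ref{lem:tile symbolic coding preserves entropy of equilibrium state}.
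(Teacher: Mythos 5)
Your argument is essentially correct, but note that this paper does not prove the statement at all: it is imported verbatim from \cite[Theorem~6.8]{li2024prime:dirichlet}, so there is no in-paper proof to compare against. What you have written is a plausible self-contained reconstruction using only ingredients that are available here: monotonicity of pressure on compact invariant subsets and the Variational Principle give $P(f|_{\mathbf{V}^0}, \varphi|_{\mathbf{V}^0}) \leqslant P(f|_{\mathcal{C}}, \varphi|_{\mathcal{C}}) \leqslant P(f, \varphi)$; the equalities in (2) follow from the fact that $\ecoding$ and $\eecoding$ are surjective factor maps with fibers of cardinality at most $2$ (Proposition~\ref{prop:edge_symbolic_factors_properties}), so that measure-theoretic entropy is preserved under pushforward (Bowen's finite-fiber inequality, or the Ledrappier--Walters formula) and invariant measures downstairs lift to invariant measures upstairs, yielding pressure preservation; and the strict inequality follows by contradiction from the existence of an equilibrium state for the expansive system $(\sigma_{\ematrix}, \varphi \circ \ecoding)$, pushed forward to an $f$-invariant equilibrium state carried by $\mathcal{C}$, against uniqueness (Theorem~\ref{thm:properties of equilibrium state}) and $\mu_{\varphi}(\mathcal{C}) = 0$. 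You correctly identify the last fact as the only non-soft input; it is exactly what the ergodicity-plus-Gibbs-full-support argument in the proof of Lemma~\ref{lem:tile symbolic coding preserves entropy of equilibrium state} delivers, once one adds (as you implicitly do) that the pushforward of the tile-coding equilibrium state is $\mu_{\varphi}$ by uniqueness, and there is no circularity since that lemma does not use the present theorem. Two small points deserve explicit citation in a written version: the measure-lifting step for factor maps (a Krylov--Bogolyubov averaging of any lift of $\nu$) and the finite-fiber entropy preservation, since Bowen's inequality in the form you invoke is stated for topological fiber entropy and needs the measure-theoretic version (or Ledrappier--Walters) to conclude $h_{\mu}(T) = h_{\pi_{*}\mu}(S)$ for every invariant $\mu$. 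Whether the original proof of Li and Zheng proceeds this way cannot be judged from this paper, but your route is sound and fits the framework used here.
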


We record the following result from \cite[Theorem~6.3]{li2024prime:dirichlet}.

\begin{theorem}[Li \& Zheng \cite{li2024prime:dirichlet}] \label{thm:decomposition of local degree by cardinality of the preimages and periodic points of codings}
    Let $f$ and $\mathcal{C}$ satisfy the Assumptions in Section~\ref{sec:The Assumptions}. 
    We assume that $f(\mathcal{C}) \subseteq \mathcal{C}$. 
    Let $\parentheses[\big]{ \Sigma_{A_{\vartriangle}}^+, \sigma_{A_{\vartriangle}} }$ be the one-sided subshift of finite type associated to $f$ and $\mathcal{C}$ defined in Proposition~\ref{prop:one-sided subshift of finite type associated with expanding Thurston map}, and let $\pi_{\vartriangle} \colon \Sigma_{A_{\vartriangle}}^+ \to S^2$ be the factor map defined in~\eqref{eq:prop:one-sided subshift of finite type associated with expanding Thurston map:factor map}.
    Recall the one-sided subshifts of finite type $\parentheses[\big]{ \Sigma_{\ematrix}^+, \sigma_{\ematrix} }$ and $\parentheses[\big]{ \Sigma_{\eematrix}^+, \sigma_{\eematrix} }$ constructed in Subsection~\ref{sub:Symbolic dynamics for expanding Thurston maps}, and the factor maps $\ecoding \colon \Sigma_{\ematrix}^+ \to \mathcal{C}$ and $\eecoding \colon \Sigma_{\eematrix}^+ \to \Sigma_{\ematrix}^+$ defined in Proposition~\ref{prop:edge_symbolic_factors_properties}.
    We denote by $(\mathbf{V}^0, f|_{\mathbf{V}^0})$ the dynamical system on $\mathbf{V}^0 = \Vertex{0} = \post{f}$ induced by $f|_{\mathbf{V}^0} \colon \mathbf{V}^0 \to \mathbf{V}^0$. 
    Then for each $n \in \n$ and each $x \in \myperiodpoint{f}{n}$, we have
    \begin{equation} \label{eq:thm:decomposition of local degree by cardinality of the preimages and periodic points of codings:zeta_function_identity} % This was Eq. (6.7)
        \deg_{f^n}(x) = M_{A_{\vartriangle}}(x, n) - M_{\eematrix}(x, n) + M_{\ematrix}(x, n) + M_{\bullet}(x, n),
    \end{equation}
    where
    \begin{align*}
        M_{A_{\vartriangle}}( x, n )
            & \define \card[\big]{ \myperiodpoint[\big]{\sigma_{A_{\vartriangle}}}{n} \cap \pi_{\vartriangle}^{-1}( x ) }, \\
        M_{\eematrix}( x, n )
            & \define \card[\big]{ \myperiodpoint[\big]{\sigma_{\eematrix}}{n} \cap ( \ecoding \circ \eecoding )^{-1}( x ) }, \\
        M_{\ematrix}( x, n )
            & \define \card[\big]{ \myperiodpoint[\big]{\sigma_{\ematrix}}{n} \cap \ecoding^{-1}( x ) }, \\
        M_{\bullet}( x, n )
            & \define \card{ \myperiodpoint{ ( f|_{\mathbf{V}^0} ) }{n} \cap \{ x \} }.
    \end{align*}
\end{theorem}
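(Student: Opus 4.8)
\medskip

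The plan is to establish identity~\eqref{eq:thm:decomposition of local degree by cardinality of the preimages and periodic points of codings} pointwise: fix $n \in \n$ and $x \in \myperiodpoint{f}{n}$, and compute each of the four symbolic multiplicities by analyzing the cell decompositions $\mathbf{D}^m(f, \mathcal{C})$ locally along the orbit $\set{x, f(x), \dots, f^{n-1}(x)}$. As a first reduction, recall that an expanding Thurston map has no periodic critical points (see \cite{bonk2017expanding}); hence every orbit point is non-critical, so $\deg_{f^n}(x) = \prod_{i=0}^{n-1} \deg_f(f^i(x)) = 1$, and it remains only to prove that $M_{A_{\vartriangle}}(x, n) - M_{\eematrix}(x, n) + M_{\ematrix}(x, n) + M_{\bullet}(x, n) = 1$.

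The second step is a trichotomy for the orbit of $x$. Since $f(\mathcal{C}) \subseteq \mathcal{C}$, if one orbit point lies on $\mathcal{C}$ then all do; and since $\post{f}$ is forward invariant and $\post{f} \subseteq \mathcal{C}$, if one orbit point is a vertex of $\mathbf{D}^m(f, \mathcal{C})$ for some $m \in \n_0$, then $x$ itself is a $0$-vertex. So exactly one of the following holds: \textbf{(A)} the orbit is disjoint from $\mathcal{C}$; \textbf{(B)} every orbit point lies in the relative interior of a $0$-edge (hence $x \in \mathcal{C} \mysetminus \mathbf{V}(f, \mathcal{C})$); \textbf{(C)} every orbit point is a postcritical point (hence $x \in \mathbf{V}^0 = \post{f}$), and, being periodic, is non-critical. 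In Case~(A) the three correction terms vanish, as $\ecoding$, $\ecoding \circ \eecoding$, and $f|_{\mathbf{V}^0}$ all have image in $\mathcal{C}$ while $x \notin \mathcal{C}$; and by Proposition~\ref{prop:one-sided subshift of finite type associated with expanding Thurston map}, $\pi_{\vartriangle}$ is injective on $\pi_{\vartriangle}^{-1}\bigl( S^2 \mysetminus \bigcup_{i \in \n_0} f^{-i}(\mathcal{C}) \bigr)$, so $x$ has a unique tile-coding $\omega$; since $\pi_{\vartriangle}\bigl( \sigma_{A_{\vartriangle}}^n \omega \bigr) = f^n(\pi_{\vartriangle}(\omega)) = x$, the point $\sigma_{A_{\vartriangle}}^n \omega$ is again a tile-coding of $x$, hence equals $\omega$, so $M_{A_{\vartriangle}}(x, n) = 1$ and the identity reads $1 = 1$.

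Cases~(B) and~(C) carry the real content. In both, each orbit point $f^i(x)$ is non-critical, so $f$ is an orientation-preserving homeomorphism near $f^i(x)$ carrying the level-$1$ cell structure there onto the level-$0$ cell structure near $f^{i+1}(x)$; in particular $f^i(x)$ lies in exactly two $1$-tiles — one inside $X^0_{\black}$ and one inside $X^0_{\white}$ — and in exactly two $1$-edges contained in $\mathcal{C}$ (its interior lies in a single $1$-edge in Case~(B), but a $0$-vertex is an endpoint of two). Consequently the defining transition conditions for $A_{\vartriangle}$, $\ematrix$, and $\eematrix$ force every admissible coding of $x$ to propagate deterministically from its $0$-th coordinate, so the number of periodic codings in each system is controlled by a parity — a $\{0,1\}$-valued monodromy around the first-return map $f^n$ at $x$. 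The key point, which makes the alternating sum collapse, is that the monodromy governing the black/white label of a tile-coding (and of an $\eematrix$-coding) coincides with the monodromy governing the side of $\mathcal{C}$ selected by an $\ematrix$-coding: both encode the same ``phase'' with which the orientation-preserving map $f$ sends the cyclically ordered link of $f^i(x)$ — two edges alternating with two tiles — onto that of $f^{i+1}(x)$. Tracking this through, in Case~(B) the quadruple $\bigl( M_{A_{\vartriangle}}, M_{\eematrix}, M_{\ematrix}, M_{\bullet} \bigr)$ of multiplicities at $(x,n)$ equals $(2, 2, 1, 0)$ when this monodromy is trivial and $(0, 0, 1, 0)$ otherwise, while in Case~(C) it equals $(2, 4, 2, 1)$ resp.\ $(0, 0, 0, 1)$; in every instance the alternating sum is $1$.

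The main obstacle is the bookkeeping in Case~(C): setting up the local cell-decomposition picture at a postcritical cycle carefully enough to verify that the three symbolic systems detect the same monodromy, and that the two-to-one fibration of $\Sigma_{\eematrix}^+$ over $\Sigma_{\ematrix}^+$ from Proposition~\ref{prop:edge_symbolic_factors_properties} is exactly what cancels the over-counting of $1$-tiles along $\mathcal{C}$ against the signed lower-dimensional edge- and vertex-counts. Producing the four tuples above — in particular checking that the transition matrix forces the correct $1$-tile, $1$-edge, and coloured $1$-edge at $f^{i+1}(x)$ at each step — is elementary once the picture is in place, but requires care with the distinction between the ``image colour'' of a $1$-tile $X$ (determined by $f(X) = X^0_{\colour}$) and its ``containment colour'' (determined by $X \subseteq X^0_{\colour}$), which need not agree.

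\medskip
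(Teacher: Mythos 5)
This theorem is not proved in the present paper: it is stated as a recorded citation of \cite[Theorem~6.3]{li2024prime:dirichlet} (the sentence immediately preceding the statement reads ``We record the following result from''), so there is no in-paper proof to compare against. Evaluated on its own terms, your sketch is a reasonable reconstruction of a direct proof and identifies the right structure, but it is genuinely a sketch rather than a proof, and you say as much yourself.

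What is solid: the reduction $\deg_{f^n}(x) = 1$ (an expanding Thurston map has no periodic critical points), the trichotomy (A)/(B)/(C) for periodic orbits (forward invariance of $\mathcal{C}$ and $\post f$ together with periodicity of $x$ makes the cases mutually exclusive and exhaustive), and Case~(A). The real content sits in Cases~(B) and~(C), and there your argument leans entirely on a claim you state but do not prove: that the $\mathbb{Z}/2$ monodromy governing the colour propagation of tile-codings (and of $\eematrix$-codings) \emph{coincides with} the monodromy governing the edge propagation of $\ematrix$-codings. This is not a cosmetic observation — it is load-bearing. If the two monodromies were independent, Case~(C) would admit a tuple like $(0, 0, 2, 1)$, whose alternating sum is $3$, not $1$; your conclusion depends on ruling that out. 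The identification does hold, and it follows exactly from the picture you gesture at (at each non-critical $f^i(x) \in \mathcal{C}$, the link is a 4-cycle edge–tile–edge–tile; $f$ being an orientation-preserving local homeomorphism sends this 4-cycle to the corresponding 4-cycle at $f^{i+1}(x)$ by a rotation by $0$ or $2$; a rotation by $2$ swaps the two edges \emph{and} swaps the two colours simultaneously, so after $n$ steps the net edge-swap equals the net colour-swap). But as written this is asserted, not demonstrated, and you explicitly flag the ``bookkeeping in Case~(C)'' as an unaddressed obstacle. Similarly, the deterministic propagation of an admissible coding from its zeroth coordinate (so that $M_{A_\vartriangle}, M_{\ematrix}, M_{\eematrix}$ are really monodromy fixed-point counts of $\{0,2\}$- or $\{0,4\}$-valued Klein-four actions, rather than something larger) is needed at every orbit point and is only discussed informally. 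In short: correct strategy, correct key insight, but the verification that the four tuples are exactly $(1,0,0,0)$, $(2,2,1,0)$ or $(0,0,1,0)$, and $(2,4,2,1)$ or $(0,0,0,1)$ is the theorem, and it is left to the reader.
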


By definition, $M_{\eematrix}(x, n) = M_{\ematrix}(x, n) = 0$ for all $x \notin \mathcal{C}$, and $M_{\bullet}(x, n) = 0$ for all $x \notin \mathbf{V}^0$.

\begin{lemma} \label{lem:factor_map_periodic_points}
	Let $X$ and $Y$ be topological spaces, and let $G \colon Y \to Y$ and $g \colon X \to X$ be continuous maps.
	If $(X, g)$ is a factor of $(Y, G)$ with a factor map $\pi \colon Y \to X$, then for each $n \in \n$, we have $\pi(\myperiodpoint{G}{n}) \subseteq \myperiodpoint{g}{n}$, and the set \( \myperiodpoint{G}{n} \) of periodic points admits the decomposition
	\[
		\myperiodpoint{G}{n} = \bigcup_{ x \in \myperiodpoint{g}{n} } \parentheses[\big]{ \myperiodpoint{G}{n} \cap \pi^{-1}( x ) }.
	\]
    Moreover, for each complex-valued function $\varphi \colon X \to \cx$, we have
    \[  
    	\sum_{ y \in \myperiodpoint{G}{n} } e^{ S_n \varphi (\pi(y)) }
    	= \sum_{ x \in \myperiodpoint{g}{n} } \card[\big]{ \myperiodpoint[\big]{G}{n} \cap \pi^{-1}( x ) } \, e^{ S_n \varphi(x) }
    \]
\end{lemma}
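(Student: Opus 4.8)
The plan is to push periodic orbits of $G$ forward through the factor map $\pi$ and then reorganize the weighted sum over $\myperiodpoint{G}{n}$ according to the fibres of $\pi$. The only input is the semi-conjugacy relation $\pi \circ G = g \circ \pi$, which by a one-line induction upgrades to $\pi \circ G^{n} = g^{n} \circ \pi$ for every $n \in \n$.

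From this, the first two assertions are immediate. If $y \in \myperiodpoint{G}{n}$, then $g^{n}(\pi(y)) = \pi\parentheses[\big]{ G^{n}(y) } = \pi(y)$, so $\pi(y) \in \myperiodpoint{g}{n}$; this is the inclusion $\pi\parentheses[\big]{ \myperiodpoint{G}{n} } \subseteq \myperiodpoint{g}{n}$. For the decomposition, "$\supseteq$" is trivial since each $\myperiodpoint{G}{n} \cap \pi^{-1}(x)$ is contained in $\myperiodpoint{G}{n}$, while "$\subseteq$" follows because any $y \in \myperiodpoint{G}{n}$ lies in $\myperiodpoint{G}{n} \cap \pi^{-1}(\pi(y))$ with $\pi(y) \in \myperiodpoint{g}{n}$ by the previous step. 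I would also record that this union is pairwise disjoint, since the fibres $\pi^{-1}(x)$, $x \in X$, are pairwise disjoint; this is precisely what legitimizes splitting a sum indexed by $\myperiodpoint{G}{n}$ along it.

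For the summation identity, I would split the sum over $\myperiodpoint{G}{n}$ along the disjoint fibre decomposition just obtained, giving
\[
    \sum_{y \in \myperiodpoint{G}{n}} e^{ S_n \varphi(\pi(y)) }
    = \sum_{x \in \myperiodpoint{g}{n}} \, \sum_{y \in \myperiodpoint{G}{n} \cap \pi^{-1}(x)} e^{ S_n \varphi(\pi(y)) }.
\]
On the fibre $\pi^{-1}(x)$ one has $\pi(y) = x$, so the summand $e^{ S_n \varphi(\pi(y)) } = e^{ S_n \varphi(x) }$ is independent of $y$, and the inner sum collapses to $\card[\big]{ \myperiodpoint{G}{n} \cap \pi^{-1}(x) } \, e^{ S_n \varphi(x) }$, which is the claimed formula. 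There is no genuine obstacle in this lemma — it is pure bookkeeping — and the only point deserving a moment's attention is the disjointness of the fibres underlying the rearrangement of the sum; in the applications $\myperiodpoint{G}{n}$ will be finite, so no convergence issue arises.
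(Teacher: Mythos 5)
Your proposal is correct and follows essentially the same route as the paper's proof: derive $\pi \circ G^{n} = g^{n} \circ \pi$ from the semi-conjugacy to get $\pi(\myperiodpoint{G}{n}) \subseteq \myperiodpoint{g}{n}$, obtain the fibre decomposition, and regroup the sum fibrewise, noting $\pi(y) = x$ on each fibre. Your explicit remark about the pairwise disjointness of the fibres is a small point the paper leaves implicit, but it does not constitute a different argument.
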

\begin{proof}
    For each \( y \in \myperiodpoint{G}{n} \) we have \( g^n(\pi(y)) = \pi(G^n(y)) = \pi(y) \) since \( g \circ \pi = \pi \circ G \).
    Thus \( \pi(y) \in \myperiodpoint{g}{n} \), which establishes the inclusion \( \pi(\myperiodpoint{G}{n}) \subseteq \myperiodpoint{g}{n} \).

    The decomposition of \( \myperiodpoint{G}{n} \) follows directly.
    The inclusion \( \supseteq \) is trivial.
    For the reverse inclusion, every \( y \in \myperiodpoint{G}{n} \) belongs to \( \myperiodpoint{G}{n} \cap \pi^{-1}(\pi(y)) \).
    Since \( \pi(y) \in \myperiodpoint{g}{n} \), this set is part of the union.
    
    The identity for the sum over periodic points is obtained by grouping terms according to the decomposition:
    \[
        \sum_{ y \in \myperiodpoint{G}{n} } e^{ S_n \varphi ( \pi(y) ) }
        = \sum_{ x \in \myperiodpoint{g}{n} } \sum_{ y \in \myperiodpoint{G}{n} \cap \pi^{-1}(x) } e^{ S_n \varphi ( \pi(y) ) }.
    \]
    Since $\pi(y) = x$ for every $y$ in the inner sum, this simplifies to
    \[
        \sum_{ x \in \myperiodpoint{g}{n} } \sum_{ y \in \myperiodpoint{G}{n} \cap \pi^{-1}(x) } e^{ S_n \varphi(x) }
        = \sum_{ x \in \myperiodpoint{g}{n} } \card[\big]{ \myperiodpoint{G}{n} \cap \pi^{-1}(x) } \, e^{ S_n \varphi(x) }.
    \]
    This completes the proof.
\end{proof}

\begin{lemma} \label{lem:upper bound of shift partition function}
    Consider a finite set of states $S$ and a transition matrix $A \colon S \times S \to \{0, 1\}$. 
    Let $\parentheses[\big]{ \Sigma_{A}^+, \sigma_{A} }$ be the one-sided subshift of finite type defined by $A$, and $\psi \in C\parentheses[\big]{ \Sigma_{A}^+ }$ be a real-valued continuous function.
    Then for each $\varepsilon > 0$ there exists $C = C(\varepsilon) > 0$ such that for all $n \in \n$,
    \begin{equation}    \label{eq:lem:upper bound of shift partition function}
        \sum_{ y \in \myperiodpoint{ \sigma_{A} }{ n } } e^{ S_n \psi( y ) } 
        \leqslant C e^{ n ( P(\sigma_{A}, \psi) + \varepsilon ) }.
    \end{equation}
\end{lemma}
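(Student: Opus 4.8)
The plan is to bypass transfer operators altogether and compare the sum over periodic points directly against the defining formula \eqref{eq:def:topological pressure} for the topological pressure, which is already phrased in terms of $(n,\varepsilon)$-separated sets; only continuity of $\psi$ will be used. As preliminaries, I would first dispose of the trivial case $\Sigma_{A}^{+} = \varnothing$, and then equip $\Sigma_{A}^{+}$ with the metric $\metriconshiftspace$ from \eqref{eq:def:metric on shift space} for an arbitrary $\metricexpshiftspace \in (0,1)$; this is harmless since the topological pressure depends only on the topology of the underlying space.

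The one substantive observation is that $\myperiodpoint{\sigma_{A}}{n}$ (a finite set, since the alphabet $S$ is finite) is $(n,1)$-separated with respect to $\sigma_{A}$. Indeed, if $x = \{x_{i}\}_{i \geqslant 0}$ and $y = \{y_{i}\}_{i \geqslant 0}$ are distinct points of $\myperiodpoint{\sigma_{A}}{n}$, then $x_{i+n} = x_{i}$ and $y_{i+n} = y_{i}$ for all $i \geqslant 0$, so each of $x$, $y$ is determined by its first $n$ coordinates; hence they already disagree at some coordinate $m \leqslant n-1$, and then $\metriconshiftspace(\sigma_{A}^{m}(x), \sigma_{A}^{m}(y)) = 1$, so that $\max_{0 \leqslant k \leqslant n-1} \metriconshiftspace \bigl( \sigma_{A}^{k}(x), \sigma_{A}^{k}(y) \bigr) \geqslant 1$. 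Taking $\myperiodpoint{\sigma_{A}}{n}$ as an admissible competitor in the supremum defining $N_{\metriconshiftspace}(\sigma_{A}, \psi, 1, n)$, I would conclude that $\sum_{y \in \myperiodpoint{\sigma_{A}}{n}} e^{S_{n}\psi(y)} \leqslant N_{\metriconshiftspace}(\sigma_{A}, \psi, 1, n)$ for every $n \in \n$.

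Next I would feed this into \eqref{eq:def:topological pressure}. Since any $(n,\varepsilon)$-separated set is also $(n,\varepsilon')$-separated whenever $\varepsilon' \leqslant \varepsilon$, the quantity $N_{\metriconshiftspace}(\sigma_{A}, \psi, \varepsilon, n)$ is non-increasing in $\varepsilon$, hence so is $\varepsilon \mapsto \limsup_{n \to +\infty} \frac{1}{n} \log N_{\metriconshiftspace}(\sigma_{A}, \psi, \varepsilon, n)$; its limit as $\varepsilon \to 0^{+}$, which by \eqref{eq:def:topological pressure} equals $P(\sigma_{A}, \psi)$, is therefore its supremum over $\varepsilon > 0$, so in particular the value at $\varepsilon = 1$ is $\leqslant P(\sigma_{A}, \psi)$. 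Combining this with the previous paragraph yields
\[
    \limsup_{n \to +\infty} \frac{1}{n} \log \parentheses[\bigg]{ \sum_{y \in \myperiodpoint{\sigma_{A}}{n}} e^{S_{n}\psi(y)} } \leqslant P(\sigma_{A}, \psi).
\]
Fixing $\varepsilon > 0$, this gives $\sum_{y \in \myperiodpoint{\sigma_{A}}{n}} e^{S_{n}\psi(y)} \leqslant e^{n(P(\sigma_{A}, \psi) + \varepsilon)}$ for all $n$ larger than some threshold $N_{0}$, and I would absorb the finitely many remaining indices $n < N_{0}$ into the constant $C$, obtaining \eqref{eq:lem:upper bound of shift partition function}.

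I do not expect a genuine obstacle. The only step warranting care is the separation claim — specifically the observation that a point fixed by $\sigma_{A}^{n}$ is $n$-periodic as a sequence, hence determined by its first $n$ coordinates — which is immediate from the definition of the shift. Notably, no mixing or irreducibility of $A$, and no \holder regularity of $\psi$, enters the argument.
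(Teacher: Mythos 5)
Your proposal is correct and follows essentially the same route as the paper: observe that $\myperiodpoint{\sigma_{A}}{n}$ is $(n,1)$-separated, invoke the defining formula \eqref{eq:def:topological pressure} for topological pressure to bound $\limsup_{n} \frac{1}{n}\log\sum_{y}e^{S_n\psi(y)}$ by $P(\sigma_A,\psi)$, and then absorb finitely many small $n$ into the constant $C$. Your additional remarks on the monotonicity of $N_d(\cdot,\cdot,\varepsilon,\cdot)$ in $\varepsilon$ simply spell out why $\varepsilon=1$ suffices, which the paper leaves implicit.
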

\begin{proof}
    Let $n \in \n$ and $Z_n(\psi) \define \sum_{ y \in \myperiodpoint{ \sigma_{A} }{ n } } e^{ S_n \psi( y ) }$.
    Any two distinct points in $\myperiodpoint{\sigma_{A}}{n}$ must differ in at least one of their first $n$ coordinates, which implies that $\myperiodpoint{\sigma_{A}}{n}$ is an $(n, 1)$-separated set (see Subsection~\ref{sub:thermodynamic formalism}).
    Then it follows directly from the definition of topological pressure \eqref{eq:def:topological pressure} that
    \[
        \limsup_{n \to +\infty} \frac{1}{n} \log Z_n(\psi) \leqslant P(\sigma_A, \psi).
    \]
    Given $\varepsilon > 0$, this implies $Z_n(\psi) \leqslant e^{n(P(\sigma_A, \psi) + \varepsilon)}$ for all sufficiently large $n$.
    The claimed inequality \eqref{eq:lem:upper bound of shift partition function} for all $n \in \n$ then follows by choosing a sufficiently large constant $C = C(\varepsilon) > 0$.
\end{proof}

% relation between the two zeta functions of the symbolic coding and expanding Thurston map

\def\errdegpartifun{\mathrm{I}_n(s)}
\def\codingpartifun{\Pi_n(s)}

\def\degbirkhoffsum#1{\eta_{#1}}

\def\critprimeorbit{\mathcal{P}^>(f|_{\mathbf{V}^0})}
    
\def\degbirkhoffsum#1{\eta_{#1}}

\begin{lemma} \label{lem:relation between partition function of expanding Thurston map and tile coding}
    Let $f$, $\mathcal{C}$, $d$, $\potential$, $\rootpressure$ satisfy the Assumptions in Section~\ref{sec:The Assumptions}.
    We assume that $f(\mathcal{C}) \subseteq \mathcal{C}$ and no $1$-tile in $\Tile{1}$ joins opposite sides of $\mathcal{C}$.
    Then there exist constants $\kappa \in (0, 1)$ and $C > 0$ such that for each $n \in \n$ and each $s \in \cx$ with $\Re{s} = -\rootpressure$, we have
    \[
        \abs[\big]{ \partifun{s}[n][f][\potential] - \shiftpartifun{s}[n][\sigma_{A_{\vartriangle}}][\potential\circ\pi_{\vartriangle}] } \leqslant C \kappa^n.
    \]
    % Here $\partifun{s}$ and $\shiftpartifun{s}$ are partition functions defined in Definition~\ref{def:partition function}.
\end{lemma}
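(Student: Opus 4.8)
The plan is to write $\partifun{s}[n][f][\potential] - \shiftpartifun{s}[n][\sigma_{A_{\vartriangle}}][\potential\circ\pi_{\vartriangle}]$ as an explicit combination of partition functions attached to the proper subsystems of $f$ living on the invariant Jordan curve $\mathcal{C}$ and on the finite set $\mathbf{V}^0 = \post{f}$, and then to bound each of those using Lemma~\ref{lem:upper bound of shift partition function} together with the strict pressure gaps of Theorem~\ref{thm:expanding Thurston map symbolic pressure relation}.

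First I would apply Lemma~\ref{lem:factor_map_periodic_points} to the factor map $\pi_{\vartriangle} \colon \Sigma^+_{A_{\vartriangle}} \mapping S^2$ with the complex potential $s \potential$, using the semi-conjugacy $\pi_{\vartriangle} \circ \sigma_{A_{\vartriangle}} = f \circ \pi_{\vartriangle}$ to identify $S_n^{\sigma_{A_{\vartriangle}}}(\potential \circ \pi_{\vartriangle})$ with $(S_n \potential) \circ \pi_{\vartriangle}$, to obtain
\[
    \shiftpartifun{s}[n][\sigma_{A_{\vartriangle}}][\potential\circ\pi_{\vartriangle}] = \sum_{x \in \myperiodpoint{f}{n}} M_{A_{\vartriangle}}(x, n)\, e^{s S_n \potential(x)}.
\]
Since $f$ is expanding, it has no periodic critical point (see \cite{bonk2017expanding}), so $\deg_{f^n}(x) = \prod_{j = 0}^{n - 1} \deg_f(f^j(x)) = 1$ for every $x \in \myperiodpoint{f}{n}$; hence $\partifun{s}[n][f][\potential] = \sum_{x \in \myperiodpoint{f}{n}} \deg_{f^n}(x)\, e^{s S_n \potential(x)}$, and Theorem~\ref{thm:decomposition of local degree by cardinality of the preimages and periodic points of codings} yields $\deg_{f^n}(x) - M_{A_{\vartriangle}}(x, n) = - M_{\eematrix}(x, n) + M_{\ematrix}(x, n) + M_{\bullet}(x, n)$. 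Applying Lemma~\ref{lem:factor_map_periodic_points} once more to the factor maps $\ecoding \circ \eecoding \colon \Sigma^+_{\eematrix} \mapping \mathcal{C}$ and $\ecoding \colon \Sigma^+_{\ematrix} \mapping \mathcal{C}$ from the commutative diagram of Proposition~\ref{prop:edge_symbolic_factors_properties} (using that $M_{\eematrix}(\cdot, n)$ and $M_{\ematrix}(\cdot, n)$ vanish off $\mathcal{C}$, that $\myperiodpoint{f}{n} \cap \mathcal{C} = \myperiodpoint{(f|_{\mathcal{C}})}{n}$, and that $S_n^f \potential = S_n^{f|_{\mathcal{C}}}(\potential|_{\mathcal{C}})$ on $\mathcal{C}$), together with the identity $M_{\bullet}(x, n) = \indicator{\myperiodpoint{(f|_{\mathbf{V}^0})}{n}}(x)$, this gives the exact formula
\[
    \partifun{s}[n][f][\potential] - \shiftpartifun{s}[n][\sigma_{A_{\vartriangle}}][\potential\circ\pi_{\vartriangle}] = - \shiftpartifun{s}[n][\sigma_{\eematrix}][\potential\circ\ecoding\circ\eecoding] + \shiftpartifun{s}[n][\sigma_{\ematrix}][\potential\circ\ecoding] + \sum_{x \in \myperiodpoint{(f|_{\mathbf{V}^0})}{n}} e^{s S_n \potential(x)},
\]
valid for all $s \in \cx$.

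Next I would estimate the three terms on the line $\Re{s} = -\rootpressure$. There, for a real-valued potential $\psi$, one has $\abs[\big]{e^{s S_n \psi}} = e^{-\rootpressure S_n \psi} = e^{S_n(-\rootpressure \psi)}$, so each of the three terms is bounded in modulus by the same expression with the potential replaced by $-\rootpressure$ times the relevant real potential. Lemma~\ref{lem:upper bound of shift partition function} then bounds the first two, for any $\varepsilon > 0$, by $C(\varepsilon) e^{n(P(\sigma_{\eematrix},\, -\rootpressure \potential \circ \ecoding \circ \eecoding) + \varepsilon)}$ and $C(\varepsilon) e^{n(P(\sigma_{\ematrix},\, -\rootpressure \potential \circ \ecoding) + \varepsilon)}$; for the third, since $\mathbf{V}^0$ is finite we have $\card{\myperiodpoint{(f|_{\mathbf{V}^0})}{n}} \leqslant \card{\mathbf{V}^0}$, and the variational principle applied to the uniform measure on a periodic orbit gives $-\rootpressure S_n(\potential|_{\mathbf{V}^0})(x) \leqslant n\, P(f|_{\mathbf{V}^0},\, -\rootpressure \potential|_{\mathbf{V}^0})$ for each $x \in \myperiodpoint{(f|_{\mathbf{V}^0})}{n}$, whence the third term is at most $\card{\mathbf{V}^0}\, e^{n P(f|_{\mathbf{V}^0},\, -\rootpressure \potential|_{\mathbf{V}^0})}$ in modulus. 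Finally, by Assumption~\ref{assumption:unique zero} we have $P(f, -\rootpressure \potential) = 0$, and Theorem~\ref{thm:expanding Thurston map symbolic pressure relation}, applied with $\varphi \define -\rootpressure \potential$, gives $P(\sigma_{\eematrix},\, -\rootpressure \potential \circ \ecoding \circ \eecoding) = P(\sigma_{\ematrix},\, -\rootpressure \potential \circ \ecoding) < 0$ and $P(f|_{\mathbf{V}^0},\, -\rootpressure \potential|_{\mathbf{V}^0}) < 0$; choosing $\varepsilon > 0$ small enough that $P(\sigma_{\ematrix},\, -\rootpressure \potential \circ \ecoding) + \varepsilon < 0$, taking $\kappa \in (0, 1)$ to be the maximum of $e^{P(\sigma_{\ematrix},\, -\rootpressure \potential \circ \ecoding) + \varepsilon}$ and $e^{P(f|_{\mathbf{V}^0},\, -\rootpressure \potential|_{\mathbf{V}^0})}$, and letting $C$ be a suitable multiple of the resulting constants yields $\abs[\big]{\partifun{s}[n][f][\potential] - \shiftpartifun{s}[n][\sigma_{A_{\vartriangle}}][\potential\circ\pi_{\vartriangle}]} \leqslant C \kappa^n$ for all $n \in \n$, uniformly in $s$ with $\Re{s} = -\rootpressure$.

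Given the four cited inputs, this is essentially an assembly argument; the two places that need genuine care are (i) the reduction $\deg_{f^n}(x) = 1$ on $\myperiodpoint{f}{n}$, which collapses the left-hand side of the decomposition in Theorem~\ref{thm:decomposition of local degree by cardinality of the preimages and periodic points of codings} and rests on expanding Thurston maps having no periodic critical points, and (ii) the bookkeeping of the composed potentials $\potential \circ \ecoding$, $\potential \circ \ecoding \circ \eecoding$, and $\potential|_{\mathbf{V}^0}$ and their associated topological pressures as they are pushed through the factor maps, together with the fact that the finite system $(\mathbf{V}^0, f|_{\mathbf{V}^0})$ must be treated by hand rather than via Lemma~\ref{lem:upper bound of shift partition function}.
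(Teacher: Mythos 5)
Your overall architecture is the same as the paper's (the local-degree decomposition of Theorem~\ref{thm:decomposition of local degree by cardinality of the preimages and periodic points of codings}, converted via Lemma~\ref{lem:factor_map_periodic_points} into partition functions over the edge codings and $\mathbf{V}^0$, then killed by the pressure gaps of Theorem~\ref{thm:expanding Thurston map symbolic pressure relation} and Lemma~\ref{lem:upper bound of shift partition function}), but there is a genuine gap at step (i). Your claim that ``since $f$ is expanding, it has no periodic critical point, so $\deg_{f^n}(x)=1$ for every $x\in\myperiodpoint{f}{n}$'' is false for general expanding Thurston maps: the absence of periodic critical points characterizes the \emph{rational} expanding Thurston maps (this is exactly \cite[Proposition~2.3]{bonk2017expanding}, quoted before Corollary~\ref{coro:main theorem for postcritically-finite rational maps}), and the lemma is stated for arbitrary expanding Thurston maps, which may well have periodic critical points. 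Consequently the term $\sum_{x\in\myperiodpoint{f}{n}}\parentheses[\big]{\deg_{f^n}(x)-1}e^{sS_n\potential(x)}$, which measures the mismatch between $\partifun{s}[n][f][\potential]$ (an unweighted sum over fixed points of $f^n$) and the degree-weighted sum appearing in the decomposition, cannot simply be discarded.

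Bounding that term is not a formality: the non-zero contributions come from periodic points in $\post{f}$ lying on periodic critical orbits, where $\deg_{f^n}(x)$ grows exponentially in $n$, so one must beat this growth by the factor $e^{-\rootpressure S_n\potential(x)}$. The paper does this via the quantity $\deg_f(\tau)\,e^{-\rootpressure l_{f,\potential}(\tau)}$ attached to each primitive postcritical orbit $\tau$ with $\deg_f(\tau)>1$, invoking the key claim from the proof of Theorem~D in \cite{li2024prime:dirichlet} that this quantity is strictly less than $1$ --- a claim whose proof uses the standing assumption that no $1$-tile in $\Tile{1}$ joins opposite sides of $\mathcal{C}$. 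It is telling that your argument never uses that hypothesis. The rest of your proposal (the treatment of the curve and vertex terms, including the elementary variational-principle bound for the finite system $(\mathbf{V}^0, f|_{\mathbf{V}^0})$ in place of the paper's appeal to Lemma~\ref{lem:upper bound of shift partition function}) is correct, but as written the proof only establishes the lemma under the additional hypothesis that $f$ has no periodic critical points.
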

\begin{proof}
    Let $s \in \cx$ with $\Re{s} = -\rootpressure$.
    % Since we will be taking absolute values, we note that $\abs{ e^{s S_n \potential(x)} } = e^{\Re{s} S_n \potential(x)} = e^{-\rootpressure S_n \potential(x)}$.
    
    We begin by decomposing the partition function $\partifun{s}$ into two terms:
    \begin{equation} \label{eq:proof:lem:relation between partition function of expanding Thurston map and tile coding:partition_function_decomposition_step1}
        \partifun{s}
        = \sum_{ x \in \myperiodpoint{f}{n} } \deg_{ f^n }( x ) e^{ s S_n \potential( x ) } - \sum_{ x \in \myperiodpoint{f}{n} } \parentheses[\big]{ \deg_{ f^n }( x ) - 1 } e^{ s S_n \potential( x ) }.
    \end{equation}
    Substituting the expression \eqref{eq:thm:decomposition of local degree by cardinality of the preimages and periodic points of codings:zeta_function_identity} in Theorem~\ref{thm:decomposition of local degree by cardinality of the preimages and periodic points of codings} for the local degree $\deg_{ f^n }( x )$ transforms the first sum on the right-hand side of \eqref{eq:proof:lem:relation between partition function of expanding Thurston map and tile coding:partition_function_decomposition_step1} into
    \[
        \sum_{ x \in \myperiodpoint{f}{n} } \parentheses[\big]{ M_{A_{\vartriangle}}(x, n) - M_{\eematrix}(x, n) + M_{\ematrix}(x, n) + M_{\bullet}(x, n) } e^{ s S_n \potential( x ) }.
    \]
    By Lemma~\ref{lem:factor_map_periodic_points}, this sum can be expressed in terms of the associated symbolic systems via the factor maps $\pi_ {\vartriangle}$, $\ecoding \circ \eecoding$, and $\ecoding$ (cf.~Propositions~\ref{prop:one-sided subshift of finite type associated with expanding Thurston map} and~\ref{prop:edge_symbolic_factors_properties}), yielding
    \begin{align*}
        \shiftpartifun{s}
            &- \sum_{ y \in \myperiodpoint{ \sigma_{\eematrix} }{ n } } e^{ s S_n \potential( \ecoding \circ \eecoding (y) ) } + \sum_{ y \in \myperiodpoint{ \sigma_{\ematrix} }{ n } } e^{ s S_n \potential( \ecoding( y ) ) }
            + \sum_{ x \in \myperiodpoint{ ( f|_{\mathbf{V}^0} ) }{ n } } e^{ s S_n \potential( x ) }.
    \end{align*}
    Collecting all terms, we obtain the decomposition
    \[
        \partifun{s} = \shiftpartifun{s} - \errdegpartifun - \codingpartifun,
    \]
    where
    \[
        \errdegpartifun \define \sum_{ x \in \myperiodpoint{f}{n} } \parentheses[\big]{ \deg_{ f^n }( x ) - 1 } e^{ s S_n \potential( x ) }
    \]
    and
    \[
        \codingpartifun \define 
            \sum_{ y \in \myperiodpoint{ \sigma_{\eematrix} }{ n } } e^{ s S_n \potential \circ \ecoding \circ \eecoding (y) }
            - \sum_{ y \in \myperiodpoint{ \sigma_{\ematrix} }{ n } } e^{ s S_n \potential \circ \ecoding (y) }
            - \sum_{ x \in \myperiodpoint{ ( f|_{\mathbf{V}^0} ) }{ n } } e^{ s S_n \potential( x ) }.
    \]
    It remains to show that $\abs{\errdegpartifun}$ and $\abs{\codingpartifun}$ are bounded by terms that decay exponentially in $n$.

    We first estimate $\abs{\errdegpartifun}$.
    The only non-zero terms in the sum defining $\errdegpartifun$ correspond to periodic points $x$ with $\deg_{f^n}(x) > 1$, which are necessarily contained in the postcritical set $\mathbf{V}^0 = \post{f}$.
    To analyze these terms, we introduce some notation.
    For a primitive periodic orbit $\tau \in \priorbit$, we write
    \[
        l_{f, \, \potential}(\tau) \define \sum_{y \in \tau} \potential(y) \quad \text{and} \quad \deg_f(\tau) \define \prod_{y \in \tau} \deg_f(y).
    \]
    Define $\critprimeorbit \define \{ \tau \in \priorbit[f|_{\mathbf{V}^0}] \describe \deg_f(\tau) > 1 \}$, which is a finite set since $\mathbf{V}^0$ is finite.
    For each $\tau \in \critprimeorbit$, we define
    \[
        \degbirkhoffsum{\tau} \define \deg_f(\tau) e^{- \rootpressure l_{f, \potential}(\tau)},
    \]
    and let
    \[
        \degbirkhoffsum{} \define \max_{ \tau \in \critprimeorbit } \degbirkhoffsum{\tau}^{ 1 / \abs{\tau} }.
    \]
    A key claim in the proof of Theorem~D in \cite[p.~82]{li2024prime:dirichlet}\footnote{The proof relies on the assumption that no 1-tile in $\Tile{1}$ joins opposite sides of $\mathcal{C}$.} establishes that $\degbirkhoffsum{\tau} < 1$ for all $\tau \in \critprimeorbit$, which implies that $\degbirkhoffsum{} \in (0, 1)$.

    Now, consider a point $x \in \myperiodpoint{f}{n}$ with $\deg_{f^n}(x) > 1$.
    Then $x$ belongs to a primitive periodic orbit $\tau_x \in \critprimeorbit$ of period $k \define \abs{\tau_x}$, and $n$ must be a multiple of $k$, say $n = mk$ for some $m \in \n$.
    Thus $\deg_{f^n}(x) = (\deg_f(\tau_x))^m$ and $S_n\potential(x) = m l_{f, \potential}(\tau_x)$, which yields
    \[
        \deg_{f^n}(x) e^{ -\rootpressure S_n \potential(x) }
        = \parentheses[\big]{ \deg_f(\tau_x) e^{ -\rootpressure l_{f, \potential}(\tau_x) } }^m
        = \degbirkhoffsum{\tau_x}^m
        \leqslant (\degbirkhoffsum{}^k)^m = \degbirkhoffsum{}^n.
    \]
    This implies that
    \begin{align*}
        \abs[\big]{ \errdegpartifun }
        &\leqslant \sum_{ x \in \myperiodpoint{f}{n} } \parentheses[\big]{ \deg_{ f^n }( x ) - 1 } e^{ -\rootpressure S_n \potential( x ) } \\
        &= \sum_{ \substack{  x \in \myperiodpoint{f}{n} \cap \mathbf{V}^{0} \\ \deg_{f^{n}}(x) > 1 } } \parentheses[\big]{ \deg_{ f^n }( x ) - 1 } e^{ -\rootpressure S_n \potential( x ) } 
        \leqslant \card{\mathbf{V}^{0}} \degbirkhoffsum{}^{n}.
    \end{align*}
    This establishes a desired exponential bound for $\abs{\errdegpartifun}$.

    We next estimate $\abs{\codingpartifun}$. 
    Recall from Theorem~\ref{thm:expanding Thurston map symbolic pressure relation} that $P(f|_{\mathbf{V}^0}, - \rootpressure \potential|_{\mathbf{V}^0}) < P(f, - \rootpressure \potential) = 0$ and
    \begin{gather*}
        P(\sigma_{\eematrix}, - \rootpressure \potential \circ \ecoding \circ \eecoding) = P(\sigma_{\ematrix}, - \rootpressure \potential \circ \ecoding) = P(f|_{\mathcal{C}}, - \rootpressure \potential|_{\mathcal{C}}) < P(f, - \rootpressure \potential) = 0.
    \end{gather*}
    This allows us to choose $\delta > 0$ sufficiently small such that
    \[
        P(f|_{\mathcal{C}}, - \rootpressure \potential|_{\mathcal{C}}) + \delta < - \delta \quad \text{and} \quad P(f|_{\mathbf{V}^0}, - \rootpressure \potential|_{\mathbf{V}^0}) + \delta < - \delta.
    \]
    Thus by Lemma~\ref{lem:upper bound of shift partition function} and \eqref{eq:def:topological pressure}, there exist constants $C_1, C_2, C_3 > 0$ such that for all $n \in \n$,
    \begin{align*}
        \abs{ \codingpartifun }
        &\leqslant \sum_{ y \in \myperiodpoint{ \sigma_{\eematrix} }{ n } } e^{ -\rootpressure S_n \potential \circ \ecoding \circ \eecoding (y) }
            + \sum_{ y \in \myperiodpoint{ \sigma_{\ematrix} }{ n } } e^{ -\rootpressure S_n \potential \circ \ecoding (y) }
            + \sum_{ x \in \myperiodpoint{ ( f|_{\mathbf{V}^0} ) }{ n } } e^{ -\rootpressure S_n \potential( x ) }  \\
        &\leqslant C_1 e^{ n ( P(\sigma_{\eematrix}, - \rootpressure \potential \circ \ecoding \circ \eecoding) + \delta ) }
            + C_2 e^{ n ( P(\sigma_{\ematrix}, - \rootpressure \potential \circ \ecoding) + \delta ) }
            + C_3 e^{ n ( P(f|_{\mathbf{V}^0}, - \rootpressure \potential|_{\mathbf{V}^0}) + \delta ) } \\
        &= (C_1+C_2) e^{ n ( P(f|_{\mathcal{C}}, - \rootpressure \potential|_{\mathcal{C}}) + \delta ) } + C_3 e^{ n ( P(f|_{\mathbf{V}^0}, - \rootpressure \potential|_{\mathbf{V}^0}) + \delta ) } \\
        &\leqslant (C_1 + C_2 + C_3) e^{-\delta n}.
    \end{align*}
    This establishes a desired exponential bound for $\abs{\codingpartifun}$.

    Combining the estimates for $\abs{\errdegpartifun}$ and $\abs{\codingpartifun}$, we set $\kappa \define \max\{\degbirkhoffsum, \, e^{-\delta}\} \in (0,1)$ and $C \define \card{\mathbf{V}^{0}} + C_1+C_2+C_3$. 
    It follows that
    \[
        \abs[\big]{ \partifun{s} - \shiftpartifun{s} } \leqslant \abs{ \errdegpartifun } + \abs{ \codingpartifun } \leqslant C \kappa^n,
    \]
    which completes the proof.
\end{proof}

\begin{proposition}    \label{prop:partition function estimate:expanding Thurston map:unbounded imaginary}
    Let $f$, $\mathcal{C}$, $d$, $\Lambda$, $\holderexp$, $\potential$, $\rootpressure$, $\alpha$ satisfy the Assumptions in Section~\ref{sec:The Assumptions}.
    We assume that $\potential$ satisfies the $\holderexp$-strong non-integrability condition, and that $f(\mathcal{C}) \subseteq \mathcal{C}$ and no $1$-tile in $\Tile{1}$ joins opposite sides of $\mathcal{C}$.
    Define $\normpotential \define \phi - \alpha$.
    Then for each $\varepsilon > 0$, there exist constants $T > 1$, $\rho \in (0, 1)$, and $C > 0$ such that for each $t \in \real \mysetminus (-T, T)$ and each integer $n \geqslant 2$, we have
    \[
        \abs[\big]{ \normpartifun{-\rootpressure + \imaginary t} } \leqslant C \abs{t}^{2 + \varepsilon} \rho^{n} e^{n P(f, -\rootpressure \normpotential)}.
    \]    
\end{proposition}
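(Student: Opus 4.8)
The plan is to derive the bound purely by combining the two estimates already established---the discrepancy bound between the partition function of $f$ and that of its tile coding (Lemma~\ref{lem:relation between partition function of expanding Thurston map and tile coding}) and the decay estimate for the symbolic partition function in the unbounded régime (Proposition~\ref{prop:partition function estimate:symbolic coding:unbounded imaginary})---together with the trivial algebraic relation between $\normpotential$ and $\potential$. Since $\normpotential = \potential - \alpha$ differs from $\potential$ by a constant, for every $n \in \n$, every $x \in \myperiodpoint{f}{n}$, and every $s \in \cx$ we have $S_n \normpotential(x) = S_n \potential(x) - n\alpha$, hence
\[
    \normpartifun{s} = e^{-s n \alpha}\, \partifun{s}[n][f][\potential],
\]
and in particular $\abs[\big]{ \normpartifun{-\rootpressure + \imaginary t} } = e^{\rootpressure \alpha n}\, \abs[\big]{ \partifun{-\rootpressure + \imaginary t}[n][f][\potential] }$ for $t \in \real$. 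Thus everything reduces to bounding $\abs[\big]{ \partifun{-\rootpressure + \imaginary t}[n][f][\potential] }$.

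First I would fix $\varepsilon > 0$ and invoke Proposition~\ref{prop:partition function estimate:symbolic coding:unbounded imaginary} (whose hypotheses---the $\holderexp$-strong non-integrability condition, $f(\mathcal{C}) \subseteq \mathcal{C}$, and no $1$-tile joining opposite sides of $\mathcal{C}$---are exactly those we have assumed) to obtain $T > 1$, $C_{\vartriangle} > 0$, $\rho_{\vartriangle} \in (0, 1)$ with $\abs[\big]{ \shiftpartifun{ -\rootpressure + \imaginary t }[n][\sigma_{A_{\vartriangle}}][\potential \circ \pi_{\vartriangle}] } \leqslant C_{\vartriangle} \abs{t}^{2+\varepsilon} \rho_{\vartriangle}^{n}$ for $t \in \real \mysetminus (-T, T)$ and $n \geqslant 2$; then I would invoke Lemma~\ref{lem:relation between partition function of expanding Thurston map and tile coding} (applicable since $\Re(-\rootpressure + \imaginary t) = -\rootpressure$) to obtain $\kappa \in (0, 1)$ and $C' > 0$, both uniform in $t$, with $\abs[\big]{ \partifun{-\rootpressure + \imaginary t}[n][f][\potential] - \shiftpartifun{-\rootpressure + \imaginary t}[n][\sigma_{A_{\vartriangle}}][\potential \circ \pi_{\vartriangle}] } \leqslant C' \kappa^{n}$. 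By the triangle inequality, and using $\abs{t} > T > 1$ to absorb $\kappa^n \leqslant \abs{t}^{2+\varepsilon}\kappa^n$, one gets for all $t \in \real \mysetminus (-T, T)$ and all integers $n \geqslant 2$ that
\[
    \abs[\big]{ \partifun{-\rootpressure + \imaginary t}[n][f][\potential] } \leqslant \parentheses[\big]{ C' + C_{\vartriangle} } \abs{t}^{2+\varepsilon} \rho^{n}, \qquad \rho \define \max\set{\kappa, \rho_{\vartriangle}} \in (0, 1).
\]

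Finally I would convert the prefactor $e^{\rootpressure \alpha n}$ into the exponential rate appearing in the statement: since topological pressure satisfies $P(f, \psi + c) = P(f, \psi) + c$ for any constant $c$, and $P(f, -\rootpressure \potential) = 0$ by~\ref{assumption:unique zero}, we have $P(f, -\rootpressure \normpotential) = P(f, -\rootpressure \potential + \rootpressure \alpha) = \rootpressure \alpha$, so $e^{\rootpressure \alpha n} = e^{n P(f, -\rootpressure \normpotential)}$; multiplying through yields $\abs[\big]{ \normpartifun{-\rootpressure + \imaginary t} } \leqslant \parentheses[\big]{ C' + C_{\vartriangle} } \abs{t}^{2+\varepsilon} \rho^{n} e^{n P(f, -\rootpressure \normpotential)}$, which is the asserted inequality with $C \define C' + C_{\vartriangle}$. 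All of the genuine analytic input---the Dolgopyat-type contraction of the split Ruelle operators (Theorem~\ref{thm:exponential bounds for the holder norm of split Ruelle operator}), Ruelle's estimate (Proposition~\ref{prop:Ruelle's estimate for the shift associated with expanding Thurston map}), and the degree/coding decomposition of periodic points (Theorem~\ref{thm:decomposition of local degree by cardinality of the preimages and periodic points of codings})---is already packaged inside the cited results, so I do not expect a substantive obstacle; the only points requiring care are the bookkeeping of constants, the verification that the constants produced by the two cited results are uniform in $t$, and the domain restriction $t \in \real \mysetminus (-T, T)$ (needed so that $\abs{t} > 1$ can absorb the $\kappa^n$ term).
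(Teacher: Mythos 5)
Your proposal is correct and follows essentially the same route as the paper's proof: both reduce $\normpartifun{-\rootpressure + \imaginary t}$ to $\partifun{-\rootpressure + \imaginary t}[n][f][\potential]$ via the constant shift and the identity $P(f,-\rootpressure\normpotential)=\rootpressure\alpha$, then bound the latter by combining Proposition~\ref{prop:partition function estimate:symbolic coding:unbounded imaginary} with Lemma~\ref{lem:relation between partition function of expanding Thurston map and tile coding} and absorbing the $\kappa^n$ term using $\abs{t}>T>1$. The bookkeeping (choice of $\rho=\max\{\rho_\vartriangle,\kappa\}$, summing the constants) matches the paper line for line.
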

\begin{proof}
    Let $s \define -\rootpressure + \imaginary t$ with $t \in \real$.
    By Definition~\ref{def:partition function}, we have the relation $\normpartifun{s} = e^{- s \alpha n} \partifun{s}$.
    Note that $P(f, -\rootpressure \normpotential) = P(f, -\rootpressure \potential) + \rootpressure \alpha = \rootpressure \alpha$ since $P(f, -\rootpressure \potential) = 0$.
    Thus $\abs[\big]{ \normpartifun{s} } = e^{n P(f, -\rootpressure\normpotential)} \abs[\big]{ \partifun{s} }$ and it suffices to find an appropriate bound for $\abs{\partifun{s}}$.

    Let $\varepsilon > 0$ be arbitrary. 
    Then by Proposition~\ref{prop:partition function estimate:symbolic coding:unbounded imaginary}, there exist $T > 1$, $C_{\vartriangle} > 0$, and $\rho_{\vartriangle} \in (0,1)$ such that for all $n \geqslant 2$ and $t \in \real \mysetminus (-T, T)$,
    \[
        \abs{ \shiftpartifun{s} } \leqslant C_{\vartriangle} \abs{t}^{2+\varepsilon} \rho_{\vartriangle}^n.
    \]
    By Lemma~\ref{lem:relation between partition function of expanding Thurston map and tile coding}, there exist constants $\kappa \in (0, 1)$ and $C_1 > 0$ such that $\abs[\big]{ \partifun{s} - \shiftpartifun{s} } \leqslant C_1 \kappa^n$ for all $n \in \n$ and $t \in \real$.
    Let $\rho \define \max\{ \rho_{\vartriangle}, \kappa \}$.
    Then for all $n \geqslant 2$ and $t \in \real \mysetminus (-T, T)$, we have
    \begin{align*}
        \abs[\big]{ \partifun{s} } 
        &\leqslant \abs[\big]{ \shiftpartifun{s} } + \abs[\big]{ \partifun{s} - \shiftpartifun{s} }  \\
        &\leqslant C_{\vartriangle} \abs{t}^{2+\varepsilon} \rho_{\vartriangle}^n + C_1 \kappa^n 
        \leqslant (C_{\vartriangle} + C_1) \abs{t}^{2+\varepsilon} \rho^n.
    \end{align*}
    Setting $C \define C_{\vartriangle} + C_1$ completes the proof.
\end{proof}

% debug

% We recall the definition of the non-lattice property, which is for applying the complex Ruelle--Perron--Frobenius theorem in \cite{pollicottZetaFunctionsPeriodic1990}. function \emph{regular} if it is non-lattice}. % inaccurate, there the terminology refers to a complex function, but not its imaginary part.

We recall the definition of the non-lattice property, which is needed to apply the complex Ruelle--Perron--Frobenius theorem in \cite{pollicottZetaFunctionsPeriodic1990}.

\begin{definition}    \label{def:non-lattice property}
    Consider a finite set of states $S$ and a transition matrix $A \colon S \times S \to \{0, 1\}$. 
      Let $\parentheses[\big]{ \Sigma_{A}^+, \sigma_{A} }$ be the one-sided subshift of finite type defined by $A$.
      A real-valued function $\psi \colon \Sigma_{A}^+ \mapping \real$ is called \emph{non-lattice} if there exists no continuous function $u \colon \Sigma_{A}^+ \mapping 2\pi \z$ such that
      \[
          \psi = C + u + v \circ \sigma_{A} - v
      \]
      for some constant $C \in \real$ and continuous function $v \in C \parentheses[\big]{ \Sigma_{A}^+ }$.
  \end{definition}
  
  \def\finiteappro{\psi^{(k)}}
  \def\ruelleoperator{\mathcal{L}_{\psi_t}}
  \def\spectrario{\rho(\ruelleoperator)}
  
  The following proposition is a part of \cite[Theorem~F]{li2024prime:dirichlet}.
  
  \begin{proposition}[Li \& Zheng \cite{li2024prime:dirichlet}] \label{prop:cohomologous_equivalence}
      Let \( f \colon S^2 \to S^2 \) be an expanding Thurston map and \( d \) a visual metric on \( S^2 \) for \( f \).
      Let \( \psi \in C^{0,\holderexp}\parentheses[\big]{ (S^2, d), \mathbb{C} } \) be a complex-valued \holder continuous function with an exponent \( \holderexp \in (0, 1] \).
      Then the following statements are equivalent:
      \begin{enumerate}
          \smallskip
  
          \item   \label{item:prop:cohomologous_equivalence:cohomologous}
          The function \( \psi \) is cohomologous to a constant in \( C(S^2, \mathbb{C}) \), i.e., \( \psi = K + u \circ f - u \) for some \( K \in \mathbb{C} \) and \( u \in C(S^2, \mathbb{C}) \).
  
          \smallskip
  
          \item   \label{item:prop:cohomologous_equivalence:lattice}
          There exists \( n \in \n \) and a Jordan curve \( \mathcal{C} \subseteq S^2 \) with \( f^n(\mathcal{C}) \subseteq \mathcal{C} \) and \( \post{f} \subseteq \mathcal{C} \) such that the following statement holds for \( F \define f^n \), \( \Psi \define S_n^f \psi \), the one-sided subshift of finite type \( \parentheses[\big]{ \Sigma_{A_{\vartriangle}}^+, \sigma_{A_{\vartriangle}} } \) associated to \( F \) and \( \mathcal{C} \) defined in Proposition~\ref{prop:one-sided subshift of finite type associated with expanding Thurston map}, and the factor map \( \pi_{\vartriangle} \colon \Sigma_{A_{\vartriangle}}^+ \to S^2 \) defined in~\eqref{eq:prop:one-sided subshift of finite type associated with expanding Thurston map:factor map}:
  
          The function \( \Psi \circ \pi_{\vartriangle} \) is cohomologous to a constant multiple of an integer-valued continuous function in \( C\parentheses[\big]{ \Sigma_{A_{\vartriangle}}^+, \mathbb{C} } \), i.e., \( \Psi \circ \pi_{\vartriangle} = KM + \varpi \circ \sigma_{A_{\vartriangle}} - \varpi \) for some \( K \in \mathbb{C} \), \( M \in C\parentheses[\big]{ \Sigma_{A_{\vartriangle}}^+, \mathbb{Z} } \), and \( \varpi \in C\parentheses[\big]{ \Sigma_{A_{\vartriangle}}^+, \mathbb{C} } \).
      \end{enumerate}
  \end{proposition}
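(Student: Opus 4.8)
The plan is to prove the two implications separately: \ref{item:prop:cohomologous_equivalence:cohomologous} $\Rightarrow$ \ref{item:prop:cohomologous_equivalence:lattice} is soft, while \ref{item:prop:cohomologous_equivalence:lattice} $\Rightarrow$ \ref{item:prop:cohomologous_equivalence:cohomologous} carries all the weight. For the first implication, suppose $\psi = K + u \circ f - u$ with $K \in \mathbb{C}$ and $u \in C(S^{2}, \mathbb{C})$. First I would use Lemma~\ref{lem:invariant_Jordan_curve not join opposite sides} to fix an $n \in \n$ admitting an $f^{n}$-invariant Jordan curve $\mathcal{C} \subseteq S^{2}$ with $\post{f} \subseteq \mathcal{C}$, and set $F \define f^{n}$, $\Psi \define S_{n}^{f}\psi$. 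A telescoping sum gives $\Psi = nK + u \circ F - u$. Since $\post{F} = \post{f} \subseteq \mathcal{C}$ and $F(\mathcal{C}) \subseteq \mathcal{C}$, Proposition~\ref{prop:one-sided subshift of finite type associated with expanding Thurston map} applies to $F$ and $\mathcal{C}$; composing the identity with $\pi_{\vartriangle}$ and using $\pi_{\vartriangle} \circ \sigma_{A_{\vartriangle}} = F \circ \pi_{\vartriangle}$ yields $\Psi \circ \pi_{\vartriangle} = KM + \varpi \circ \sigma_{A_{\vartriangle}} - \varpi$ with $M \equiv n$ (a continuous, integer-valued constant function) and $\varpi \define u \circ \pi_{\vartriangle} \in C(\Sigma_{A_{\vartriangle}}^{+}, \mathbb{C})$, which is exactly \ref{item:prop:cohomologous_equivalence:lattice}.

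For \ref{item:prop:cohomologous_equivalence:lattice} $\Rightarrow$ \ref{item:prop:cohomologous_equivalence:cohomologous}, fix the data $n$, $\mathcal{C}$, $F = f^{n}$, $(\Sigma_{A_{\vartriangle}}^{+}, \sigma_{A_{\vartriangle}})$, $\pi_{\vartriangle}$ provided by \ref{item:prop:cohomologous_equivalence:lattice}, so that $\Psi \circ \pi_{\vartriangle} = KM + \varpi \circ \sigma_{A_{\vartriangle}} - \varpi$ with $M \in C(\Sigma_{A_{\vartriangle}}^{+}, \mathbb{Z})$. The first and decisive step is to push this identity down to the periodic orbits of $f$ with a \emph{uniform} lattice. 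Evaluating along $\xi \in \myperiodpoint{\sigma_{A_{\vartriangle}}}{m}$, the coboundary telescopes away, so $S_{m}^{\sigma_{A_{\vartriangle}}}(\Psi \circ \pi_{\vartriangle})(\xi) = K \, S_{m}^{\sigma_{A_{\vartriangle}}}M(\xi) \in K\mathbb{Z}$; by $\pi_{\vartriangle} \circ \sigma_{A_{\vartriangle}} = F \circ \pi_{\vartriangle}$ and $\Psi = S_{n}^{f}\psi$ this reads $S_{mn}^{f}\psi(\pi_{\vartriangle}(\xi)) \in K\mathbb{Z}$. In the other direction, since an expanding Thurston map has no periodic critical points, $\deg_{F^{m}}$ equals $1$ at every $F$-periodic point; combining this with Theorem~\ref{thm:decomposition of local degree by cardinality of the preimages and periodic points of codings}, the uniform bounds on the fiber cardinalities of $\ecoding$ and $\eecoding$ in Proposition~\ref{prop:edge_symbolic_factors_properties}, and Lemma~\ref{lem:factor_map_periodic_points}, one checks that every $F$-periodic point of period $p$ is the $\pi_{\vartriangle}$-image of some $\sigma_{A_{\vartriangle}}$-periodic point whose period is at most $C_{0}\, p$ for a universal constant $C_{0} \in \n$. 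Together with the elementary identity $S_{pn}^{f}\psi(x) = n\, S_{p}^{f}\psi(x)$, valid for $x \in \myperiodpoint{f}{p}$, this yields a fixed $a \in \mathbb{C}$ — a nonzero rational multiple of $K$ if $K \ne 0$, and $a = 0$ if $K = 0$ — such that $S_{p}^{f}\psi(x) \in a\mathbb{Z}$ for every $p \in \n$ and every $x \in \myperiodpoint{f}{p}$.

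The second step turns this periodic-orbit lattice condition into a coboundary, and this is where the topology of $S^{2}$ enters. If $K = 0$, then $a = 0$, every periodic Birkhoff sum of $\psi$ vanishes, and the Liv\v{s}ic theorem for $f$ — valid because $f$ is topologically exact and has the closing property, a consequence of expansion — provides $u \in C(S^{2}, \mathbb{C})$ with $\psi = u \circ f - u$, giving \ref{item:prop:cohomologous_equivalence:cohomologous}. If $K \ne 0$, consider the circle-valued \holder function $w \define e^{2 \pi \mathbf{i} \psi / a}$ on $S^{2}$. The previous step gives $\prod_{j=0}^{p-1} w(f^{j}(x)) = e^{2 \pi \mathbf{i} S_{p}^{f}\psi(x) / a} = 1$ for every $x \in \myperiodpoint{f}{p}$, so the Liv\v{s}ic theorem applied to the $\mathbb{R}/\mathbb{Z}$-valued cocycle $\psi / a$ produces a continuous $h \colon S^{2} \to S^{1}$ with $w = (h \circ f) / h$. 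Because $S^{2}$ is simply connected, $h$ lifts to a continuous logarithm $h = e^{2 \pi \mathbf{i} w_{0}}$ with $w_{0} \in C(S^{2}, \mathbb{R})$; then $\psi / a - (w_{0} \circ f - w_{0})$ is a continuous $\mathbb{Z}$-valued function on the connected space $S^{2}$, hence a constant $c$, and $\psi = ac + u \circ f - u$ with $u \define a\, w_{0}$ is \ref{item:prop:cohomologous_equivalence:cohomologous}.

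I expect the main obstacle to be the first step of the second implication: transferring the cocycle identity from the subshift down to the periodic orbits of $f$ with a lattice that is uniform in the orbit requires controlling the multiplicities with which $\pi_{\vartriangle}$ and the auxiliary edge codings fold periodic orbits, together with some care in tracking periods (hence the role of Theorem~\ref{thm:decomposition of local degree by cardinality of the preimages and periodic points of codings} and the absence of periodic critical points, which bound these multiplicities). The conceptual heart is by contrast quite short: the integer-valued cocycle $M$ in \ref{item:prop:cohomologous_equivalence:lattice} carries no more information than a single constant precisely because $S^{2}$ is simply connected and connected, which is what forces the a priori ``lattice'' normal form on the symbolic side to descend to ``cohomologous to a constant'' on $S^{2}$.
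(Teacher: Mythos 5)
First, a point of comparison: the paper does not prove this proposition at all --- it is recorded as a quotation of part of \cite[Theorem~F]{li2024prime:dirichlet} --- so your attempt has to be judged on its own merits rather than against an in-paper argument. Your implication \ref{item:prop:cohomologous_equivalence:cohomologous}~$\Rightarrow$~\ref{item:prop:cohomologous_equivalence:lattice} is correct. In the converse, however, the decisive step is delegated to ``the Liv\v{s}ic theorem for $f$'', and this is a genuine gap. An expanding Thurston map is a branched covering with critical points, so it is never forward expansive (near a critical point there are distinct, arbitrarily close points with the same image, hence with identical orbits from time $1$ on), and it is not an open distance-expanding map; consequently the standard closing-lemma-plus-expansivity proof of Liv\v{s}ic's theorem, and the versions available in the references quoted in this paper, do not apply. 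Neither a closing property nor a periodic-obstruction-to-coboundary theorem for this class is among the results you may cite, and establishing such a statement (for the $\mathbb{R}$-valued and the circle-valued cocycle; note also that for genuinely complex $\psi$ your $e^{2\pi\mathbf{i}\psi/a}$ is $\mathbb{C}^{*}$-valued, so one should first decompose $\psi$ in the real basis $\{a,\mathbf{i}a\}$) is essentially of the same depth as the cited Theorem~F itself --- which is precisely why the equivalence is a theorem in \cite{li2024prime:dirichlet} rather than a remark. As written, the heart of the hard direction is assumed, not proved.

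There is also a smaller, repairable inaccuracy in your first step of the converse. With $\deg_{F^{m}}(x)=1$ at periodic points, the identity of Theorem~\ref{thm:decomposition of local degree by cardinality of the preimages and periodic points of codings} reads $M_{A_{\vartriangle}}(x,m)=1+M_{\eematrix}(x,m)-M_{\ematrix}(x,m)-M_{\bullet}(x,m)$, and for $x\in\mathcal{C}$ the right-hand side can vanish (for instance when the two-point $\eecoding$-fibre over an $\ematrix$-periodic lift is cyclically permuted), so the fibre-cardinality bookkeeping you invoke does not by itself produce a $\sigma_{A_{\vartriangle}}$-periodic preimage of $x$ of period at most $C_{0}p$. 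What is true, and suffices for your purposes, is the orbit version: for a periodic orbit $\mathcal{O}$ of $F$, the set $\pi_{\vartriangle}^{-1}(\mathcal{O})$ is exactly the set of infinite paths in the finite directed graph whose vertices are pairs (point of $\mathcal{O}$, $1$-tile containing it), every vertex of which has an outgoing edge; hence it contains a cycle, i.e.\ a $\sigma_{A_{\vartriangle}}$-periodic point of period at most $\card{\mathcal{O}}\cdot\card[\big]{\Tile{1}}$ projecting to \emph{some} point of $\mathcal{O}$. Since Birkhoff sums over a full period depend only on the orbit, this yields your uniform lattice $a\mathbb{Z}$ with $a$ a fixed rational multiple of $K$, as intended. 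With that repair the periodic-orbit step is sound; the unproved Liv\v{s}ic input remains the obstruction to a complete proof.
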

  
  This proposition is used in Lemma~\ref{lem:partition function estimate:symbolic coding:bounded imaginary bounded cases}~\ref{item:lem:partition function estimate:symbolic coding:bounded imaginary bounded cases:intermediate part} to apply the complex Ruelle--Perron--Frobenius theorem \cite[Theorem~2]{pollicott1984complex}.

  % Complex Ruelle--Perron--Frobenius theorem
  
  \begin{theorem}[Parry \& Pollicott {\cite[Theorem~2(ii)]{pollicottZetaFunctionsPeriodic1990}}] \label{thm:complex Ruelle--Perron--Frobenius}
      Consider a finite set of states $S$ and a transition matrix $A \colon S \times S \to \{0, 1\}$. 
      Let $\parentheses[\big]{ \Sigma_{A}^+, \sigma_{A} }$ be the one-sided subshift of finite type defined by $A$.
      Fix $\metricexpshiftspace \in (0, 1)$ and equip the space $\Sigma_{A}^+$ with the metric $\metriconshiftspace$ defined in \eqref{eq:def:metric on shift space}.
  
      Consider a complex-valued \holder continuous function $\psi = u + iv \in C^{0, \holderexp}\parentheses[\big]{ \parentheses[\big]{ \Sigma_A^+, \metriconshiftspace }, \mathbb{C} }$ with $u, v \in C^{0, \holderexp}\parentheses[\big]{ \Sigma_A^+, \metriconshiftspace }$.
      If $\parentheses[\big]{ \Sigma_{A}^+, \sigma_{A} }$ is topologically mixing and $v$ is non-lattice, then the spectrum of $\mathcal{L}_{\psi}$ is contained in a disc of radius strictly smaller than $e^{P(\sigma, u)}$.
  \end{theorem}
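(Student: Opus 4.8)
The plan is to prove this classical complex Ruelle--Perron--Frobenius theorem by first reducing to a normalized operator and then excluding unit-modulus eigenvalues using the non-lattice hypothesis. Set $P \define P(\sigma_A, u)$. By the (real) Ruelle--Perron--Frobenius theorem for the \holder continuous potential $u$ on the topologically mixing subshift $(\Sigma_A^+, \sigma_A)$, there is a strictly positive \holder continuous function $h$ with $\mathcal{L}_u h = e^{P} h$. The multiplication operator $w \mapsto h w$ is bounded and invertible on $C^{0,\holderexp}(\Sigma_A^+)$, and conjugating $\mathcal{L}_\psi$ by it and dividing by $e^{P}$ produces the operator $\mathcal{L}_{\widetilde{u} + \mathbf{i} v}$, where $\widetilde{u} \define u - P + \log h - \log(h \circ \sigma_A)$ satisfies $P(\sigma_A, \widetilde{u}) = 0$ and $\mathcal{L}_{\widetilde{u}} \mathbf{1} = \mathbf{1}$. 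Since $v$ and the mixing property are left unchanged and the spectrum is merely scaled by $e^{-P}$, I may assume from now on that $P(\sigma_A, u) = 0$ and $\mathcal{L}_u \mathbf{1} = \mathbf{1}$, so the goal becomes to show that the spectral radius of $\mathcal{L}_{u + \mathbf{i} v}$ on $C^{0,\holderexp}(\Sigma_A^+)$ is strictly smaller than $1$.

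Next I would establish quasi-compactness of $\mathcal{L}_{u + \mathbf{i} v}$. From $\mathcal{L}_u \mathbf{1} = \mathbf{1}$ and positivity, the pointwise bound $\lvert \mathcal{L}_{u + \mathbf{i} v}^n w \rvert \leqslant \mathcal{L}_u^n \lvert w \rvert$ gives $\norm{\mathcal{L}_{u + \mathbf{i} v}^n w}_\infty \leqslant \norm{w}_\infty$ for all $n \in \n$; in particular every eigenvalue of $\mathcal{L}_{u + \mathbf{i} v}$ (whose eigenfunctions lie in $C(\Sigma_A^+)$) has modulus at most $1$. I would then prove a Lasota--Yorke (Doeblin--Fortet) inequality of the form $\norm{\mathcal{L}_{u + \mathbf{i} v}^n w}_{C^{0,\holderexp}} \leqslant C_1 \theta^{n\holderexp} \norm{w}_{C^{0,\holderexp}} + C_2 \norm{w}_\infty$, where $0 < \theta < 1$ is the parameter of the metric on $\Sigma_A^+$ (so inverse branches of $\sigma_A$ contract by $\theta$), using the bounded distortion of the Birkhoff sums $S_n u$. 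Since the inclusion $C^{0,\holderexp}(\Sigma_A^+) \hookrightarrow C(\Sigma_A^+)$ is compact, a standard quasi-compactness criterion (Ionescu-Tulcea--Marinescu, Hennion) then bounds the essential spectral radius of $\mathcal{L}_{u + \mathbf{i} v}$ on $C^{0,\holderexp}(\Sigma_A^+)$ by $\theta^{\holderexp} < 1$. Hence the spectrum outside the disc of radius $\theta^{\holderexp}$ consists of isolated eigenvalues of finite multiplicity, which can accumulate only on the circle $\lvert z \rvert = \theta^{\holderexp}$, and the whole problem reduces to ruling out eigenvalues of modulus exactly $1$.

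The step I expect to be the main obstacle is this last one: showing that a unit-modulus eigenvalue contradicts the non-lattice hypothesis. Suppose $\mathcal{L}_{u + \mathbf{i} v} w = \lambda w$ with $\lvert \lambda \rvert = 1$ and, normalized so that, $\norm{w}_\infty = 1$. At a point $x$ where $\lvert w(x) \rvert = 1$, the chain $1 = \lvert \lambda w(x) \rvert = \bigl\lvert \sum_{\sigma_A y = x} e^{u(y) + \mathbf{i} v(y)} w(y) \bigr\rvert \leqslant \sum_{\sigma_A y = x} e^{u(y)} \lvert w(y) \rvert \leqslant \sum_{\sigma_A y = x} e^{u(y)} = 1$ must be an equality throughout; this forces $\lvert w(y) \rvert = 1$ for each $y \in \sigma_A^{-1}(x)$ and aligns every summand with $\lambda w(x)$, so that $e^{\mathbf{i} v(y)} w(y) = \lambda w(\sigma_A y)$ there. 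Iterating the modulus conclusion along $\bigcup_{n \geqslant 0} \sigma_A^{-n}(x)$, which is dense because $(\Sigma_A^+, \sigma_A)$ is topologically mixing, and using continuity of $w$, I get $\lvert w \rvert \equiv 1$ on $\Sigma_A^+$; the alignment identity $e^{\mathbf{i} v(y)} w(y) = \lambda w(\sigma_A y)$ then holds for every $y \in \Sigma_A^+$. Since $\Sigma_A^+$ is compact and totally disconnected (it is an inverse limit of finite discrete sets), every continuous map $\Sigma_A^+ \to S^1$ lifts along the covering $\real \to S^1$, so I may write $w = e^{\mathbf{i} g}$ with $g \colon \Sigma_A^+ \to \real$ continuous; moreover $g$ is \holder continuous, since $w$ is \holder with $\lvert w \rvert \equiv 1$. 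Writing $\lambda = e^{\mathbf{i} a}$ with $a \in \real$, the alignment identity becomes $v(y) - a - (g(\sigma_A y) - g(y)) \in 2\pi \z$ for all $y$; as the left-hand side is continuous and $2\pi\z$-valued it defines a continuous $k \colon \Sigma_A^+ \to 2\pi\z$ with $v = a + k + g \circ \sigma_A - g$, which exhibits $v$ in the form forbidden by Definition~\ref{def:non-lattice property}, a contradiction.

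To conclude, I would assemble the pieces: no eigenvalue lies on the unit circle, so in any annulus $\{ r \leqslant \lvert z \rvert \leqslant 1 \}$ with $\theta^{\holderexp} < r < 1$ there are only finitely many eigenvalues, all of modulus strictly below $1$; together with the essential spectral radius bound $\theta^{\holderexp} < 1$ this confines the whole spectrum of $\mathcal{L}_{u + \mathbf{i} v}$ to a disc of radius $\rho < 1$, and undoing the normalization scales this back up to radius $e^{P} \rho < e^{P(\sigma_A, u)}$. Apart from the triangle-inequality-equality argument, the two places that need genuine care are the \holder regularity of the logarithmic lift $g$ (where the inverse-limit structure of $\Sigma_A^+$ is used) and the bounded-distortion bookkeeping in the Lasota--Yorke estimate; both are routine but must be carried out honestly.
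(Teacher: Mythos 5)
Your argument is correct and is essentially the classical proof of this quoted result (the paper itself does not reprove it but cites Parry--Pollicott): normalization by the real Ruelle--Perron--Frobenius eigenfunction, quasi-compactness via a Lasota--Yorke inequality, and exclusion of unit-modulus eigenvalues through the equality-in-the-triangle-inequality rigidity that yields $v = a + k + g \circ \sigma_A - g$ with $k$ continuous and $2\pi\mathbb{Z}$-valued, contradicting Definition~\ref{def:non-lattice property}. Note only that the H\"older regularity you establish for the lift $g$ is not actually needed, since the non-lattice definition requires merely continuous $g$ and $k$.
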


  Applying the complex Ruelle--Perron--Frobenius theorem (Theorem~\ref{thm:complex Ruelle--Perron--Frobenius}) to the subshift $\parentheses[\big]{ \Sigma^{+}_{A_{\vartriangle}}, \sigma_{A_{\vartriangle}} }$ yields the following estimates for a bounded imaginary part.
  
  \begin{lemma} \label{lem:partition function estimate:symbolic coding:bounded imaginary bounded cases}
      Let $f$, $\mathcal{C}$, $d$, $\Lambda$, $\holderexp$, $\potential$, $\rootpressure$ satisfy the Assumptions in Section~\ref{sec:The Assumptions}.
      We assume that $f(\mathcal{C}) \subseteq \mathcal{C}$.
      Let $\parentheses[\big]{ \Sigma^{+}_{A_{\vartriangle}}, \sigma_{A_{\vartriangle}} }$ be the one-sided subshift of finite type associated to $f$ and $\mathcal{C}$ defined in Proposition~\ref{prop:one-sided subshift of finite type associated with expanding Thurston map}, and let $\pi_{\vartriangle} \colon \Sigma^{+}_{A_{\vartriangle}} \mapping S^2$ be defined in \eqref{eq:prop:one-sided subshift of finite type associated with expanding Thurston map:factor map}.
      Fix $\metricexpshiftspace \in (0, 1)$ and equip the space $\Sigma^{+}_{A_{\vartriangle}}$ with the metric $\metriconshiftspace$ defined in \eqref{eq:def:metric on shift space}.
      Then the following statements hold:
      \begin{enumerate}
          \smallskip
  
          \item     \label{item:lem:partition function estimate:symbolic coding:bounded imaginary bounded cases:local part}
              There exist constants $t_{0} > 0$, $\theta \in (0, 1)$, and $C \geqslant 0$ such that $P(\sigma_{A_{\vartriangle}}, (-\rootpressure + \imaginary t)\potential\circ\pi_{\vartriangle})$ is well-defined and
              \[
                   \abs[\Big]{ \shiftpartifun{ -\rootpressure + \imaginary t }[n][\sigma_{A_{\vartriangle}}][\potential\circ\pi_{\vartriangle}] - \myexp[\big]{ n P(\sigma_{A_{\vartriangle}}, (-\rootpressure + \imaginary t)\potential\circ\pi_{\vartriangle}) } }  \leqslant C \theta^{n}
              \]
              for all $t \in (-t_{0}, t_{0})$ and all $n \in \n$.
          \item     \label{item:lem:partition function estimate:symbolic coding:bounded imaginary bounded cases:intermediate part}
              If $\potential$ is not cohomologous to a constant in $C(S^{2})$, then for each $\varepsilon > 0$ and each compact set $K \subseteq \real$, there exist $\vartheta \in (0, 1)$ and $C \geqslant 0$ such that
              \[
                  \abs[\Big]{ \shiftpartifun{ -\rootpressure + \imaginary t }[n][\sigma_{A_{\vartriangle}}][\potential\circ\pi_{\vartriangle}] }
                  \leqslant C \vartheta^{n}
              \]
              for all $t \in K \mysetminus (-\varepsilon, \varepsilon)$ and all $n \in \n$.
      \end{enumerate}
  \end{lemma}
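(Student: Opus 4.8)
The plan is to read off both estimates from the spectrum of the Ruelle operator $\mathcal{L}_{\psi_t}$ on the \holder space $C^{0,\holderexp}\parentheses[\big]{(\Sigma^{+}_{A_{\vartriangle}}, \metriconshiftspace), \cx}$ attached to the complex potential $\psi_t \define (-\rootpressure + \imaginary t)\potential\circ\pi_{\vartriangle}$, whose real part is the real \holder potential $u \define -\rootpressure\potential\circ\pi_{\vartriangle}$; note that $\shiftpartifun{-\rootpressure + \imaginary t}[n][\sigma_{A_{\vartriangle}}][\potential\circ\pi_{\vartriangle}] = \sum_{x \in \myperiodpoint{\sigma_{A_{\vartriangle}}}{n}} e^{S_n\psi_t(x)}$ by Definition~\ref{def:partition function}. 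Two facts are used throughout. First, since $\parentheses[\big]{\Sigma^{+}_{A_{\vartriangle}}, \sigma_{A_{\vartriangle}}}$ is topologically mixing (Proposition~\ref{prop:one-sided subshift of finite type associated with expanding Thurston map}) and $P(\sigma_{A_{\vartriangle}}, u) = P(f, -\rootpressure\potential) = 0$ by the choice of $\rootpressure$, the classical Ruelle--Perron--Frobenius theorem (cf.\ \cite{bowen1975equilibrium}) shows that $\mathcal{L}_u$ has a simple leading eigenvalue equal to $1$ with the rest of its spectrum inside $\set{\abs{z} \leqslant \theta_0}$ for some $\theta_0 < 1$. Second, the periodic-point sums $\sum_{x \in \myperiodpoint{\sigma_{A_{\vartriangle}}}{n}} e^{S_n\psi(x)}$ for a \holder potential $\psi$ are governed by $\operatorname{spec}(\mathcal{L}_\psi)$: approximating $\psi$ by locally constant potentials $\psi^{(k)}$ (depending only on the first $k$ coordinates), for which this sum equals exactly the trace of the $n$-th power of a finite weighted transition matrix with the same nonzero spectrum as $\mathcal{L}_{\psi^{(k)}}$, and controlling the geometrically small error $\norm{\psi - \psi^{(k)}}_\infty$ as in \cite[Chapter~5]{pollicottZetaFunctionsPeriodic1990}, one obtains that if $\operatorname{spec}(\mathcal{L}_\psi)$ is a simple eigenvalue $\lambda$ together with a subset of $\set{\abs{z} \leqslant \theta}$ then $\abs[\big]{\sum_{x \in \myperiodpoint{\sigma_{A_{\vartriangle}}}{n}} e^{S_n\psi(x)} - \lambda^n} \leqslant C\theta'^{\,n}$ for every $\theta' \in (\theta, 1)$, while if $\operatorname{spec}(\mathcal{L}_\psi) \subseteq \set{\abs{z} \leqslant \theta}$ then $\abs[\big]{\sum_{x \in \myperiodpoint{\sigma_{A_{\vartriangle}}}{n}} e^{S_n\psi(x)}} \leqslant C\theta'^{\,n}$, with $C$ uniform when $\psi$ ranges over a family with uniformly bounded $C^{0,\holderexp}$-norm on which the spectral data are uniformly controlled.

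For part~\ref{item:lem:partition function estimate:symbolic coding:bounded imaginary bounded cases:local part}, I would apply analytic perturbation theory. The family $s \mapsto \mathcal{L}_{s\potential\circ\pi_{\vartriangle}}$ is analytic in operator norm, so by the first fact there is $t_0 > 0$ such that for $\abs{t} < t_0$ the operator $\mathcal{L}_{\psi_t}$ has a simple eigenvalue $\lambda(t)$, analytic in $t$ with $\lambda(0) = 1$ (hence $\abs{\lambda(t)}$ bounded away from $0$), while $\operatorname{spec}(\mathcal{L}_{\psi_t}) \mysetminus \set{\lambda(t)} \subseteq \set{\abs{z} \leqslant \theta_1}$ for some $\theta_1 < 1$ uniformly in $\abs{t} < t_0$. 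After possibly shrinking $t_0$ so that $\log\lambda(t)$ is well-defined, we set $P(\sigma_{A_{\vartriangle}}, (-\rootpressure + \imaginary t)\potential\circ\pi_{\vartriangle}) \define \log\lambda(t)$ (this is the sense in which the pressure is ``well-defined'' on the interval $\abs{t} < t_0$, and it reduces to $P(\sigma_{A_{\vartriangle}}, u) = 0$ at $t = 0$); then $e^{n P(\sigma_{A_{\vartriangle}}, (-\rootpressure + \imaginary t)\potential\circ\pi_{\vartriangle})} = \lambda(t)^n$, and the second fact gives $\abs[\big]{\shiftpartifun{-\rootpressure + \imaginary t}[n][\sigma_{A_{\vartriangle}}][\potential\circ\pi_{\vartriangle}] - e^{n P(\sigma_{A_{\vartriangle}}, (-\rootpressure + \imaginary t)\potential\circ\pi_{\vartriangle})}} \leqslant C\theta^n$ for a suitable $\theta \in (\theta_1, 1)$ and a constant $C$ uniform in $\abs{t} < t_0$.

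For part~\ref{item:lem:partition function estimate:symbolic coding:bounded imaginary bounded cases:intermediate part}, the engine is the complex Ruelle--Perron--Frobenius theorem (Theorem~\ref{thm:complex Ruelle--Perron--Frobenius}), which requires the imaginary part $t\potential\circ\pi_{\vartriangle}$ of $\psi_t$ to be non-lattice; I expect verifying this for every $t \ne 0$ to be the main obstacle. The argument would go as follows: if $t \ne 0$ and $t\potential\circ\pi_{\vartriangle}$ were lattice, say $t\potential\circ\pi_{\vartriangle} = c + w + v \circ \sigma_{A_{\vartriangle}} - v$ with $c \in \real$, $w \in C\parentheses[\big]{\Sigma^{+}_{A_{\vartriangle}}, 2\pi\z}$, $v \in C\parentheses[\big]{\Sigma^{+}_{A_{\vartriangle}}}$, then, $w$ being continuous on a compact space and $2\pi\z$ discrete, $w$ takes finitely many values; fixing one value $w_0$ of $w$ and setting $c_1 \define \frac{1}{t}(c + w_0)$, $a \define \frac{2\pi}{t}$, $M \define \frac{1}{2\pi}(w - w_0) \in C\parentheses[\big]{\Sigma^{+}_{A_{\vartriangle}}, \z}$, $\varpi \define \frac{1}{t} v$, one has $(\potential - c_1)\circ\pi_{\vartriangle} = a M + \varpi\circ\sigma_{A_{\vartriangle}} - \varpi$. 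Hence statement~\ref{item:prop:cohomologous_equivalence:lattice} of Proposition~\ref{prop:cohomologous_equivalence} holds for the real \holder function $\potential - c_1$ with $n = 1$ and the given Jordan curve $\mathcal{C}$ (recall $f(\mathcal{C}) \subseteq \mathcal{C}$ and $\post{f} \subseteq \mathcal{C}$), so $\potential - c_1$, and therefore $\potential$, would be cohomologous to a constant in $C(S^2)$, contradicting the hypothesis. Thus $t\potential\circ\pi_{\vartriangle}$ is non-lattice for every $t \ne 0$, and Theorem~\ref{thm:complex Ruelle--Perron--Frobenius} gives $\operatorname{spec}(\mathcal{L}_{\psi_t}) \subseteq \set[\big]{\abs{z} < e^{P(\sigma_{A_{\vartriangle}}, u)}} = \set{\abs{z} < 1}$ for each such $t$. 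Since $t \mapsto \mathcal{L}_{\psi_t}$ is continuous in operator norm, the spectral radius $\rho(\mathcal{L}_{\psi_t})$ is upper semicontinuous in $t$, so on the compact set $K \mysetminus (-\varepsilon, \varepsilon)$ it is bounded by some $\vartheta_0 < 1$; the second fact, applied with $\theta = \vartheta_0$, then yields $\abs[\big]{\shiftpartifun{-\rootpressure + \imaginary t}[n][\sigma_{A_{\vartriangle}}][\potential\circ\pi_{\vartriangle}]} \leqslant C\vartheta^n$ for any fixed $\vartheta \in (\vartheta_0, 1)$ with $C$ uniform over $t \in K \mysetminus (-\varepsilon, \varepsilon)$, as required.
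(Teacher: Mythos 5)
Your proposal is correct in its essentials and, for the most part, follows the paper's route. For part~\ref{item:lem:partition function estimate:symbolic coding:bounded imaginary bounded cases:local part} the paper simply invokes the argument of \cite[Theorem~5.5(ii)]{pollicottZetaFunctionsPeriodic1990}, which is exactly the perturbation-theoretic picture you sketch (leading eigenvalue $\lambda(t)$ of $\mathcal{L}_{\psi_t}$ for small $\abs{t}$, complex pressure defined as $\log\lambda(t)$); and for part~\ref{item:lem:partition function estimate:symbolic coding:bounded imaginary bounded cases:intermediate part} the paper uses the same spectral inputs you do: the non-lattice property of $t\potential\circ\pi_{\vartriangle}$ via Proposition~\ref{prop:cohomologous_equivalence} (your explicit rescaling of the lattice identity by $t$ and application of the direction (2)$\Rightarrow$(1) with $n=1$ and the given invariant curve is precisely the intended argument, spelled out in more detail than the paper gives), then Theorem~\ref{thm:complex Ruelle--Perron--Frobenius} to get spectral radius $<1$, and upper semicontinuity of the spectral radius plus compactness of $K\setminus(-\varepsilon,\varepsilon)$ for a uniform bound $\rho_0<1$. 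Where you genuinely diverge is the mechanism transferring spectral information to the periodic-point sums: the paper uses its Appendix Ruelle lemma (Corollary~\ref{coro:bound of partition function for subshift of finite type via Ruelle lemma}), which bounds the partition function by $C\abs{\Im s}\sum_{m=1}^{n}\norm{\mathcal{L}^{n-m}_{\psi_t}}_{C^{0,\holderexp}}\parentheses[\big]{\metricexpshiftspace^{\holderexp}e^{\delta}}^{m}$, and then upgrades the uniform spectral-radius bound to uniform operator-norm decay $\norm{\mathcal{L}^{k}_{\psi_t}}_{C^{0,\holderexp}}\leqslant C_{\rho}\rho^{k}$ over the compact parameter set; you instead invoke a Parry--Pollicott Chapter~5 locally-constant-approximation/trace statement as a black box. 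That black box is where the remaining work lies: as stated it is slightly too strong (the decay rate these methods actually produce is also constrained by the metric parameter $\metricexpshiftspace^{\holderexp}$, not every $\theta'>\theta$), and the uniformity in $t$ of the constant, which you assume under ``uniformly controlled spectral data,'' is exactly what the paper's telescoping estimate together with the compactness argument for the operator norms supplies. Since the lemma only requires \emph{some} $\vartheta<1$ and a uniform constant, these are issues of justification rather than of correctness; to make your proof self-contained you should either prove the uniform version of your ``second fact'' (taking the locally-constant approximation depth to grow with $n$ and controlling the matrix spectra uniformly in $t$) or substitute the appendix estimate, after which your argument coincides with the paper's.
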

%   Here $ \shiftpartifun{ -\rootpressure + \imaginary t }[n][\sigma_{A_{\vartriangle}}][\potential\circ\pi_{\vartriangle}]$ is defined in Definition~\ref{def:partition function}.
  
%   Lemma~\ref{lem:partition function estimate:symbolic coding:bounded imaginary bounded cases}~\ref{item:lem:partition function estimate:symbolic coding:bounded imaginary bounded cases:local part} follows from the proof of \cite[Theorem~5.5~(ii)]{pollicottZetaFunctionsPeriodic1990}; 
%   Lemma~\ref{lem:partition function estimate:symbolic coding:bounded imaginary bounded cases}~\ref{item:lem:partition function estimate:symbolic coding:bounded imaginary bounded cases:intermediate part} follows from Theorem~\ref{thm:complex Ruelle--Perron--Frobenius} by Proposition~\ref{prop:cohomologous_equivalence}.
  
  \begin{proof}
      \ref{item:lem:partition function estimate:symbolic coding:bounded imaginary bounded cases:local part}
      Note that $P(\sigma_{A_{\vartriangle}}, (-\rootpressure + \imaginary t)\potential\circ\pi_{\vartriangle})$ is well-defined when $\abs{t}$ is sufficiently small.
      The statement follows from the argument in the proof of \cite[Theorem~5.5~(ii)]{pollicottZetaFunctionsPeriodic1990}.
  
      \ref{item:lem:partition function estimate:symbolic coding:bounded imaginary bounded cases:intermediate part}
      \def\expspectrum{\rho}
      Assume that $\potential$ is not cohomologous to a constant in $C(S^{2})$. 
      Consider $t \in K \mysetminus (-\varepsilon, \varepsilon)$ and denote $\psi_t \define (-\rootpressure + \imaginary t) \potential\circ\pi_{\vartriangle}$.
      Set $\delta \define \frac{1}{2} \log(\metricexpshiftspace^{-\holderexp}) > 0$.
      % Our goal is to find constants $\vartheta \in (0, 1)$ and $C \geqslant 0$ such that for all $t \in K \mysetminus (-\varepsilon, \varepsilon)$, the partition function estimate holds.
  
      Since $P(\sigma_{A_{\vartriangle}}, -\rootpressure \potential\circ\pi_{\vartriangle}) = P(f, -\rootpressure \potential) = 0$, by applying Corollary~\ref{coro:bound of partition function for subshift of finite type via Ruelle lemma} to the subshift $\parentheses[\big]{ \Sigma^{+}_{A_{\vartriangle}}, \sigma_{A_{\vartriangle}} }$ and the potential $\potential\circ\pi_{\vartriangle}$, we obtain that
      \begin{equation} \label{eq:lem:partition function estimate:symbolic coding:bounded imaginary bounded cases:intermediate part:temporary estimate}
          \begin{aligned}
          \abs[\Big]{ \shiftpartifun{ -\rootpressure + \imaginary t }[n][\sigma_{A_{\vartriangle}}][\potential\circ\pi_{\vartriangle}] }
          &\leqslant C_\delta \abs{t} \sum_{m = 1}^{n} \norm[\big]{ \mathcal{L}^{n - m}_{\psi_t} }_{C^{0,\holderexp}} \parentheses[\big]{ \metricexpshiftspace^{\holderexp} e^{P(\sigma_{A_{\vartriangle}}, - \rootpressure \potential\circ\pi_{\vartriangle}) + \delta} }^{m} \\
          &= C_\delta \abs{t} \sum_{m = 1}^{n} \norm[\big]{ \mathcal{L}^{n - m}_{\psi_t} }_{C^{0,\holderexp}} \metricexpshiftspace^{\holderexp m / 2}
          \end{aligned}
      \end{equation}
      for some constant $C_\delta > 0$ depending only on $f$, $\mathcal{C}$, $d$, $\holderexp$, $\potential$, \( b_0 \), \( \metricexpshiftspace \), and \( \delta \).
  
      We now bound the operator norm $\norm[\big]{ \mathcal{L}^{k}_{\psi_t} }_{C^{0,\holderexp}}$ for $k \in \n$ and $t \in K \mysetminus (-\varepsilon, \varepsilon)$ by establishing a uniform estimate on the spectral radius of $\mathcal{L}_{\psi_t}$.
      Since $\phi$ is not cohomologous to a constant, Proposition~\ref{prop:cohomologous_equivalence} and Theorem~\ref{thm:complex Ruelle--Perron--Frobenius} together imply that the spectral radius of $\mathcal{L}_{\psi_t}$, denoted by $\rho(\mathcal{L}_{\psi_t})$, is strictly less than $e^{P(\sigma_{A_{\vartriangle}}, \Re{\psi_t})} = 1$ for each $t \in K \mysetminus (-\varepsilon, \varepsilon)$.
      Straightforward calculations show that $t \mapsto \mathcal{L}_{\psi_t}$ is continuous (as a map from $\real$ to the space of bounded linear operators on $C^{0,\holderexp} \parentheses[\big]{ \parentheses[\big]{ \Sigma_{A}^{+}, \metriconshiftspace }, \cx }$).
      A classical result from operator theory asserts that the spectrum is upper semi-continuous on the space of bounded linear operators on a Banach space (cf.~\cite[Remark~3.3, pp.~208--209]{katoPerturbationTheoryLinear1995}).
      It follows that the function $t \mapsto \rho(\mathcal{L}_{\psi_t})$ is upper semi-continuous, and hence attains its maximum on the compact set $K \mysetminus (-\varepsilon, \varepsilon)$.
      Thus we have
      \[
          \rho_0 \define \max_{t \in K \mysetminus (-\varepsilon, \varepsilon)} \rho(\mathcal{L}_{\psi_t}) < 1.
      \]
      According to the spectral radius formula $\rho(\mathcal{L}_{\psi_t}) = \lim_{n \to \infty} \norm[\big]{ \mathcal{L}^{n}_{\psi_t} }_{C^{0,\holderexp}}^{1/n}$, we can find constants $\expspectrum \in (\rho_0, 1)$ and $C_{\expspectrum} \geqslant  1$ such that
      \begin{equation} \label{eq:lem:partition function estimate:symbolic coding:bounded imaginary bounded cases:intermediate part:operator norm estimate}
          \norm[\big]{ \mathcal{L}^{k}_{\psi_t} }_{C^{0,\holderexp}} 
          \leqslant C_{\expspectrum} \, \expspectrum^k \quad \text{for all } k \in \n \text{ and } t \in K \mysetminus (-\varepsilon, \varepsilon).
      \end{equation}
      
      Substituting the estimate \eqref{eq:lem:partition function estimate:symbolic coding:bounded imaginary bounded cases:intermediate part:operator norm estimate} into \eqref{eq:lem:partition function estimate:symbolic coding:bounded imaginary bounded cases:intermediate part:temporary estimate}, we get
      \begin{align*}
          \abs[\Big]{ \shiftpartifun{ -\rootpressure + \imaginary t }[n][\sigma_{A_{\vartriangle}}][\potential\circ\pi_{\vartriangle}] }
          \leqslant C_\delta C_{\expspectrum} |t| \sum_{m = 1}^{n} \expspectrum^{n-m} \metricexpshiftspace^{\holderexp m / 2} 
          \leqslant C' n \parentheses[\big]{ \max\set[\big]{ \expspectrum, \metricexpshiftspace^{\holderexp / 2} } }^{n},
      \end{align*}
      where $C' \define C_\delta C_{\expspectrum} \sup_{t \in K}|t|$. 
      Therefore, by setting $\vartheta \define \frac{1}{2} \parentheses[\big]{ 1 +  \max\set[\big]{ \expspectrum, \metricexpshiftspace^{\holderexp / 2} } } \in (0, 1)$ and $C \define C' \sup_{n \in \n} \set{n \vartheta^{n}} < +\infty$, we establish the desired estimate.
  \end{proof}

Combining Lemmas~\ref{lem:partition function estimate:symbolic coding:bounded imaginary bounded cases}~and~\ref{lem:relation between partition function of expanding Thurston map and tile coding}, we obtain the following estimates for the partition function in the case of bounded imaginary part.

\begin{lemma} \label{lem:partition function estimate:expanding Thurston map:bounded imaginary bounded cases}
    Let $f$, $d$, $\phi$, $\rootpressure$, $\alpha$ satisfy the Assumptions in Section~\ref{sec:The Assumptions}.
    We assume that $f(\mathcal{C}) \subseteq \mathcal{C}$ and no $1$-tile in $\Tile{1}$ joins opposite sides of $\mathcal{C}$.
    Define $\normpotential \define \phi - \alpha$.
    Then the following statements hold:
    \begin{enumerate}
        \smallskip
        
        \item     \label{item:lem:partition function estimate:expanding Thurston map:bounded imaginary bounded cases:local part}
            There exist constants $t_{0} > 0$, $\theta \in (0, 1)$, and $C \geqslant 0$ such that $P(\sigma_{A_{\vartriangle}}, (-\rootpressure + \imaginary t)\normpotential)$ is well-defined and
            \[
                 \abs[\Big]{ \normpartifun{-\rootpressure + \imaginary t} - e^{n P(f, (- \rootpressure + \imaginary t)\normpotential)} }
                \leqslant C \theta^{n} e^{ n P(f, -\rootpressure \normpotential) }
            \]
            for all $t \in (-t_{0}, t_{0})$ and all $n \in \n$.

        \smallskip

        \item     \label{item:lem:partition function estimate:expanding Thurston map:bounded imaginary bounded cases:intermediate part}
            If $\phi$ is not cohomologous to a constant in $C(S^2)$, then for each $\varepsilon > 0$ and each compact set $K \subseteq \real$, there exist constants $\vartheta \in (0, 1)$ and $C \geqslant 0$ such that
            \[
                \abs[\big]{ \normpartifun{-\rootpressure + \imaginary t} }
                \leqslant C \vartheta^{n} e^{ n P(f, -\rootpressure \normpotential) }.
            \]
            for all $t \in K \mysetminus (-\varepsilon, \varepsilon)$ and all $n \in \n$.
    \end{enumerate}
\end{lemma}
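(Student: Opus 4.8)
The plan is to obtain this lemma by transferring Lemma~\ref{lem:partition function estimate:symbolic coding:bounded imaginary bounded cases}, which gives the corresponding estimates for the \emph{symbolic} partition function $\shiftpartifun{-\rootpressure + \imaginary t}[n][\sigma_{A_{\vartriangle}}][\potential\circ\pi_{\vartriangle}]$ (for the unnormalized potential $\potential = \phi$), to the partition function $\normpartifun{-\rootpressure + \imaginary t}$ of the Thurston map for the normalized potential $\normpotential = \potential - \alpha$. The device linking the two is Lemma~\ref{lem:relation between partition function of expanding Thurston map and tile coding}, which bounds $\abs[\big]{\partifun{s}[n][f][\potential] - \shiftpartifun{s}[n][\sigma_{A_{\vartriangle}}][\potential\circ\pi_{\vartriangle}]}$ by $C\kappa^{n}$ with $\kappa\in(0,1)$ on the vertical line $\Re{s}=-\rootpressure$, together with two elementary identities. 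First, by Definition~\ref{def:partition function} and $S_{n}\normpotential = S_{n}\potential - n\alpha$, one has $\normpartifun{s} = e^{-s\alpha n}\partifun{s}[n][f][\potential]$, and on the line $\Re{s}=-\rootpressure$ the factor $e^{-s\alpha n}$ has modulus $e^{\rootpressure\alpha n}=e^{nP(f,-\rootpressure\normpotential)}$, using $P(f,-\rootpressure\normpotential)=P(f,-\rootpressure\potential)+\rootpressure\alpha=\rootpressure\alpha$. Second, the translation property of pressure gives $P(f,z\normpotential)=P(\sigma_{A_{\vartriangle}},z\,\potential\circ\pi_{\vartriangle})-z\alpha$; this holds for real $z$ by Proposition~\ref{prop:one-sided subshift of finite type associated with expanding Thurston map}, and hence, by the uniqueness of analytic continuation, for the complex-analytic extension near $z=-\rootpressure$ provided by Lemma~\ref{lem:partition function estimate:symbolic coding:bounded imaginary bounded cases}~\ref{item:lem:partition function estimate:symbolic coding:bounded imaginary bounded cases:local part}. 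In particular this shows $P(f,(-\rootpressure+\imaginary t)\normpotential)$ is well-defined for small $\abs{t}$ and, after multiplication by $e^{-s\alpha n}$, turns the symbolic main term into $e^{nP(f,(-\rootpressure+\imaginary t)\normpotential)}$.

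With these in hand, part~\ref{item:lem:partition function estimate:expanding Thurston map:bounded imaginary bounded cases:local part} is three moves. First, apply Lemma~\ref{lem:partition function estimate:symbolic coding:bounded imaginary bounded cases}~\ref{item:lem:partition function estimate:symbolic coding:bounded imaginary bounded cases:local part} to $\parentheses[\big]{\Sigma^{+}_{A_{\vartriangle}},\sigma_{A_{\vartriangle}}}$ and $\potential$ to get $t_{0}>0$, $\theta_{0}\in(0,1)$, $C_{1}\geqslant0$ with $\abs[\big]{\shiftpartifun{-\rootpressure+\imaginary t}[n][\sigma_{A_{\vartriangle}}][\potential\circ\pi_{\vartriangle}] - e^{nP(\sigma_{A_{\vartriangle}},(-\rootpressure+\imaginary t)\potential\circ\pi_{\vartriangle})}}\leqslant C_{1}\theta_{0}^{n}$ for $\abs{t}<t_{0}$. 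Second, add the bound $C_{0}\kappa^{n}$ from Lemma~\ref{lem:relation between partition function of expanding Thurston map and tile coding} to replace the symbolic partition function by $\partifun{-\rootpressure+\imaginary t}[n][f][\potential]$, and put $\theta\define\max\{\kappa,\theta_{0}\}\in(0,1)$. Third, multiply through by $e^{-s\alpha n}$ with $s=-\rootpressure+\imaginary t$: the modulus identity turns the error term into $(C_{0}+C_{1})\theta^{n}e^{nP(f,-\rootpressure\normpotential)}$, while the translation identity turns $e^{-s\alpha n}e^{nP(\sigma_{A_{\vartriangle}},s\potential\circ\pi_{\vartriangle})}$ into $e^{nP(f,s\normpotential)}$, giving the claim with $C\define C_{0}+C_{1}$. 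Part~\ref{item:lem:partition function estimate:expanding Thurston map:bounded imaginary bounded cases:intermediate part} is identical except that it starts from Lemma~\ref{lem:partition function estimate:symbolic coding:bounded imaginary bounded cases}~\ref{item:lem:partition function estimate:symbolic coding:bounded imaginary bounded cases:intermediate part} — whose hypothesis that $\potential$ is not cohomologous to a constant in $C(S^{2})$ is exactly the hypothesis of this part — yielding $\abs[\big]{\shiftpartifun{-\rootpressure+\imaginary t}[n][\sigma_{A_{\vartriangle}}][\potential\circ\pi_{\vartriangle}]}\leqslant C_{1}\vartheta_{0}^{n}$ on $K\mysetminus(-\varepsilon,\varepsilon)$; adding $C_{0}\kappa^{n}$, setting $\vartheta\define\max\{\kappa,\vartheta_{0}\}$, and multiplying by $e^{-s\alpha n}$ then produces the stated bound with the factor $e^{nP(f,-\rootpressure\normpotential)}$.

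I do not anticipate a serious obstacle: this is a ``gluing'' lemma, and all the analytic content lies in the two cited results. The only points requiring attention are checking that the hypotheses of the cited lemmas are available here — Lemma~\ref{lem:relation between partition function of expanding Thurston map and tile coding} requires both $f(\mathcal{C})\subseteq\mathcal{C}$ and that no $1$-tile in $\Tile{1}$ joins opposite sides of $\mathcal{C}$, whereas Lemma~\ref{lem:partition function estimate:symbolic coding:bounded imaginary bounded cases} needs only $f(\mathcal{C})\subseteq\mathcal{C}$, and both conditions are assumed — and justifying that the translation property of the topological pressure survives the passage to the complex-analytic continuation of $z\mapsto P(f,z\normpotential)$, which is immediate since two analytic functions agreeing on $\real$ agree on a connected neighborhood of it. The remaining steps are bookkeeping with exponentials and the observation that $\max\{\kappa,\theta_{0}\}$ (resp.\ $\max\{\kappa,\vartheta_{0}\}$) absorbs both decaying contributions.
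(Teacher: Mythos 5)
Your proposal is correct and takes essentially the same route as the paper: the paper's (omitted) proof is exactly the reduction you describe, namely combining the symbolic estimates of Lemma~\ref{lem:partition function estimate:symbolic coding:bounded imaginary bounded cases} with Lemma~\ref{lem:relation between partition function of expanding Thurston map and tile coding} and the identity $\normpartifun{s} = e^{-s\alpha n}\partifun{s}$, in parallel with the proof of Proposition~\ref{prop:partition function estimate:expanding Thurston map:unbounded imaginary}. Your explicit justification that $P(f, z\normpotential) = P(\sigma_{A_{\vartriangle}}, z\,\potential\circ\pi_{\vartriangle}) - z\alpha$ persists for the complex-analytic extension near $z = -\rootpressure$ (via the identity theorem) is a detail the paper leaves implicit, and it is needed to convert the symbolic main term into $e^{nP(f, (-\rootpressure+\imaginary t)\normpotential)}$.
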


The proof of Lemma~\ref{lem:partition function estimate:expanding Thurston map:bounded imaginary bounded cases} parallels that of Proposition~\ref{prop:partition function estimate:expanding Thurston map:unbounded imaginary}, substituting the estimates from Lemma~\ref{lem:partition function estimate:symbolic coding:bounded imaginary bounded cases} for those from Proposition~\ref{prop:partition function estimate:symbolic coding:unbounded imaginary}; the details are omitted. % Thermodynamic formalism
% !TEX root = ../main.tex

\section{Proof of the main theorem}
\label{sec:Proof of the main theorem}

In this section, we complete the proof of Theorem~\ref{thm:main theorem} using the results from the previous section.
In Subsection~\ref{sub:Notation and assumptions}, we introduce the notation and assumptions that are used throughout this section.
In Subsection~\ref{sub:Auxiliary estimates}, we establish some auxiliary estimates (Lemma~\ref{lem:error of shorter primitive periodic orbits} and Proposition~\ref{prop:estimate of cardinality periodic orbits}).
In Subsection~\ref{sub:Approximation argument}, we use these estimates to establish our main result.

This section is dedicated to the proof of Theorem~\ref{thm:main theorem}. The argument follows a standard strategy for such counting problems and is organized into three main steps. First, in Subsection~\ref{sub:Auxiliary estimates}, we replace the sharp count of orbits with a smoothed version using a test function and relate this to a sum over all periodic points. Second, we employ Fourier analysis and the decay estimates from Section~\ref{sec:Pressure function and partition function estimates} to establish the asymptotic behavior of this smoothed count. Finally, in Subsection~\ref{sub:Approximation argument}, we complete the proof by using an approximation argument, where the sharp indicator function is bounded by smooth functions from above and below.

\subsection{Notation and assumptions}%
\label{sub:Notation and assumptions}

Throughout this section, let $f$, $\mathcal{C}$, $d$, $\potential$, $\rootpressure$, $\alpha$ satisfy the Assumptions in Section~\ref{sec:The Assumptions}.
We assume that $f(\mathcal{C}) \subseteq \mathcal{C}$ and no $1$-tile in $\Tile{1}$ joins opposite sides of $\mathcal{C}$.
Suppose that $\potential$ is not cohomologous to a constant in $C(S^2)$.
We define $\normpotential \define \potential - \alpha$.
Let $\sigma > 0$ be defined as in \eqref{eq:def:variance of potential}.
Let $K \subseteq \real$ be a compact set and $\sequen{I_{n}}$ be a sequence of intervals contained in $K$.

For each $n \in \n$, we denote by $p_{n}$ the midpoint of the interval $I_{n}$ and by $\ell_{n}$ the length of $I_{n}$.
Moreover, we assume that $\sequen{\ell_{n}^{-1}}$ has sub-exponential growth.
Then we can write
\begin{equation}    \label{eq:cardinality of periodic orbits with constraints}
	\primeorbitcard
	= \sum_{\tau \in \primeorbit} \indicator{I_{n}}\parentheses[\big]{ l_{f, \, \potential}(\tau) - n \alpha }  
	= \sum_{\tau \in \primeorbit} \indicator{[- \frac{1}{2}, \frac{1}{2}]}\parentheses[\big]{ \ell_{n}^{-1}\parentheses[\big]{ l_{f, \, \potential}(\tau) - n \alpha - p_{n} }  },
\end{equation}
where $l_{f, \, \potential}(\tau) = \sum_{y \in \tau} \potential(y)$.

\subsection{Auxiliary estimates}%
\label{sub:Auxiliary estimates}

In this subsection, we fix a non-negative function $\psi \in C^{4}(\real, \nonnegreal)$ with compact support.
For each $n \in \n$ we consider the auxiliary counting number
\begin{equation}    \label{eq:def:cardinality prime orbits}
	\auxprimeorbitcounting \define \sum_{\tau \in \primeorbit} \psi \parentheses[\big]{ \ell_{n}^{-1}\parentheses[\big]{ l_{f, \, \potential}(\tau) - n \alpha - p_{n} } }. 
\end{equation}

We study the asymptotic behavior of $\auxprimeorbitcounting$ to establish our main result by using an approximation argument in Subsection~\ref{sub:Approximation argument}.

We start by transforming the summation over $\primeorbit$, which represents the primitive periodic orbits of period $n$, into a summation over the set of fixed points of the iterated map $f^{n}$. 
Each primitive periodic orbit corresponds to $n$ distinct points in this set. 
However, this set also contains points belonging to primitive periodic orbits of shorter lengths. 
The following lemma establishes a bound for the error introduced by these shorter primitive periodic orbits. 
For each $n \in \n$, define
\begin{equation}    \label{eq:def:cardinality periodic orbits}
	\auxperiodicorbitcounting \define \frac{1}{n} \sum_{f^{n}(x) = x} \psi \parentheses[\big]{ \ell_{n}^{-1}\parentheses[\big]{ S_{n}^{f}\potential(x) - n \alpha - p_{n} } }.
\end{equation}

\begin{lemma} \label{lem:error of shorter primitive periodic orbits}
	Consider a non-negative function $\psi \in C^{4}(\real, \nonnegreal)$ with compact support.
	Following the assumptions in Subsection~\ref{sub:Notation and assumptions}, we have that for each $\eta > 0$,
	\[
		\auxprimeorbitcounting = \auxperiodicorbitcounting + \mathcal{O}\parentheses[\Big]{ e^{(P(f, -\rootpressure\normpotential) + \eta) n/2} }     \quad \text{as } n \to +\infty.
	\]
	Here $\auxprimeorbitcounting$ and $\auxperiodicorbitcounting$ are defined by \eqref{eq:def:cardinality prime orbits} and \eqref{eq:def:cardinality periodic orbits}, respectively.
\end{lemma}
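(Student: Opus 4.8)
The plan is to pass from the sum over $\myperiodpoint{f}{n}$ defining $\auxperiodicorbitcounting$ to the sum over $\mathfrak{P}_n(f)$ defining $\auxprimeorbitcounting$ by sorting the fixed points of $f^{n}$ according to their primitive period. Every $x \in \myperiodpoint{f}{n}$ lies on a unique primitive periodic orbit $\tau_{x}$, whose period $m = \abs{\tau_{x}}$ divides $n$; for such a point $S_{n}^{f}\potential(x) = \frac{n}{m} l_{f, \, \potential}(\tau_{x})$, and each $\tau \in \mathfrak{P}_{m}(f)$ contributes precisely its $m$ points to $\myperiodpoint{f}{n}$. Sorting the sum in \eqref{eq:def:cardinality periodic orbits} by the divisors $m$ of $n$ and isolating the term $m = n$ yields the exact identity
\[
    n \auxperiodicorbitcounting = n \auxprimeorbitcounting + \sum_{\substack{m \mid n,\ m < n}} m \sum_{\tau \in \mathfrak{P}_{m}(f)} \psi \parentheses[\big]{ \ell_{n}^{-1} \parentheses[\big]{ \tfrac{n}{m} l_{f, \, \potential}(\tau) - n\alpha - p_{n} } }.
\]
Since $\psi \geqslant 0$, all summands on the right are nonnegative, so it remains to bound the right-hand sum; observe that every divisor $m$ occurring there satisfies $m \leqslant n/2$.

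The crucial point is to exploit the compact support of $\psi$, not merely the crude bound $\psi \leqslant \uniformnorm{\psi}$. Fix $R$ with $\supp{\psi} \subseteq [-R, R]$. If the argument $\ell_{n}^{-1}(\tfrac{n}{m} l_{f, \, \potential}(\tau) - n\alpha - p_{n})$ is nonzero, then it lies in $[-R, R]$, and since $I_{n} \subseteq K$ forces $\ell_{n} \leqslant \diameter{d}{K}$ while $\abs{p_{n}}$ is bounded over the compact set $K$, one obtains $\abs[\big]{ \tfrac{n}{m}(l_{f, \, \potential}(\tau) - m\alpha) } \leqslant C_{0}$ for a constant $C_{0} = C_{0}(\psi, K)$; the factor $m/n \leqslant 1/2$ then upgrades this to $\abs[\big]{ l_{f, \, \potential}(\tau) - m\alpha } \leqslant C_{0}$. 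Hence, bounding $\psi$ by $\uniformnorm{\psi}$ and inserting the trivial inequality $1 \leqslant e^{\rootpressure C_{0}} e^{-\rootpressure (l_{f, \, \potential}(\tau) - m\alpha)}$ (valid since $\rootpressure > 0$), the inner sum over $\mathfrak{P}_{m}(f)$ is at most $\uniformnorm{\psi}\, e^{\rootpressure C_{0}} e^{\rootpressure \alpha m} \sum_{\tau \in \mathfrak{P}_{m}(f)} e^{-\rootpressure l_{f, \, \potential}(\tau)}$; and since each $\tau \in \mathfrak{P}_{m}(f)$ contributes $m$ equal terms $e^{-\rootpressure l_{f, \, \potential}(\tau)}$ to $\partifun{-\rootpressure}[m][f][\potential] = \sum_{f^{m}(x) = x} e^{-\rootpressure S_{m}^{f}\potential(x)}$, this is $\leqslant \uniformnorm{\psi}\, e^{\rootpressure C_{0}}\, m^{-1} e^{\rootpressure \alpha m}\, \partifun{-\rootpressure}[m][f][\potential]$.

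It then remains to control $\partifun{-\rootpressure}[m][f][\potential]$. Combining Lemma~\ref{lem:relation between partition function of expanding Thurston map and tile coding} (with $s = -\rootpressure$) with Lemma~\ref{lem:upper bound of shift partition function} applied to the subshift $\parentheses[\big]{ \Sigma^{+}_{A_{\vartriangle}}, \sigma_{A_{\vartriangle}} }$, and using $P(\sigma_{A_{\vartriangle}}, -\rootpressure \potential \circ \pi_{\vartriangle}) = P(f, -\rootpressure \potential) = 0$ (Proposition~\ref{prop:one-sided subshift of finite type associated with expanding Thurston map} together with \ref{assumption:unique zero}), for the given $\eta > 0$ one gets a constant $C_{\eta}$ with $\partifun{-\rootpressure}[m][f][\potential] \leqslant C_{\eta} e^{\eta m}$ for all $m \in \n$. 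Plugging this in, recalling $P(f, -\rootpressure \normpotential) = P(f, -\rootpressure \potential) + \rootpressure\alpha = \rootpressure\alpha$, and summing the geometric series $\sum_{m \leqslant n/2} e^{(\rootpressure\alpha + \eta) m}$, the error sum is bounded by a constant multiple of $n^{-1} e^{(\rootpressure\alpha + \eta) n/2}$, which gives $\auxprimeorbitcounting = \auxperiodicorbitcounting + \mathcal{O}\parentheses[\big]{ e^{(P(f, -\rootpressure\normpotential) + \eta) n/2} }$ as $n \to +\infty$. The main obstacle is precisely getting this exponent right: a crude count of $\mathfrak{P}_{m}(f)$ grows like $e^{m\, h_{\operatorname{top}}(f)}$, and since $\potential$ is not cohomologous to a constant one has $h_{\operatorname{top}}(f) > \rootpressure\alpha = P(f, -\rootpressure\normpotential)$ strictly, so the naive estimate would yield the exponent $h_{\operatorname{top}}(f)/2$, which can even exceed $P(f, -\rootpressure\normpotential)$ itself and thus be useless downstream; confining $l_{f, \, \potential}(\tau)$ to an $\mathcal{O}(1)$-window around $m\alpha$ is exactly what lets one introduce the weight $e^{-\rootpressure l_{f, \, \potential}(\tau)}$ matching the zero of $t \mapsto P(f, -t\potential)$ and so recover the correct rate.
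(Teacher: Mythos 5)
Correct, and essentially the paper's argument: you isolate the contributions of periodic points with primitive period at most $n/2$, use the compact support of $\psi$ together with $I_n \subseteq K$ to insert the weight $e^{-\rootpressure(\,\cdot\,)}$ at uniformly bounded cost, bound the resulting sum by the partition function at $-\rootpressure$, and sum a geometric series over periods up to $n/2$. The only deviations are bookkeeping: you decompose exactly over primitive orbits (the paper instead upper-bounds by fixed points of $f^q$ over proper divisors $q$), and you bound the partition function at $-\rootpressure$ via Lemma~\ref{lem:relation between partition function of expanding Thurston map and tile coding} combined with Lemma~\ref{lem:upper bound of shift partition function}, where the paper invokes Lemma~\ref{lem:partition function estimate:expanding Thurston map:bounded imaginary bounded cases}~\ref{item:lem:partition function estimate:expanding Thurston map:bounded imaginary bounded cases:local part} --- both yield the same rate.
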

The proof follows that of \cite[Lemma~4.1]{sharpStatisticsMultipliersHyperbolic2022}; we include it for completeness.
\begin{proof}
	We say that a fixed point $x$ of $f^{n}$ is non-primitive if there exists a proper divisor $q$ of $n$ such that $f^{q}(x) = x$.
	Then we have
	\begin{align*}
		\auxperiodicorbitcounting - \auxprimeorbitcounting 
		&= \frac{1}{n} \sum_{ \substack{ f^{n}(x) = x \\ \text{non-primitive} } } \psi \parentheses[\big]{ \ell_{n}^{-1}\parentheses[\big]{ S_{n}^{f}\potential(x) - n \alpha - p_{n} } } \\
		&= \frac{1}{n} \sum_{\substack{ q \mid n \\ q \leqslant n/2 } } \sum_{f^{q}(x) = x} \psi \parentheses[\big]{ \ell_{n}^{-1}\parentheses[\big]{ S_{n}^{f}\potential(x) - n \alpha - p_{n} } } \\
		&= \frac{1}{n} \sum_{\substack{ q \mid n \\ q \leqslant n/2 } } \sum_{f^{q}(x) = x} \frac{ \psi \parentheses[\big]{ \ell_{n}^{-1}\parentheses[\big]{ S_{n}^{f}\normpotential(x) - p_{n} } } }{ e^{-\rootpressure S_{q}^{f}\normpotential(x)} } e^{-\rootpressure S_{q}^{f}\normpotential(x)}.
	\end{align*}
	Since $\psi$ has compact support, it suffices to consider periodic points which satisfy $\ell_{n}^{-1}\parentheses[\big]{ S_{n}^{f}\normpotential(x) - p_{n} } \in \supp{\psi}$, i.e., $S_{n}^{f}\normpotential(x) \in p_{n} + \ell_{n} \supp{\psi}$.
	In particular, $S_{q}^{f}\normpotential(x)$ is bounded for these periodic points (recall from Subsection~\ref{sub:Notation and assumptions} that the intervals $I_{n}$ are contained in a compact set $K \subseteq \real$).
	Hence, for a non-primitive periodic point $x$ satisfying $f^{q}(x) = x$ for $q$ in the summation above, we get that $S_{q}^{f}\normpotential(x) = \frac{q}{n} S_{n}^{f}\normpotential(x)$ and thus $e^{-\rootpressure S_{q}^{f}\normpotential(x)}$ is uniformly bounded away from $0$ for $q$ in the summation above.
	Applying Lemma~\ref{lem:partition function estimate:expanding Thurston map:bounded imaginary bounded cases}~\ref{item:lem:partition function estimate:expanding Thurston map:bounded imaginary bounded cases:local part}, we deduce that for each $\eta > 0$,
	\begin{align*}
		&\frac{1}{n} \sum_{\substack{ q \mid n \\ q \leqslant n/2 } } \sum_{f^{q}(x) = x} \frac{ \psi \parentheses[\big]{ \ell_{n}^{-1}\parentheses[\big]{ S_{n}^{f}\normpotential(x) - p_{n} } } }{ e^{-\rootpressure S_{q}^{f}\normpotential(x)} } e^{-\rootpressure S_{q}^{f}\normpotential(x)}
		= \mathcal{O}\parentheses[\bigg]{ \frac{1}{n} \uniformnorm{\psi} \sum_{ q \leqslant n/2 } \sum_{f^{q}(x) = x} e^{-\rootpressure S_{q}^{f}\normpotential(x)} }  \\
		&= \mathcal{O}\parentheses[\bigg]{ \frac{1}{n} \uniformnorm{\psi} \sum_{ q \leqslant n/2 } \normpartifun{-\rootpressure }[q] } 
		= \mathcal{O}\parentheses[\bigg]{ \frac{1}{n} \sum_{ q \leqslant n/2 } e^{ (P(f, -\rootpressure \normpotential) + \eta) q } } 
		= \mathcal{O}\parentheses[\Big]{ e^{ (P(f, -\rootpressure\normpotential) + \eta) n / 2 } }
	\end{align*}
	as $n \to +\infty$.
	This completes the proof.
\end{proof}

For each $n \in \n$, we define
\begin{equation}    \label{eq:def:normalized testin function}
	\psi_{n}(x) \define \psi \parentheses[\big]{ \ell_{n}^{-1} (x - p_{n}) } e^{\rootpressure (x - p_n)}.
\end{equation}
Note that $\psi_n \in C^{4}(\real, \nonnegreal)$ has compact support.
Recall that $\normpotential = \potential - \alpha$.
In this notation we have
\begin{equation}    \label{eq:cardinality periodic orbits reduced form}
	\auxperiodicorbitcounting = \frac{1}{n} \sum_{f^{n}(x) = x} \psi_{n}\parentheses[\big]{ S_{n}^{f}\normpotential(x) } e^{-\rootpressure \parentheses[\big]{ S_{n}^{f}\normpotential(x) - p_{n} } }.
\end{equation}

We use Fourier transform to relate $\auxperiodicorbitcounting$ to partition functions so that we can apply the estimates established in Subsection~\ref{sub:Decay estimates of the partition function}.

\begin{proposition}    \label{prop:estimate of cardinality periodic orbits}
	Consider a non-negative function $\psi \in C^{4}(\real, \nonnegreal)$ with compact support.
	Under the assumptions in Subsection~\ref{sub:Notation and assumptions}, we have that
	\[
		\auxperiodicorbitcounting \sim e^{\rootpressure p_n} \frac{\int_{\real} \! \psi_{n}(x) \,\mathrm{d}x}{\sqrt{2\pi} \, \sigma} \frac{e^{P(f, -\rootpressure \normpotential)n}}{n^{3/2}} \qquad \text{as } n \to +\infty.
	\]
	Here $\auxperiodicorbitcounting$ is defined by \eqref{eq:def:cardinality periodic orbits} and $\psi_{n}$ is defined by \eqref{eq:def:normalized testin function}.
\end{proposition}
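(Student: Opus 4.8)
The plan is to convert the sum over periodic points in \eqref{eq:cardinality periodic orbits reduced form} into a Fourier-type integral of the partition functions $\normpartifun{-\rootpressure + \imaginary \xi}$ along the vertical line $\Re{s} = -\rootpressure$, to extract the main term from a small neighbourhood of $s = -\rootpressure$ by a local central limit argument, and to bound the remaining ranges using the decay estimates of Section~\ref{sec:Pressure function and partition function estimates}. We may assume $\psi \not\equiv 0$, since otherwise both sides of the claimed asymptotic vanish. First I would record the identity $\psi_n(y) e^{-\rootpressure(y - p_n)} = \psi\parentheses[\big]{ \ell_n^{-1}(y - p_n) }$ and compute, via the substitution $x = p_n + \ell_n v$,
\[
    \widehat{\psi_n}(\xi) \define \int_{\real} \psi_n(x) e^{-\imaginary x \xi} \,\mathrm{d}x = \ell_n e^{-\imaginary p_n \xi} \, \widehat{\psi}\parentheses[\big]{ \ell_n(\xi + \imaginary \rootpressure) } ,
\]
where $\widehat{\psi}(\zeta) \define \int_{\real} \psi(v) e^{-\imaginary v \zeta} \,\mathrm{d}v$ is the entire extension of the Fourier transform of $\psi$. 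Since $\psi \in C^{4}(\real, \nonnegreal)$ has compact support, repeated integration by parts gives $\abs[\big]{ \widehat{\psi}(\zeta) } \leqslant C (1 + \abs{\zeta})^{-4} e^{R \abs{ \Im{\zeta} }}$ for a constant $R$ depending only on $\supp \psi$; since $\ell_n$ and $p_n$ range over a bounded set (because $I_n \subseteq K$), this yields the uniform bounds $\abs[\big]{ \widehat{\psi_n}(\xi) } \leqslant C \ell_n (1 + \ell_n \abs{\xi})^{-4}$, $\int_{\real} \abs[\big]{ \widehat{\psi_n} } \leqslant C$, and $0 < c \ell_n \leqslant \widehat{\psi_n}(0) = \int_{\real} \psi_n \leqslant C \ell_n$. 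Applying Fourier inversion (valid because $\widehat{\psi_n} \in L^1$) term by term in the finite sum \eqref{eq:cardinality periodic orbits reduced form} then gives
\[
    \auxperiodicorbitcounting = \frac{e^{\rootpressure p_n}}{2\pi n} \int_{\real} \widehat{\psi_n}(\xi) \, \normpartifun{-\rootpressure + \imaginary \xi} \,\mathrm{d}\xi .
\]

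Next I would split this integral over the three ranges $\abs{\xi} < t_0$, $t_0 \leqslant \abs{\xi} \leqslant T$, and $\abs{\xi} > T$, where $t_0 > 0$ is the constant from Lemma~\ref{lem:partition function estimate:expanding Thurston map:bounded imaginary bounded cases}~\ref{item:lem:partition function estimate:expanding Thurston map:bounded imaginary bounded cases:local part}, taken small enough that $t_0 \leqslant \delta$ and $C_\delta t_0 < \sigma^2/4$ (with $\delta$, $C_\delta$ from Lemma~\ref{lem:Taylor extension of pressure function in imaginary part}), and $T > 1$ is the constant from Proposition~\ref{prop:partition function estimate:expanding Thurston map:unbounded imaginary}. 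On $\abs{\xi} > T$, Proposition~\ref{prop:partition function estimate:expanding Thurston map:unbounded imaginary} with $\varepsilon = 1/2$ gives $\abs[\big]{ \normpartifun{-\rootpressure + \imaginary \xi} } \leqslant C \abs{\xi}^{5/2} \rho^n e^{n P(f, -\rootpressure \normpotential)}$; combined with $\int_{\abs{\xi} > T} \abs[\big]{ \widehat{\psi_n}(\xi) } \abs{\xi}^{5/2} \,\mathrm{d}\xi \leqslant C \ell_n^{-5/2} = e^{o(n)}$ (from the sub-exponential growth of $\sequen[\big]{ \ell_n^{-1} }$), this range contributes $\mathcal{O}\parentheses[\big]{ n^{-1} e^{n P(f, -\rootpressure \normpotential)} \rho^n e^{o(n)} }$. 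On $t_0 \leqslant \abs{\xi} \leqslant T$, Lemma~\ref{lem:partition function estimate:expanding Thurston map:bounded imaginary bounded cases}~\ref{item:lem:partition function estimate:expanding Thurston map:bounded imaginary bounded cases:intermediate part} (applicable since $\potential$ is not cohomologous to a constant) together with $\int_{\real} \abs[\big]{ \widehat{\psi_n} } \leqslant C$ yields a contribution $\mathcal{O}\parentheses[\big]{ \vartheta^n e^{n P(f, -\rootpressure \normpotential)} }$. Since $\int_{\real} \psi_n \geqslant c \ell_n = e^{-o(n)}$ while $\rho, \vartheta \in (0, 1)$ are fixed, both contributions are $o\parentheses[\big]{ n^{-3/2} e^{n P(f, -\rootpressure \normpotential)} \int_{\real} \psi_n }$, hence negligible.

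Finally, on $\abs{\xi} < t_0$ I would replace $\normpartifun{-\rootpressure + \imaginary \xi}$ by $e^{n P(f, (-\rootpressure + \imaginary \xi) \normpotential)}$ via Lemma~\ref{lem:partition function estimate:expanding Thurston map:bounded imaginary bounded cases}~\ref{item:lem:partition function estimate:expanding Thurston map:bounded imaginary bounded cases:local part}, the resulting error being $\mathcal{O}\parentheses[\big]{ \theta^n e^{n P(f, -\rootpressure \normpotential)} }$ after integrating against the $L^1$ weight $\widehat{\psi_n}$ (again negligible), then invoke the Taylor expansion of Lemma~\ref{lem:Taylor extension of pressure function in imaginary part}, namely $P(f, (-\rootpressure + \imaginary \xi) \normpotential) = P(f, -\rootpressure \normpotential) - \tfrac{1}{2} \sigma^2 \xi^2 + r(\xi)$ with $\abs{ r(\xi) } \leqslant C_\delta \abs{\xi}^3$ (where $\sigma$ is as in \eqref{eq:def:variance of potential}), substitute $\xi = u/\sqrt{n}$, and factor out $\widehat{\psi_n}(0)$. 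This turns the local contribution into
\[
    \frac{e^{\rootpressure p_n} e^{n P(f, -\rootpressure \normpotential)}}{2\pi \, n^{3/2}} \, \widehat{\psi_n}(0) \int_{\abs{u} < t_0 \sqrt{n}} \frac{ \widehat{\psi_n}(u / \sqrt{n}) }{ \widehat{\psi_n}(0) } \, e^{-\sigma^2 u^2 / 2} \, e^{n r(u / \sqrt{n})} \,\mathrm{d}u ,
\]
and the integral tends to $\int_{\real} e^{-\sigma^2 u^2 / 2} \,\mathrm{d}u = \sqrt{2\pi}/\sigma$ by dominated convergence: for each fixed $u$ the integrand tends to $e^{-\sigma^2 u^2 / 2}$, since $\widehat{\psi_n}(u / \sqrt{n}) / \widehat{\psi_n}(0) \to 1$ by the closed form for $\widehat{\psi_n}$ together with $\ell_n$, $p_n$ bounded, and $n r(u / \sqrt{n}) = \mathcal{O}\parentheses[\big]{ \abs{u}^3 / \sqrt{n} } \to 0$; and on the whole range the integrand is dominated by $C e^{-\sigma^2 u^2 / 4}$, using $\abs[\big]{ \widehat{\psi_n}(\xi) / \widehat{\psi_n}(0) } \leqslant C$ uniformly in $n$, $\xi$, and $n \abs{ r(u / \sqrt{n}) } \leqslant C_\delta t_0 u^2 < \sigma^2 u^2 / 4$ for $\abs{u} < t_0 \sqrt{n}$. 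Combining the three ranges and using $\widehat{\psi_n}(0) = \int_{\real} \psi_n$ and $\tfrac{1}{2\pi} \cdot \tfrac{\sqrt{2\pi}}{\sigma} = \tfrac{1}{\sqrt{2\pi}\, \sigma}$ then gives the claimed asymptotic. The main obstacle is less any single analytic input---the pressure Taylor expansion and the three-scale partition-function decay are supplied verbatim by Lemma~\ref{lem:Taylor extension of pressure function in imaginary part}, Lemma~\ref{lem:partition function estimate:expanding Thurston map:bounded imaginary bounded cases}, and Proposition~\ref{prop:partition function estimate:expanding Thurston map:unbounded imaginary}---than the bookkeeping forced by the $n$-dependence of the test functions $\psi_n$, whose effective width $\ell_n$ may shrink to $0$: one must run the dominated-convergence argument on the normalized ratios $\widehat{\psi_n}(\xi) / \widehat{\psi_n}(0)$ rather than on $\psi_n$ directly, track every constant through the explicit formula for $\widehat{\psi_n}$, and check at each step that the lower bound $\int_{\real} \psi_n \geqslant e^{-o(n)}$, a consequence of the sub-exponential growth of $\sequen[\big]{ \ell_n^{-1} }$, dominates all the exponentially small error terms.
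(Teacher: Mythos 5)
Your proposal is correct and follows essentially the same route as the paper: Fourier inversion converts $\auxperiodicorbitcounting$ into an integral of $\normpartifun{-\rootpressure+\imaginary \xi}$ against $\widehat{\psi}_n$, which is then split into a local part (handled by Lemma~\ref{lem:partition function estimate:expanding Thurston map:bounded imaginary bounded cases}~\ref{item:lem:partition function estimate:expanding Thurston map:bounded imaginary bounded cases:local part}, the Taylor expansion of Lemma~\ref{lem:Taylor extension of pressure function in imaginary part}, the scaling $\xi = u/\sqrt{n}$, and dominated convergence on the normalized transforms), an intermediate part (Lemma~\ref{lem:partition function estimate:expanding Thurston map:bounded imaginary bounded cases}~\ref{item:lem:partition function estimate:expanding Thurston map:bounded imaginary bounded cases:intermediate part}), and an unbounded part where the $C^{4}$ regularity and compact support of $\psi$ together with the sub-exponential growth of $\sequen{\ell_n^{-1}}$ beat the polynomial factor $\abs{t}^{2+\varepsilon}$ from Proposition~\ref{prop:partition function estimate:expanding Thurston map:unbounded imaginary}. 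The only differences are cosmetic (your decay bound on $\widehat{\psi}_n$ versus the paper's identity $(2\pi\imaginary t)^4\widehat{\psi}_n = \widehat{\psi_n^{(4)}}$, and factoring out $\widehat{\psi}_n(0)$ rather than the paper's normalization by the target quantity $A(n)$).
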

\begin{proof}
	For each $n \in \n$ we define \[
		A(n) \define \abs[\bigg]{ \frac{ \ell_{n}^{-1} e^{-\rootpressure p_{n}} \sigma \sqrt{2 \pi n^{3}} }{ e^{P(f, -\rootpressure \normpotential)n} } \auxperiodicorbitcounting \, - \, \ell_{n}^{-1} \! \int_{\real} \! \psi_{n}(x) \,\mathrm{d}x }.
	\]
	The integral $\ell_{n}^{-1} \int_{\real} \! \psi_{n}(x) \,\mathrm{d}x = \int_{\real} \! \psi(y) e^{\rootpressure \ell_{n} y} \,\mathrm{d}y$ is uniformly bounded away from $0$ and $+\infty$ for $n \in \n$ as $\psi$ has compact support. 
	Hence it suffices to show that $A(n) \to 0$ as $n \to +\infty$.

	By applying the Fourier inversion theorem, for each $n \in \n$ we obtain
	\begin{equation}    \label{eq:prop:estimate of partition function with periodic orbits:Fourier inversion formula}
		\psi_{n}(x) e^{-\rootpressure (x - p_{n})} = e^{\rootpressure p_{n}} \int_{\real} \! \widehat{\psi}_{n}(t) e^{(-\rootpressure + 2 \pi \imaginary t) x} \,\mathrm{d}t.
	\end{equation}

	\smallskip

	\emph{Claim~1.} For each $n \in \n$, \[
		A(n) \leqslant \frac{1}{\sqrt{2 \pi}} \int_{\real} \, \abs[\bigg]{ \frac{\ell_{n}^{-1}}{e^{P(f, -\rootpressure \normpotential)n}} \widehat{\psi}_{n}\parentheses[\Big]{ \frac{t}{2 \pi \sigma \sqrt{n}} }  \normpartifun[\Big]{-\rootpressure + \frac{\imaginary t}{\sigma \sqrt{n}}} \, - \, \ell_{n}^{-1} e^{-\frac{t^{2}}{2}} \! \int_{\real} \! \psi_{n}(x) \,\mathrm{d}x } \,\mathrm{d}t.
	\]

	\smallskip

	By \eqref{eq:cardinality periodic orbits reduced form} and \eqref{eq:prop:estimate of partition function with periodic orbits:Fourier inversion formula}, for each $n \in \n$ we have
	\begin{align*}
		\frac{ \ell_{n}^{-1} e^{-\rootpressure p_{n}} \sigma \sqrt{2 \pi n^{3}} }{ e^{P(f, -\rootpressure \normpotential)n} } \auxperiodicorbitcounting 
		&= \frac{ \ell_{n}^{-1} e^{-\rootpressure p_{n}} \sigma \sqrt{2 \pi n} }{ e^{P(f, -\rootpressure \normpotential)n} } \sum_{f^{n}(x) = x} \psi_{n}\parentheses[\big]{ S_{n}^{f}\normpotential(x) } e^{-\rootpressure \parentheses[\big]{ S_{n}^{f}\normpotential(x) - p_{n} } } \\
		&= \frac{ \ell_{n}^{-1} \sigma \sqrt{2 \pi n} }{ e^{P(f, -\rootpressure \normpotential)n} } \sum_{f^{n}(x) = x}  \int_{\real} \! \widehat{\psi}_{n}(t) e^{(-\rootpressure + 2 \pi \imaginary t) S_{n}^{f}\normpotential(x)} \,\mathrm{d}t\\
		&= \frac{ \ell_{n}^{-1} }{ \sqrt{2 \pi} e^{P(f, -\rootpressure \normpotential)n} } \int_{\real} \! \widehat{\psi}_{n}\parentheses[\Big]{ \frac{t}{ 2 \pi \sigma \sqrt{n} }  } \sum_{f^{n}(x) = x} e^{\parentheses[\big]{ -\rootpressure + \frac{\imaginary t}{\sigma \sqrt{n}} }  S_{n}^{f}\normpotential(x)} \,\mathrm{d}t \\
		&= \frac{ \ell_{n}^{-1} }{ \sqrt{2 \pi} e^{P(f, -\rootpressure \normpotential)n} } \int_{\real} \! \widehat{\psi}_{n}\parentheses[\Big]{ \frac{t}{ 2 \pi \sigma \sqrt{n} }  } \normpartifun[\Big]{-\rootpressure + \frac{\imaginary t}{\sigma \sqrt{n}}} \,\mathrm{d}t.
	\end{align*}
	Here $\normpartifun{\cdot}$ is defined in Definition~\ref{def:partition function}.
	Using the identity $\sqrt{2\pi} = \int_{\real} \! e^{-t^{2}/2} \,\mathrm{d}t$ we deduce that
	\[
		A(n) = \frac{1}{\sqrt{2\pi}}  \abs[\bigg]{ \frac{ \ell_{n}^{-1} }{ e^{P(f, -\rootpressure \normpotential)n} } \int_{\real} \! \widehat{\psi}_{n}\parentheses[\Big]{ \frac{t}{ 2 \pi \sigma \sqrt{n} }  } \normpartifun[\Big]{-\rootpressure + \frac{\imaginary t}{\sigma \sqrt{n}}} \,\mathrm{d}t  \, - \, \ell_{n}^{-1} \! \int_{\real} \! \psi_{n}(x) \,\mathrm{d}x \int_{\real} \! e^{-t^{2}/2} \,\mathrm{d}t }.
	\]
	Then Claim~1 follows immediately from the integral inequality for absolute values.

	\smallskip

	We now consider the following three quantities:
	\begin{align*}
		A_1(n) &\define \int_{ \abs{t} < \varepsilon \sigma \sqrt{n} } \, \abs[\bigg]{ \frac{\ell_{n}^{-1}}{e^{P(f, -\rootpressure \normpotential)n}} \widehat{\psi}_{n}\parentheses[\Big]{ \frac{t}{2 \pi \sigma \sqrt{n}} }  \normpartifun[\Big]{-\rootpressure + \frac{\imaginary t}{\sigma \sqrt{n}}} \, - \, \ell_{n}^{-1} e^{-\frac{t^{2}}{2}} \! \int_{\real} \! \psi_{n}(x) \,\mathrm{d}x } \,\mathrm{d}t,  \\
		A_2(n) &\define \int_{ \abs{t} \geqslant \varepsilon \sigma \sqrt{n} } \, \abs[\bigg]{ \frac{\ell_{n}^{-1}}{e^{P(f, -\rootpressure \normpotential)n}} \widehat{\psi}_{n}\parentheses[\Big]{ \frac{t}{2 \pi \sigma \sqrt{n}} }  \normpartifun[\Big]{-\rootpressure + \frac{\imaginary t}{\sigma \sqrt{n}}} } \,\mathrm{d}t,  \\
		A_3(n) &\define \int_{ \abs{t} \geqslant \varepsilon \sigma \sqrt{n} } \, \abs[\bigg]{ \ell_{n}^{-1} e^{-\frac{t^{2}}{2}} \! \int_{\real} \! \psi_{n}(x) \,\mathrm{d}x } \,\mathrm{d}t.
	\end{align*}
	Here $\varepsilon \in (0, 1)$ is chosen to be smaller than $\min\set[\Big]{ \delta , \frac{\sigma^{2}}{4 C_{\delta}}, t_0 }$, where the constants $\delta$ and $C_{\delta}$ are given by Lemma~\ref{lem:Taylor extension of pressure function in imaginary part}, and the constant $t_0$ is given by Lemma~\ref{lem:partition function estimate:expanding Thurston map:bounded imaginary bounded cases}~\ref{item:lem:partition function estimate:expanding Thurston map:bounded imaginary bounded cases:local part}.
	It follows from Claim~1 that 
	\[
		A(n) \leqslant \frac{1}{\sqrt{2\pi}} \parentheses[\big]{ A_1(n) + A_2(n) + A_3(n)} \qquad \text{for each } n \in \n.
	\]
	Thus it suffices to show that $\lim_{n \to +\infty} A_{i}(n) = 0$ for each $i \in \{1, 2, 3\}$.

	\smallskip
	
	\emph{Claim~2.}  $\lim_{n \to +\infty} A_{1}(n) = 0$.
	
	\smallskip

	For each $n \in \n$, we define \[
		\widetilde{A}_{1}(n) \define \int_{ \abs{t} < \varepsilon \sigma \sqrt{n} } \, \abs[\bigg]{ \ell_{n}^{-1} \widehat{\psi}_{n}\parentheses[\Big]{ \frac{t}{2 \pi \sigma \sqrt{n}} } e^{n\parentheses[\big]{ P\parentheses[\big]{ f, \parentheses{ -\rootpressure + \frac{\imaginary t}{\sigma\sqrt{n}} } \normpotential } - P(f, -\rootpressure \normpotential) } }  \, - \, \ell_{n}^{-1} e^{-\frac{t^{2}}{2}} \! \int_{\real} \! \psi_{n}(x) \,\mathrm{d}x } \,\mathrm{d}t.
	\]	
	Then by Lemma~\ref{lem:partition function estimate:expanding Thurston map:bounded imaginary bounded cases}~\ref{item:lem:partition function estimate:expanding Thurston map:bounded imaginary bounded cases:local part}, there exists $\theta \in (0, 1)$ such that as $n \to +\infty$,
	\[
		\begin{split}
			A_1(n) &= \widetilde{A}_{1}(n) + \mathcal{O}\parentheses[\big]{ \varepsilon \sigma \sqrt{n} \, \ell_{n}^{-1} \uniformnorm{\widehat{\psi}_{n}} \theta^{n} },
		\end{split}
	\]
	where $\ell_{n}^{-1} \uniformnorm{\widehat{\psi}_{n}} \leqslant \ell_{n}^{-1} \int_{\real} \! \psi_{n}(x) \,\mathrm{d}x = \int_{\real} \! \psi(y) e^{\rootpressure \ell_{n} y} \,\mathrm{d}y$ is uniformly bounded for $n \in \n$ since $\psi$ has compact support. 
	Hence, to establish Claim~2, it suffices to show that $\lim_{n \to +\infty} \widetilde{A}_{1}(n) = 0$.
	
	By the triangle inequality we have that for each $n \in \n$, 
	\[
		\widetilde{A}_{1}(n) \leqslant \int_{ \abs{t} < \varepsilon \sigma \sqrt{n} } \,  g_{n}(t) \,\mathrm{d}t + \int_{ \abs{t} < \varepsilon \sigma \sqrt{n} } \,  h_{n}(t) \,\mathrm{d}t,
	\]
	where for each $t \in \real$,
	\begin{align*}
		g_{n}(t) &\define \abs[\bigg]{ \ell_{n}^{-1} \widehat{\psi}_{n}\parentheses[\Big]{ \frac{t}{2 \pi \sigma \sqrt{n}} } e^{n\parentheses[\big]{ P\parentheses[\big]{ f, \parentheses{ -\rootpressure + \frac{\imaginary t}{\sigma\sqrt{n}} } \normpotential } - P(f, -\rootpressure \normpotential) } }  \, - \, \ell_{n}^{-1} \widehat{\psi}_{n}\parentheses[\Big]{ \frac{t}{2 \pi \sigma \sqrt{n}} } e^{-\frac{t^{2}}{2}} },  \\
		h_{n}(t) &\define \abs[\bigg]{ e^{-\frac{t^{2}}{2}} \ell_{n}^{-1} \widehat{\psi}_{n}\parentheses[\Big]{ \frac{t}{2 \pi \sigma \sqrt{n}} } \, - \, e^{-\frac{t^{2}}{2}} \ell_{n}^{-1} \int_{\real} \! \psi_{n}(x) \,\mathrm{d}x }.
	\end{align*}
	By \eqref{eq:def:normalized testin function} we have that
	\[
		\ell_{n}^{-1} \widehat{\psi}_{n}\parentheses[\Big]{ \frac{t}{2 \pi \sigma \sqrt{n}} } = \ell_{n}^{-1} \int_{\real} \! \psi_{n}(x) e^{\frac{\imaginary x t}{\sigma \sqrt{n}} } \,\mathrm{d} x = \int_{\real} \! \psi(y) e^{\rootpressure \ell_{n} y} e^{\frac{\imaginary t}{\sigma\sqrt{n}} (p_{n} + \ell_{n} y) } \,\mathrm{d} y,
	\]
	which is uniformly bounded for $n \in \n$ and $t \in \real$ since $\psi$ has compact support.
	Then it follows from Lebesgue's dominated convergence theorem that for each $t \in \real$, \[
		0 \leqslant \lim_{n \to +\infty} h_{n}(t) \leqslant \lim_{n \to +\infty} e^{-\frac{t^{2}}{2}} \int_{\real} \! \psi(y) e^{\rootpressure \ell_{n} y} \abs[\Big]{ e^{\frac{\imaginary t}{\sigma\sqrt{n}} (p_{n} + \ell_{n} y) } - 1 }  \,\mathrm{d} y = 0.
	\]
	Since $\int_{\real} \! e^{-t^{2}/2 } \,\mathrm{d}t = \sqrt{2\pi} < +\infty$, by Lebesgue's dominated convergence theorem we conclude that $\lim_{n \to +\infty} \int_{\real} h_{n}(t) \,\mathrm{d}t = 0$.
	By \eqref{eq:lem:Taylor extension of pressure function in imaginary part:Taylor expansion of pressure function} in Lemma~\ref{lem:Taylor extension of pressure function in imaginary part}, for each $t \in \real$, \[
		\lim_{n \to +\infty} e^{n\parentheses[\big]{ P\parentheses[\big]{ f, \parentheses{ -\rootpressure + \frac{\imaginary t}{\sigma\sqrt{n}} } \normpotential } - P(f, -\rootpressure \normpotential) } } = e^{- \frac{t^2}{2}}.
	\]
	This implies that for each $t \in \real$, $\lim_{n \to +\infty} g_{n}(t) = 0$.
	Moreover, since $\varepsilon < \min\set[\Big]{\delta, \frac{\sigma^{2}}{4 C_{\delta}}}$, it follows from Lemma~\ref{lem:Taylor extension of pressure function in imaginary part} that if $\abs{t} < \varepsilon \sigma\sqrt{n}$, then 
	\[
		\abs[\Big]{ e^{n\parentheses[\big]{ P\parentheses[\big]{ f, \parentheses{ -\rootpressure + \frac{\imaginary t}{\sigma\sqrt{n}} } \normpotential } - P(f, -\rootpressure \normpotential) } } - e^{- \frac{t^2}{2}} } 
		\leqslant e^{ - \frac{t^{2}}{2}\parentheses[\big]{ 1 - \frac{2 C_{\delta} t}{\sigma^{3} \sqrt{n}} }  } + e^{- \frac{t^2}{2}}
		< e^{- \frac{t^2}{4}} + e^{- \frac{t^2}{2}}.
	\]
	Hence, by Lebesgue's dominated convergence theorem, $\lim_{n \to +\infty} \int_{ \abs{t} < \varepsilon \sigma \sqrt{n} } \,  g_{n}(t) \,\mathrm{d}t = 0$.
	This implies $\lim_{n \to +\infty} \widetilde{A}_{1}(n) = 0$ and establishes Claim~2.

	\smallskip
	
	\emph{Claim~3.}  $\lim_{n \to +\infty} A_{2}(n) = 0$.
	
	\smallskip

	% We divide the estimate of the integral into a bounded part and an unbounded part. 

	Let $T > 1$ be the constant given by Proposition~\ref{prop:partition function estimate:expanding Thurston map:unbounded imaginary}.
	By Lemma~\ref{lem:partition function estimate:expanding Thurston map:bounded imaginary bounded cases}~\ref{item:lem:partition function estimate:expanding Thurston map:bounded imaginary bounded cases:intermediate part}, there exists $\vartheta \in (0, 1)$ such that
	\begin{equation}    \label{eq:temp:prop:estimate of partition function with periodic orbits:Claim3:medium case}
		\int_{ \varepsilon \leqslant \frac{ \abs{t}}{\sigma \sqrt{n} } \leqslant T } \abs[\bigg]{ \frac{\ell_{n}^{-1}}{e^{P(f, -\rootpressure \normpotential)n}} \widehat{\psi}_{n}\parentheses[\Big]{ \frac{t}{2 \pi \sigma \sqrt{n}} }  \normpartifun[\Big]{-\rootpressure + \frac{\imaginary t}{\sigma \sqrt{n}}} } \,\mathrm{d}t
		= \mathcal{O}\parentheses[\big]{ \sigma \sqrt{n} \, \ell_{n}^{-1} \uniformnorm{\widehat{\psi}_{n}} \vartheta^{n} }
	\end{equation}
	as $n \to +\infty$, where $\ell_{n}^{-1} \uniformnorm{\widehat{\psi}_{n}} \leqslant \ell_{n}^{-1} \int_{\real} \! \psi_{n}(x) \,\mathrm{d}x = \int_{\real} \! \psi(y) e^{\rootpressure \ell_{n} y} \,\mathrm{d}y$ is uniformly bounded for $n \in \n$ since $\psi$ has compact support. 
	Since $\psi_n \in C^{4}(\real, \real)$, we have that for all $n \in \n$ and $t \in \real$,
	\[
		(2 \pi \imaginary t)^{4} \widehat{\psi}_{n}(t) = \widehat{\psi_{n}^{(4)}}(t).
	\]	
	Moreover, since $\psi$ has compact support, we have that 
	\[
		\begin{split}
			\ell_{n}^{3} \, \uniformnorm[\Big]{ \widehat{\psi_{n}^{(4)}} } 
			\leqslant \ell_{n}^{3} \! \int_{\real} \abs[\big]{ \psi_{n}^{(4)}(x) }  \,\mathrm{d}x 
			% &= \ell_{n}^{3} \! \int_{\real} \, \abs[\bigg]{ \sum_{k=0}^{4} \binom{4}{k} \ell_{n}^{k - 4} \psi^{(4 - k)}\parentheses[\Big]{ \frac{x - p_{n}}{\ell_{n}} } (\rootpressure )^{k} e^{\rootpressure (x - p_{n})} }  \,\mathrm{d}x \\
			&= \int_{\real} \, \abs[\bigg]{ \sum_{k=0}^{4} \binom{4}{k} \ell_{n}^{k} \, \psi^{(4 - k)}(y) (\rootpressure )^{k} e^{\rootpressure \ell_{n} y} }  \,\mathrm{d}y \leqslant C'
		\end{split}
	\]
	for some constant $C' \geqslant 0$, which is independent of $n$. 
	Then, by Proposition~\ref{prop:partition function estimate:expanding Thurston map:unbounded imaginary}, there exist $C > 0$ and $\rho \in (0, 1)$ such that for each integer $n \geqslant 2$,
	\begin{align*}
		&\int_{ \abs{t} > T \sigma \sqrt{n} } \, \abs[\bigg]{ \frac{\ell_{n}^{-1}}{e^{P(f, -\rootpressure \normpotential)n}} \widehat{\psi}_{n}\parentheses[\Big]{ \frac{t}{2 \pi \sigma \sqrt{n}} }  \normpartifun[\Big]{-\rootpressure + \frac{\imaginary t}{\sigma \sqrt{n}}} } \,\mathrm{d}t \\
		&\qquad \leqslant \int_{ \abs{t} > T \sigma \sqrt{n} } \, \abs[\bigg]{ \ell_{n}^{-1} \widehat{\psi}_{n}\parentheses[\Big]{ \frac{t}{2 \pi \sigma \sqrt{n}} } C \abs[\Big]{ \frac{t}{\sigma\sqrt{n}}  }^{2 + \varepsilon} \rho^{n} } \,\mathrm{d}t\\
		&\qquad = \int_{ \abs{t} > T } C \sigma \sqrt{n} \, \ell_{n}^{-1} \rho^{n} \abs{t}^{2 + \varepsilon} \abs[\Big]{ \widehat{\psi}_{n}\parentheses[\Big]{ \frac{t}{2 \pi} } } \,\mathrm{d}t \\
		&\qquad = C \sigma \sqrt{n} \, \ell_{n}^{-4} \rho^{n} \int_{ \abs{t} > T } \abs{t}^{-2 + \varepsilon} \, \ell_{n}^{3} \abs[\Big]{ \widehat{\psi^{(4)}_{n}} \parentheses[\Big]{ \frac{t}{2 \pi} } }  \,\mathrm{d}t \\
		&\qquad \leqslant C C' \sigma \sqrt{n} \, \ell_{n}^{-4} \rho^{n} \int_{ \abs{t} > T } \abs{t}^{-2 + \varepsilon}  \,\mathrm{d}t.
	\end{align*}
	Combining this with \eqref{eq:temp:prop:estimate of partition function with periodic orbits:Claim3:medium case} and recalling that the sequences $\sequen{\ell_{n}^{-1}}$ are of sub-exponential growth, we establish Claim~3.

	\smallskip

	Finally, it is clear that $\lim_{n \to +\infty} A_{3}(n) = 0$. 
	We conclude the proof by combining the estimates in Claims~2~and~3.
\end{proof}

The following corollary is an immediate consequence of Proposition~\ref{prop:estimate of cardinality periodic orbits} and Lemma~\ref{lem:error of shorter primitive periodic orbits}.
Recall that $P(f, -\rootpressure \normpotential) = \rootpressure \alpha > 0$.

\begin{corollary} \label{coro:estimate of cardinality prime orbits}
	Consider a non-negative function $\psi \in C^{4}(\real, \nonnegreal)$ with compact support.
	Under the assumptions in Subsection~\ref{sub:Notation and assumptions}, we have that
	\[
		\auxprimeorbitcounting \sim e^{\rootpressure p_n} \frac{\int_{\real} \! \psi_{n}(x) \,\mathrm{d}x}{\sigma \sqrt{2\pi}} \frac{e^{\rootpressure \alpha n}}{n^{3/2}} \quad \text{as } n \to +\infty.
	\]
	Here $\auxprimeorbitcounting$ is defined by \eqref{eq:def:cardinality prime orbits} and $\psi_{n}$ is defined by \eqref{eq:def:normalized testin function}.
\end{corollary}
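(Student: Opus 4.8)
The plan is to derive the corollary by combining Proposition~\ref{prop:estimate of cardinality periodic orbits} with Lemma~\ref{lem:error of shorter primitive periodic orbits}, the only substantive point being to check that the error coming from non-primitive periodic points is negligible against the main term.

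First I would invoke Proposition~\ref{prop:estimate of cardinality periodic orbits}, which, using the recalled identity $P(f, -\rootpressure\normpotential) = \rootpressure\alpha$, states that
\[
	\auxperiodicorbitcounting \sim e^{\rootpressure p_n}\, \frac{\int_\real \psi_n(x)\,\mathrm{d}x}{\sigma\sqrt{2\pi}}\, \frac{e^{\rootpressure\alpha n}}{n^{3/2}} \qquad\text{as } n \to +\infty,
\]
and call the right-hand side $B(n)$. To see that $B(n)$ decays only like a polynomial times $e^{\rootpressure\alpha n}$, I would substitute $x = p_n + \ell_n y$ in \eqref{eq:def:normalized testin function} to get $\int_\real \psi_n(x)\,\mathrm{d}x = \ell_n \int_\real \psi(y)\, e^{\rootpressure \ell_n y}\,\mathrm{d}y$; since $\psi$ has compact support and the lengths $\ell_n$ stay bounded (the $I_n$ lie in the compact set $K$), the integral $\int_\real \psi(y)\, e^{\rootpressure \ell_n y}\,\mathrm{d}y$ is bounded above and below by positive constants uniformly in $n$, and likewise $e^{\rootpressure p_n}$ is bounded above and below since $p_n \in K$. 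Consequently there is a constant $c > 0$ independent of $n$ with $B(n) \geqslant c\,\ell_n\, n^{-3/2}\, e^{\rootpressure\alpha n}$ for all large $n$.

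Finally I would fix $\eta \in (0, \rootpressure\alpha)$ and apply Lemma~\ref{lem:error of shorter primitive periodic orbits} to write $\auxprimeorbitcounting = \auxperiodicorbitcounting + \mathcal{O}\bigl(e^{(\rootpressure\alpha + \eta)n/2}\bigr)$. Dividing through by $B(n)$ and using the lower bound just obtained,
\[
	\frac{\auxprimeorbitcounting}{B(n)} = \frac{\auxperiodicorbitcounting}{B(n)} + \mathcal{O}\!\biggl( \frac{n^{3/2}}{\ell_n}\, e^{-(\rootpressure\alpha - \eta)n/2} \biggr) \qquad\text{as } n \to +\infty.
\]
The first term on the right tends to $1$ by the asymptotics above, while the error term tends to $0$ because $\rootpressure\alpha - \eta > 0$ and, by hypothesis, $\sequen{\ell_n^{-1}}$ has sub-exponential growth, so that $n^{3/2}\ell_n^{-1} = e^{o(n)}$ is dominated by the genuine exponential decay. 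Hence $\auxprimeorbitcounting \sim B(n)$, which is the assertion. The only delicate point, and it is a minor one, is precisely this last comparison: one must know that the sub-exponential factor $\ell_n^{-1}$ cannot absorb the exponential gain, and this is exactly where the assumption $\limsup_{n\to+\infty} \frac{1}{n} \log\bigl(\ell_n^{-1}\bigr) = 0$ enters.
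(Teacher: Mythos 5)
Your proof is correct and is exactly the route the paper intends: the paper presents the corollary as ``an immediate consequence of Proposition~\ref{prop:estimate of cardinality periodic orbits} and Lemma~\ref{lem:error of shorter primitive periodic orbits}'' (invoking $P(f,-\rootpressure\normpotential)=\rootpressure\alpha$), and you have supplied precisely the missing bookkeeping: the uniform lower bound on the main term $B(n)$ via $\ell_n^{-1}\int_\real\psi_n = \int_\real\psi(y)e^{\rootpressure\ell_n y}\,\mathrm{d}y$ and the observation that the sub-exponential factor $n^{3/2}\ell_n^{-1}$ cannot overcome the exponential gap $e^{-(\rootpressure\alpha-\eta)n/2}$.
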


\subsection{Approximation argument}%
\label{sub:Approximation argument}

In this subsection we prove Theorem~\ref{thm:main theorem} by using the auxiliary estimates from Subsection~\ref{sub:Auxiliary estimates} through an approximation argument.

\begin{proof}[Proof of Theorem~\ref{thm:main theorem}]
	By Lemma~\ref{lem:invariant_Jordan_curve not join opposite sides}, it suffices to prove the theorem for the case where $f(\mathcal{C}) \subseteq \mathcal{C}$ and no $1$-tile in $\Tile{1}$ joins opposite sides of $\mathcal{C}$.
	Hence all the assumptions in Subsection~\ref{sub:Notation and assumptions} are satisfied.

	For each $n \in \n$, we define 
	\[
		B(n) \define \ell_{n}^{-1} e^{-\rootpressure p_{n}} \frac{ \sigma \sqrt{2 \pi n^{3}} }{ e^{ \rootpressure \alpha n } } \primeorbitcard \, - \, \ell_{n}^{-1} \int_{I_{n}} \! e^{\rootpressure (z - p_{n})} \,\mathrm{d}z.
	\]
	The integral $\ell_{n}^{-1} \int_{I_{n}} \! e^{\rootpressure (z - p_{n})} \,\mathrm{d}z$ is uniformly bounded away from $0$ and $+\infty$ for $n \in \n$ as $K$ is compact. 
	Thus, in order to prove this theorem, it suffices to show that $\lim_{n \to +\infty} B(n) = 0$.

	Fix an arbitrary $\varepsilon \in (0, 1)$.

	We first construct a non-negative function $\psi \in C^{4}(\real, \nonnegreal)$ with compact support satisfying the following properties:
	\[
		\indicator{[-\frac{1}{2}, \frac{1}{2}]} \leqslant \psi \leqslant 1 + \varepsilon, \quad 
		\supp{\psi} \subseteq \squarebracket[\Big]{ - \frac{1 + \varepsilon}{2}, \frac{1 + \varepsilon}{2} }, \quad \text{and} \quad
		\int_{\real} \! \psi(x) \,\mathrm{d}x \leqslant 1 + \varepsilon.
	\]
	Let $\eta \in C^{\infty}(\real)$ be a non-negative mollifier defined by 
	\[
		\eta(x) \define 
		\begin{cases}
			C\myexp[\big]{ -\frac{1}{1 - x^{2}} } & \text{if } \abs{x} < 1; \\
			0 & \text{otherwise},
		\end{cases}
	\]
	where $C > 0$ is a normalization constant such that $\int_{\real} \eta(x) \,\mathrm{d}x = 1$.
	For each $\delta > 0$, set $\eta_{\delta}(x) \define \delta^{-1} \eta(x/\delta)$. 
	Consider $G \define (1 + \varepsilon / 4) \indicator{\squarebracket{ - (1 + \varepsilon / 4)/ 2, \, (1 + \varepsilon / 4)/ 2 }}$.
	Then there exists a sufficiently small $\delta > 0$ such that the function $\psi \define G * \eta_{\delta}$ satisfies the desired properties. 
	Here $*$ denotes the convolution of functions.

	By \eqref{eq:cardinality of periodic orbits with constraints}, \eqref{eq:def:cardinality prime orbits}, and Corollary~\ref{coro:estimate of cardinality prime orbits}, we have that
	\begin{align*}
		&\limsup_{n \to +\infty} B(n) \\
		&=\limsup_{n \to +\infty} \ell_{n}^{-1} e^{-\rootpressure p_{n}} \frac{ \sigma \sqrt{2 \pi n^{3}}}{e^{ \rootpressure \alpha n }} \sum_{\tau \in \primeorbit} \indicator{[- \frac{1}{2}, \frac{1}{2}]}\parentheses[\big]{ \ell_{n}^{-1}\parentheses[\big]{ l_{f, \, \potential}(\tau) - n \alpha - p_{n} }  } \, - \, \ell_{n}^{-1} \int_{I_{n}} \! e^{\rootpressure (z - p_{n})} \,\mathrm{d}z \\
		&\leqslant \limsup_{n \to +\infty} \ell_{n}^{-1} e^{-\rootpressure p_{n}} \frac{ \sigma \sqrt{2 \pi n^{3}}}{e^{ \rootpressure \alpha n }} \sum_{\tau \in \primeorbit} \psi\parentheses[\big]{ \ell_{n}^{-1}\parentheses[\big]{ l_{f, \, \potential}(\tau) - n \alpha - p_{n} }  }  \, - \, \ell_{n}^{-1} \int_{I_{n}} \! e^{\rootpressure (z - p_{n})} \,\mathrm{d}z.\\
		&= \limsup_{n \to +\infty} \ell_{n}^{-1} \int_{\real} \! \psi_{n}(x) \,\mathrm{d}x \, - \, \ell_{n}^{-1} \int_{I_{n}} \! e^{\rootpressure (z - p_{n})} \,\mathrm{d}z,
	\end{align*}
	where $\psi_{n}$ is defined by \eqref{eq:def:normalized testin function} and $\ell_{n}^{-1} \int_{\real} \! \psi_{n}(x) \,\mathrm{d}x = \int_{\real} \! \psi(y) e^{\rootpressure \ell_{n} y} \,\mathrm{d}y$ is uniformly bounded for $n \in \n$ since $\psi$ has compact support.
	For each $n \in \n$, 
	\begin{align*}
		\ell_{n}^{-1} \int_{\real} \! \psi_{n}(x) \,\mathrm{d}x 
		&= \int_{\real} \! \psi(y) e^{\rootpressure \ell_{n} y} \,\mathrm{d}y 
		= \int_{- \frac{1 + \varepsilon}{2}}^{\frac{1 + \varepsilon}{2} } \! \psi(y) e^{\rootpressure \ell_{n} y} \,\mathrm{d}y \\
		&\leqslant \int_{- \frac{1}{2}}^{\frac{1}{2} } \! \psi(y) e^{\rootpressure \ell_{n} y} \,\mathrm{d}y + \varepsilon (1 + \varepsilon) e^{ \rootpressure \abs{K} }  \\
		&\leqslant (1 + \varepsilon) \int_{- \frac{1}{2}}^{\frac{1}{2} } \! e^{\rootpressure \ell_{n} y} \,\mathrm{d}y + \varepsilon (1 + \varepsilon) e^{ \rootpressure \abs{K} }  \\
		&= (1 + \varepsilon) \ell_{n}^{-1} \int_{I_{n}} \! e^{\rootpressure (z - p_{n})} \,\mathrm{d}z + \varepsilon (1 + \varepsilon) e^{ \rootpressure \abs{K} }  \\
		&\leqslant \ell_{n}^{-1} \int_{I_{n}} \! e^{\rootpressure (z - p_{n})} \,\mathrm{d}z + \varepsilon (2 + \varepsilon) e^{ \rootpressure \abs{K} },
	\end{align*}
	where $\abs{K}$ denotes the diameter of the compact set $K$.
	Thus, we obtain that
	\begin{align*}
		\limsup_{n \to +\infty} B(n) 
		&\leqslant \varepsilon (2 + \varepsilon) e^{ \rootpressure \abs{K} } 
		\leqslant C_1 \varepsilon,
	\end{align*}
	where the constant $C_1 \define 3e^{ \rootpressure \abs{K} }$ depends only on $\rootpressure $ and $K$.
	
	Similarly, one can show that
	\[
		\liminf_{n \to +\infty} B(n) \geqslant - C_{2} \varepsilon
	\]
	for some constant $C_2 > 0$ that depends only on $-\rootpressure $ and $K$.

	Since the choice of $\varepsilon \in (0, 1)$ was arbitrary, we have that $\lim_{n \to +\infty} B(n) = 0$.
	This completes the proof.
\end{proof} % Proof of the main theorem

\appendix % Appendix
% !TEX root = ../../main.tex

% \newpage

\section{Ruelle lemma}%
\label{sec:Appendix:Ruelle lemma}

This appendix presents a proof of the Ruelle lemma for one-sided subshifts of finite type (see Lemma~\ref{lem:Ruelle lemma for subshift of finite type}).
Estimates of this type first appeared implicitly in \cite{ruelle1990extension}, and variations were established in \cite{pollicottExponentialErrorTerms1998,pollicottErrorTermsClosed2001,naudExpandingMapsCantor2005}.
% While implicit in \cite{ruelle1990extension}, versions of this result were established in \cite{naudExpandingMapsCantor2005,pollicottExponentialErrorTerms1998,pollicottErrorTermsClosed2001}.
The lemma provides an estimate of the difference between the partition function and a sum involving iterates of the Ruelle operator acting on characteristic functions of cylinders.
This estimate is crucial for analyzing the analytic properties of dynamical zeta functions and establishing statistical limit theorems.

Throughout this appendix, we fix the following notation.
Consider a finite set of states $S$ and a transition matrix $A \colon S \times S \to \{0, 1\}$.
Denote by $\parentheses[\big]{ \Sigma_{A}^+, \sigma_{A} }$ the one-sided subshift of finite type defined by $A$.
Fix $\metricexpshiftspace \in (0, 1)$ and equip the space $\Sigma_{A}^+$ with the metric $\metriconshiftspace$ defined in \eqref{eq:def:metric on shift space}.
Let $\holderexp \in (0, 1]$ and $\phi \in \holderspace[\big][][\Sigma_{A}^{+}][\metriconshiftspace]$ be a real-valued \holder continuous function.
We refer the reader to Subsection~\ref{sub:Symbolic dynamics for expanding Thurston maps} for background on symbolic dynamics and the Ruelle operator.

We introduce definitions needed to state and prove the Ruelle lemma.

\begin{definition}[Admissible word, cylinder set, and concatenation]
    For $n \in \n$ and a finite word \( \omega = \omega_0 \cdots \omega_{n - 1} \) with \( \omega_i \in S \) and \( A(\omega_i, \omega_{i + 1}) = 1 \) for every \( 0 \leqslant i < n - 1 \), we say \( \omega \) is an \emph{admissible word of length \( n \)}, denoted \( \abs{\omega} = n \).
    The \emph{cylinder set} associated to \( \omega \) is
    \[
        C_\omega = [\omega] \define \set[\big]{ x = \set{x_{i}}_{i \in \n_{0}} \in \Sigma_{A}^{+} \describe x_i = \omega_i \text{ for } 0 \leqslant i < n } .
    \]
    % In particular, for a single symbol \( j \in S \), we write \( C_{j} \define [j] = \set{ x \in \Sigma_A \describe x_0 = j } \).
    We denote by \( \chi_\omega \) the characteristic function of the cylinder set \( C_\omega \).
    For a point \( x \in \Sigma_{A}^{+} \) satisfying \( A(\omega_{n - 1}, x_0) = 1 \), we write \( \omega x \in \Sigma_{A}^{+} \) for the concatenation obtained by prepending \( \omega \) to \( x \); explicitly, \( (\omega x)_i = \omega_i \) for \( 0 \leqslant i < n \) and \( (\omega x)_i = x_{i - n} \) for \( i \geqslant n \).
\end{definition}

We now state the Ruelle lemma.

\begin{lemma}[Ruelle lemma]      \label{lem:Ruelle lemma for subshift of finite type}
    For each \( a_0 > 0 \), each \( b_0 > 0 \), and each \( \varepsilon > 0 \), there exists a constant \( C_{\varepsilon} > 0 \) such that for all $k \in \n$, \( n \in \n \) with $n \geqslant 2$, and \( s \in \cx \) with \( \abs{\Re{s}} \leqslant a_0 \) and \( \abs{\Im{s}} \geqslant b_0 \), we have
    \begin{equation} \label{eq:lem:Ruelle lemma for subshift of finite type:bound on iterates of Ruelle operator on characteristic functions of cylinder sets}
        \sum_{ \abs{\omega} = k } \bigl\| \mathcal{L}_{s\phi}^{k} \chi_\omega \bigr\|_{C^{0,\holderexp}} 
        \leqslant C_\varepsilon \abs{\Im{s}} \metricexpshiftspace^{\holderexp} e^{k (P(\sigma, \Re{s} \phi) + \varepsilon)}
    \end{equation}
    and
    \begin{equation} \label{eq:lem:Ruelle lemma for subshift of finite type:Ruelle lemma}
        \abs[\bigg]{ \sum_{\sigma^n x = x} e^{ s S_n\phi(x)} - \sum_{j \in S} \mathcal{L}^n_{s \phi} \chi_{j}(x_{j}) }
        \leqslant C_\varepsilon \abs{\Im{s}} \sum_{m=2}^{n} \norm[\big]{ \mathcal{L}^{n - m}_{s \phi} }_{C^{0,\holderexp}} \parentheses[\big]{ \metricexpshiftspace^{\holderexp} e^{P(\sigma, \Re{s} \phi) + \varepsilon} }^{m}
    \end{equation}
    for any choice of a point \( x_j \in C_j \) for each cylinder set \( C_j \).
    Here the sum in \eqref{eq:lem:Ruelle lemma for subshift of finite type:bound on iterates of Ruelle operator on characteristic functions of cylinder sets} is taken over all admissible words \( \omega \) of length \( k \), and \( \norm[\big]{ \mathcal{L}^{n-m}_{s \phi} }_{C^{0,\holderexp}} \) denotes the operator norm of \( \mathcal{L}^{n-m}_{s \phi} \) on \( C^{0,\holderexp} \parentheses[\big]{ \parentheses[\big]{ \Sigma_{A}^{+}, \metriconshiftspace }, \cx } \).
\end{lemma}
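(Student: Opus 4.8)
The plan is to prove \eqref{eq:lem:Ruelle lemma for subshift of finite type:bound on iterates of Ruelle operator on characteristic functions of cylinder sets} first by a direct computation, and then to deduce \eqref{eq:lem:Ruelle lemma for subshift of finite type:Ruelle lemma} from it by Ruelle's telescoping argument.

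For \eqref{eq:lem:Ruelle lemma for subshift of finite type:bound on iterates of Ruelle operator on characteristic functions of cylinder sets}, the first step is to compute $\mathcal{L}_{s\phi}^k\chi_\omega$ explicitly: for $\abs{\omega} = k$ and $x \in \Sigma_A^+$, the only $\sigma_A^k$-preimage $y$ of $x$ lying in $C_\omega$ is $y = \omega x$, and this is admissible iff $A(\omega_{k-1}, x_0) = 1$, a condition depending on $x$ only through $x_0$; hence $(\mathcal{L}_{s\phi}^k\chi_\omega)(x) = e^{sS_k\phi(\omega x)}$ there and $0$ otherwise. I would bound $\|\mathcal{L}_{s\phi}^k\chi_\omega\|_{\infty} \leqslant \sup_{C_\omega} e^{\Re{s} S_k\phi}$ and the $\holderexp$-seminorm using the bounded-distortion estimate $\abs{S_k\phi(\omega x) - S_k\phi(\omega x')} \leqslant \frac{|\phi|_\holderexp \metricexpshiftspace^\holderexp}{1 - \metricexpshiftspace^\holderexp} \metriconshiftspace(x, x')^\holderexp$ combined with $\abs{e^a - e^b} \leqslant \abs{a - b}\, e^{\max\{\Re{a}, \Re{b}\}}$ (and the trivial bound $\abs{\mathcal{L}_{s\phi}^k\chi_\omega(x) - \mathcal{L}_{s\phi}^k\chi_\omega(x')} \leqslant 2\|\mathcal{L}_{s\phi}^k\chi_\omega\|_\infty$ when $x_0 \ne x_0'$). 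This gives $\|\mathcal{L}_{s\phi}^k\chi_\omega\|_{C^{0,\holderexp}} \leqslant C(1 + \abs{s})\sup_{C_\omega} e^{\Re{s} S_k\phi}$, and since $\abs{\Re{s}} \leqslant a_0$ and $\abs{\Im{s}} \geqslant b_0$ one may replace $1 + \abs{s}$ by a constant multiple of $\abs{\Im{s}}$. Summing over admissible words $\omega$ of length $k$, a second bounded-distortion step replaces $\sum_\omega \sup_{C_\omega} e^{\Re{s} S_k\phi}$ by a constant multiple of $\sum_\omega e^{S_k(\Re{s}\phi)(y_\omega)}$ for chosen points $y_\omega \in C_\omega$; distinct such points are $(k,1)$-separated, so the argument of Lemma~\ref{lem:upper bound of shift partition function} yields $\sum_\omega e^{S_k(\Re{s}\phi)(y_\omega)} \leqslant C_\varepsilon e^{k(P(\sigma, \Re{s}\phi) + \varepsilon)}$, uniformly in $\Re{s} \in [-a_0, a_0]$ by continuity of the pressure function and compactness. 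Absorbing the remaining fixed constants (including a harmless factor $\metricexpshiftspace^{-\holderexp}$, permissible since $\metricexpshiftspace$ and $\holderexp$ are fixed throughout the appendix) into $C_\varepsilon$ gives \eqref{eq:lem:Ruelle lemma for subshift of finite type:bound on iterates of Ruelle operator on characteristic functions of cylinder sets}.

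For \eqref{eq:lem:Ruelle lemma for subshift of finite type:Ruelle lemma}, I would first observe that both quantities are sums over the admissible length-$n$ words $\omega$ satisfying the periodic-closure condition $A(\omega_{n-1}, \omega_0) = 1$: the left-hand side equals $\sum_\omega e^{sS_n\phi(\overline{\omega})}$ with $\overline{\omega} = \omega\omega\omega\cdots$ the corresponding periodic point, while $\sum_{j \in S}\mathcal{L}_{s\phi}^n\chi_j(x_j) = \sum_\omega e^{sS_n\phi(\omega x_{\omega_0})}$. Hence the difference is $\sum_\omega \bigl(e^{sS_n\phi(\overline{\omega})} - e^{sS_n\phi(\omega x_{\omega_0})}\bigr)$. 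Since $\overline{\omega}$ and $\omega x_{\omega_0}$ agree on their first $n+1$ coordinates, I would write the exponent difference as $s\sum_{j=0}^{n-1}\bigl(\phi(\sigma_A^j\overline{\omega}) - \phi(\sigma_A^j(\omega x_{\omega_0}))\bigr)$ and telescope $e^a - e^b$ one orbit-position at a time; grouping by the ``level'' $m$, at which $\abs{\phi(\sigma_A^{n-m}\overline{\omega}) - \phi(\sigma_A^{n-m}(\omega x_{\omega_0}))} = \mathcal{O}(\metricexpshiftspace^{m\holderexp})$, decomposes the difference (after a harmless reindexing) as $\sum_{m=2}^n (\text{level-}m\text{ term})$. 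All exponential prefactors arising in the telescoping stay bounded by $e^{\Re{s} S_n\phi}$-type quantities up to a constant depending only on $a_0$, because the real part of the exponent changes by only $\mathcal{O}(a_0)$ from one level to the next.

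The crux is to recognize the level-$m$ term, after summing over $\omega$, as $\mathcal{L}_{s\phi}^{n-m}$ evaluated at a reference point on a function $g_m$ depending only on the first $m$ symbols: the Markov structure of $\Sigma_A^+$ lets the sum over the length-$(n-m)$ prefix of $\omega$ reassemble into the operator $\mathcal{L}_{s\phi}^{n-m}$, while the sum over the length-$m$ suffix builds $g_m$, which has the shape of a sum over length-$m$ cylinders of a weight $e^{sS_m\phi(\cdot)}$ times a near-cancellation factor $e^{sD} - 1$ with $\abs{D} = \mathcal{O}(\metricexpshiftspace^{m\holderexp})$. Bounding $\|g_m\|_{C^{0,\holderexp}}$ by the method used for \eqref{eq:lem:Ruelle lemma for subshift of finite type:bound on iterates of Ruelle operator on characteristic functions of cylinder sets} with $k = m$, and using $\abs{e^{sD} - 1} \leqslant C\abs{\Im{s}}\metricexpshiftspace^{m\holderexp}$, produces $\|g_m\|_{C^{0,\holderexp}} \leqslant C_\varepsilon \abs{\Im{s}}\bigl(\metricexpshiftspace^\holderexp e^{P(\sigma, \Re{s}\phi)+\varepsilon}\bigr)^m$; hence the level-$m$ term is at most $\|\mathcal{L}_{s\phi}^{n-m}\|_{C^{0,\holderexp}}\|g_m\|_{C^{0,\holderexp}}$, and summing over $m = 2, \dots, n$ gives \eqref{eq:lem:Ruelle lemma for subshift of finite type:Ruelle lemma}. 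I expect this regrouping to be the main obstacle: one must track the admissibility constraints (prefix-admissibility, suffix-admissibility, the coupling at the cut point, and the periodic-closure condition) precisely enough to exhibit the level-$m$ contribution as a genuine application of $\mathcal{L}_{s\phi}^{n-m}$ to a level-$m$ function, while simultaneously verifying that $g_m$ carries the extra decay $\metricexpshiftspace^{(m-1)\holderexp}$ beyond what \eqref{eq:lem:Ruelle lemma for subshift of finite type:bound on iterates of Ruelle operator on characteristic functions of cylinder sets} gives on its own, which is exactly what the near-cancellation factor $e^{sD} - 1$ supplies.
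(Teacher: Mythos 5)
Your proof of \eqref{eq:lem:Ruelle lemma for subshift of finite type:bound on iterates of Ruelle operator on characteristic functions of cylinder sets} is essentially the paper's own argument (explicit evaluation of $\mathcal{L}^k_{s\phi}\chi_\omega$, the case analysis for the \holder seminorm with $\abs{e^z-e^w}\leqslant\abs{z-w}e^{\max\{\Re{z},\Re{w}\}}$, then a pressure bound on $\sum_{\abs{\omega}=k}\sup_{C_\omega}e^{\Re{s}\,S_k\phi}$). One minor caveat: uniformity of that last bound over $\Re{s}\in[-a_0,a_0]$ does not follow from ``continuity of the pressure and compactness'' alone; the paper obtains it from the uniform Lipschitz bound on $a\mapsto\frac1k\log\sum_{\abs{\omega}=k}\sup_{C_\omega}e^{aS_k\phi}$ together with an Arzel\`a--Ascoli argument. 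This is fixable.

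The genuine gap is in your route to \eqref{eq:lem:Ruelle lemma for subshift of finite type:Ruelle lemma}, precisely at the step you yourself single out as the crux. Your starting identity is correct: both sums run over closed words $\omega$ of length $n$, and the difference is $\sum_\omega\bigl(e^{sS_n\phi(\overline{\omega})}-e^{sS_n\phi(\omega x_{\omega_0})}\bigr)$. But after telescoping position by position, the level-$m$ term for $\omega=\eta\tau$ with $\abs{\eta}=n-m$, $\abs{\tau}=m$ is (in one hybridization direction)
\begin{equation*}
    e^{sS_{n-m}\phi(\eta\tau x_{\eta_0})}\,\bigl(e^{s\phi(\overline{\tau\eta})}-e^{s\phi(\tau x_{\eta_0})}\bigr)\,e^{sS_{m-1}\phi(\sigma_A(\overline{\tau\eta}))},
\end{equation*}
and the factors carrying the near-cancellation and the suffix weight depend on the \emph{entire prefix} $\eta$ through the periodic point $\overline{\tau\eta}$, while the evaluation point $\eta\tau x_{\eta_0}$ has a tail depending on the suffix $\tau$ and on $\eta_0$ (closure condition). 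Swapping the hybridization direction only moves the problem: then the prefix weight is a Birkhoff sum along $\overline{\eta\tau}$, whose tail depends on $\tau$. So the level-$m$ sum is \emph{not} of the form $\mathcal{L}^{n-m}_{s\phi}g_m$ at a reference point with $g_m$ a function of the first $m$ coordinates, contrary to your plan. The natural repairs fail: if you encode the $\eta$-dependence as a function of the evaluation variable $y=\eta\tau x_{\eta_0}$ (by periodizing its first $n$ symbols), the suffix $\tau$ occupies coordinates $n-m,\dots,n-1$ of $y$, so either you lose the localization to $C_\tau$ that produces the factor $e^{m(P(\sigma,\Re{s}\phi)+\varepsilon)}$ after summing over $\tau$, or the resulting function has \holder seminorm of order $\metricexpshiftspace^{-\holderexp(n-m)}$, destroying the bound $\norm[\big]{\mathcal{L}^{n-m}_{s\phi}}_{C^{0,\holderexp}}\|g_m\|_{C^{0,\holderexp}}$; alternatively, replacing the periodic-orbit pieces by reference-point pieces costs either an extra factor $\abs{\Im{s}}$ or an error that carries no factor $\norm[\big]{\mathcal{L}^{n-m}_{s\phi}}_{C^{0,\holderexp}}$ at all, and such terms cannot be absorbed into the right-hand side of \eqref{eq:lem:Ruelle lemma for subshift of finite type:Ruelle lemma} (whose whole point is that these operator norms are later made small by the Dolgopyat estimates).

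The paper's proof sidesteps this coupling by telescoping in a different variable: it sets $A_m\define\sum_{\abs{\omega}=m}(\mathcal{L}^n_{s\phi}\chi_\omega)(x_\omega)$, notes $A_1=\sum_{j\in S}\mathcal{L}^n_{s\phi}\chi_j(x_j)$ and $A_n=\sum_{\sigma_A^nx=x}e^{sS_n\phi(x)}$ (Lemma~\ref{lem:periodic point representation}), and uses $\chi_{\widehat{\omega}}=\sum_j\chi_{\widehat{\omega}j}$ to rewrite $A_{m-1}=\sum_{\abs{\omega}=m}(\mathcal{L}^n_{s\phi}\chi_\omega)(x_{\widehat{\omega}})$, where $\widehat{\omega}$ drops the last symbol. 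Each level-$m$ difference then compares the \emph{same} function $\mathcal{L}^n_{s\phi}\chi_\omega$ at two points of distance at most $\metricexpshiftspace^{m-1}$, so the \holder bound plus $\norm[\big]{\mathcal{L}^n_{s\phi}\chi_\omega}_{C^{0,\holderexp}}\leqslant\norm[\big]{\mathcal{L}^{n-m}_{s\phi}}_{C^{0,\holderexp}}\norm[\big]{\mathcal{L}^m_{s\phi}\chi_\omega}_{C^{0,\holderexp}}$ and your already-proved estimate \eqref{eq:lem:Ruelle lemma for subshift of finite type:bound on iterates of Ruelle operator on characteristic functions of cylinder sets} with $k=m$ give exactly the claimed bound, with no hybrid Birkhoff sums and no regrouping issue. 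You would need to switch to this (or an equivalent) decomposition to complete your argument.
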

\begin{rmk}
    The constant \( C_\varepsilon \) in Lemma~\ref{lem:Ruelle lemma for subshift of finite type} only depends on \( \varepsilon \), \( a_0 \), \( b_0 \), \( \metricexpshiftspace \), \( \holderexp \), \( \phi \), and the ambient parameters of the subshift; it is independent of \( n \) and the specific value of \( s \) (as long as \( \abs{\Re{s}} \leqslant a_0 \) and \( \abs{\Im{s}} \geqslant b_0 \)).
\end{rmk}

The following lemma expresses the partition function as a sum over the Ruelle operator acting on the characteristic functions of the cylinder sets.

% [Partition fucntion representation via Ruelle operator]
\begin{lemma}    \label{lem:periodic point representation}
    For each $\psi \in C(X, \cx)$ and each \( n \in \n \) we have
    \[
        \sum_{x \in \fixpoint{\sigma^n}} e^{S_n \psi(x)} 
        = \sum_{\abs{\omega} = n} (\mathcal{L}_\psi^n \chi_\omega)(x_\omega) ,
    \]
    where for each admissible word \( \omega \) the point \( x_\omega \in C_\omega \) is defined as follows: if the cylinder set \( C_\omega \) contains a fixed point of \( \sigma^n \), we take \( x_\omega \) to be that point; otherwise we choose \( x_\omega \) arbitrarily in \( C_\omega \).
\end{lemma}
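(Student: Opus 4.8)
The plan is to unfold the $n$-th iterate of the Ruelle operator and to check that, for every admissible word $\omega$ of length $n$, the sum defining $(\mathcal{L}_\psi^n \chi_\omega)(x_\omega)$ collapses to at most one term. Iterating \eqref{eq:def:ruelle operator symbolic shift} gives $(\mathcal{L}_\psi^n u)(x) = \sum_{y \in \sigma^{-n}(x)} e^{S_n\psi(y)} u(y)$ for every $u \in C(\Sigma_{A}^{+}, \cx)$, and a point $y$ lies in $\sigma^{-n}(x)$ precisely when $y = \eta x$ (the concatenation) for an admissible word $\eta$ of length $n$ satisfying $A(\eta_{n-1}, x_0) = 1$ (admissibility of $\eta$ together with this ``seam'' condition is exactly what is needed for $\eta x$ to lie in $\Sigma_{A}^{+}$, and $\sigma^n(\eta x) = x$); conversely, if $\sigma^n y = x$ then $\eta \define y_0 \cdots y_{n-1}$ works, so this is a bijection between $\sigma^{-n}(x)$ and the set of such words $\eta$. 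Taking $u = \chi_\omega$ and $x = x_\omega$, the factor $\chi_\omega(\eta x_\omega)$ vanishes unless the first $n$ symbols of $\eta x_\omega$ are $\omega$, i.e.\ unless $\eta = \omega$; so the only term that can survive corresponds to $y = \omega x_\omega$.

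I would then observe that $\omega x_\omega$ belongs to $\Sigma_{A}^{+}$ if and only if $A(\omega_{n-1}, (x_\omega)_0) = 1$, and since $x_\omega \in C_\omega$ forces $(x_\omega)_0 = \omega_0$ (here $n \geqslant 1$), this reduces to the condition $A(\omega_{n-1}, \omega_0) = 1$ on $\omega$ alone --- in particular it is insensitive to the arbitrary choice of $x_\omega$ permitted by the statement. Hence $(\mathcal{L}_\psi^n \chi_\omega)(x_\omega) = e^{S_n\psi(\omega x_\omega)}$ when $A(\omega_{n-1},\omega_0) = 1$ and $(\mathcal{L}_\psi^n \chi_\omega)(x_\omega) = 0$ otherwise. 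When $A(\omega_{n-1},\omega_0) = 1$ the point $z_\omega \in \Sigma_{A}^{+}$ with coordinate sequence $\omega\omega\omega\cdots$ is well defined, lies in $C_\omega$, and satisfies $\sigma^n(z_\omega) = z_\omega$; it is moreover the unique $\sigma^n$-periodic point in $C_\omega$, because any $x$ with $\sigma^n(x) = x$ is determined by its first $n$ coordinates. Thus the prescription in the lemma yields $x_\omega = z_\omega$, and since $\omega z_\omega = z_\omega$ we get $(\mathcal{L}_\psi^n \chi_\omega)(x_\omega) = e^{S_n\psi(z_\omega)}$; when $A(\omega_{n-1},\omega_0)=0$, the cylinder $C_\omega$ contains no $\sigma^n$-periodic point, so the prescription allows an arbitrary $x_\omega$ and the contribution is $0$ regardless.

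Finally I would assemble the pieces: summing over admissible words $\omega$ of length $n$,
\[
    \sum_{\abs{\omega} = n} (\mathcal{L}_\psi^n \chi_\omega)(x_\omega) = \sum_{\substack{\abs{\omega} = n \\ A(\omega_{n-1}, \omega_0) = 1}} e^{S_n\psi(z_\omega)},
\]
and the map $\omega \mapsto z_\omega$ is a bijection from the set of admissible length-$n$ words with $A(\omega_{n-1},\omega_0) = 1$ onto $\fixpoint{\sigma^n}$, with inverse $x \mapsto x_0 \cdots x_{n-1}$; this turns the right-hand side into $\sum_{x \in \fixpoint{\sigma^n}} e^{S_n\psi(x)}$, which is the claim. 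I do not expect a genuine obstacle here --- the argument is elementary bookkeeping --- and the only steps requiring a little care are the reduction of the seam admissibility condition $A(\omega_{n-1}, (x_\omega)_0) = 1$ to a condition on $\omega$ alone (so that the freedom in choosing $x_\omega$ for non-cyclically-admissible words is harmless) and the verification that cyclically admissible length-$n$ words are in bijection with the fixed points of $\sigma^n$.
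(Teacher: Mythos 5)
Your proof is correct and follows essentially the same route as the paper: both arguments reduce the identity to evaluating $(\mathcal{L}_\psi^n \chi_\omega)(x_\omega)$ cylinder by cylinder and splitting into the two cases $A(\omega_{n-1},\omega_0)=1$ (where the unique $\sigma^n$-fixed point $\omega\omega\cdots$ in $C_\omega$ is the chosen base point) and $A(\omega_{n-1},\omega_0)=0$ (where both contributions vanish). The only cosmetic difference is that you unfold the preimage set $\sigma^{-n}(x_\omega)$ explicitly via concatenation, whereas the paper phrases the same bookkeeping as the set identity $C_\omega \cap \myperiodpoint{\sigma}{n} = C_\omega \cap \sigma^{-n}(x_\omega)$.
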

\begin{proof}
    Fix an arbitrary $n \in \n$.
    Since $\Sigma_{A}^{+} = \bigcup\limits_{\abs{\omega} = n} C_{\omega}$, we have
    \[
        \sum_{x \in \fixpoint{\sigma^n}} e^{S_n \psi(x)} 
        = \sum_{\abs{\omega} = n} \, \sum_{x \in C_{\omega} \cap \fixpoint{\sigma^n}} e^{S_n \psi(x)}.
    \]
    By definition of the Ruelle operator,
    \[
        (\mathcal{L}_\psi^n \chi_\omega)(x_\omega) 
        = \sum_{y \in C_\omega \cap \sigma^{-n}(x_{\omega}) } e^{S_n \psi(y)}.
    \]
    Thus it suffices to show that $C_{\omega} \cap \fixpoint{\sigma^n} = C_\omega \cap \sigma^{-n}(x_{\omega})$ for each admissible word \( \omega \) with \( |\omega| = n \).

    Let $\omega$ be an arbitrary admissible word of length $n$.
    Note that the cylinder set $C_\omega$ can contain at most one point from each of the sets $\fixpoint{\sigma^n}$ and $\sigma^{-n}(x_{\omega})$. 
    We consider two cases.

    If $A(\omega_{n-1}, \omega_0) = 1$, the word $\omega$ can be followed by itself, which forms the point $x_{\omega}^{*} \define \omega\omega\cdots$. 
    This point lies in $C_\omega$ and is a fixed point of $\sigma^n$. 
    This implies that $x_{\omega} = x_{\omega}^{*}$ and
    \[
        C_{\omega} \cap \fixpoint{\sigma^n} = C_{\omega} \cap \sigma^{-n}(x_{\omega}) = \{x_{\omega}\}.
    \]
    On the other hand, if $A(\omega_{n-1}, \omega_0) = 0$, then the word $\omega$ cannot be followed by the symbol $\omega_0$. 
    This implies that 
    \[
        C_{\omega} \cap \fixpoint{\sigma^n} = C_{\omega} \cap \sigma^{-n}(x_{\omega}) = \emptyset.
    \]
    In either case, the equality $C_{\omega} \cap \fixpoint{\sigma^n} = C_{\omega} \cap \sigma^{-n}(x_{\omega})$ holds. 
    This completes the proof.
    % If \( A(\omega_{n - 1}, \omega_0) = 1 \), then $x_{\omega} = \omega \omega \cdots \in C_{\omega}$, which implies that $C_{\omega} \cap \fixpoint{\sigma^n} = C_{\omega} \cap \sigma^{-n}(x_{\omega}) = \set{x_{\omega}}$; otherwise \( A(\omega_{n - 1}, \omega_0) = 0 \), then $C_{\omega} \cap \fixpoint{\sigma^n} = C_{\omega} \cap \sigma^{-n}(x_{\omega}) = \emptyset$.
    % Since in both cases one has $C_{\omega} \cap \fixpoint{\sigma^n} = C_{\omega} \cap \sigma^{-n}(x_{\omega}) = \set{x_{\omega}}$, the proof is complete.
\end{proof}

% \subsection{Proof of the Ruelle lemma}

We now prove Lemma~\ref{lem:Ruelle lemma for subshift of finite type}.
The proof proceeds in three steps: establishing a telescoping sum identity, estimating the \holder norms, and bounding the sum over admissible words.

\begin{proof}[Proof of Lemma~\ref{lem:Ruelle lemma for subshift of finite type}]
    \def\wordtruncation{\widehat{\omega}}
    Let $a_0 > 0$, $b_0 > 0$, and $\varepsilon > 0$ be arbitrary.
    Fix an arbitrary integer $n \geqslant 2$.
    Consider $s = a + \imaginary b \in \cx$ with $\abs{a} \leqslant a_0$ and $\abs{b} \geqslant b_0$, where $a \define \Re{s}$ and $b \define \Im{s}$.

    For each admissible word \( \omega \) of length \( n \), we choose \( x_\omega \in C_\omega \) in the following way:
    if the cylinder set \( C_\omega \) contains a fixed point of \( \sigma^n \), we take \( x_\omega \) to be that point; otherwise we choose \( x_\omega \) arbitrarily in \( C_\omega \).
    Next, for each $m \in \oneton[n - 1]$ and each admissible word \( \omega \) of length $m$, we fix an arbitrary point $x_{\omega} \in C_{\omega}$.

    Denote
    \[
        Z_n(s) \define \sum_{x \in \fixpoint{\sigma^n}} e^{s S_n \phi(x)} \quad \text{and} \quad T_n(s) \define \sum_{j \in S} (\mathcal{L}_{s\phi}^n \chi_{j})(x_{j}).
    \]

    \emph{Claim.}
    \[
        Z_n(s) - T_n(s) = \sum_{m=2}^n \sum_{\abs{\omega} = m} \parentheses[\big]{ (\mathcal{L}_{s\phi}^n \chi_\omega)(x_\omega) - (\mathcal{L}_{s\phi}^n \chi_\omega)(x_{\wordtruncation}) },
    \]
    where \( \wordtruncation \) denotes the word obtained by removing the last symbol from \( \omega \).
    \smallskip

    \emph{Proof of the claim.}
    By Lemma~\ref{lem:periodic point representation}, $Z_n(s) = \sum_{\abs{\omega} = n} (\mathcal{L}_{s\phi}^n \chi_\omega)(x_\omega)$.
    For \( m \in \oneton \), define
    \[
        A_m \define \sum_{\abs{\omega} = m} (\mathcal{L}_{s\phi}^n \chi_\omega)(x_\omega) .
    \]
    Then $Z_n(s) - T_n(s) = A_n - A_1 = \sum_{m=2}^n (A_m - A_{m-1})$.
    Note that for each $m \in \atob{2}{n}$ and each admissible word \( \eta \) of length \( m - 1 \), the characteristic function satisfies
    \[
        \chi_\eta = \sum_{j \in S \summationdescribe A(\eta_{m-2}, j) = 1} \chi_{\eta j} ,
    \]
    where \( \eta j \) denotes the word obtained by appending \( j \) to \( \eta \).
    Thus for each $m \in \atob{2}{n}$, we have
    \begin{align*}
        A_{m-1} &= \sum_{\abs{\eta} = m-1} (\mathcal{L}_{s\phi}^n \chi_\eta)(x_\eta) \\
        &= \sum_{\abs{\eta} = m-1} \sum_{j \in S \summationdescribe A(\eta_{m-2}, j) = 1} (\mathcal{L}_{s\phi}^n \chi_{\eta j})(x_\eta) \\
        &= \sum_{\abs{\omega} = m} (\mathcal{L}_{s\phi}^n \chi_\omega)(x_{\wordtruncation}) ,
    \end{align*}
    where we set $\omega = \eta j$, so that $\wordtruncation = \eta$.
    It follows that
    \[
        A_m - A_{m-1} = \sum_{\abs{\omega} = m} \parentheses[\big]{ (\mathcal{L}_{s\phi}^n \chi_\omega)(x_\omega) - (\mathcal{L}_{s\phi}^n \chi_\omega)(x_{\wordtruncation}) }.
    \]
    Summing this equality over $m \in \atob{2}{n}$ yields the desired identity, establishing the claim.

    \smallskip

    By the claim, we have
    \begin{align*}
        \abs{ Z_n(s) - T_n(s) } 
        &\leqslant \sum_{m=2}^n \sum_{\abs{\omega} = m} \abs{ (\mathcal{L}_{s\phi}^n \chi_\omega)(x_\omega) - (\mathcal{L}_{s\phi}^n \chi_\omega)(x_{\wordtruncation}) }  \\
        &\leqslant \sum_{m=2}^n \sum_{\abs{\omega} = m} | \mathcal{L}_{s\phi}^n \chi_\omega |_{\holderexp} \, \metriconshiftspace(x_\omega, x_{\wordtruncation})^\holderexp \\
        &\leqslant \sum_{m=2}^n \sum_{\abs{\omega} = m} \| \mathcal{L}_{s\phi}^n \chi_\omega \|_{C^{0,\holderexp}} \, \metricexpshiftspace^{\holderexp(m-1)}.
        % &= \metricexpshiftspace^{-\holderexp} \sum_{m = 2}^n \metricexpshiftspace^{\holderexp m} \sum_{\abs{\omega} = m} \| \mathcal{L}_{s\phi}^{n-m} \parentheses[\big]{ \mathcal{L}_{s\phi}^m \chi_\omega } \|_{C^{0,\holderexp}}.
    \end{align*}
    % Here $\| \cdot \|_{C^{0,\holderexp}}$ denotes the operator norm on $C^{0, \holderexp} \parentheses[\big]{ \parentheses[\big]{ \Sigma_{A}^{+}, \metriconshiftspace }, \cx }$.
    Since \( \mathcal{L}_{s\phi}^{n - m} \) is bounded on $C^{0, \holderexp} \parentheses[\big]{ \parentheses[\big]{ \Sigma_{A}^{+}, \metriconshiftspace }, \cx }$ (cf.~\cite[Proposition~2.1]{pollicottZetaFunctionsPeriodic1990}), we have
    \[
        \| \mathcal{L}_{s\phi}^n \chi_\omega \|_{C^{0,\holderexp}} 
        = \| \mathcal{L}_{s\phi}^{n-m} \parentheses[\big]{ \mathcal{L}_{s\phi}^m \chi_\omega } \|_{C^{0,\holderexp}} 
        \leqslant \norm[\big]{ \mathcal{L}_{s\phi}^{n-m} }_{C^{0,\holderexp}} \, \| \mathcal{L}_{s\phi}^m \chi_\omega \|_{C^{0,\holderexp}} ,
    \]
    where \( \norm[\big]{ \mathcal{L}_{s\phi}^{n-m} }_{C^{0,\holderexp}} \) denotes the operator norm of \( \mathcal{L}_{s\phi}^{n-m} \) on \( C^{0,\holderexp} \parentheses[\big]{ \parentheses[\big]{ \Sigma_{A}^{+}, \metriconshiftspace }, \cx } \).
    Hence,
    \begin{equation}     \label{eq:temp:lem:Ruelle lemma for subshift of finite type:auxiliary bound}
        \abs{ Z_n(s) - T_n(s) } \leqslant \metricexpshiftspace^{-\holderexp} \sum_{m = 2}^n \metricexpshiftspace^{\holderexp m} \, \norm[\big]{ \mathcal{L}_{s\phi}^{n-m} }_{C^{0,\holderexp}} \sum_{\abs{\omega} = m} \| \mathcal{L}_{s\phi}^m \chi_\omega \|_{C^{0,\holderexp}}.
    \end{equation}

    We now bound the sum \( \sum_{\abs{\omega} = m} \| \mathcal{L}_{s\phi}^m \chi_\omega \|_{C^{0,\holderexp}} \) for all $m \in \n$.
    For each admissible word \( \omega \) of length \( m \), we have
    \[
        (\mathcal{L}_{s\phi}^m \chi_\omega)(x) = \sum_{y \in C_\omega \summationdescribe \sigma^m(y) = x} e^{s S_m \phi(y)} \qquad \text{for } x \in \Sigma_{A}^{+}.
    \]
    This implies that \( \uniformnorm{ \mathcal{L}_{s\phi}^m \chi_\omega } \leqslant \sup_{y \in C_\omega} e^{a S_m \phi(y)} \).
    For each \( x \in \Sigma_{A}^{+} \), there is at most one \( y \in C_\omega \) satisfying \( \sigma^m(y) = x \).
    When such a \( y \) exists, we denote it by \( y_x^\omega \).
    Thus,
    \[
        (\mathcal{L}_{s\phi}^m \chi_\omega)(x) =
        \begin{cases}
            e^{s S_m \phi(y_x^\omega)} & \text{if } y_x^\omega \text{ exists}; \\
            0 & \text{otherwise}.
        \end{cases}
    \]

    To estimate the \holder seminorm $| \mathcal{L}_{s\phi}^m \chi_\omega |_{\holderexp}$, we consider distinct points \( x = \set{x_i}_{i \in \n_{0}} \) and \( z = \set{z_i}_{i \in \n_{0}} \) in \( \Sigma_{A}^{+} \).
    According to the existence of \( y_x^\omega \) and \( y_z^\omega \), we consider the following three cases.
    
    \smallskip
    \emph{Case 1: Neither \( y_x^\omega \) nor \( y_z^\omega \) exists.} 
    Then \( (\mathcal{L}_{s\phi}^m \chi_\omega)(x) = (\mathcal{L}_{s\phi}^m \chi_\omega)(z) = 0 \), and \( \abs{ (\mathcal{L}_{s\phi}^m \chi_\omega)(x) - (\mathcal{L}_{s\phi}^m \chi_\omega)(z) } = 0 \) holds trivially.

    \smallskip
    \emph{Case 2: Exactly one of \( y_x^\omega \) and \( y_z^\omega \) exists.}
    In this case, we must have $\metriconshiftspace(x, z) = 1$ (i.e., $x_0 \ne z_0$); otherwise the condition $x_0 = z_0$ would imply that either both $y_x^\omega$ and $y_z^\omega$ exist, or neither exists, contradicting the assumption that exactly one of them exists.
    Then one of \( (\mathcal{L}_{s\phi}^m \chi_\omega)(x) \) and \( (\mathcal{L}_{s\phi}^m \chi_\omega)(z) \) vanishes, and we have
    \[
        \abs[\big]{ (\mathcal{L}_{s\phi}^m \chi_\omega)(x) - (\mathcal{L}_{s\phi}^m \chi_\omega)(z) } 
        \leqslant \uniformnorm{ \mathcal{L}_{s\phi}^m \chi_\omega } 
        \leqslant \metriconshiftspace(x, z)^{\holderexp} \sup_{y \in C_\omega} e^{a S_m \phi(y)}.
    \]

    \emph{Case 3: Both \( y_x^\omega \) and \( y_z^\omega \) exist.}
    Since $y_x^\omega = \omega x$ and $y_z^\omega = \omega z$ by definition, we have \( \metriconshiftspace(y_x^\omega, y_z^\omega) =  \metriconshiftspace(x, z) \metricexpshiftspace^m \).
    Using the \holder continuity of \( \phi \), we deduce that
    \begin{align*}
        \abs{ S_m \phi(y_x^\omega) - S_m \phi(y_z^\omega) } 
        &\leqslant \sum_{i = 0}^{m - 1} |\phi(\sigma^{i}(y_x^\omega)) - \phi(\sigma^{i}(y_z^\omega))| \\ 
        &\leqslant \sum_{i = 0}^{m - 1} \| \phi \|_{C^{0, \holderexp}} \metriconshiftspace\parentheses[\big]{\sigma^{i}(y_x^\omega), \sigma^{i}(y_z^\omega)}^\holderexp \\ 
        &= \| \phi \|_{C^{0, \holderexp}} \metriconshiftspace(y_x^\omega, y_z^\omega)^{\holderexp} \sum_{i = 0}^{m - 1} \metricexpshiftspace^{- i \holderexp} \\ 
        &= \| \phi \|_{C^{0, \holderexp}} \metriconshiftspace(x, z)^{\holderexp} \metricexpshiftspace^{\holderexp m} \frac{ \metricexpshiftspace^{- \holderexp m} - 1 }{\metricexpshiftspace^{-\holderexp} - 1} \\
        &\leqslant \frac{ \| \phi \|_{C^{0, \holderexp}} }{\metricexpshiftspace^{-\holderexp} - 1} \metriconshiftspace(x, z)^{\holderexp}.
    \end{align*}
    Applying the inequality \( \abs{e^{z} - e^{w}} \leqslant \abs{z - w} \, e^{\max\set{\Re{z}, \,\Re{w}}} \) for $z, w \in \cx$, which is a consequence of the Fundamental Theorem of Calculus for line integrals, we obtain that
    \begin{align*}
        \abs[\big]{ (\mathcal{L}_{s\phi}^m \chi_\omega)(x) - (\mathcal{L}_{s\phi}^m \chi_\omega)(z) }
        &= \abs[\big]{ e^{s S_m \phi(y_x^\omega)} - e^{s S_m \phi(y_z^\omega)} } \\
        &\leqslant \abs{s} \, \abs{ S_m \phi(y_x^\omega) - S_m \phi(y_z^\omega) } \, e^{a \max\set{\Re{S_m \phi(y_x^\omega)}, \, \Re{S_m \phi(y_z^\omega)}}} \\
        &\leqslant \frac{ \| \phi \|_{C^{0, \holderexp}} }{\metricexpshiftspace^{-\holderexp} - 1} \abs{s} \, \metriconshiftspace(x, z)^{\holderexp} \sup_{y \in C_\omega} e^{a S_m \phi(y)}.
    \end{align*}
    Since $\abs{s} = \sqrt{a^2 + b^2} \leqslant \abs{a} + \abs{b} \leqslant a_0 + \abs{b} \leqslant \parentheses[\big]{ 1 + \frac{a_0}{b_0} } \abs{\Im{s}}$, it follows that
    \[
        \abs[\big]{ (\mathcal{L}_{s\phi}^m \chi_\omega)(x) - (\mathcal{L}_{s\phi}^m \chi_\omega)(z) } \leqslant \frac{ \parentheses[\big]{ 1 + \frac{a_0}{b_0} } \| \phi \|_{C^{0, \holderexp}} }{\metricexpshiftspace^{-\holderexp} - 1} \, \abs{\Im{s}} \, \metriconshiftspace(x, z)^{\holderexp} \sup_{y \in C_\omega} e^{a S_m \phi(y)}.
    \]

    Combining the estimates from these three cases, we conclude that
    \begin{align*}
        | \mathcal{L}_{s\phi}^m \chi_\omega |_{\holderexp} 
        &\leqslant \max \set[\bigg]{1, \, \frac{ \parentheses[\big]{ 1 + \frac{a_0}{b_0} } \| \phi \|_{C^{0, \holderexp}} }{\metricexpshiftspace^{-\holderexp} - 1} \, \abs{\Im{s}} } \sup_{y \in C_\omega} e^{a S_m \phi(y)}  
        \leqslant C_0 \abs{\Im{s}} \sup_{y \in C_\omega} e^{a S_m \phi(y)},
    \end{align*}
    where $C_0 \define \max \set[\Big]{ \frac{1}{b_0}, \, \frac{ \parentheses[\big]{ 1 + \frac{a_0}{b_0} } \| \phi \|_{C^{0, \holderexp}} }{\metricexpshiftspace^{-\holderexp} - 1} }$.    
    Thus we have
    \begin{align*}
        \| \mathcal{L}_{s\phi}^m \chi_\omega \|_{C^{0,\holderexp}}
        &= \uniformnorm{ \mathcal{L}_{s\phi}^m \chi_\omega } + | \mathcal{L}_{s\phi}^m \chi_\omega |_{\holderexp} \\
        &\leqslant \parentheses[\big]{ 1 + C_0 \abs{\Im{s}} } \sup_{y \in C_\omega} e^{a S_m \phi(y)} \\
        &\leqslant C \abs{\Im{s}} \sup_{y \in C_\omega} e^{a S_m \phi(y)} ,
    \end{align*}
    where the constant \( C \define \frac{1}{b_0} + C_0 \) only depends on \( a_0 \), \( b_0 \), \( \metricexpshiftspace \), \( \holderexp \), and \( \phi \).
    Summing over all admissible words \( \omega \) of length \( m \) yields
    \begin{equation} \label{eq:temp:lem:Ruelle lemma for subshift of finite type:sum of holder norm of Ruelle operator acting on characteristic functions}
        \sum_{\abs{\omega} = m} \| \mathcal{L}_{s\phi}^m \chi_\omega \|_{C^{0,\holderexp}} \leqslant C \abs{\Im{s}} \sum_{\abs{\omega} = m} \sup_{y \in C_\omega} e^{a S_m \phi(y)}.
    \end{equation}

    Denote
    \[
        Z_m(a\phi) \define \sum_{\abs{\omega} = m} \sup_{y \in C_\omega} e^{a S_m \phi(y)} .
    \]
    It is a classical result that $\lim_{m \to +\infty} \frac{1}{m} \log Z_m(a\phi) = P(\sigma, a\phi)$ (see~e.g.~\cite[Theorem~9.6]{walters1982introduction}).
    We now show that this convergence is uniform for $a \in [-a_0, a_0]$.
    Indeed, a straightforward calculation shows that for each $m \in \n$, the function $a \mapsto p_m(a) \define \frac{1}{m} \log Z_m(a\phi)$ is Lipschitz continuous on $\real$ with a Lipschitz constant $\uniformnorm{\phi}$, and is bounded on $[-a_0, a_0]$ by $a_0 \uniformnorm{\phi} + \log (\card{S})$.
    This implies that the sequence of functions $\set{ p_m }_{m \in \n}$ is equicontinuous and uniformly bounded on $[-a_0, a_0]$.
    By the \aalem theorem, every subsequence $\set{ p_{m_k} }_{k \in \n}$, which is equicontinuous and uniformly bounded on $[-a_0, a_0]$, has a further subsequence $\set[\big]{ p_{m_{k_j}} }_{j \in \n}$ that converges uniformly on $[-a_0, a_0]$.
    Note that such a subsequential limit must be $P(\sigma, \cdot \, \phi)$ since $\lim_{m \to +\infty} p_m(a) = P(\sigma, a\phi)$.
    A standard result in topology states that if every subsequence of a sequence in a topological space has a further subsequence converging to the same point, then the original sequence itself converges to that point.
    Therefore, the convergence $\lim_{m \to +\infty} p_m(a) = P(\sigma, a\phi)$ is uniform in $a \in [-a_0, a_0]$.
    Hence, there exists a constant \( K_\varepsilon > 0 \) such that for all $m \in \n$ and $a \in [-a_0, a_0]$, we have
    \begin{equation} \label{eq:temp:lem:Ruelle lemma for subshift of finite type:upper bound of partition function}
        Z_m(a\phi) \leqslant K_\varepsilon \, e^{m (P(\sigma, a\phi) + \varepsilon)},
    \end{equation}
    and \eqref{eq:lem:Ruelle lemma for subshift of finite type:bound on iterates of Ruelle operator on characteristic functions of cylinder sets} follows immediately from \eqref{eq:temp:lem:Ruelle lemma for subshift of finite type:sum of holder norm of Ruelle operator acting on characteristic functions} and \eqref{eq:temp:lem:Ruelle lemma for subshift of finite type:upper bound of partition function} by setting \( C_\varepsilon \define \metricexpshiftspace^{-\holderexp} C K_\varepsilon \).

    Finally, by substituting the estimate \eqref{eq:lem:Ruelle lemma for subshift of finite type:bound on iterates of Ruelle operator on characteristic functions of cylinder sets} into \eqref{eq:temp:lem:Ruelle lemma for subshift of finite type:auxiliary bound}, we obtain
    \begin{align*}
        \abs{ Z_n(s) - T_n(s) }
        &\leqslant \metricexpshiftspace^{-\holderexp} \sum_{m = 2}^n \metricexpshiftspace^{\holderexp m} \, \norm[\big]{ \mathcal{L}_{s\phi}^{n-m} }_{C^{0,\holderexp}} \sum_{\abs{\omega} = m} \| \mathcal{L}_{s\phi}^m \chi_\omega \|_{C^{0,\holderexp}}  \\
        &\leqslant C_\varepsilon \abs{\Im{s}} \sum_{m = 2}^{n}  \norm[\big]{ \mathcal{L}_{s\phi}^{n - m} }_{C^{0,\holderexp}} \metricexpshiftspace^{\holderexp m} e^{m (P(\sigma, a\phi) + \varepsilon)},
    \end{align*}
    establishing \eqref{eq:lem:Ruelle lemma for subshift of finite type:Ruelle lemma}.
\end{proof}

The following estimate for the partition function is an immediate consequence of Lemma~\ref{lem:Ruelle lemma for subshift of finite type}.

\begin{corollary} \label{coro:bound of partition function for subshift of finite type via Ruelle lemma}
    For all \( a_0 > 0 \), \( b_0 > 0 \), \( \varepsilon > 0 \), \( n \in \n\), and \( s \in \cx \) satisfying \( \abs{\Re{s}} \leqslant a_0 \) and \( \abs{\Im{s}} \geqslant b_0 \), we have
    \begin{equation} \label{eq:coro:bound of partition function for subshift of finite type via Ruelle lemma:bound on partition function}
        \abs[\bigg]{ \sum_{\sigma^n x = x} e^{ s S_n\phi(x)} }
        \leqslant C_\varepsilon \abs{\Im{s}} \sum_{m = 1}^{n} \norm[\big]{ \mathcal{L}^{n - m}_{s \phi} }_{C^{0,\holderexp}} \parentheses[\big]{ \metricexpshiftspace^{\holderexp} e^{P(\sigma, \Re{s} \phi) + \varepsilon} }^{m},
    \end{equation}
    where \( C_\varepsilon \) is the constant from Lemma~\ref{lem:Ruelle lemma for subshift of finite type}, which depends only on \( \varepsilon \), \( a_0 \), \( b_0 \), \( \metricexpshiftspace \), \( \holderexp \), \( \phi \), and the ambient parameters of the subshift $\parentheses[\big]{ \sft, \sopt }$.
\end{corollary}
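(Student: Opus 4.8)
The plan is to deduce this directly from Lemma~\ref{lem:Ruelle lemma for subshift of finite type} via the triangle inequality. Fix $a_0, b_0, \varepsilon > 0$ and $s \in \cx$ with $\abs{\Re{s}} \leqslant a_0$ and $\abs{\Im{s}} \geqslant b_0$, and choose an arbitrary point $x_j \in C_j$ for each $j \in S$; set $T_n(s) \define \sum_{j \in S}(\mathcal{L}^n_{s\phi}\chi_j)(x_j)$. For $n \geqslant 2$, write
\[
    \abs[\bigg]{ \sum_{\sigma^n x = x} e^{s S_n\phi(x)} } \leqslant \abs[\bigg]{ \sum_{\sigma^n x = x} e^{s S_n\phi(x)} - T_n(s) } + \abs{ T_n(s) }.
\]
The first term on the right is bounded by estimate~\eqref{eq:lem:Ruelle lemma for subshift of finite type:Ruelle lemma} of Lemma~\ref{lem:Ruelle lemma for subshift of finite type}, which supplies precisely the summands $m = 2, \dots, n$ of the claimed bound.

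For the second term I would bound each value by the corresponding $C^{0,\holderexp}$ norm, $\abs{(\mathcal{L}^n_{s\phi}\chi_j)(x_j)} \leqslant \uniformnorm{\mathcal{L}^n_{s\phi}\chi_j} \leqslant \norm[\big]{\mathcal{L}^n_{s\phi}\chi_j}_{C^{0,\holderexp}}$, then split $\mathcal{L}^n_{s\phi}\chi_j = \mathcal{L}^{n-1}_{s\phi}\parentheses[\big]{ \mathcal{L}_{s\phi}\chi_j }$ and apply submultiplicativity of the operator norm on $C^{0,\holderexp}\parentheses[\big]{ \parentheses[\big]{ \Sigma_{A}^{+}, \metriconshiftspace }, \cx }$. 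Since the admissible words of length $1$ are exactly the elements of $S$, summing over $j \in S$ and invoking estimate~\eqref{eq:lem:Ruelle lemma for subshift of finite type:bound on iterates of Ruelle operator on characteristic functions of cylinder sets} with $k = 1$ gives
\[
    \abs{ T_n(s) } \leqslant \norm[\big]{\mathcal{L}^{n-1}_{s\phi}}_{C^{0,\holderexp}} \sum_{\abs{\omega} = 1} \norm[\big]{\mathcal{L}^{1}_{s\phi}\chi_\omega}_{C^{0,\holderexp}} \leqslant C_\varepsilon \abs{\Im{s}} \, \norm[\big]{\mathcal{L}^{n-1}_{s\phi}}_{C^{0,\holderexp}} \parentheses[\big]{ \metricexpshiftspace^{\holderexp} e^{P(\sigma, \Re{s} \phi) + \varepsilon} },
\]
which is exactly the $m = 1$ summand of the claimed bound. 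Adding the two estimates proves \eqref{eq:coro:bound of partition function for subshift of finite type via Ruelle lemma:bound on partition function} for all $n \geqslant 2$.

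Finally, the case $n = 1$ --- not covered by \eqref{eq:lem:Ruelle lemma for subshift of finite type:Ruelle lemma} --- is handled separately: by Lemma~\ref{lem:periodic point representation}, $\sum_{\sigma x = x} e^{s\phi(x)} = \sum_{\abs{\omega} = 1}(\mathcal{L}_{s\phi}\chi_\omega)(x_\omega)$ for a suitable choice of $x_\omega \in C_\omega$, and bounding each value by the $C^{0,\holderexp}$ norm and applying \eqref{eq:lem:Ruelle lemma for subshift of finite type:bound on iterates of Ruelle operator on characteristic functions of cylinder sets} with $k = 1$ yields $\abs[\big]{ \sum_{\sigma x = x} e^{s\phi(x)} } \leqslant C_\varepsilon\abs{\Im{s}}\metricexpshiftspace^{\holderexp}e^{P(\sigma,\Re{s} \phi)+\varepsilon}$, which is the right-hand side of \eqref{eq:coro:bound of partition function for subshift of finite type via Ruelle lemma:bound on partition function} with $n = 1$ (its only term, $m = 1$, has $\norm[\big]{\mathcal{L}^0_{s\phi}}_{C^{0,\holderexp}} = 1$). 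Since this is a genuine corollary, I expect no substantive obstacle; the only points requiring a little care are the bookkeeping that matches $\abs{T_n(s)}$ to the $m = 1$ term and the separate treatment of $n = 1$.
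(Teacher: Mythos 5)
Your proposal is correct and follows essentially the same route as the paper's proof: handle $n=1$ via Lemma~\ref{lem:periodic point representation} together with \eqref{eq:lem:Ruelle lemma for subshift of finite type:bound on iterates of Ruelle operator on characteristic functions of cylinder sets}; for $n\geqslant 2$, split off the $m=1$ term by factoring $\mathcal{L}^n_{s\phi}\chi_j=\mathcal{L}^{n-1}_{s\phi}(\mathcal{L}_{s\phi}\chi_j)$ and applying the operator-norm bound with $k=1$, then absorb the remaining $m=2,\dots,n$ terms via \eqref{eq:lem:Ruelle lemma for subshift of finite type:Ruelle lemma} and the triangle inequality. The only difference is cosmetic: the paper treats $n=1$ before $n\geqslant 2$ rather than after.
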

\begin{proof}
    When $n = 1$, \eqref{eq:coro:bound of partition function for subshift of finite type via Ruelle lemma:bound on partition function} follows immediately from Lemma~\ref{lem:periodic point representation} and \eqref{eq:lem:Ruelle lemma for subshift of finite type:bound on iterates of Ruelle operator on characteristic functions of cylinder sets} in Lemma~\ref{lem:Ruelle lemma for subshift of finite type}.
    
    We now consider the case $n \geqslant 2$.
    Let $x_j \in C_j$ be an arbitrary point for each $j \in S$.
    By \eqref{eq:lem:Ruelle lemma for subshift of finite type:bound on iterates of Ruelle operator on characteristic functions of cylinder sets}, we have
    \begin{align*}
        \uppercase\expandafter{\romannumeral1} \define 
        \abs[\bigg]{\sum_{j \in S} \mathcal{L}^n_{s \phi} \chi_{j}(x_{j})} 
        &\leqslant \sum_{j \in S} \abs[\big]{ \mathcal{L}^{n - 1}_{s \phi} (\mathcal{L}_{s \phi} \chi_{j}) (x_{j}) }
        \leqslant \norm[\big]{ \mathcal{L}^{n - 1}_{s \phi} }_{C^{0, \holderexp}} 
            \sum_{j \in S}  \norm[\big]{ \mathcal{L}_{s \phi} \chi_{j} }_{C^{0, \holderexp}} \\
        &\leqslant \norm[\big]{ \mathcal{L}^{n - 1}_{s \phi} }_{C^{0, \holderexp}} \, C_\varepsilon \abs{\Im{s}} \metricexpshiftspace^{\holderexp} e^{P(\sigma, \Re{s} \phi) + \varepsilon},
    \end{align*}
    where $C_\varepsilon$ is the constant from Lemma~\ref{lem:Ruelle lemma for subshift of finite type}.
    Hence, using the triangle inequality and \eqref{eq:lem:Ruelle lemma for subshift of finite type:Ruelle lemma} in Lemma~\ref{lem:Ruelle lemma for subshift of finite type}, we deduce that
    \begin{align*}
        &\abs[\bigg]{ \sum_{\sigma^n x = x} e^{ s S_n\phi(x)} }
        \leqslant \uppercase\expandafter{\romannumeral1} + \abs[\bigg]{ \sum_{\sigma^n x = x} e^{ s S_n\phi(x)} - \sum_{j \in S} \mathcal{L}^n_{s \phi} \chi_{j}(x_{j}) }  \\
        &\qquad \leqslant C_\varepsilon \abs{\Im{s}} \norm[\big]{ \mathcal{L}^{n - 1}_{s \phi} }_{C^{0, \holderexp}} \metricexpshiftspace^{\holderexp} e^{P(\sigma, \Re{s} \phi) + \varepsilon} 
            + C_\varepsilon \abs{\Im{s}} \sum_{m=2}^{n} \norm[\big]{ \mathcal{L}^{n - m}_{s \phi} }_{C^{0,\holderexp}} \parentheses[\big]{ \metricexpshiftspace^{\holderexp} e^{P(\sigma, \Re{s} \phi) + \varepsilon} }^{m} \\
        &\qquad = C_\varepsilon \abs{\Im{s}} \sum_{m = 1}^{n} \norm[\big]{ \mathcal{L}^{n - m}_{s \phi} }_{C^{0,\holderexp}} \parentheses[\big]{ \metricexpshiftspace^{\holderexp} e^{P(\sigma, \Re{s} \phi) + \varepsilon} }^{m}.
    \end{align*}
    This establishes \eqref{eq:coro:bound of partition function for subshift of finite type via Ruelle lemma:bound on partition function}.
\end{proof}
 % Ruelle lemma 

\printbibliography

\end{document}